\newcommand{\hf}{{\mathsf h}}
\theoremstyle{plain}
\newtheorem{thm}{Theorem}[section]
\newtheorem{lem}[thm]{Lemma}
\newtheorem{prop}[thm]{Proposition}
\def\@rst #1 #2other{#1}
\newcommand\MR[1]{\relax\ifhmode\unskip\spacefactor3000 \space\fi
\MRhref{\expandafter\@rst #1 other}{#1}}
\newcommand{\MRhref}[2]{\href{http://www.ams.org/mathscinet-getitem?mr=#1}{MR#2}}
\newcommand{\arxiv}[1]{\href{http://arxiv.org/abs/#1}{#1}}
\theoremstyle{definition}
\newtheorem{defn}[thm]{Definition}
\newtheorem{remark}[thm]{Remark}
\numberwithin{equation}{section}
\newcommand{\dsb}{\begin{adjustwidth}{2.5em}{0pt}
\begin{footnotesize}}
\newcommand{\dse}{\end{footnotesize}
\end{adjustwidth}}
\newcommand{\ssb}{\begin{adjustwidth}{2.5em}{0pt}}
\newcommand{\sse}{\end{adjustwidth}}
\newcommand{\aryb}{\begin{eqnarray*}}
\newcommand{\arye}{\end{eqnarray*}}
\def\alb#1\ale{\begin{align*}#1\end{align*}}
\def\allb#1\alle{\begin{align}#1\end{align}}
\newcommand{\eqb}{\begin{equation}}
\newcommand{\eqe}{\end{equation}}
\newcommand{\eqbn}{\begin{equation*}}
\newcommand{\eqen}{\end{equation*}}
\newcommand{\BB}{\mathbbm}
\newcommand{\ol}{\overline}
\newcommand{\op}{\operatorname}
\newcommand{\frk}{\mathfrak}
\newcommand{\eqD}{\overset{d}{=}}
\newcommand{\ep}{\varepsilon}
\newcommand{\rta}{\rightarrow}
\newcommand{\wt}{\widetilde}
\newcommand{\wh}{\widehat} 
\newcommand{\mcl}{\mathcal}
\newcommand{\rng}{\mathring}
\newcommand{\Var}{\operatorname{Var}}
\newcommand{\len}{\operatorname{len}}
\newcommand{\diam}{{\operatorname{diam}}}
\newcommand{\cc}{\op{\mathbf{c}}}
\newcommand{\xicrit}{\op{\xi_{\text{crit}}}}
\newcommand{\changes}[1]{{\color{black}{#1}}}
\newcommand{\Ninf}{\ol{\BB {N}}}
\let\originalleft\left
\let\originalright\right
\renewcommand{\left}{\mathopen{}\mathclose\bgroup\originalleft}
\renewcommand{\right}{\aftergroup\egroup\originalright}
\title{Weak Liouville quantum gravity metrics with matter central charge $\cc  \in (-\infty, 25)$
}
\author{ \begin{tabular}{c}{Joshua Pfeffer}\\[-4pt]\small Columbia\end{tabular}
}
\date{   }
\begin{document}

\maketitle 

\begin{abstract}
We define a random metric associated to Liouville quantum gravity (LQG) for all values of matter central charge $\mathbf{c} < 25$ by extending the axioms for a weak LQG metric from the $\mathbf{c} < 1$ setting.  We show that the axioms are satisfied by subsequential limits of Liouville first passage percolation; Ding and Gwynne (2020) showed these limits exist in a suitably chosen topology.  We show that, in contrast to the $\mathbf{c} < 1$ phase, the metrics for $\mathbf{c} \in (1,25)$ do not induce the Euclidean topology since they a.s.\ have a dense (measure zero) set of singular points, points at infinite distance from all other points.  We use this fact to prove that a.s.\ the metric ball is not compact and its boundary has infinite Hausdorff dimension.  On the other hand, we extend many fundamental properties of LQG metrics for $\mathbf{c} < 1$ to all $\mathbf{c} \in (-\infty,25)$, such as a version of the (geometric) Knizhnik-Polyakov-Zamolodchikov (KPZ) formula.
\end{abstract}

\tableofcontents

\section{Introduction}
\label{sec-intro}

This paper studies a universal family of random fractal surfaces called \textit{Liouville quantum gravity} (LQG).  The theory of LQG is a subject of active study in the probability community for its applications to conformal field theory, string theory, and other areas of mathematical physics, as well as for its links to the geometry of many classes of random planar maps.  LQG was first described heuristically by Polyakov and other physicists in the 1980s; the following is an early formulation of LQG.  

\begin{defn}[Heuristic formulation of LQG]
\label{defn-heuristic}
An \emph{LQG surface with matter central charge $\cc \in (-\infty,1]$}\footnote{The matter central charge is often denoted $\op{\mathbf{c}_M}$ to distinguish it from the \emph{Liouville} central charge $\op{\mathbf{c}_L} = 26 - \op{\mathbf{c}_M}$. Since we do not use the Liouville central charge in this paper, we denote the matter central charge simply by $\cc$.} and a specified topology (such as the sphere, disk, or torus) is a random surface whose law is given by ``the uniform measure on the space of surfaces with this topology'' weighted by the $(-\cc/2)$-th power of the determinant of the Laplace-Beltrami operator of the surface. \end{defn}

In this work, we restrict to LQG with simply connected topology; see~\cite{drv-torus, remy-annulus, grv-higher-genus} for discussion of LQG surfaces that are not simply connected.

The physicists who introduced the theory of LQG believed that a notion of LQG should exist for all real values of $\cc$, and in studying LQG, they were particularly motivated by the regime $\cc \in (1,25)$. 
Yet, their formulations of LQG do not extend to values of $\cc$ greater than $1$.   Physicists have observed~\cite{cates-branched-polymer,david-c>1-barrier,adjt-c-ge1,ckr-c-ge1,bh-c-ge1-matrix,dfj-critical-behavior,bj-potts-sim,adf-critical-dimensions} through numerical simulations and heuristics that, when $\cc \in (1,25)$, Definition~\ref{defn-heuristic}  describes the geometry of a branched polymer.  In mathematical terms, if we model the geometry of Definition~\ref{defn-heuristic}  by a random planar map with   a fixed large number $n$ of edges, sampled with probability proportional to the $(-\cc/2)$-th power of the determinant of its discrete Laplacian, we expect that  this random planar map model converges as $n \rta \infty$ to an object similar to the continuum random tree (CRT) defined by~\cite{aldous-crt1,aldous-crt2,aldous-crt3}.  This geometry does not depend on $\cc$ and is degenerate from a conformal field theory perspective since it is a tree and not a surface.  Physicists have tried to associate a nontrivial geometry to LQG for $\cc \in (1,25)$ despite these apparent obstacles, but have not been successful.  See~\cite{ambjorn-remarks} for a survey of these works.

In this paper, we study LQG for $\cc \in (1,25)$ from a rigorous perspective.   In recent years, probabilists have translated the heuristic of Definition~\ref{defn-heuristic} into a rigorous theory of LQG for $\cc \in (-\infty,1)$, constructing both an LQG random measure and an LQG random metric. Their constructions are based on the \emph{DDK ansatz} of David~\cite{david-conformal-gauge} and Distler-Kawai~\cite{dk-qg}, which heuristically describes an LQG surface as  a random two-dimensional Riemannian manifold whose Riemannian metric tensor is obtained by exponentiating a multiple of a random distribution $h$ called the \emph{Gaussian free field} (GFF).  (See~\cite{shef-gff},~\cite{berestycki-gmt-elementary},~\cite{pw-gff-notes} for a detailed introduction to the GFF.)  
The LQG measure is constructed  by exponentiating a regularized version of $\gamma h$, where $\gamma = \gamma(\cc)$ is the \emph{coupling constant} (see Figure~\ref{fig-c-Q-gamma-table}).\footnote{The construction of this measure in~\cite{shef-kpz} is a special case of a general theory of regularized random measures known as \emph{Gaussian multiplicative chaos} (GMC), first introduced in~\cite{kahane}.  See~\cite{rhodes-vargas-review,berestycki-gmt-elementary,aru-gmc-survey} for reviews of this theory.} When $\cc \in (1,25)$, this parameter $\gamma$ is complex, which suggests that the LQG random measure does not extend to a real measure for $\cc \in (1,25)$.  
In contrast, the definition of the LQG metric suggests that it could possibly be defined for $\cc \in (-\infty,25)$.  The definition of the metric for \changes{$\cc \in (-\infty, 1)$} involves  the \emph{background charge} $Q$ (see Figure~\ref{fig-c-Q-gamma-table}), which is real and nonzero for $\cc \in (-\infty,25)$.

The idea of defining a metric associated to LQG for $\cc \in (1,25)$ was first explored in~\cite{ghpr-central-charge}, which constructs a heuristic model for such a metric by considering a natural discretization of LQG in terms of dyadic squares that makes sense for all $\changes{\cc \in (-\infty,25)}$. 
This paper, building on the previous work~\cite{dg-supercritical-lfpp}, \emph{rigorously} proves the existence of a metric that satisfies a collection of axioms that an LQG metric for $\cc \in (-\infty,25)$ should satisfy.  We then apply these axioms to illustrate the key distinguishing features of LQG in this regime and provide a framework for proving results about LQG for $\cc \in (-\infty,25)$. \changes{The results of this paper are used to prove in~\cite{dg-uniqueness} that the axioms we state in this paper \emph{uniquely} characterize a metric associated to LQG for $\cc \in (-\infty,25)$.}

\begin{figure}[t!]
 \begin{center}
\includegraphics[scale=1.1]{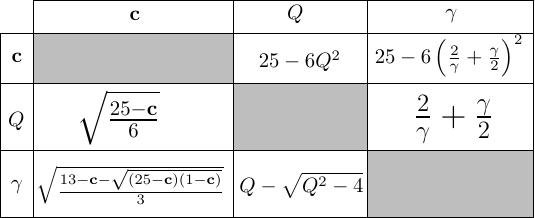}
\vspace{-0.01\textheight}
\caption{Table of relationships between the values of the matter central charge $\cc$, the background charge $Q$, and the coupling constant $\gamma$. When $\changes{\cc \in (-\infty,1]}$, all three parameters are real.  When $\cc \in (1,25)$, $\gamma$ is complex, but $Q$ is real and nonzero.
}\label{fig-c-Q-gamma-table}
\end{center}
\vspace{-1em}
\end{figure}

Before stating our results, we review the construction of the LQG random metric in the classical regime $\changes{\cc \in (-\infty, 1)}$. The DDK ansatz suggests constructing a random metric associated to LQG as a 
limit of regularized versions of the heuristic metric
\[
(z,w) \mapsto  \inf_{P : z\rta w} \int_0^1 e^{\xi h(P(t))} |P'(t)| \,dt
\]
for some $\xi = \xi(\cc)$, where the infimum is over all paths from $z$ to $w$.  
To make this idea precise, we first mollify  the field $h$ by the heat kernel; i.e., we set
\eqb \label{eqn-gff-convolve}
h_\ep^*(z) := (h *p_{\ep^2/2})(z) = \int_{\changes{\BB{C}}} h(w) p_{\ep^2/2}(z,w) \, dw,
\eqe
where $p_s(z,w) =   \frac{1}{2\pi s} \exp\left( - \frac{|z-w|^2}{2s} \right) $ is the heat kernel on $\BB C$ and the integral~\eqref{eqn-gff-convolve} is interpreted in the sense of distributional pairing.  We then define the following regularized metric associated to LQG.

\begin{defn}
Let $\xi > 0$. We define the \emph{$\ep$-Liouville first passage percolation (LFPP) metric} with parameter $\xi$ associated to the field $h$ as
\eqb \label{eqn-lfpp}
D_h^\ep(z,w) := \inf_{P : z\rta w} \int_0^1 e^{\xi h_\ep^*(P(t))} |P'(t)| \,dt 
\eqe
where the infimum is over all piecewise continuously differentiable paths from $z$ to $w$.  We define the \emph{rescaled $\ep$-LFPP metric} as 
\eqb
\frk a_\ep^{-1} D_h^{\ep},
\label{eqn-defn-rescaled-LFPP}
\eqe
where $\frk a_\ep$ denote the median of the $D_h^\ep$-distance between the left and right boundaries of the unit square along paths which stay in the unit square.  
\end{defn}

It is shown in~\cite[Theorem 1.5]{dg-lqg-dim} and~\cite[Proposition 1.1]{dg-supercritical-lfpp} that, for every $\xi > 0$, there exists $Q = Q(\xi) > 0$ with 
\eqb
\frk a_\ep = \ep^{1-  \xi Q + o_\ep(1)} \qquad \text{as $\ep \rta 0$} \label{eqn-defn-Q(xi)}
\eqe
Furthermore, $\xi \rta Q(\xi)$ is continuous and nonincreasing on $(0,\infty)$, and tends to $0$ as $\xi \rta \infty$.  With $\xicrit$ chosen so that $Q(\xicrit) = 2$, we call LFPP for $\xi \in (0,\xicrit)$ the \emph{subcritical} phase and  LFPP for $\xi \in (\xicrit, \infty)$ the \emph{supercritical} phase.  Equivalently, if we associate LFPP with parameter $\xi$ to the value of matter central charge associated to background charge $Q(\xi)$ (see Table~\ref{fig-c-Q-gamma-table}), then the subcritical phase of LFPP corresponds to $\changes{\cc \in (-\infty, 1)}$, and the supercritical phase of LFPP corresponds to $\cc \in (1,25)$.

Probabilists used the LFPP approximation scheme in the subcritical phase to prove the {existence} and uniqueness of a random metric associated to LQG for $\changes{\cc \in (-\infty,1)}$.  They proved this in three stages:

\begin{itemize}
    \item \emph{Stage 1: constructing a candidate metric associated to LQG.}  First,~\cite{dddf-lfpp} constructed a candidate metric associated to LQG for $\changes{\cc \in (-\infty,1)}$ by showing that, in the subcritical phase, the laws of the rescaled $\ep$-LFPP metrics are tight in the local uniform topology.
    \item \emph{Stage 2: Showing that this candidate metric satisfies a collection of axioms that characterize the LQG metric.} Next,~\cite{lqg-metric-estimates} showed that every weak subsequential limit of  rescaled $\ep$-LFPP metrics in the subcritical phase can indeed be viewed as a random metric that describes an LQG surface.  Specifically,~\cite{lqg-metric-estimates} proved that, for each fixed $\xi \in (0,\xicrit)$, if $\cc \in (-\infty,1)$ is chosen so that $\xi = \xi(\cc)$, every subsequential limit of  rescaled $\ep$-LFPP metrics satisfies a collection of axioms that a $\cc$-LQG metric should satisfy.  These axioms also provide a framework for proving important results about these metrics, such as H\"older continuity and estimates for distances.  By applying these estimates,~\cite{gp-kpz} showed that, for every metric $D_h$ satisfying these axioms, and every set $X \subset \BB{C}$ independent of the field $h$, the Hausdorff dimensions of $X$ with respect to the Euclidean metric and the metric $D_h$ are related by the famous \emph{KPZ formula}~\eqref{eqn-old-kpz}.  (We define Hausdorff dimension in Definition~\ref{defn-dim}.) Also, the Hausdorff dimension of a metric satisfying the axioms in~\cite{lqg-metric-estimates} is equal to the ratio $\xi/\gamma$.
    \item 
    \emph{Stage 3: Proving that the axioms formulated in the previous stage uniquely determine the LQG metric.} Finally, the work~\cite{gm-uniqueness} subsequently showed that the axioms stated in~\cite{lqg-metric-estimates} characterize a \emph{unique} metric (up to global rescalings).  A key input in this uniqueness proof is the work~\cite{gm-confluence}, which shows that a metric satisfying the axioms in~\cite{lqg-metric-estimates} satisfies a \emph{confluence of geodesics} property.
\end{itemize}

In the more general setting $\cc \in (-\infty,25)$, a natural approach for constructing a candidate random metric is to consider the rescaled $\ep$-LFPP metrics for all $\xi > 0$. 
The recent work~\cite{dg-supercritical-lfpp} proved that, for all $\xi > 0$, the laws of the rescaled $\ep$-LFPP metrics are tight with respect to the topology on lower semicontinuous functions introduced in~\cite{beer-usc}.  This topology is defined as follows.

\begin{defn}
We define the \emph{lower semicontinuous topology} as the topology on the space of lower semicontinuous functions from $\BB{C} \times \BB{C} \rta \BB{R} \cup \{\pm \infty\}$, in which a sequence $f_n$ converges to a function $f$ if and only if
\begin{enumerate}[(a)]
    \item For every sequence of points $(z_n,w_n) \in \BB{C} \times \BB{C}$ converging to a point $(z,w) \in \BB{C} \times \BB{C}$, we have $f(z,w) \leq \liminf_{n \rta \infty} f_n(z_n,w_n)$.
    \item For every point $(z,w) \in \BB{C} \times \BB{C}$, there exists a sequence of points $(z_n,w_n) \in \BB{C} \times \BB{C}$ converging to $(z,w)$ for which $f(z,w) = \lim_{n \rta \infty} f_n(z_n,w_n)$.
\end{enumerate}
\end{defn}

The reason we do not have tightness of the rescaled LFPP metrics in the local uniform topology is that, in the supercritical phase, the subsequential limiting metrics are not finite-valued or continuous on $\BB{C} \times \BB{C}$.  Rather, each subsequential limiting metric a.s.\ has an uncountable dense set of \emph{singular points}:

\begin{defn}
Let $D$ be a metric defined on a set $U \subset \BB{C}$.
We say that $z \in U$ is a  \emph{singular point} for $D$ if $D(z,w) = \infty$ for every $w \in U \backslash \{z\}$.
\label{defn-singular}
\end{defn}

The fact that the subsequential limiting metrics have an uncountable dense set of singular points in the supercritical phase is consistent with the work~\cite{ghpr-central-charge}, which observed an analogous phenomenon in their heuristic model of an LQG metric for $\cc \in (1,25)$.

\subsection{An axiomatic description of an LQG metric for \texorpdfstring{$\cc \in (-\infty,25)$}{matter central charge less than 25}}
    
Our first main result, which we prove in Section~\ref{sec-lfpp-local}, builds on the work of~\cite{dg-supercritical-lfpp} by showing that  subsequential limits of LFPP metrics satisfy a list of axioms that a $\cc$-LQG metric for $\cc \in (-\infty,25)$ would be expected to satisfy.  These two works together prove the existence of a metric associated to LQG for $\cc \in (-\infty,25)$.
To state these axioms, we will need the following elementary metric space definitions.

\begin{defn}
Let $(X,D)$ be a metric space. Note that, throughout this paper, we allow the distance between pairs of points with respect to a metric $D$ to be infinite.
\begin{itemize}
\item
For a curve $P : [a,b] \rta X$, the \emph{$D$-length} of $P$ is defined by 
\eqbn
\op{len}\left( P ; D  \right) := \sup_{T}
\sum_{i=1}^{\# T} D(P(t_i) , P(t_{i-1})) 
\eqen
where the supremum is over all partitions $T : a= t_0 < \dots < t_{\# T} = b$ of $[a,b]$. Note that the $D$-length of a curve may be infinite.
\item
\changes{We say $(X,D)$  is a \emph{length space} if the $D$-distance between any two points in $X$ is given by the infimum of the $D$-lengths of all paths in $X$ between the two points.}
\item
For $Y\subset X$, the \emph{internal metric of $D$ on $Y$} is defined by
\eqb \label{eqn-internal-def}
D(x,y ; Y)  := \inf_{P \subset Y} \op{len}\left(P ; D \right) ,\quad \forall x,y\in Y 
\eqe 
where the infimum is over all paths $P$ in $Y$ from $x$ to $y$. 
Note that $D(\cdot,\cdot ; Y)$ is a metric on $Y$, except that it is allowed to take infinite values.  
\item
If $\BB{A}$ is \changes{a Euclidean annulus}, we define the $D$-distance \emph{across} $\BB{A}$ as the distance between the inner and outer boundaries of $\BB{A}$, and the $D$-distance \emph{around} $\BB{A}$ as the infimum of the $D$-distances of closed paths that separate the inner and outer boundaries of $\BB{A}$.
\item \changes{
If $f:\BB{R} \rta \BB{C}$ is a continuous function, then we define the metric $e^f \cdot D$ as
\eqb
\label{eqn-metric-f}
(e^{f} \cdot D) (z,w) := \inf_{P : z\rta w} \int_0^{\op{len}(P ; D)} e^{f(P(t))} \,dt , \quad \forall z,w\in \BB C ,
\eqe
where the infimum in~\eqref{eqn-metric-f} is over all  paths $P$ from $z$ to $w$ parametrized by $D$-length.
}
\end{itemize}
\end{defn}

\changes{Also, we let $h_r(0)$ be the average of $h$ over the circle $\partial B_r(0)$ (see~\cite[Section 3.1]{shef-kpz} for a more detailed explanation of the \emph{circle average process} associated to $h$).}

\begin{defn}
Let $\mcl D'(\BB C)$ be the space of distributions (generalized functions) on $\BB C$, equipped with the usual weak topology.
For each $\xi > 0$, we define a \emph{weak LQG metric with parameter $\xi $} as a measurable function $h\mapsto D_h$ from $\mcl D'(\BB C)$ to the space of lower semicontinuous metrics on $\BB C$ such that the following is true whenever $h$ is a whole-plane GFF plus a continuous function.
\begin{enumerate}[I.] 
\item \textbf{Length space.} Almost surely, $(\BB C , D_h)$ is a length space. \label{item-metric-length}
\item \textbf{Locality.} For each deterministic open set  $U\subset\BB C$, the $D_h$-internal metric $D_h(\cdot,\cdot ; U)$ is determined a.s.\ by $h|_U$.  \label{item-metric-local}  
\item \textbf{Weyl scaling.} If $f : \BB C\rta \BB R$ is a continuous function, then a.s.\ $ D_{h+f} = e^{\xi f} \cdot D_h$. 
\label{item-metric-f}
\item \textbf{Translation invariance.} For each deterministic point $z \in \BB C$, a.s.\ $D_{h(\cdot + z)} = D_h(\cdot+ z , \cdot+z)$.  \label{item-metric-translate}
\item \textbf{Tightness across scales.} For each  $r>0$, there is a deterministic constant $\frk c_r$ such that, if $\BB{A}$ is a fixed annulus, the laws of the distances $\frk c_r^{-1} e^{-\xi h_r(0)} D_h ( \changes{\text{across $r \BB{A}$}})$ and $\frk c_r^{-1} e^{-\xi h_r(0)} D_h ( \changes{\text{around $r \BB{A}$}})$ and their inverses for  $r > 0$ \changes{are} tight. 
Moreover, there exists  \label{item-metric-coord} 
$\Lambda > 1$ such that for each $\delta \in (0,1)$, 
\eqb \label{eqn-scaling-constant}
\Lambda^{-1} \delta^\Lambda \leq \frac{\frk c_{\delta r}}{\frk c_r} \leq \Lambda \delta^{-\Lambda} ,\quad\forall r  \changes{\in \BB{Q} \cap (0,\infty)}.
\eqe
\end{enumerate}
\label{defn-weak-metric}
\end{defn}

In analogy with~\cite{lqg-metric-estimates} in the $\xi < \xicrit$ setting, we call the metric in Definition~\ref{defn-weak-metric} a \emph{weak} LQG metric because we expect the LQG metric to satisfy one additional property that we have not stipulated as an axiom.  Namely, if $h$ is a field on $U \subset \BB{C}$, $\phi: \wt U \rta U$ is a conformal mapping and $\wt h:= h \circ \phi + Q \log |\phi'|$, then the LQG metric $D_{\wt h}$ associated to $\wt h$ on $\wt U$ should equal the pullback by $\phi$ of the LQG metric $D_h$ associated to $h$ on $U$.  We call this property \emph{conformal covariance} of the metric.  The reason we have not included the conformal covariance property as an axiom in Definition~\ref{defn-weak-metric}  is that it is far from obvious that the property is satisfied by subsequential limits of LFPP.  The problem is that, if we fix a subsequence $\ep_n \rta 0$ for which the rescaled $\ep_n$-LFPP metrics converge, then applying a conformal mapping $\phi$ rescales space and therefore changes the LFPP parameters $\ep_n$ that index the subsequence. As in the $\xi < \xicrit$ phase, we expect that every weak LQG metric satisfies the conformal covariance property, \changes{and that one can deduce this property from the recent work~\cite{dg-uniqueness} proving the uniqueness of the metric defined in Definition~\ref{defn-weak-metric}.}
Axiom~\ref{item-metric-coord} is a weaker substitute for an  exact conformal \changes{covariance} property. (Axiom~\ref{item-metric-coord}  may be deduced from the conformal \changes{covariance} property combined with Axiom~\ref{item-metric-f}.)  Nonetheless, Axiom~\ref{item-metric-coord} can be applied in place of exact conformal covariance in many settings.  Essentially, Axiom~\ref{item-metric-coord}  allows us to compare distance quantities at the same Euclidean scale; see, e.g., Lemma~\ref{lem-tight} below.

Our first main result asserts that subsequential limits of LFPP metrics yield weak LQG metrics in the sense of Definition~\ref{defn-weak-metric}.

\begin{thm} \label{thm-lfpp-axioms}
For every sequence of values of $\ep$ tending to zero, there is a weak $\xi$-LQG metric $D$ (with the constants $\frk c_r$ given by~\eqref{eqn-defn-cr}) and a subsequence $\{\ep_n\}_{n\in\BB N}$ for which the following is true. Suppose that $h$ is a whole-plane GFF plus a bounded continuous function\changes{, and as in Definition~\ref{defn-weak-metric}, let $D_h = D(h)$.}
\begin{itemize}
    \item The re-scaled $\ep_n$-LFPP metrics $\frk a_{\ep_n}^{-1} D_h^{\ep_n}$ defined in~\eqref{eqn-defn-rescaled-LFPP} converge a.s.\ as $n \rta \infty$ to $D_h$ in the lower semicontinuous topology.
    \item 
Let $O,O'$ be \emph{rational circles}, i.e., circles with centers in $\BB{Q}^2$ and rational radii.  (We allow the radius to equal zero, in which case the circle is a rational point.) Then a.s.\ $\frk a_{\ep_n}^{-1} D_h^{\ep_n}(O,O') \rta D_h(O,O')$.
\end{itemize}
Moreover, the constant $\frk c_r$ in Axiom~\ref{item-metric-coord} corresponding to this weak $\xi$-LQG metric $D$ \changes{can be chosen to satisfy}
\eqb
\frk c_r := \lim_{\ep \rta 0} \frac{ r \frk a_{\ep/r}}{\frk a_{\ep}} \qquad \forall r \in \BB{Q}.
\label{eqn-defn-cr}
\eqe
\end{thm}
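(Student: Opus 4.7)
The plan is to use the tightness of $\{\frk a_\ep^{-1} D_h^\ep\}$ in the lower semicontinuous topology established in~\cite{dg-supercritical-lfpp}, extract (via Prokhorov and Skorohod) a subsequence $\ep_n \to 0$ along which the rescaled LFPP metrics converge a.s.\ to a limit $D_h$, and then verify for $D_h$ the five axioms of Definition~\ref{defn-weak-metric}. Before extracting, I would diagonalize over the countable set $\BB Q_{>0}$ to a sub-subsequence along which each ratio $r\frk a_{\ep_n/r}/\frk a_{\ep_n}$ converges to a finite positive limit---bounded two-sidedly by~\eqref{eqn-defn-Q(xi)}---which we declare to be $\frk c_r$ as in~\eqref{eqn-defn-cr}. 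To obtain the second bullet point, I would diagonalize further over the countably many pairs of rational circles, using tightness of $\frk a_{\ep_n}^{-1} D_h^{\ep_n}(O,O')$ to extract limits and then identifying each limit with $D_h(O,O')$ by combining lower semicontinuity with the existence of recovery sequences (property (b) in the definition of the topology) approximated onto $O \times O'$ using the compactness of rational circles.

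Axioms~\ref{item-metric-length}, \ref{item-metric-f}, and~\ref{item-metric-translate} pass to the limit by relatively direct arguments. Translation invariance is immediate since $h_\ep^*$ is a convolution with a translation-invariant kernel. For Weyl scaling, if $f$ is continuous then $(h+f)_\ep^* = h_\ep^* + f*p_{\ep^2/2}$ with $f*p_{\ep^2/2} \to f$ locally uniformly, and inspecting the path-integral definition of LFPP together with the lower-semicontinuous topology yields $D_{h+f} = e^{\xi f} \cdot D_h$ a.s. The length-space property is the most subtle of the three because of singular points: for any $z,w$ with $D_h(z,w) < \infty$ and any $\eta > 0$ I would produce an almost-geodesic of $D_h$-length at most $D_h(z,w)+\eta$ as an Arzel\`a--Ascoli limit (after reparameterization by arclength) of near-minimizers for $\frk a_{\ep_n}^{-1} D_h^{\ep_n}$.

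For Axiom~\ref{item-metric-coord}, the polynomial bound~\eqref{eqn-scaling-constant} follows from $\frk a_\ep = \ep^{1-\xi Q + o(1)}$, which forces $\frk c_r$ to lie in $[r^{\xi Q - o_r(1)}, r^{\xi Q + o_r(1)}]$ and hence to satisfy the two-sided polynomial bound for any $\Lambda$ strictly larger than $\xi Q$. The tightness of $\frk c_r^{-1} e^{-\xi h_r(0)} D_h(\text{across } \BB A)$ and of the around-distance at scale $r$ is obtained by pushing the corresponding LFPP tightness statement through the subsequential limit; that LFPP statement itself reduces to the scale-$1$ case via the exact spatial scaling of $D_h^\ep$ under $h\mapsto h(r\cdot)$, the spatial scaling $h(r\cdot) - h_r(0) \eqD h$ of the whole-plane GFF, and Weyl scaling. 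The definition~\eqref{eqn-defn-cr} of $\frk c_r$ is chosen precisely so that the scaling factors line up on the two sides of the limit.

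The main obstacle is Axiom~\ref{item-metric-local} (locality), because the mollification $h_\ep^*$ depends on $h$ globally. To handle it I would decompose the restriction of the whole-plane GFF to $U$ as $h|_U = \rng h^U + \frk h^U$, where $\rng h^U$ is a zero-boundary GFF on $U$ and $\frk h^U$ is a random distribution on $U$ harmonic in the interior; both are measurable functions of $h|_U$. For any compactly contained $V \subset U$ and $\ep$ small compared to $d(V,\partial U)$, I would show that $h_\ep^*$ and $(\rng h^U + \frk h^U)_\ep^*$ agree on $V$ up to an error that tends to $0$ in probability uniformly in $z \in V$; combined with Weyl scaling, this forces the subsequential limit $D_h(\cdot,\cdot;V)$ to be a measurable function of $h|_U$. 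Exhausting $U$ by such $V$'s and using the length-space property~\ref{item-metric-length} to pass from internal metrics on $V$ to the internal metric on $U$ then yields locality on all of $U$. The principal technical difficulty here is to control the heat-kernel tails against the Gaussian fluctuations of $h$ in $U^c$ uniformly in $\ep$ along the subsequence; I expect this can be done with standard Gaussian concentration combined with the polynomial control on $\frk c_r$ established above.
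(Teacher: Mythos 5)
Your treatment of Axiom~\ref{item-metric-local} contains the central gap, and it cascades into the rest of the argument. You propose to show that the mollifications $h_\ep^*$ and $(\rng h^U + \frk h^U)_\ep^*$ agree on $V$ up to a vanishing error and to conclude from this that the subsequential limit $D_h(\cdot,\cdot;V)$ is a measurable function of $h|_U$. This is where the argument fails: at best you obtain that the approximating metrics are (approximately) local, and hence that the limiting internal metric $D_h(\cdot,\cdot;V)$ is conditionally independent of $(h, D_h(\cdot,\cdot;\BB C\setminus \ol V))$ given $h|_{\ol V}$. That is only the weaker ``$\xi$-additive local metric'' property (Definitions~\ref{defn-local-metric}--\ref{defn-xi-additive}), not Axiom~\ref{item-metric-local} itself. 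The missing step is showing that the conditional law of $D_h$ given $h$ is a point mass, i.e., that $D_h$ is a.s.\ \emph{determined} by $h$; nothing in your argument rules out a subsequential limit carrying additional randomness independent of $h$. The paper devotes most of Sections~\ref{sec-geo-behavior}--\ref{sec-measurability} to this determination step. Its engine is an Efron--Stein variance decomposition over squares of a randomly shifted grid (as in~\cite{local-metrics}), and the key new difficulty in the supercritical phase---which your proposal does not acknowledge---is that near-geodesics could a priori spend constant-order $D$-length inside arbitrarily small Euclidean neighborhoods (near $\alpha$-thick points with $\alpha$ close to $Q$). Overcoming this requires Lemmas~\ref{lem-thickbound} and~\ref{lem-geo-doesnt-dawdle}, which control the thickness of points visited by almost-geodesics and bound the length they spend in small balls. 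Without this input, the Efron--Stein error terms need not vanish and the variance does not collapse, so Axiom~\ref{item-metric-local} remains unproved.

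There is a second, related problem in your overall architecture. You want to invoke Skorohod at the outset to extract a.s.\ convergence. Skorohod only produces a coupling on an auxiliary probability space, and in such a coupling there is no reason for the limit to be a \emph{measurable function of $h$}---which the theorem requires, since a weak $\xi$-LQG metric is by definition a map $h\mapsto D_h$ from $\mcl D'(\BB C)$ to metrics. The paper's order of operations avoids this: first prove convergence in law; then establish that $D_h$ is a.s.\ determined by $h$ (via Axiom~\ref{item-metric-local}); then apply Lemma~\ref{lem-prob} (from~\cite{ss-contour}) to upgrade to convergence in probability; finally extract a further subsequence for a.s.\ convergence. Determination by $h$ is thus a prerequisite for the a.s.\ convergence you are trying to obtain directly, not a consequence that can be deferred. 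The remainder of your proposal (Weyl scaling, translation invariance, Axiom~\ref{item-metric-coord} via $\frk a_\ep = \ep^{1-\xi Q + o_\ep(1)}$, diagonalization over rational circles) is in the right spirit and matches the paper's plan at a high level, but it cannot be assembled in the order you propose without the determination-by-$h$ step, which is the genuinely hard part of the theorem.
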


We observe that, in particular, Theorem~\ref{thm-lfpp-axioms} proves the existence of a metric satisfying the axioms in Definition~\ref{defn-weak-metric} in the critical case $\cc = 1$; this special value of $\cc$ was not treated in the previous works~\cite{lqg-metric-estimates, gm-confluence, gm-uniqueness, gm-coord-change} on the LQG metric. 

\changes{We note that the reason we consider distances between rational circles in Theorem~\ref{thm-lfpp-axioms}---and not just distances between rational points---is that if a length metric $D$ is lower semicontinuous, then $D$ is \emph{not} determined by  the $D$-distances between all pairs of rational points (as it would be in the continuous setting), but $D$ \emph{is}  determined by  the $D$-distances between all pairs of rational circles.}\footnote{\label{footnote-rational}\changes{Given a pair of points $z,w$, let $O^n_z$ and $O^n_w$ are sequences of rational circles surrounding $z$ and $w$ and shrinking to $z$ and $w$, respectively. Then $D(z,w) \geq D(O^n_z,O^n_w)$ since $D$ is a length metric, and $
D(z,w) \leq \liminf_{n \rta \infty} D(O^n_z,O^n_w)$ since $D$ is lower semicontinuous; hence, $
D(z,w) = \liminf_{n \rta \infty} D(O^n_z,O^n_w)$.}}

We prove Theorem~\ref{thm-lfpp-axioms} in Section~\ref{sec-lfpp-local} of this paper.  The proof of Theorem~\ref{thm-lfpp-axioms} uses many of the techniques in the works~\cite{df-lqg-metric, lqg-metric-estimates, local-metrics}, but it also requires some nontrivial ideas due to the unique challenges that arise in the $\cc \in (1,25)$ regime. We explain this further in Section~\ref{sec-thm-outline}, which outlines the proof of Theorem~\ref{thm-lfpp-axioms}  in detail.  One consequence of our analysis that is worth highlighting is Lemma~\ref{lem-avoids-thick}, which asserts that geodesics  a.s.\ do not contain any thick points with thickness $>\zeta$ for some deterministic (but nonexplicit) $\zeta < Q$. Roughly speaking, this means that geodesics do not spend too much time traversing small Euclidean neighborhoods.

\subsection{Properties of weak LQG metrics for \texorpdfstring{$\cc \in (-\infty,25)$}{matter central charge less than 25}}
\label{sec-intro-properties}

In Section~\ref{sec-properties}, we demonstrate the power of the axioms stated in Definition~\ref{defn-weak-metric} by proving properties of weak LQG metrics.  We show that many of the properties that characterize LQG metrics for $\changes{\cc \in (-\infty,1)}$ extend to the range $\cc \in (-\infty,25)$.  We also demonstrate that weak LQG metrics for $\cc \in (1,25)$ have several distinguishing features that contrast sharply with the $\cc \in (-\infty,1)$ regime.
In all the results that we state here and in Section~\ref{sec-intro-kpz}, we let $h$ be a whole-plane GFF plus a continuous function (unless specified otherwise), and we let $D_h$ be the metric associated to $h$ by a weak LQG metric $h \mapsto D_h$.  

First, we prove the following estimate for distances between sets, which extends~\cite[Proposition 3.1]{lqg-metric-estimates} to general $\xi > 0$.

\begin{prop}[Distance between sets] \label{prop-two-set-dist}
Let $h$ be a whole-plane GFF normalized so that $h_1(0) = 0$.
Let $U \subset \BB C$ be an open set (possibly all of $\BB C$) and let $K_1,K_2\subset U$ be connected, disjoint compact sets which are not singletons. 
For each $\BB r  >0$, it holds with superpolynomially high probability as $A\rta \infty$, at a rate which is uniform in the choice of $\BB r$, that 
\eqb \label{eqn-two-set-dist}
 A^{-1}\frk c_{\BB r} e^{\xi h_{\BB r}(0)} \leq D_h(\BB r K_1,\BB r K_2 ; \BB r U) \leq A \frk c_{\BB r} e^{\xi h_{\BB r}(0)} .  
\eqe
\end{prop}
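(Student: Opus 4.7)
This statement fits the template of \cite[Proposition 3.1]{lqg-metric-estimates}: both bounds in~\eqref{eqn-two-set-dist} will be reduced to applications of Axiom~\ref{item-metric-coord} to Euclidean annuli of scale $\BB r$ located by Axiom~\ref{item-metric-translate}. The only genuinely new ingredient compared to the $\xi<\xicrit$ regime is that the axioms in Definition~\ref{defn-weak-metric} provide only tightness (not polynomial tail bounds) for the normalized annulus distances, so promoting the bounds to hold with superpolynomially high probability in $A$ requires a multi-scale argument exploiting near-independence of the whole-plane GFF on well-separated scales.

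\textbf{Upper bound.} Since $K_1,K_2$ are compact, connected, non-singleton subsets of the open set $U$, fix at scale $1$ a chain of overlapping closed disks $\ol B_\rho(z_1),\dots,\ol B_\rho(z_N)$ with $\ol B_{2\rho}(z_i)\subset U$, $B_\rho(z_1)\cap K_1\neq\emptyset$, $B_\rho(z_N)\cap K_2\neq\emptyset$, and consecutive disks overlapping. Scaled by $\BB r$, closed loops in each annulus $\BB r(B_{2\rho}(z_i)\setminus B_\rho(z_i))$ can be concatenated using the overlaps to produce a path in $\BB r U$ from $\BB r K_1$ to $\BB r K_2$ whose $D_h$-length is at most the sum of the $D_h$-distances around the $N$ annuli. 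By Axioms~\ref{item-metric-coord}~and~\ref{item-metric-translate} each such distance, divided by $\frk c_{\BB r}e^{\xi h_{\BB r}(\BB r z_i)}$, is tight uniformly in $\BB r$, and the difference $h_{\BB r}(\BB r z_i)-h_{\BB r}(0)$ has $\BB r$-uniform Gaussian tails. To upgrade tightness to superpolynomially high probability, I replace each annulus by a nested family of $\lceil K\log A\rceil$ concentric sub-annuli and use \emph{any} one of them to build the local loop around $z_i$; by the Markov decomposition of the GFF these sub-scale events are approximately independent, so the probability that all fail is superpolynomially small in $A$. The polynomial scaling of $\frk c_r$ in~\eqref{eqn-scaling-constant} is absorbed into powers of $A$.

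\textbf{Lower bound.} Dually, using that $K_1,K_2$ are disjoint and $K_1$ is compact and connected, fix at scale $1$ an annulus $\BB A\subset U$ separating $K_1$ from $K_2$. Any path from $\BB r K_1$ to $\BB r K_2$ in $\BB r U$ must cross $\BB r\BB A$, so $D_h(\BB r K_1,\BB r K_2;\BB r U)$ is bounded below by the $D_h$-distance across $\BB r\BB A$. Axioms~\ref{item-metric-coord}~and~\ref{item-metric-translate} give the desired lower bound with probability tending to $1$ as $A\rta\infty$; nesting $\lceil K\log A\rceil$ disjoint separating sub-annuli inside $\BB A$ and using near-independence (given a suitable harmonic part) of the ``distance-across-is-not-too-small'' events then yields superpolynomial decay, with the factors $\frk c_{\delta \BB r}/\frk c_{\BB r}$ again absorbed via~\eqref{eqn-scaling-constant}.

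\textbf{Main obstacle.} The principal difficulty is the multi-scale independence step: the whole-plane GFF must be decomposed via its Markov property into pieces that determine distances inside each sub-annulus and are approximately independent across well-separated sub-annuli, uniformly in $\BB r$, so that tightness at each individual scale genuinely boosts to superpolynomial probability after combining $\lceil K\log A\rceil$ scales. A secondary subtlety specific to the $\cc\in(1,25)$ regime is ensuring that the upper-bound chain path does not hit a singular point; this is free, because on the good event the concatenated path has finite $D_h$-length and hence by Definition~\ref{defn-singular} automatically avoids singular points.
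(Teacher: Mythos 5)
Your high-level plan is the same as the paper's: the proof there defers entirely to the argument of Proposition 3.1 of the subcritical reference, substituting Lemma~\ref{lem-iterate} (the multi-scale annulus iteration) for the corresponding input. The chain of overlapping annuli for the upper bound, the separating annulus for the lower bound, and the observation that a path of finite $D_h$-length automatically avoids singular points (so no new issue arises in the supercritical regime) are all correct.

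The concrete gap is in your boosting step. You claim that using $\lceil K\log A\rceil$ near-independent sub-annulus events, each with per-event failure probability bounded away from $1$, makes the probability that all fail superpolynomially small in $A$. With $K$ fixed this is only $\approx A^{K\log q}$ for $q$ the per-event failure probability — a fixed polynomial rate, not superpolynomial. And you cannot simply let $K\to\infty$ as the target exponent grows, because then the smallest sub-annulus has Euclidean scale $\sim \BB r\, 4^{-K\log A}$, and the resulting loss in $\frk c_r/\frk c_{\BB r}$ (polynomial in $A^{K}$ via~\eqref{eqn-scaling-constant}) and in $e^{\xi(h_r(\cdot)-h_{\BB r}(0))}$ (Gaussian of variance $\asymp K\log A$) eventually overwhelms the $A^{\pm 1}$ window in~\eqref{eqn-two-set-dist}. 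The mechanism the paper actually uses, packaged in Lemma~\ref{lem-iterate} via Lemma~\ref{lem-annulus-iterate}, is different: the scale range $[\ep^{1+m}\BB r,\ep\BB r]$, and hence the number of scales $\asymp m\log\ep^{-1}$, is kept \emph{fixed} in the target exponent $q$, while the per-scale event $E_r(z;C)$ is \emph{strengthened} — $C = C(m,q)$ is chosen large enough (using tightness from Axiom~\ref{item-metric-coord}) that $\BB P[E_r(z;C)]$ exceeds the threshold $p(a,b)$ required by Lemma~\ref{lem-annulus-iterate} for decay rate $a=a(q)$. The resulting $q$-dependent constant $C$ is then absorbed into the implicit constant in the $O_A(A^{-q})$ bound. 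Your argument should be restructured around enlarging $C$, not around increasing the number of scales, to close the gap.
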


Second, we strengthen the bounds~\eqref{eqn-scaling-constant} for the scaling constant $\frk c_r$ associated to a weak LQG metric.

\begin{prop} \label{prop-metric-scaling}
Let $Q = Q(\xi)$ be as in~\eqref{eqn-defn-Q(xi)}. 
Then for $r>0$, the scaling constants $\frk c_r$ in Axiom~\ref{item-metric-coord} satisfy
\eqb \label{eqn-metric-scaling}
\frac{\frk c_{\delta  r } }{ \frk c_{r} } = \delta^{\xi Q + o_\delta(1)} \quad \text{as $\delta \rta 0$},
\eqe
at a rate which is uniform over all $r>0$. 
\end{prop}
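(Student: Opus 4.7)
The plan is to exploit the explicit formula $\frk c_r = \lim_{\ep \to 0} r \frk a_{\ep/r}/\frk a_\ep$ for rational $r$ given in Theorem~\ref{thm-lfpp-axioms}, together with the asymptotic~\eqref{eqn-defn-Q(xi)}. The first step is to reduce the ratio to a quantity depending only on $\delta$: for $r, \delta r \in \BB{Q}_{>0}$, substituting the definition and making the change of variables $\eta = \ep/r$ yields
\[
\frac{\frk c_{\delta r}}{\frk c_r} = \lim_{\ep \to 0} \frac{\delta \frk a_{\ep/(\delta r)}}{\frk a_{\ep/r}} = \lim_{\eta \to 0} \frac{\delta \frk a_{\eta/\delta}}{\frk a_\eta} = \frk c_\delta,
\]
where the last equality uses $\frk c_1 = 1$. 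Hence $\frk c$ is multiplicative on $\BB{Q}_{>0}$, and it suffices to prove $\frk c_\delta = \delta^{\xi Q + o_\delta(1)}$ for rational $\delta$; the uniformity in $r$ will then be automatic, with a density argument using the tightness half of Axiom~\ref{item-metric-coord} handling irrational $r$.

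To extract the power law, write $\log \frk a_\ep = (1-\xi Q) \log \ep + \alpha(\ep)$ with $\alpha(\ep)/\log \ep \to 0$ by~\eqref{eqn-defn-Q(xi)}. Substituting into the formula for $\frk c_\delta$ and cancelling the leading contribution gives
\[
\log \frk c_\delta = \xi Q \log \delta + K(\delta), \qquad K(\delta) := \lim_{\ep \to 0}\bigl[\alpha(\ep/\delta) - \alpha(\ep)\bigr].
\]
The limit $K(\delta)$ exists for each rational $\delta$ because $\frk c_\delta$ does. The remaining task is to show $K(\delta)/\log \delta \to 0$ as $\delta \to 0$, which I expect to be the main obstacle since it is a statement about a correction term beyond the first-order asymptotic.

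The argument I have in mind is an iteration. Set $g(x) = \alpha(e^{-x})$ and $d = \log(1/\delta) > 0$, so that $g(x)/x \to 0$ as $x \to \infty$ and $K(\delta) = \lim_{x\to\infty}\bigl[g(x+d) - g(x)\bigr]$. For any $\epsilon > 0$, choose $X_0$ with $|g(x+d)-g(x)-K(\delta)| < \epsilon$ for $x \geq X_0$; telescoping then gives $|g(x+nd) - g(x) - nK(\delta)| < n\epsilon$ for every $n \geq 1$. Combining this with the bound $|g(y)| \leq \mu y$ (valid for $y$ large, any $\mu > 0$) applied at $y = x + nd$, then dividing by $n$ and sending $n\to\infty$, produces $|K(\delta)| \leq \mu d + \epsilon$. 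Since $\mu, \epsilon > 0$ are arbitrary, $K(\delta) = 0$, so $\frk c_\delta = \delta^{\xi Q}$ for every rational $\delta$. This proves the proposition on rational arguments, and approximating a general $r > 0$ by rationals from above and below, combined with the scaling bound~\eqref{eqn-scaling-constant}, extends the conclusion to all $r > 0$, with any residual discrepancy at irrational $r$ absorbed into the $o_\delta(1)$ correction.
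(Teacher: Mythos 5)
Your approach has a genuine gap: it proves the proposition only for the specific weak LQG metrics constructed in Theorem~\ref{thm-lfpp-axioms}, not for arbitrary weak LQG metrics as the statement requires. Your entire argument rests on the explicit formula $\frk c_r = \lim_{\ep\to 0} r\frk a_{\ep/r}/\frk a_\ep$ from~\eqref{eqn-defn-cr}, but that formula is a feature of the LFPP construction, not a consequence of the axioms in Definition~\ref{defn-weak-metric}. Proposition~\ref{prop-metric-scaling} is stated in Section~\ref{sec-properties} for any map $h\mapsto D_h$ satisfying the five axioms, precisely so that the asymptotics of $\frk c_r$ identify the parameter $Q$ for \emph{every} weak LQG metric (in particular, for a hypothetical weak metric not a priori known to be an LFPP limit, as is needed in the uniqueness program). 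The paper even flags your observation explicitly: after stating the proposition in the introduction it remarks that ``In the special case of subsequential limits of rescaled LFPP metrics, Proposition~\ref{prop-metric-scaling} follows immediately from~\eqref{eqn-defn-Q(xi)} and~\eqref{eqn-defn-cr}.'' The actual proof instead adapts the argument of~\cite[Theorem 1.5]{lqg-metric-estimates}, which derives the exponent from Axioms~\ref{item-metric-length}--\ref{item-metric-coord} alone by comparing $D_h$ to a coupled discrete LFPP-type metric across dyadic scales; the only modification needed in the supercritical regime is replacing the justification of one intermediate estimate with~\cite[Lemma 2.11]{dg-supercritical-lfpp}.

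A secondary remark on your reduction. Within the LFPP case your telescoping-subadditivity argument (if $g(x)/x\to 0$ and $\lim_{x\to\infty}[g(x+d)-g(x)]$ exists, then that limit vanishes) is valid and actually yields the stronger exact identity $\frk c_\delta = \delta^{\xi Q}$ for rational $\delta$. But that conclusion hinges on the limit in~\eqref{eqn-defn-cr} genuinely existing as a full $\ep\to 0$ limit rather than a subsequential one; the proof of Proposition~\ref{prop-check-3.5-axioms} is somewhat noncommittal on this point (it refers to convergence ``along a subsequence''), so you should not present your claimed existence of $K(\delta)$ as free of charge. Also note a small sign bookkeeping issue: with $\eta=e^{-x}$ and $d=\log(1/\delta)$, one has $\alpha(\eta/\delta)=g(x-d)$, so $K(\delta)=\lim_{x\to\infty}[g(x-d)-g(x)]$, the negative of what you wrote; this does not affect the conclusion $K(\delta)=0$.
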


In the special case of subsequential limits of rescaled LFPP metrics,  Proposition~\ref{prop-metric-scaling} follows immediately from~\eqref{eqn-defn-Q(xi)} and~\eqref{eqn-defn-cr}.  Note that Proposition~\ref{prop-metric-scaling} associates \emph{every} weak LQG metric with parameter $\xi$ to the value of background charge $Q(\xi)$ defined in~\eqref{eqn-defn-Q(xi)}.  We already described this correspondence in the setting of LFPP metrics.  In particular, the phases $\xi \in (0,\xicrit)$ and $\xi > \xicrit$ correspond to the ranges $\changes{\cc \in (-\infty,1)}$ and $\cc \in (1,25)$, respectively.

Next, it was proved in~\cite[Theorem 1.7]{lqg-metric-estimates} that weak LQG metrics for $\xi < \xicrit$ satisfy a H\"older continuity property.  It was shown in~\cite[Proposition 5.20]{dg-supercritical-lfpp} that  subsequential limits of the rescaled $\ep$-LFPP metrics~\eqref{eqn-defn-rescaled-LFPP} satisfy a weaker version of this property. We can generalize the latter result to weak LQG metrics for all $\xi > 0$.

\begin{prop}[H\"older continuity] \label{prop-holder}
Almost surely, the identity map from $\BB{C}$, equipped with the metric $D_h$, to $\BB{C}$, equipped with the Euclidean metric, is locally H\"older continuous with any exponent less than $[\xi(Q+2)]^{-1}$.
\end{prop}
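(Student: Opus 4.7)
The plan is to lower-bound $D_h(z,w)$ by a power of $|z-w|$. Any continuous path from $z$ to $w$ must cross every Euclidean annulus of radius less than $|z-w|$ centered at $z$, so
\[D_h(z,w) \geq D_h\bigl(\partial B_{\rho/4}(z^*),\, \partial B_{\rho/2}(z^*)\bigr)\]
whenever $z \in B_{\rho/4}(z^*)$ and $w \notin B_{\rho/2}(z^*)$. I control the right-hand side via Proposition~\ref{prop-two-set-dist} and union-bound across dyadic $\rho$ and a spatial net of centers. Fix $\chi < [\xi(Q+2)]^{-1}$ and pick $\delta,\epsilon > 0$ so small that $\chi < [\xi(Q+2+\epsilon) + \delta]^{-1}$. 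Since the continuous function in $h$ is locally bounded, Axiom~\ref{item-metric-f} (Weyl scaling) reduces the problem to the case where $h$ is a whole-plane GFF normalized so $h_1(0) = 0$.

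The core step is a Borel--Cantelli argument across dyadic scales $r_k := 2^{-k}$. For each $k$, take a covering net $N_k \subset [-10,10]^2$ such that every point of $[-10,10]^2$ lies within $r_k/8$ of some point of $N_k$, with $|N_k| = O(r_k^{-2})$. For fixed $z \in N_k$, translation invariance of the whole-plane GFF modulo additive constants, combined with Axioms~\ref{item-metric-translate} and~\ref{item-metric-f} to absorb the shift $h_1(z)$ back into the normalization, lets me apply Proposition~\ref{prop-two-set-dist} to the translated, re-centered field with $K_1 = \ol{B_{1/4}(0)}$, $K_2 = \partial B_{1/2}(0)$, $U = \BB C$, $\BB r = r_k$, and constant $A = r_k^{-\delta}$. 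This yields
\[D_h\bigl(\partial B_{r_k/4}(z),\, \partial B_{r_k/2}(z)\bigr) \geq r_k^{\delta}\, \frk c_{r_k}\, e^{\xi h_{r_k}(z)}\]
outside an event of probability at most $C_p r_k^{p\delta}$, for any prescribed $p$. Choosing $p$ large depending on $\delta$ and union-bounding over $z \in N_k$ and $k$ makes the total failure summable. Separately, $h_{r_k}(z)$ is centered Gaussian with variance $\log r_k^{-1} + O(1)$, so the Gaussian tail gives that the event $h_{r_k}(z) < -(2+\epsilon)\log r_k^{-1}$ has probability at most $r_k^{(2+\epsilon)^2/2}$, also summable after union-bounding over $N_k$ and $k$. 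Combined with Proposition~\ref{prop-metric-scaling} ($\frk c_{r_k} = r_k^{\xi Q + o_k(1)}$), Borel--Cantelli guarantees that almost surely, for all $k$ larger than some random $K(h)$ and all $z \in N_k$,
\[D_h\bigl(\partial B_{r_k/4}(z),\, \partial B_{r_k/2}(z)\bigr) \geq r_k^{\xi(Q+2+\epsilon) + \delta + o_k(1)}.\]

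To pass from net centers to arbitrary pairs, for $z,w \in [-9,9]^2$ with $r := |z-w|$ small I pick $k$ with $r/4 \leq r_k < r/2$ and $z^* \in N_k$ with $|z-z^*| \leq r_k/8 < r_k/4$; then $|w - z^*| \geq r - r_k/8 > r_k/2$, so any $D_h$-path from $z$ to $w$ crosses the annulus around $z^*$, and the lower bound above forces $D_h(z,w) \geq r^{\xi(Q+2+\epsilon)+\delta + o(1)}$. Rearranging gives $|z-w| \leq C_h\, D_h(z,w)^{\chi}$ for all such $z,w$, which is the desired local Hölder bound. The main technical subtlety is balancing the (superpolynomial but unspecified-rate) decay in Proposition~\ref{prop-two-set-dist} against the union bound over the $O(r_k^{-2})$ net points per scale, which is resolved by choosing the polynomial rate $p$ large enough relative to $\delta$; a minor secondary point is the renormalization of the GFF when translating centers, handled via Axiom~\ref{item-metric-f} applied to the random constant $h_1(z)$.
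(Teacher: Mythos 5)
Your proposal is correct and follows essentially the same route as the paper's proof: lower-bound distances across small annuli centered at a net of points via Proposition~\ref{prop-two-set-dist} applied at polynomial tolerance $A = r_k^{-\delta}$, control the circle averages via a Gaussian tail bound, convert $\frk c_{r_k}$ to $r_k^{\xi Q + o(1)}$ via Proposition~\ref{prop-metric-scaling}, and combine via a union bound and Borel--Cantelli before using an annulus-separation argument for arbitrary pairs. The paper packages these ingredients slightly differently---it works with dyadic squares and the square annuli of Lemma~\ref{lem-defn-sq-annulus} rather than round annuli on a net, and factors the distance estimates through Lemma~\ref{lem-ann-hash-dist} in order to prove the more quantitative, scale-uniform Proposition~\ref{prop-holder-r}---but the logical content and the exponent bookkeeping (yielding exponent $1/(\xi(Q+2+\ep)+\delta)$, hence anything below $[\xi(Q+2)]^{-1}$) are the same.
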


In the $\changes{\cc \in (-\infty,1)}$ regime,~\cite[Theorem 1.7]{lqg-metric-estimates} asserts that the inverse map of the map in Proposition~\ref{prop-holder} is a.s.\ locally H\"older continuous with any exponent smaller than $\xi(Q-2)$.  For $\cc \in (1,25)$, however, this inverse map is not continuous, since a.s.\ the metric $D_h$ has \emph{singular points} (Definition~\ref{defn-singular}).
The following proposition characterizes the set of singular points of the metric $D_h$ when $Q<2$.

\begin{prop}
\label{prop-singular}
Almost surely, the following is true.
\begin{enumerate}[(a)]
    \item 
Every point $z \in \BB{C}$ for which 
\eqb
\limsup_{r \rta 0} \frac{h_r(z)}{\log{r^{-1}}}
\label{eqn-limsup-Q}
\eqe
is greater than $Q$ is a singular point.  In particular, the set of singular points is dense when $Q<2$ (i.e., $\xi > \xicrit$).
\item Conversely,   every point $z \in \BB{C}$ for which~\eqref{eqn-limsup-Q} is less than $Q$ is not a singular point. 
\end{enumerate}
\end{prop}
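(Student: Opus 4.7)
The overall plan combines the two-sided bounds of Proposition~\ref{prop-two-set-dist} with the scaling relation $\frk c_r = r^{\xi Q + o(1)}$ of Proposition~\ref{prop-metric-scaling}: together these show that, with high probability, the $D_h$-distance across the dyadic annulus $A_n(z) := B_{2^{-n}}(z) \setminus B_{2^{-n-1}}(z)$ is of order $\frk c_{2^{-n}} e^{\xi h_{2^{-n}}(z)} = 2^{n\xi(\alpha_n(z) - Q) + o(n)}$, where $\alpha_n(z) := h_{2^{-n}}(z)/\log 2^n$ is the effective thickness at scale $2^{-n}$. Thus crossing an annulus contributes a divergent amount to $D_h$-distances whenever $\alpha_n(z) > Q$ and a summable amount when $\alpha_n(z) < Q$, which is exactly the dichotomy the proposition records.

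For part (a), fix $\beta$ with $Q < \beta < \limsup_{r\to 0} h_r(z)/\log r^{-1}$ and set $A_n := 2^{\delta n}$ for a small $\delta \in (0, \xi(\beta - Q))$. I apply the lower bound in Proposition~\ref{prop-two-set-dist} across $A_n(z')$ for every $z'$ in a $2^{-n-10}$-grid of a large Euclidean ball and every $n \geq 1$; since the tails in $A$ are superpolynomial uniformly in the scale, they dominate the $O(4^n)$ grid cardinality, so Borel--Cantelli yields an a.s.\ event on which, for all large $n$ and all grid points $z'$,
\[
D_h(\text{across } A_n(z')) \geq A_n^{-1}\, \frk c_{2^{-n}}\, e^{\xi h_{2^{-n}}(z')}.
\]
H\"older regularity of GFF circle averages in the center gives $h_{2^{-n}}(z') = h_{2^{-n}}(z) + O(1)$ for the closest grid point $z'$, and $|z - z'| \leq 2^{-n-10}$ ensures $A_n(z')$ topologically surrounds $z$. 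Along the infinite subsequence of scales at which $\alpha_n(z) \geq \beta$ (found by rounding to the nearest dyadic scale, using continuity of circle averages in the radius), we therefore have $D_h(\text{across } A_n(z')) \geq 2^{n(\xi(\beta - Q) - \delta) - o(n)} \to \infty$. Any path from $z$ to any $w$ with $|w-z|$ bounded away from zero must cross all but finitely many such annuli, so its $D_h$-length is infinite, proving $D_h(z, w) = \infty$. Density of such thick points when $Q < 2$ is a classical GFF fact: the set of $\alpha$-thick points for $\alpha \in (Q, 2)$ has positive Hausdorff dimension $2 - \alpha^2/2$ and hence is a.s.\ dense.

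For part (b), fix $\beta'$ with $\limsup_{r\to 0} h_r(z)/\log r^{-1} < \beta' < Q$ and set $A_n = 2^{\delta n}$ with $\delta \in (0, \xi(Q - \beta'))$. The dual union-bound argument using the upper bound in Proposition~\ref{prop-two-set-dist} (together with an analogous bound for the around distance, which is controlled at the same rate via the tightness stipulated in Axiom~\ref{item-metric-coord}) yields an a.s.\ event on which for all $n$ large,
\[
D_h(\text{across } A_n(z)) + D_h(\text{around } A_n(z)) \leq 2^{-n(\xi(Q - \beta') - \delta) + o(n)},
\]
which is summable in $n$. Combining across and around gives a summable bound on the $D_h$-diameter of each $\overline{A_n(z)}$, hence the $D_h$-diameter of $\overline{B_r(z)} \setminus \{z\}$ tends to $0$ as $r \to 0$. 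Picking any $w_0 \in \overline{B_r(z)} \setminus \{z\}$ for some small $r$, together with a Euclidean sequence $w_k \to z$ in $\overline{B_r(z)} \setminus \{z\}$, we have $D_h(w_0, w_k)$ uniformly bounded by this diameter, and the lower semicontinuity of $D_h: \BB C \times \BB C \to \BB R \cup \{+\infty\}$ yields $D_h(w_0, z) \leq \liminf_k D_h(w_0, w_k) < \infty$, so $z$ is not singular.

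The main obstacle will be executing the grid approximation for the random point $z$: one must simultaneously ensure that (i) the annulus $A_n(z')$ topologically surrounds $z$ and (ii) $h_{2^{-n}}(z') = h_{2^{-n}}(z) + O(1)$, both of which follow from standard H\"older continuity of the GFF circle average in its center at a fixed scale. A subsidiary technical point in part (b) is that the endpoints of geodesics across successive annuli need not match up, which is why one needs the around distance in addition to the across distance to control the $D_h$-diameter of each annulus; the finiteness conclusion is then extracted via lower semicontinuity of $D_h$ rather than through an explicit curve construction.
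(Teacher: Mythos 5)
Your strategy tracks the paper's closely: both use the two-sided distance estimates of Proposition~\ref{prop-two-set-dist} plus the scaling $\frk c_r = r^{\xi Q + o(1)}$, discretize over a grid, take a union bound and then Borel--Cantelli, and finally sum across scales. Two remarks, the second of which is a real gap.

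In part~(a) your argument is essentially the paper's. One small imprecision: the comparison between $h_{2^{-n}}(z')$ (grid center) and $h_{2^{-n}}(z)$ is not $O(1)$ uniformly in $n$; the circle-average H\"older estimate (as in Lemma~\ref{lem-limsup-Q}) only gives a $o(n)$ error, i.e.\ $o(1)$ after dividing by $\log r^{-1}$. This does not affect the conclusion, since the exponent comparison you need only requires a $o(n)$ error.

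In part~(b) there is a genuine gap in the step ``Combining across and around gives a summable bound on the $D_h$-diameter of each $\overline{A_n(z)}$.'' The across-distance and around-distance of an annulus do not control the $D_h$-diameter of the annulus as a set. Indeed, once part~(a) is established, the set of singular points is dense when $Q<2$, so \emph{every} open annulus $A_n(z)$ has infinite $D_h$-diameter; in particular the claim ``the $D_h$-diameter of $\overline{B_r(z)}\setminus\{z\}$ tends to $0$'' is never true in the regime where the proposition is interesting. What the across/around estimates actually control is the $D_h$-diameter of an explicit \emph{connected one-dimensional network} inside the annulus (a separating loop joined to a crossing arc), and these networks at successive scales must be made to intersect so that their total $D_h$-length can be summed. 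The paper does this with the ``hash'' construction (Definition~\ref{defn-hash} and Lemma~\ref{lem-ann-hash-geometry}): hashes at successive dyadic scales are forced to intersect, so the union $H_S$ of all descendant hashes is connected with summable $D_h$-diameter, and $z$ lies in $\overline{H_S}$. Then lower semicontinuity (Lemma~\ref{lem-closure}) finishes, exactly as in your last step. Your concluding lower-semicontinuity argument is fine once you replace $\overline{B_r(z)}\setminus\{z\}$ by such a network; without that replacement the argument does not close.
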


\changes{We note that we do not determine whether points $z$ for which~\eqref{eqn-limsup-Q} is equal to $Q$ are singular.  In particular,} our estimates are not precise enough to determine whether a weak LQG metric with $\cc = 1$ has singular points.

Next, we show that, a.s.\ on the complement of the set of singular points, the metric $D_h$ is complete and finite-valued, every pair of points can be joined by a geodesic, and the set of rational points are $D_h$-dense.

\begin{prop}\label{prop-complete}
Almost surely, the metric $D_h$ is complete and finite-valued on $\BB{C} \backslash \{\text{singular points}\}$, and every pair of points in  $\BB{C} \backslash \{\text{singular points}\}$ can be joined by a geodesic.
\end{prop}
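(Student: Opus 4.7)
The plan is to establish, in order, the three assertions of Proposition~\ref{prop-complete}: finiteness of $D_h$, completeness, and geodesic existence on $\BB{C} \backslash \{\text{singular points}\}$. The main tools are Axiom~\ref{item-metric-length} (length space), the lower semicontinuity of $D_h$, the two-set distance estimate (Proposition~\ref{prop-two-set-dist}), the scaling relation (Proposition~\ref{prop-metric-scaling}), and the H\"older continuity estimate (Proposition~\ref{prop-holder}).

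For finite-valuedness, let $z, z' \in \BB{C} \backslash \{\text{singular}\}$. Applying Proposition~\ref{prop-two-set-dist} to the countable collection of all pairs of disjoint closed Euclidean balls with rational centers and rational radii, I obtain the a.s.\ event on which $D_h(\partial B, \partial B') < \infty$ for every such pair $(B, B')$. I then choose small rational balls $B \ni z$, $B' \ni z'$ with $B \cap B' = \emptyset$, and small enough that some $w$ with $D_h(z, w) < \infty$ (which exists by non-singularity of $z$) lies outside $B$. By Axiom~\ref{item-metric-length}, some curve from $z$ to $w$ has $D_h$-length at most $D_h(z, w) + 1$, and this curve must cross $\partial B$, so $D_h(z, \partial B) < \infty$. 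The same argument applies to $z'$, and the triangle inequality yields $D_h(z, z') < \infty$.

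For completeness, let $(z_n)$ be a $D_h$-Cauchy sequence in $\BB{C} \backslash \{\text{singular}\}$. Fix $N$ large and, by Proposition~\ref{prop-holder}, a $D_h$-neighborhood $V$ of $z_N$ on which the identity $(V, D_h) \rta (V, |\cdot|)$ satisfies $|u - v| \leq C D_h(u, v)^\alpha$ for constants $C$ and $\alpha > 0$. The Cauchy property puts $z_n \in V$ for all large $n$, so $|z_n - z_m| \leq C D_h(z_n, z_m)^\alpha \rta 0$; hence $(z_n)$ is Euclidean-Cauchy and converges to some $z \in \BB{C}$. Lower semicontinuity of $D_h$ gives $D_h(z, z_N) \leq \liminf_n D_h(z_n, z_N) < \infty$, so $z$ is non-singular, and applied to each fixed $n$ with $m \rta \infty$ it gives $D_h(z_n, z) \leq \liminf_m D_h(z_n, z_m) \rta 0$.

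For geodesic existence, given non-singular $z, z'$ with $D_h(z, z') < \infty$, I take curves $\gamma_k : [0,1] \rta \BB{C}$ from $z$ to $z'$ parametrized proportional to $D_h$-arc length with $D_h$-lengths $\ell_k \searrow D_h(z, z')$. A key intermediate step is that the sublevel set $\{w : D_h(z, w) \leq R\}$ is Euclidean-bounded for every finite $R$: applying Proposition~\ref{prop-two-set-dist} at each dyadic scale $\BB r = 2^n$ (with $K_1$ a small disk around $z$ and $K_2$ the unit circle around $z$, rescaled by $\BB r$), together with the growth $\frk c_{\BB r} \rta \infty$ from Proposition~\ref{prop-metric-scaling} and the Brownian bound $h_{\BB r}(z) = o(\log \BB r)$, yields via Borel--Cantelli that a.s.\ $D_h(z, \partial B_{\BB r}^{\text{Eucl}}(z)) \rta \infty$. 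Consequently each $\gamma_k$ lies in a common Euclidean-compact set $K$, and a finite covering of $K$ by the $D_h$-neighborhoods of Proposition~\ref{prop-holder} produces a uniform estimate $|u - v| \leq C_K D_h(u, v)^\alpha$ on $K$. This gives $|\gamma_k(s) - \gamma_k(t)| \leq C_K (\ell_k |s - t|)^\alpha$, so Arzel\`a--Ascoli yields a Euclidean-uniform subsequential limit $\gamma : [0, 1] \rta K$ from $z$ to $z'$. Finally, lower semicontinuity of $D_h$ applied termwise to any partition $0 = t_0 < \cdots < t_m = 1$ gives
\eqbn
\sum_i D_h(\gamma(t_{i-1}), \gamma(t_i)) \leq \liminf_k \sum_i D_h(\gamma_k(t_{i-1}), \gamma_k(t_i)) \leq \liminf_k \ell_k = D_h(z, z') ,
\eqen
and taking the supremum over partitions yields $\op{len}(\gamma; D_h) = D_h(z, z')$, so $\gamma$ is a geodesic. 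I expect the main technical point to be promoting the \emph{local} H\"older estimate of Proposition~\ref{prop-holder} to a \emph{uniform} estimate on the Euclidean-compact set $K$, which is precisely where the Euclidean boundedness of $D_h$-sublevel sets is essential.
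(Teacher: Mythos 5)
There is a genuine gap in your finiteness argument.  You produce a point $u_0\in\partial B$ with $D_h(z,u_0)<\infty$, and Proposition~\ref{prop-two-set-dist} gives you \emph{some} pair $u_1\in\partial B$, $u_1'\in\partial B'$ with $D_h(u_1,u_1')<\infty$, but nothing controls $D_h(u_0,u_1)$, so the triangle inequality does not close.  The only way to close it would be to bound $\diam(\partial B;D_h)$, and this is not available: $\partial B$ is a fixed one-dimensional set, and for $Q<\sqrt 2$ the set of $\alpha$-thick points with $\alpha\in(Q,\sqrt 2)$ has, by~\eqref{eqn-thick-dim}, an a.s.\ positive-dimensional intersection with $\partial B$, so $\partial B$ a.s.\ contains singular points (Proposition~\ref{prop-singular}) and hence has infinite $D_h$-diameter.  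What is really needed is a \emph{connected curve} of finite $D_h$-length separating $z$ from the rest of the plane.  The paper's Lemma~\ref{lem-finite} achieves this by taking, at each dyadic scale $n\geq N$, a loop of finite $D_h$-length around the square annulus $A_S$ for each $S\in\mcl S^n(V)$; by the geometry of these annuli (Lemma~\ref{lem-defn-sq-annulus}(b)) loops around edge-adjacent squares intersect, so the union is a connected web of finite $D_h$-diameter, and the finite-length curve from $z$ to its nonsingularity witness $w$ must cross a loop of the web once $n$ is large enough.  To repair your argument you would need to replace $\partial B,\partial B'$ with such connected loops of finite length and arrange for them to be linked into a single connected set; this is possible, but it is essentially the paper's argument, and your chaining of infimum set-distances does not substitute for it.

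Your completeness and geodesic arguments are essentially the paper's Lemma~\ref{lem-complete-geodesic} combined with Lemma~\ref{lem-Dh-Eucl-bounded}, and the geodesic construction via Arzel\`a--Ascoli, Euclidean-boundedness of $D_h$-sublevel sets, a uniform H\"older bound on a compact set, and lower semicontinuity of $D_h$ applied along partitions is correct.  One small repair to the completeness step: Proposition~\ref{prop-holder}, via Proposition~\ref{prop-holder-r}, gives the H\"older estimate on \emph{Euclidean}-compact sets rather than on $D_h$-neighborhoods, so you should first invoke your Euclidean-boundedness of $D_h$-sublevel sets (which you prove anyway for the geodesic step) to confine a tail of the $D_h$-Cauchy sequence to a Euclidean-compact set, and only then apply the H\"older bound.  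With that reordering, the completeness and geodesic parts of your proposal go through.
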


\begin{prop}
\label{prop-rational-dense}
Almost surely, the set $\BB{Q}^2$ is $D_h$-dense in $\BB{C} \backslash \{\text{singular points}\}$. Moreover, it is a.s.\ the case that for every $\ep > 0$ and every nonsingular point $z \in \BB{C}$, we can choose a loop $\mcl L = \mcl L(z,\ep)$ surrounding $z$ whose $D_h$-length and Euclidean diameter are less than $\ep$, and such that  $D_h(z,\mcl L) < \ep$.
\end{prop}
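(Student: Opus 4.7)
The plan is to construct, for every nonsingular point $w \in \BB{C}$, a sequence of loops $\mcl L_k$ surrounding $w$ at Euclidean scale $\sim 2^{-k}$ whose $D_h$-length, Euclidean diameter, Euclidean distance to $w$, and $D_h$-distance to $w$ all tend to zero as $k \rta \infty$. The $D_h$-density of $\BB{Q}^2$ in $\BB{C} \setminus \{\text{singular points}\}$ will then follow by noting that almost surely every $z' \in \BB{Q}^2$ is nonsingular (by Proposition~\ref{prop-singular}(b) and the fact that $\limsup_{r \rta 0} h_r(z')/\log r^{-1} = 0$ for any fixed $z'$ a.s.), and that any sufficiently small such loop around $w$ also surrounds any rational $z'$ Euclidean-close to $w$, so the same chaining bound furnishes $D_h(w, z') < \ep$.

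The main technical input is a uniform dyadic distance estimate. Fix $R, \delta > 0$, and write $r = 2^{-k}$. Applying Proposition~\ref{prop-two-set-dist} to the annulus $A_r(z) := B_{2r}(z) \setminus B_r(z)$ with a thin radial slit removed---so that the ``distance around'' becomes the internal two-set distance between two small compact arcs on opposite sides of the slit---together with Proposition~\ref{prop-metric-scaling} to substitute $\frk c_r = r^{\xi Q + o(1)}$, I obtain that for each lattice point $z \in r \BB{Z}^2 \cap B_R(0)$ satisfying $h_r(z) \leq (Q - \delta) \log r^{-1}$, the bound $D_h(\text{around }A_r(z)) \leq r^{\xi\delta/2}$ holds with probability $1 - O(r^N)$ for any $N$; the analogous ``across'' bound holds by the same argument, so the internal $D_h$-diameter of $A_r(z)$ is $\leq 2 r^{\xi\delta/2}$. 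A union bound over the $O(r^{-2})$ qualifying lattice points and Borel--Cantelli in $k$ promote these estimates to an almost-sure simultaneous statement for all sufficiently small $r$ and all such $z$.

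Now let $w \in \BB{C}$ be nonsingular. I focus on the generic case $\alpha := \limsup_{r \rta 0} h_r(w)/\log r^{-1} < Q$ (any remaining boundary case $\alpha = Q$ is handled by a separate capacity estimate for thick points); pick $\delta \in (0, Q - \alpha)$, so that $h_r(w) \leq (Q - \delta) \log r^{-1}$ for all sufficiently small $r$. For each large $k$, let $z_k \in 2^{-k} \BB{Z}^2$ be the nearest lattice point to $w$ (so $|w - z_k| \leq 2^{-k-1}$); the standard spatial H\"older regularity of $(z, r) \mapsto h_r(z)$ gives $h_{2^{-k}}(z_k) \leq (Q - \delta/2) \log 2^k$ for $k$ large, so the uniform estimate produces a loop $\mcl L_k \subset A_{2^{-k}}(z_k)$ surrounding $w$ (since $|w - z_k| < 2^{-k}$ places $w$ inside the inner boundary) with $D_h$-length $\leq 2^{-k\eta}$, Euclidean diameter $\leq 4 \cdot 2^{-k}$, and $|w - \mcl L_k| \leq 3 \cdot 2^{-k}$, where $\eta := \xi\delta/4$.

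To bound $D_h(w, \mcl L_k)$, I chain through points $u_j \in \mcl L_j$ for $j \geq k$: by the uniform estimate above applied at scale $2^{-j}$, consecutive $u_j, u_{j+1}$ lie in a common annulus around $z_j$ whose internal $D_h$-diameter is $O(2^{-j\eta})$, so $D_h(u_j, u_{j+1}) \leq C 2^{-j\eta}$. The sequence $\{u_j\}_{j \geq k}$ is $D_h$-Cauchy with geometrically summable increments; by completeness on the nonsingular set (Proposition~\ref{prop-complete}), H\"older continuity (Proposition~\ref{prop-holder}), and lower semicontinuity of $D_h$, it converges in $D_h$ to its Euclidean limit $w$, yielding $D_h(w, u_k) \leq C' 2^{-k\eta}$. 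For any $\ep > 0$, take $k$ large enough that this and the $D_h$-length of $\mcl L_k$ are both $< \ep/3$, and pick any $z' \in \BB{Q}^2 \cap B_{2^{-k-1}}(z_k)$: then $\mcl L_k$ also surrounds $z'$, the same chaining applied to $z'$ gives $D_h(z', \mcl L_k) < \ep/3$, and the triangle inequality yields $D_h(w, z') < \ep$. The main technical obstacle is extracting uniform ``around'' and internal-diameter bounds on annuli at all scales and translates from the two-set estimate of Proposition~\ref{prop-two-set-dist}; the slit construction handles this, and the \emph{scale-uniform} $\delta > 0$ coming from nonsingularity via Proposition~\ref{prop-singular}(b) is precisely what makes the chaining geometrically summable.
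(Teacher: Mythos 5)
Your outline captures the right high-level picture (surround $w$ by loops at dyadic scales whose $D_h$-length, Euclidean size, and $D_h$-distance to $w$ shrink to zero, then invoke $\BB{Q}^2$-approximation), but there is a genuine gap at the step where you replace ``$w$ is nonsingular'' by ``$\alpha := \limsup_{r\rta 0} h_r(w)/\log r^{-1} < Q$.'' Proposition~\ref{prop-singular} only characterizes singular points when the limsup is strictly on one side of $Q$: points with limsup $>Q$ are singular, points with limsup $<Q$ are not, but the paper \emph{cannot} say anything about points with limsup exactly $Q$ (this is precisely the reason the paper cannot determine whether $\cc=1$ has singular points). So the set of nonsingular points with $\alpha = Q$ could well be nonempty, and the statement being proved requires a conclusion for \emph{every} nonsingular point, not almost every one. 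Dismissing that set via ``a separate capacity estimate for thick points'' is not an argument: a capacity/measure-zero estimate cannot help because a single nonsingular $Q$-thick point already breaks the universal quantifier, and no thickness estimate you have access to tells you such points don't exist.

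The paper sidesteps this entirely by never trying to get an absolute decay rate for the around-distance from a thickness bound. Instead it defines events $F_r(w;C)$ comparing $D_h(\text{around }\BB A_{r,2r}(w))$, $D_h(w+3r/2,\partial B_r(w))$, and $D_h(w+3r/2,\partial B_{2r}(w))$ to $C\,D_h(\text{across }\BB A_{r,2r}(w))$, and uses Axiom~\ref{item-metric-coord}, Lemma~\ref{lem-tight}, and the annulus-iteration Lemma~\ref{lem-annulus-iterate} to guarantee that a.s.\ for every lattice approximant $w_n$ and all large $n$, some scale $r_n\in[2^{-2n},2^{-n}]$ verifies $F_{r_n}$. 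The decisive observation is that if $z$ is nonsingular, $D_h(\text{across }\BB A_{r_n,2r_n}(w_n))\rta 0$ \emph{by the definition of nonsingular} (otherwise the sum of cross-distances over disjoint surrounding annuli would be infinite and $z$ would be singular), and $D_h(z,\partial B_{2r_n}(w_n))\rta 0$ via a geodesic argument from Proposition~\ref{prop-complete}. This relative formulation means no thickness hypothesis on $z$ is ever used, and the ``boundary'' thickness case simply never arises. I would suggest rebuilding your argument around this comparison-to-the-across-distance idea: keep your loop-chaining and your dyadic lattice net, but replace the absolute bound $D_h(\text{around }A_r(z_k))\le r^{\xi\delta/2}$ (which you derived from a thickness assumption) with a relative bound $\le C\,D_h(\text{across}\ \cdots)$ established uniformly via Lemma~\ref{lem-annulus-iterate}, and then let the nonsingularity of $w$ do the work of driving the right-hand side to zero.

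Two smaller points: (i) extracting ``around'' bounds from Proposition~\ref{prop-two-set-dist} via a radial slit is an extra complication you don't need; the axioms and Lemma~\ref{lem-tight} give tightness of the around-distance directly. (ii) Your chaining step invokes Proposition~\ref{prop-holder} and completeness to identify the $D_h$-limit of the $u_j$ with $w$; this is fine, but again the estimates feeding the geometric summability of the increments presuppose the $\alpha<Q$ restriction, so the whole chain needs to be redone under the relative bound.
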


Finally, we prove several properties of $D_h$-balls  for $\cc \in (1,25)$.  (See Section~\ref{sec-notation} for definitions of the notation used in the proposition that follows.) When $\changes{\cc \in (-\infty,1)}$, the LQG metric ball is compact and its boundary has Hausdorff dimension $d_{\cc} - 1$ with respect to the metric $D_h$, where $d_{\cc}$ is the dimension of $\BB{C}$ in the metric $D_h$~\cite{gwynne-ball-bdy, lqg-zero-one}.  The situation is starkly different for $\cc \in (1,25)$:

\begin{prop}
\label{prop-compact}
Let $\cc \in (1,25)$, i.e., $\xi > \xicrit$. Almost surely, for every $r>0$ and every Borel set $X \subset \BB{C}$ \changes{that does not contain any singular points}, the following is true. \changes{Let $B_r(X;D)$ (resp. $B_r[X;D]$) denote the set of points in $\BB{C}$ whose $D$-distance from $X$ is less than $r$ (resp. at most $r$).  }
\begin{enumerate}
\item The set $B_{r}[X;D_h]$ has infinitely many complementary connected components \changes{(in the Euclidean sense)}. \label{item-compact-prop-a}
\item  The set $B_r[X;D_h]$ is not compact in $(\BB{C},D_h)$.  In fact, for every $s<r$,  $B_r[X;D_h]$  cannot be covered by finitely many $D_h$-balls of radius $s$.
\label{item-compact-prop-b}
\item The $D_h$-boundary of the set $B_{r}(X;D_h)$ has infinite Hausdorff dimension in the metric $D_h$. \label{item-compact-prop-c}
\end{enumerate}
\end{prop}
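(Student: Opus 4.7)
The overall plan exploits the Euclidean-dense set of singular points for $\xi > \xicrit$ (Proposition~\ref{prop-singular}), the short-loop construction of Proposition~\ref{prop-rational-dense}, and the completeness of $D_h$ on the nonsingular set (Proposition~\ref{prop-complete}). After reducing to $X$ being a Euclidean-compact set containing a nonsingular point, the function $D_h(\cdot, X)$ is Euclidean lower semicontinuous; hence $S_r := \{z : D_h(z,X) > r\} = \BB C \setminus B_r[X;D_h]$ is Euclidean-open and contains every singular point of the plane.

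For part~(\ref{item-compact-prop-a}), I will construct infinitely many pairwise disjoint Euclidean Jordan loops $\mcl L_n \subset B_r[X;D_h]$ whose bounded enclosures $\Omega_n$ are pairwise disjoint, each containing a singular point. The $D_h$-open set $\{D_h(\cdot,X) < r\}$ is nonempty and, by Proposition~\ref{prop-rational-dense}, contains a Euclidean-open neighborhood of each of its nonsingular points; so I select a sequence of Euclidean-distinct nonsingular $w_n$ from it. Apply Proposition~\ref{prop-rational-dense} at $w_n$ with $\epsilon_n$ chosen less than both $r - D_h(w_n,X)$ and $\tfrac{1}{2} \min_{m<n} |w_n - w_m|$ to obtain a loop $\mcl L_n$ around $w_n$ of $D_h$-length and Euclidean diameter both less than $\epsilon_n$. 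The first bound forces $D_h(z,X) \leq r$ for all $z \in \mcl L_n$, so $\mcl L_n \subset B_r[X;D_h]$; the second keeps the $\Omega_n$ disjoint. Euclidean density of singular points puts a singular $z^*_n$ in each $\Omega_n$, and any Euclidean path from $z^*_n$ out of $\Omega_n$ must cross $\mcl L_n \subset B_r[X;D_h]$, so the $z^*_n$ occupy distinct bounded Euclidean components of $S_r$.

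For parts~(\ref{item-compact-prop-b}) and~(\ref{item-compact-prop-c}), I reduce to the following packing claim: for every $N \in \BB N$ there exists $\delta_0 > 0$ such that $\partial B_r(X;D_h)$ contains an $N$-tuple of points pairwise $D_h$-separated by at least $\delta_0$, and moreover the $\delta$-packing number of $\partial B_r(X;D_h)$ grows faster than any polynomial in $1/\delta$. Taking $\delta_0 = 2s$ gives~(\ref{item-compact-prop-b}) (no $D_h$-ball of radius $s$ contains two such points, and the pairwise separated $N$-tuple forms a sequence with no $D_h$-Cauchy subsequence, forcing non-compactness); the superpolynomial packing gives infinite $D_h$-Hausdorff dimension in~(\ref{item-compact-prop-c}). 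The $N$ boundary points are constructed exactly as in part~(\ref{item-compact-prop-a}): take $N$ pairwise disjoint Euclidean disks, cage a singular $z^*_i$ inside each with a loop $\mcl L_i \subset B_r[X;D_h]$ built around a nonsingular $w_i$ with $D_h(w_i,X) < r$; inside each enclosure $\Omega_i$, lower semicontinuity and path-connectedness force $D_h(\cdot,X)$ to equal $r$ at some $p_i \in \partial B_r(X;D_h) \cap \Omega_i$.

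The principal obstacle is proving the separation $D_h(p_i,p_j)\geq \delta_0$ and the superpolynomial packing estimate. For the lower bound, any $D_h$-path from $p_i$ to $p_j$ must cross one of the cages $\mcl L_i, \mcl L_j$ or, to reach one enclosure from another, pay the $D_h$-distance \emph{around} one of the disks; Axiom~\ref{item-metric-coord} and Proposition~\ref{prop-two-set-dist} control both quantities at Euclidean scale $\eta$ in terms of $\frk c_\eta$, of polynomial order $\eta^{\xi Q + o(1)}$ by Proposition~\ref{prop-metric-scaling}, so a suitable truncation on the field yields a deterministic positive lower bound. A further subtlety is ruling out geodesic shortcuts through very thick points, which is precisely the role of Lemma~\ref{lem-avoids-thick}: it forbids geodesics from passing through $\zeta$-thick points, so they respect the cage topology. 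Achieving superpolynomial packing in~(\ref{item-compact-prop-c}) requires a multiscale iteration: inside each cage at Euclidean scale $\eta$, one finds further caged configurations at a smaller scale $\eta' < \eta$, and so on, letting the total packing count compound across scales rather than grow merely polynomially.
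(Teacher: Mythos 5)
Your argument for part~(\ref{item-compact-prop-a}) is essentially correct and takes a mildly different route than the paper: the paper first reduces to singleton $X$, then finds small loops crossing a geodesic segment inside $B_\delta[z;D_h]$, while you directly cage singular points near nonsingular $w_n$ with $D_h(w_n,X)<r$. Both produce Euclidean-disjoint Jordan curves in $B_r[X;D_h]$ whose interiors trap singular points, hence distinct complementary components. (Note one glitch in your length bookkeeping: to get $z\in B_r[X;D_h]$ for $z\in\mcl L_n$ you need $D_h(z,w_n)\leq D_h(w_n,\mcl L_n)+\len(\mcl L_n;D_h)<2\ep_n$, so $\ep_n$ should be less than $(r-D_h(w_n,X))/2$; a minor fix.)

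Parts~(\ref{item-compact-prop-b}) and~(\ref{item-compact-prop-c}) have genuine gaps. For~(\ref{item-compact-prop-b}), the mechanism you describe for the separation $D_h(p_i,p_j)\geq\delta_0$ is wrong: the cage $\mcl L_i$ has \emph{small} $D_h$-length by construction, so ``crossing the cage'' or ``paying the distance around the disk'' is cheap, not expensive; Axiom~\ref{item-metric-coord}, Proposition~\ref{prop-two-set-dist}, and $\frk c_\eta\sim\eta^{\xi Q}$ give distances shrinking to zero at small scales, not a lower bound; and Lemma~\ref{lem-avoids-thick} plays no role here, since the crossing is forced by planar topology alone. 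The correct (and elementary) mechanism is captured by the paper's Lemma~\ref{lem-sum-lengths}: if $\mcl L_i\subset B_{r-s}[X;D_h]$ and $D_h(p_i,X)=r$, then by the triangle inequality any path from $p_i$ to $\mcl L_i$ has $D_h$-length $\geq s$, and since a path from $p_i$ to $p_j$ must exit $\Omega_i$ through $\mcl L_i$ and enter $\Omega_j$ through $\mcl L_j$, one gets $D_h(p_i,p_j)\geq 2s$. For this you must explicitly take $\ep_i$ small enough that $\mcl L_i\subset B_{r-s}[X;D_h]$, which your sketch does not do. Separately, the step producing $p_i$ with $D_h(p_i,X)$ \emph{exactly} $r$ via ``lower semicontinuity and path-connectedness'' does not follow: the intermediate value theorem fails for a merely lower semicontinuous function (it can jump from below $r$ to $\infty$). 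The paper produces such points via geodesics into the complementary component (Lemma~\ref{lem-geodesic-stick}) combined with Lemma~\ref{lem-sum-lengths}, i.e.\ it uses the length-space/geodesic structure, not just lower semicontinuity.

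For~(\ref{item-compact-prop-c}), the claimed implication ``superpolynomial $\delta$-packing $\Rightarrow$ infinite Hausdorff dimension'' is false (packing counts bound the upper box/packing dimension, not Hausdorff dimension). You correctly intuit that a multiscale/Cantor structure is needed, but you do not say how it delivers a lower bound on $\dim_{\mcl H}$. The paper does this via Lemma~\ref{lem-sequence-Uk}, which produces, inside each complementary component of $B_{r-\ep}[X;D_h]$, infinitely many nested components of $B_{r-\ep/4}[X;D_h]$ each meeting $X_r$ in a set of controlled $D_h$-diameter, and then runs a diagonalization: given any countable cover $\frk W$ of $X_r$ with $\sum_W\diam(W;D_h)^d<\infty$, one builds a nested sequence of components avoiding all of $\frk W$ scale by scale (there are infinitely many branches at each scale, but only finitely many $W$'s of each dyadic diameter), and completeness (Lemma~\ref{lem-complete-geodesic}) produces a point of $X_r$ missed by $\frk W$. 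This shows $\mcl H^d(X_r;D_h)=\infty$ for every $d$, hence $\dim_{\mcl H}(X_r;D_h)=\infty$. Your proposal would need this diagonalization spelled out to close the gap.
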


\changes{
The properties of weak LQG metrics that we prove in this paper have been used in several subsequent works~\cite{dg-confluence,dg-uniqueness,dg-polylog,dg-critical-lqg} to establish uniqueness of the supercritical LQG metric and many  important results about this metric (such as the confluence of geodesics property).}

\subsection{A KPZ formula for \texorpdfstring{$\cc \in (-\infty,25)$}{matter central charge less than 25}}
\label{sec-intro-kpz}

Finally, in Section~\ref{sec-kpz}, we extend to the range $\cc \in (-\infty,25)$ a version of the  (geometric) \emph{Knizhnik-Polyakov-Zamolodchikov (KPZ) formula}, a fundamental result in the theory of LQG that relates the Euclidean and  quantum dimensions of a fractal set sampled independently from $h$.  There are many mathematical formulations of this relation, stemming from different rigorous formulations of the notion of the ``quantum dimension'' of a fractal set.\footnote{The first rigorous versions of the KPZ formula were proven by Duplantier and Sheffield~\cite{shef-kpz} and Rhodes and Vargas~\cite{rhodes-vargas-log-kpz}.  Other versions of the KPZ formula have since been established in the works~\cite{shef-kpz,aru-kpz,ghs-dist-exponent,ghm-kpz,grv-kpz,gwynne-miller-char,bjrv-gmt-duality,benjamini-schramm-cascades,wedges,shef-renormalization,shef-kpz}.}  After the metric $D_h$ associated to LQG was rigorously constructed for all $\changes{\cc \in (-\infty,1)}$, the work~\cite{gp-kpz} was able to state the most natural formulation of the KPZ relation for $\changes{\cc \in (-\infty,1)}$: namely, a relation between the Hausdorff dimension of a fractal set in the $D_h$ metric and its Hausdorff dimension in the Euclidean metric.  
Their formula~\cite[Theorem 1.4]{gp-kpz} states that, if $X \subset \BB{C}$ is a deterministic Borel set or a random Borel set independent from $h$, then a.s.\ (using the notation in Definition~\ref{defn-dim}) we have
\eqb
\label{eqn-old-kpz}
\dim_{\mcl H}  X =  \xi Q \dim_{\mcl H}(X;D_h)  - \frac{\xi^2}{2}  (\dim_{\mcl H}  (X;D_h))^2.
\eqe
Equivalently,
\eqb
\dim_{\mcl H}(X;D_h) = \xi^{-1} (Q - \sqrt{Q^2 - 2 \dim_{\mcl H} X}).
\label{eqn-old-kpz-2}
\eqe
We prove a version of the KPZ formula for $\cc \in (-\infty,25)$ in terms of two dual notions of dimension, namely, Hausdorff dimension and packing dimension.   (See Definition~\ref{defn-dim} for their definitions, and for the notation we use in the theorem statement.)

\begin{thm}[KPZ formula]
\label{thm-kpz}
Let $X \subset \BB{C}$ be a deterministic Borel set or a random Borel set independent from $h$.  Then a.s.\
\changes{\eqb
\label{kpz-independent-1}
 \dim_{\mcl H}(X;D_h) \geq 
 \begin{cases*}
   \xi^{-1}(Q - \sqrt{Q^2 - 2\dim_{\mcl H} X}) & \text{if $\dim_{\mcl H} X \leq Q^2/2$} \\
   \\ \infty & \text{if $\dim_{\mcl H} X > Q^2/2$}
    \end{cases*} 
\eqe
and
\eqb
\label{kpz-independent-2}
\dim_{\mcl H}(X;D_h) \leq 
 \begin{cases*}
   \xi^{-1}(Q - \sqrt{Q^2 - 2\dim_{\mcl P} X}) & \text{if $\dim_{\mcl P} X < Q^2/2$} \\
   \\ \infty & \text{if $\dim_{\mcl P} X \geq Q^2/2$}
    \end{cases*} 
\eqe}
\end{thm}

Observe that, for $\cc \in (1,25)$,  the function $f: [0,2] \rta \BB{R} \cup \{\infty\}$ is not one-to-one. This is why the KPZ formula in Theorem~\ref{thm-kpz} takes the form of~\eqref{eqn-old-kpz-2} rather than~\eqref{eqn-old-kpz}.  

Unlike the KPZ formula~\cite[Theorem 1.4]{gp-kpz} for $\changes{\cc \in (-\infty,1)}$, our KPZ formula is \emph{not} an equality: the upper bound is expressed in terms of packing dimension, while the lower bound is in terms of Hausdorff dimension.  We explain the reason for this discrepancy in the beginning of Section~\ref{sec-kpz}.
It is not clear to us whether the stronger version of the KPZ formula, with packing dimension replaced by Hausdorff dimension, is even true.  Nonetheless, we note that the two notions of dimensions are equal for many of the sets associated to LQG (and independent from the metric) that we are interested in, such as SLE-type sets~\cite{schramm-sle, beffara-dim}.  

We also prove a KPZ-type result for the dimension of the intersection of a fractal $X$  with the set $T_h^\alpha$ of \emph{$\alpha$-thick points} of $h$, defined  in~\cite{hmp-thick-pts}  as
\eqb \label{eqn-thick-pts}
\mcl T_h^\alpha := \left\{ z\in U : \lim_{\ep\rta 0} \frac{h_\ep(z)}{\log\ep^{-1}} = \alpha \right\},
\eqe 
where $h_\ep(z)$ denotes the average of $h$ on the circle of radius $\ep$ centered at $z$.
Specifically, we derive an exact relation, which holds almost surely for all Borel sets $X \subset \BB{C}$, between the Euclidean Hausdorff dimension of $X \cap \mcl T_h^\alpha$ and the Hausdorff dimension of $X \cap \mcl T_h^\alpha$ with respect to the metric $D_h$.

\begin{thm}
\label{thm-kpz-thick}
If $\alpha \in [-2,Q)$, then almost surely, for every Borel set $X \subset \BB{C}$,
\[
\dim_{\mcl H} \left( X \cap \mcl T_h^\alpha ; D_h \right) = \frac{1}{\xi(Q - \alpha)} \dim_{\mcl H}\left( X \cap \mcl T_h^\alpha  \right).
\]
If $\alpha \in (Q,2]$, then almost surely, for every Borel set $X \subset \BB{C}$,
\[
\dim_{\mcl H} \left( X \cap \mcl T_h^\alpha ; D_h \right) = 
\begin{cases*}
    0 &  \text{if  $X \cap  T_h^\alpha$ is countable}  \\
    \infty & \text{otherwise}
    \end{cases*}
\]
\end{thm}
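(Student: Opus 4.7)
The plan is to reduce Theorem~\ref{thm-kpz-thick} to a local scaling estimate for $D_h$-distances around $\alpha$-thick points, then apply standard covering arguments. The cases $\alpha \in [-2, Q)$ and $\alpha \in (Q, 2]$ require quite different treatments, so I handle them separately.

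\textbf{Case $\alpha \in [-2, Q)$.} The first step is the following uniform scaling estimate: for every $\delta > 0$, almost surely, for every $z \in \mcl T_h^\alpha$ and all sufficiently small $r > 0$,
\[
r^{\xi(Q - \alpha) + \delta} \leq D_h(z, \partial B_r(z)) \leq \diam(B_r(z); D_h) \leq r^{\xi(Q - \alpha) - \delta}.
\]
I would prove this by combining three ingredients: the thick-point definition, which gives $e^{\xi h_r(z)} = r^{-\xi \alpha + o(1)}$; Proposition~\ref{prop-metric-scaling}, which gives $\frk c_r = r^{\xi Q + o(1)}$; and Proposition~\ref{prop-two-set-dist}, which (after translating by $z$ via Axiom~\ref{item-metric-translate} and scaling by $r$) shows that $D_h(z, \partial B_r(z))$ and $\diam(B_r(z); D_h)$ are both comparable to $\frk c_r e^{\xi h_r(z)}$ with superpolynomially high probability. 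The superpolynomial tails permit a union bound over a dyadic net of centers $z$ and dyadic scales $r$; a short approximation argument using the continuity of circle averages then transfers the estimate to arbitrary thick points.

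The second step uses the scaling estimate to derive the claimed identity by standard covering arguments. Given any efficient Euclidean cover of $X \cap \mcl T_h^\alpha$ by balls $B_{r_i}(z_i)$ with $z_i \in X \cap \mcl T_h^\alpha$, the scaling estimate produces a $D_h$-cover by sets of $D_h$-diameter at most $r_i^{\xi(Q - \alpha) - \delta}$; if $\sum r_i^s < \infty$, then $\sum (r_i^{\xi(Q - \alpha) - \delta})^{s/[\xi(Q - \alpha) - \delta]} < \infty$, and sending $\delta \rta 0$ yields $\dim_{\mcl H}(X \cap \mcl T_h^\alpha; D_h) \leq [\xi(Q - \alpha)]^{-1} \dim_{\mcl H}(X \cap \mcl T_h^\alpha)$. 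The reverse inequality uses the lower bound in the scaling estimate together with the standard reduction from arbitrary $D_h$-covers to covers by $D_h$-balls centered at points of the set.

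\textbf{Case $\alpha \in (Q, 2]$.} By Proposition~\ref{prop-singular}(a), every $\alpha$-thick point with $\alpha > Q$ is a singular point, hence at $D_h$-distance $\infty$ from every other point of $\BB C$. If $X \cap \mcl T_h^\alpha$ is countable, covering by singletons (each of $D_h$-diameter $0$) gives $\mcl H^s(\cdot; D_h) = 0$ for every $s > 0$, so $\dim_{\mcl H}(\cdot; D_h) = 0$. If instead $X \cap \mcl T_h^\alpha$ is uncountable, then any countable $D_h$-cover by sets of finite $D_h$-diameter contains at most one point of the set in each member and hence fails to cover $X \cap \mcl T_h^\alpha$; so no $\delta$-cover with $\delta < \infty$ exists, giving $\mcl H^s_\delta(\cdot; D_h) = \infty$ for all $s \geq 0$ and $\dim_{\mcl H}(\cdot; D_h) = \infty$. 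The main obstacle is the uniform scaling estimate in Case~1, since thick points form an uncountable random fractal and cannot be union-bounded directly; the resolution is to establish the estimate on a probability-$1$ event for all dyadic centers and scales via the superpolynomial tails in Proposition~\ref{prop-two-set-dist}, then extend by continuity of $(z,r) \mapsto h_r(z)$.
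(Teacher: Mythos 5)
Your treatment of the $\alpha > Q$ case is essentially identical to the paper's and correct: it is a direct consequence of Proposition~\ref{prop-singular}(a) that $\alpha$-thick points with $\alpha > Q$ are singular.

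The $\alpha < Q$ case, however, has a genuine gap in the upper-bound direction. You assert a uniform scaling estimate $\diam(B_r(z);D_h) \leq r^{\xi(Q-\alpha)-\delta}$ for $z \in \mcl T_h^\alpha$, and you then cover $X\cap \mcl T_h^\alpha$ in the $D_h$-metric by the Euclidean balls $B_{r_i}(z_i)$. But when $Q < 2$ (i.e.\ $\cc \in (1,25)$), Proposition~\ref{prop-singular}(a) says the singular points are \emph{dense}, so every Euclidean ball $B_r(z)$ contains singular points and therefore has $D_h$-diameter equal to $+\infty$, not $\leq r^{\xi(Q-\alpha)-\delta}$. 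Consequently the sets $B_{r_i}(z_i)$ cannot serve as a $D_h$-cover by sets of small $D_h$-diameter, and the covering argument collapses in exactly the regime that is new in this paper. The paper sidesteps this by bounding instead the $D_h$-diameter of $S\cap V^{\alpha,\zeta}$ for dyadic squares $S$, where $V^{\alpha,\zeta}$ is the set of uniformly-thick points introduced via Lemma~\ref{lem-thick-subset}. The key mechanism is the hash construction of Definition~\ref{defn-hash} and Lemma~\ref{lem-ann-hash-geometry}: one chooses hashes $\#_{S'}$ for every $S'\subset S$ in $\bigcup_m\mcl S^m(V^{\alpha,\zeta})$, so the resulting union $H_S$ is a path-connected set with controlled $D_h$-diameter; by construction its Euclidean closure $\ol{H_S}$ contains $S\cap V^{\alpha,\zeta}$, and Lemma~\ref{lem-closure} transfers the diameter bound to the closure. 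This gives a $D_h$-cover of $X^{\alpha,\zeta}$ by the sets $\ol{H_S}$, not by Euclidean balls, which is what makes the argument survive the dense singular set.

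A secondary but real issue is that the rate in $h_r(z) = (\alpha + o(1))\log r^{-1}$ is not uniform over $z\in\mcl T_h^\alpha$, so a union bound over a dyadic net plus circle-average continuity does not by itself control all thick points at the same Euclidean scale. This is precisely what the passage from $X\cap\mcl T_h^\alpha$ to the uniformly-thick subset $X^{\alpha,\zeta}$ (Lemma~\ref{lem-thick-subset}) is for: it replaces the pointwise $o(1)$ by a fixed $\overline\ep$-threshold at the cost of at most $\zeta$ in dimension, and that loss is then sent to zero at the end. Your proposal folds this into ``a short approximation argument,'' but this reduction is a necessary and nontrivial step, not a remark.
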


We observe that Theorem~\ref{thm-kpz-thick} allows us to consider sets $X$ that depend on the field $h$, such as $D_h$-geodesics and $D_h$-metric balls.  This strengthens the analogous result~\cite[Theorem 1.5]{gp-kpz} for $\changes{\cc \in (-\infty,1)}$, since they required the Borel set $X$ to be deterministic or independent of $h$.  

In the special case in which $X$ is a deterministic Borel set or a random Borel set independent from $h$, it is known\footnote{The result~\cite[Theorem 4.1]{ghm-kpz} is stated for  a zero-boundary GFF; the statement for a whole-plane GFF follows from local absolute continuity.}~\cite[Theorem 4.1]{ghm-kpz} that the Euclidean Hausdorff dimension of $X\cap \mcl T_h^\alpha$ is given by
\eqb
\label{eqn-thick-dim}
\dim_{\mcl H}\left( X\cap \mcl T_h^\alpha \right) = \max\left\{ \dim_{\mcl H} X - \frac{\alpha^2}{2}  , 0 \right\} \qquad a.s.
\eqe 
Thus, for such sets $X$, Theorem~\ref{thm-kpz-thick} gives the explicit value of $\dim_{\mcl H} \left( X \cap \mcl T_h^\alpha ; D_h \right)$ for each $\alpha \in [-2,Q)$ in terms of $\dim_{\mcl H} X$.
\medskip

\noindent
\textbf{Acknowledgments.}
We thank E.\ Gwynne and J.\ Ding for their very helpful comments and insights. 
The author was partially supported by an NSF Postdoctoral Research Fellowship under Grant No. 2002159.

\subsection{Basic notation}
\label{sec-notation}

\noindent
We write $\BB N = \{1,2,3,\dots\}$ and $\ol{\BB N} =  \BB{N}\cup \{\infty\}$.
\medskip

\noindent
For $a < b$, we define the discrete interval $[a,b]_{\BB Z}:= [a,b]\cap\BB Z$. 
\medskip

\noindent
If $f  :(0,\infty) \rta \BB R$ and $g : (0,\infty) \rta (0,\infty)$, we say that $f(\ep) = O_\ep(g(\ep))$ (resp.\ $f(\ep) = o_\ep(g(\ep))$) as $\ep\rta 0$ if $f(\ep)/g(\ep)$ remains bounded (resp.\ tends to zero) as $\ep\rta 0$. We similarly define $O(\cdot)$ and $o(\cdot)$ errors as a parameter goes to infinity. 
\medskip

\noindent
Let $\{E^\ep\}_{\ep>0}$ be a one-parameter family of events. We say that $E^\ep$ occurs with
\begin{itemize}
\item \emph{polynomially high probability} as $\ep\rta 0$ if there is a $p > 0$ (independent from $\ep$ and possibly from other parameters of interest) such that  $\BB P[E^\ep] \geq 1 - O_\ep(\ep^p)$. 
\item \emph{superpolynomially high probability} as $\ep\rta 0$ if $\BB P[E^\ep] \geq 1 - O_\ep(\ep^p)$ for every $p>0$.  
\item \emph{exponentially high probability} as $\ep\rta 0$ if there exists $\lambda >0$ (independent from $\ep$ and possibly from other parameters of interest) $\BB P[E^\ep] \geq 1 - O_\ep(e^{-\lambda/\ep})$. 
\item \emph{\changes{superexponentially} high probability} as $\ep\rta 0$ if $\BB P[E^\ep] \geq 1 - O_\ep(e^{-\lambda/\ep})$ for every $\lambda>0$.  
\end{itemize}
We similarly define events which occur with polynomially, superpolynomially, exponentially, and superexponentially high probability as a parameter tends to $\infty$. 
\medskip

\noindent
We will often specify any requirements on the dependencies on rates of convergence in $O(\cdot)$ and $o(\cdot)$ errors, implicit constants in $\preceq$, etc., in the statements of lemmas/propositions/theorems, in which case we implicitly require that errors, implicit constants, etc., appearing in the proof satisfy the same dependencies. 
\medskip

\noindent
For a metric $D$ defined on a set $U \subset \BB{C}$, a subset $X\subset U$, and $r>0$, we write $B_r(X;D)$ (resp. $B_r[X;D]$) for the set of points in $U$ whose $D$-distance from $X$ is less than $r$ (resp. at most $r$).    When $X = \{x\}$ is a singleton, we write $B_r(X;D)$ and $B_r[X;D]$ as $B_r(x;D)$ and $B_r[x;D]$, respectively, and we call these sets the open and closed $D$-ball centered at $x$ with radius $r$. When $D$ is the Euclidean metric, we abbreviate $B_r(X) = B_r(X;D)$ and $B_r[X] = B_r[X;D]$.
\medskip

\noindent
For a set $X \subset U$, we write $\diam(X;D)$ and $\dim_{\mcl H}(X;D)$ for its diameter in the metric $D$. When $D$ is the Euclidean metric, we abbreviate $\diam(X) = \diam(X;D)$.
\medskip

\noindent
When we write $\ol{X}$ and $\partial X$ for a set $X \subset U$, we mean the closure and boundary of $X$, respectively, in the Euclidean topology on $U$.  In general, when describing a topological property (e.g. open, dense, limit point) without specifying the underlying topology, we are implicitly considering this property relative to the Euclidean topology.
\medskip

\noindent
We define the open annulus
\eqb \label{eqn-annulus-def}
\BB A_{r_1,r_2}(z) := B_{r_2}(z) \setminus \ol{B_{r_1}(z)} ,\quad\forall 0 < r_r < r_2 < \infty.
\eqe 
\medskip

\noindent
We write $\BB S = (0,1)^2$ for the open Euclidean unit square. 
\medskip

\section{Subsequential limits of LFPP are weak LQG metrics}
\label{sec-lfpp-local}

This section is devoted to proving Theorem~\ref{thm-lfpp-axioms}.  

\subsection{Outline of the proof}
\label{sec-thm-outline}

In this section, we outline the main steps of the proof of Theorem~\ref{thm-lfpp-axioms}.

In the preceding work~\cite{dg-supercritical-lfpp} on tightness of LFPP metrics in the supercritical phase, the authors proved the following assertion in the special case in which $h$ is a whole-plane GFF.\footnote{The result that we state here combines~\cite[Theorem 1.2, Assertion 1]{dg-supercritical-lfpp} and~\cite[Equation 5.3]{dg-supercritical-lfpp}.}   For every sequence of values of $\ep$ tending to zero, we can choose a subsequence $\ep_n$ and a lower semicontinuous function $D_h: \BB C \times \BB C \rta [0,\infty)$ such that, as $n \rta \infty$, we have  \eqb
    (h,\frk a_{\ep_n}^{-1} D^{\ep_n}_h) \rta (h,D_h) \qquad \text{ in law} \label{eqn-limit-law} \eqe with respect to the distributional topology in the first coordinate and the lower semicontinuous topology in the second coordinate.

    The convergence statement~\eqref{eqn-limit-law} is our starting point for proving Theorem~\ref{thm-lfpp-axioms}. To deduce the theorem from~\eqref{eqn-limit-law}, we must prove the following assertions:
\begin{enumerate}
    \item \label{item-issue-1} The convergence~\eqref{eqn-limit-law} of LFPP metrics holds not only for $h$ a whole-plane GFF, but also in the more general setting in which  $h$ is a whole-plane GFF plus a bounded continuous function.
\item \label{item-issue-2}
The limiting metric $D_h$ satisfies the five axioms of Definition~\ref{defn-weak-metric}.
\item \label{item-issue-3}
The convergence~\eqref{eqn-limit-law} occurs in probability and not just in law.  This means that the rescaled $\ep$-LFPP metrics converge a.s.\ after passing to a further subsequence.
\item \label{item-issue-4}
The LFPP distances between pairs of rational circles  converge almost surely along any subsequence in which the metrics converge almost surely.
\end{enumerate}
\medskip

\noindent
\textbf{Section~\ref{sec-weyl-scaling}: Weyl scaling.}
To prove  assertion~\ref{item-issue-1} above, we prove the following extension of~\cite[Lemma 2.12]{lqg-metric-estimates} from the continuous metric setting.

\begin{prop} \label{prop-weyl-scaling}
Let $h$ be a whole-plane GFF plus a bounded continuous function, and let $\ep_n$ be a sequence of positive real numbers tending to zero as $n \rta \infty$. Suppose that the metrics $ \frk a_{\ep_n}^{-1} D_h^{\ep_n}$ are coupled so that they converge a.s.\ as $n \rta \infty$ to some metric $ D_h$ w.r.t.\ the lower semicontinuous topology.
Then almost surely, for every sequence of bounded continuous functions $f^n : \BB C\rta \BB R$ such that $f^n$ converges to a bounded continuous function $f$ uniformly on compact subsets of $\BB C$, the metrics $\frk a_{\ep_n}^{-1}  D_{h+f^n}^{\ep_n}$ converge to $e^{\xi f}\cdot D_h$ in the lower semicontinuous topology, where  $D_{h+f^n}^\ep$ is defined as in~\eqref{eqn-lfpp} with $h+f^n$ in place of $h$ and $e^{\xi f}\cdot D_h$ is defined as in~\eqref{eqn-metric-f}. 

Furthermore, if we have also chosen the sequence $\ep_n$ and a coupling of the metrics such that $\frk a_{\ep_n}^{-1} D_h^{\ep_n}(O,O') \rta D_h(O,O')$ a.s.\ for all rational circles $O,O'$, then we have $\frk a_{\ep_n}^{-1}  D_{h+f^n}^{\ep_n}(O,O') \rta e^{\xi f}\cdot D_h(O,O')$ a.s.\ for all rational circles $O,O'$.
\end{prop}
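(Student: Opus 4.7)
The strategy is to exploit the additivity $(h+f^n)_{\ep_n}^* = h_{\ep_n}^* + (f^n)_{\ep_n}^*$ together with a reparametrization of paths by $D_h^{\ep_n}$-arclength, which yields the exact identity
\[
\frk a_{\ep_n}^{-1} D_{h+f^n}^{\ep_n}(z,w) = \inf_{P : z \rta w} \int_0^{\frk a_{\ep_n}^{-1} \op{len}(P ; D_h^{\ep_n})} e^{\xi (f^n)_{\ep_n}^*(P(u))} \, du,
\]
where $P$ on the right is parametrized by $\frk a_{\ep_n}^{-1} D_h^{\ep_n}$-arclength. This exhibits the shifted LFPP metric as a ``Weyl-scaling'' of the base LFPP metric by the weight $e^{\xi (f^n)_{\ep_n}^*}$; the problem then reduces to showing that this scaling operation passes to the lower semicontinuous limit as we simultaneously perturb the base metric via $\frk a_{\ep_n}^{-1} D_h^{\ep_n} \rta D_h$ and the weight via $(f^n)_{\ep_n}^* \rta f$. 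The latter convergence (uniform on compacts) follows from the triangle inequality $|(f^n)_{\ep_n}^* - f| \leq |(f^n - f)_{\ep_n}^*| + |f_{\ep_n}^* - f|$, combined with uniform boundedness of the heat kernel on compacts and standard mollifier properties for bounded continuous functions.

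For condition (a) of the lower semicontinuous topology, fix $(z_n, w_n) \rta (z,w)$; we may assume by passing to a subsequence that $\frk a_{\ep_n}^{-1} D_{h+f^n}^{\ep_n}(z_n, w_n) \rta L < \infty$ (else the liminf is $+\infty$). The plan is to take near-optimal LFPP paths $P^n$ from $z_n$ to $w_n$, reparametrize them by $\frk a_{\ep_n}^{-1} D_h^{\ep_n}$-arclength on $[0, M_n]$ (with boundedness of $f^n$ forcing $M_n \leq L e^{\xi \|f\|_\infty} + o(1)$), discretize $[0, M_n]$ at equally spaced points $s_0 < \cdots < s_k$, and extract a Euclidean subsequential limit $x_i := \lim_n P^n(s_i)$ for each $i$. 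The lower semicontinuity of $\frk a_{\ep_n}^{-1} D_h^{\ep_n}$ at each $(x_{i-1}, x_i)$, together with uniform convergence $(f^n)_{\ep_n}^* \rta f$ and continuity of $f$, yields a Riemann-sum lower bound on the integrals above that converges (first $n \rta \infty$, then $k \rta \infty$) to $e^{\xi f} \cdot D_h(z,w)$. The main subtlety is ensuring the $P^n(s_i)$ do not escape to infinity; this uses an a priori bound available from~\cite{dg-supercritical-lfpp} that paths of bounded rescaled LFPP length have bounded Euclidean diameter.

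For condition (b), I would construct an approaching sequence explicitly. Given $\eta > 0$, choose a $D_h$-rectifiable path $R$ from $z$ to $w$ with $\int_0^{\op{len}(R ; D_h)} e^{\xi f(R(s))} \, ds \leq e^{\xi f} \cdot D_h(z,w) + \eta$, partition it at $z = x_0, \dots, x_k = w$ with each segment of small $D_h$-arclength, use condition (b) of the hypothesis $\frk a_{\ep_n}^{-1} D_h^{\ep_n} \rta D_h$ to pick $x_i^n \rta x_i$ with $\frk a_{\ep_n}^{-1} D_h^{\ep_n}(x_{i-1}^n, x_i^n) \rta D_h(x_{i-1}, x_i)$, and concatenate LFPP near-geodesics between successive $x_i^n$ to form a candidate path from $z_n := x_0^n$ to $w_n := x_k^n$. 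H\"older-type estimates for LFPP (from~\cite{dg-supercritical-lfpp}) control how far these near-geodesics stray from $R$, so $(f^n)_{\ep_n}^*$ along each piece is $f(x_i) + o(1)$. This yields an upper bound of $\sum_i e^{\xi (f(x_i) + o(1))} \frk a_{\ep_n}^{-1} D_h^{\ep_n}(x_{i-1}^n, x_i^n)$, which as $n \rta \infty$, then as the partition refines, then as $\eta \rta 0$, recovers $e^{\xi f} \cdot D_h(z,w)$; a diagonal argument produces a single approaching sequence. The rational-circles claim follows from the same construction, with $x_0^n, x_k^n$ chosen on the prescribed circles using the hypothesis $\frk a_{\ep_n}^{-1} D_h^{\ep_n}(O, O') \rta D_h(O, O')$ as the base case. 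I expect the main obstacle to be that the lower semicontinuous topology gives only one-sided control of distances for approaching sequences, so matching upper and lower bounds requires explicit path constructions together with careful accounting for the Euclidean wandering of LFPP near-geodesics near thick points of the GFF.
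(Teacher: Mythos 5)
Your high-level plan matches the paper's: rewrite $D_{h+f^n}^{\ep_n}$ as a Weyl-scaling of $D_h^{\ep_n}$ by $e^{\xi(f^n)_{\ep_n}^*}$, decompose near-optimal paths into segments of short $D_h^{\ep_n}$-arclength, pass to limits segment-by-segment, and re-sum. You also correctly identify the need for an a priori confinement of near-geodesics to compacta (the paper's Lemma~\ref{lem-geo-annuli}). However, there is a gap in the step where you pass from uniform convergence $(f^n)_{\ep_n}^* \rta f$ and continuity of $f$ to a ``Riemann-sum lower bound.'' That step needs the \emph{Euclidean} diameter of each segment $P^n([s_{i-1},s_i])$, not just the segment's $D_h^{\ep_n}$-arclength, to be small uniformly in $i$ for large $n$; knowing that the endpoints $P^n(s_{i-1}), P^n(s_i)$ converge to nearby limits does not by itself control the whole segment. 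Your remark for condition (b) that ``H\"older-type estimates for LFPP control how far these near-geodesics stray'' reads as an appeal to a reverse-H\"older bound that is uniform over the pre-limit metrics $\frk a_{\ep_n}^{-1}D_h^{\ep_n}$, but no such uniform estimate is available (or needed). What the paper actually uses, in Lemma~\ref{lem-diam}, is only the reverse-H\"older continuity of the \emph{limiting} metric $D_h$, combined with a subsequential extraction: pick a Euclidean-extremal pair inside each segment $A_n$, extract a convergent subsequence, apply lower semicontinuity of $D_n\rta D_\infty$ to bound $D_\infty$ of the limiting pair, and only then deduce small Euclidean diameter along a subsequence. This ``liminf'' form of the estimate is also why the paper's proof of Lemma~\ref{lem-general-metric-weyl} passes to subsequences of subsequences and finishes with a diagonal argument, a structure your sketch glosses over. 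With Lemma~\ref{lem-diam} in hand, the rest of your outline (including the rational-circles argument, which additionally needs the shrinking-annulus property~\ref{item-assumption-2} of Lemma~\ref{lem-general-rational-weyl}, supplied by the analog of~\cite[Lemma 5.12]{dg-supercritical-lfpp}) can be completed as you describe.
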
 

From this proposition, we may deduce that, if both $h$ and $h'$ are whole-plane GFFs plus  bounded continuous functions, and $\ep_n \rta 0$ is a sequence along which $ \frk a_{\ep_n}^{-1} D_h^{\ep_n} \rta D_h$ in law, then we also have $\frk a_{\ep_n}^{-1} D_{h'}^{\ep_n} \rta D_{h'}$ in law for some limiting metric $D_{h'}$. In fact, we can \emph{couple} $(h , h' , D_h , D_{h'})$ so that $h'- h$ is a bounded continuous function and $D_{h'} = e^{\xi(h' - h)} \cdot D_h$. 

In particular, by taking $h$ to be a whole-plane GFF, we see that~\eqref{eqn-limit-law} implies convergence in law whenever the underlying field is a whole-plane GFF plus a bounded continuous function.  Moreover, Proposition~\ref{prop-weyl-scaling} implies that the limiting metrics associated to fields that differ by a bounded continuous function satisfy Axiom~\ref{item-metric-f}.
\medskip

\noindent
\textbf{Section~\ref{sec-three-axioms-check}: Checking Axioms~\ref{item-metric-length},~\ref{item-metric-translate} and~\ref{item-metric-coord}, and a weaker version of Axiom~\ref{item-metric-local}.}
Next, we address assertion~\ref{item-issue-2} above; namely, showing that the limiting metric $D_h$ satisfies each of the five axioms listed in Definition~\ref{defn-weak-metric}.  As we have just noted, we may deduce Axiom~\ref{item-metric-f} from Proposition~\ref{prop-weyl-scaling} above; so we just need to check the other four axioms.  

Verifying   Axioms~\ref{item-metric-length},~\ref{item-metric-translate} and~\ref{item-metric-coord} is straightforward, but proving Axiom~\ref{item-metric-local} is a more difficult task.  Following~\cite{lqg-metric-estimates}, instead of proving Axiom~\ref{item-metric-local} directly, we begin by restricting to $h$ a whole-plane GFF and proving a weaker locality property of the subsequential limiting metric $D_h$: 

\begin{defn}
\label{defn-local-metric}
Let $U \subset \BB{C}$ be a connected open set, and let \changes{$(h,D_1,\ldots,D_n)$  be a coupling of  $h$ with random length metrics $D_1,\ldots,D_n$ on $U$}. We say that \changes{$D_1,\ldots,D_n$ are \textit{jointly local metrics}} for $h$ if, for any open set $V \subset U$, the \changes{collection of  internal metrics $\{D_j(\cdot,\cdot;V)\}_{j=1}^n$} is conditionally independent from the  pair\footnote{We  can equivalently formulate this condition by replacing this pair with the pair \changes{$(h|_{U \setminus V}  ,\{D_j(\cdot,\cdot; U\setminus \ol V)\}_{j=1}^n)$}.  The fact that the resulting conditions are equivalent is proven in the case in which $D$ is continuous  in~\cite[Lemma 2.3]{local-metrics}; the proof of that lemma extends directly to our more general setting.} \changes{\[ (h  , \{D_j(\cdot,\cdot; U\setminus \ol V)\}_{j=1}^n)\]} given $h|_{\ol V}$.  
\end{defn}
\begin{defn}
\label{defn-xi-additive}
\changes{
Let $U \subset \BB{C}$ be a connected open set, and let $(h,D_1,\ldots,D_n)$  be a coupling of  $h$ with random length metrics $D_1,\ldots,D_n$ on $U$ which are jointly local for $h$. We say that $\{D_j\}_{j=1}^n$ are \textit{$\xi$-additive}  for $h$ if, for each $z \in U$ and each $r > 0$ such that $B_r(z) \subset U$, the metrics $(e^{-\xi h_r(z)} D_1,\ldots,e^{-\xi h_r(z)} D_n)$ are jointly local for $h - h_r(z)$.
}\end{defn}

Note that these definitions are~\cite[Definitions 1.2 and 1.5]{local-metrics} with the assumption that the metrics are continuous removed.

\begin{prop}
\label{prop-check-3.5-axioms}
If $h$ is any whole-plane GFF plus a bounded continuous function, then every subsequential limit~\eqref{eqn-limit-law} $(h,D_h)$ of the rescaled $\ep$-LFPP metrics~\eqref{eqn-defn-rescaled-LFPP} satisfies Axioms~\ref{item-metric-length},~\ref{item-metric-translate} and~\ref{item-metric-coord} with $\frk c_r = r^{\xi Q + o_r(1)}$ as $\BB{Q} \ni r \rta 0$ for some $Q>0$.  Moreover, if $h$ is a whole-plane GFF, then $(h,D_h)$ is a $\xi$-additive local metric and $D_h$ is a complete \changes{and geodesic} metric on the set $U \backslash \{\text{singular points}\}$.
\end{prop}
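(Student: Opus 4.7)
The approach is to lift each property from the $\ep$-LFPP metrics to the subsequential limit, using lower semicontinuous convergence together with the Weyl scaling established in Proposition~\ref{prop-weyl-scaling} to absorb additive constants. Axioms~\ref{item-metric-length} and~\ref{item-metric-translate} are the most direct: the LFPP metric is tautologically a length metric, and its translation covariance $D_{h(\cdot+z)}^\ep(w_1,w_2)=D_h^\ep(w_1+z,w_2+z)$ is immediate from~\eqref{eqn-lfpp}. The translation identity passes to the limit because $(w_1,w_2)\mapsto(w_1+z,w_2+z)$ is a homeomorphism of $\BB C\times\BB C$, and Proposition~\ref{prop-weyl-scaling} absorbs the constant discrepancy between $h$ and $h(\cdot+z)$ for the whole-plane GFF. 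For the length property I would use the approximate midpoint characterization: given $z,w$ with $D_h(z,w)<\infty$, produce $z_n\to z$, $w_n\to w$ realizing the lower semicontinuous limit, pick approximate LFPP midpoints $u_n$, extract a Euclidean accumulation point $u$, and apply lower semicontinuity again to verify the midpoint inequality for $D_h$. Completeness on the nonsingular set (handled below) then upgrades approximate midpoints to the length-space property.

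For Axiom~\ref{item-metric-coord}, I would define $\frk c_r$ via~\eqref{eqn-defn-cr} after a further diagonal extraction, using that the ratios $r\frk a_{\ep/r}/\frk a_\ep$ are bounded above and below by powers of $r$ thanks to~\eqref{eqn-defn-Q(xi)}; the same estimate yields $\frk c_r=r^{\xi Q+o_r(1)}$. The tightness of $\frk c_r^{-1}e^{-\xi h_r(0)}D_h(\text{across}\,\BB A)$ and of the around-distance analogue then follows by combining the scale invariance $h(r\cdot)-h_r(0)\eqD h-h_1(0)$ of the whole-plane GFF with the LFPP scaling relation $D_h^\ep(rz,rw)=r\cdot D_{h(r\cdot)}^{\ep/r}(z,w)$, an $e^{\xi h_r(0)}$ Weyl factor, and the fact that tightness of the rescaled distance at scale $1$ (from~\cite{dg-supercritical-lfpp}) passes to the subsequential limit via Proposition~\ref{prop-weyl-scaling}.

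For $h$ a whole-plane GFF, locality and $\xi$-additivity follow the template of~\cite{local-metrics}. The $\ep$-LFPP internal metric $D_h^\ep(\cdot,\cdot;V)$ is a measurable function of the mollified field $h_\ep^*|_V$, hence approximately of $h|_V$ up to an $\ep$-neighborhood. Decomposing $h$ on a neighborhood of $V$ as the sum of its harmonic extension from $\partial V$ plus an independent zero-boundary part, one obtains the conditional independence of $D_h^\ep(\cdot,\cdot;V)$ from $(h,D_h^\ep(\cdot,\cdot;U\setminus\ol V))$ given $h|_{\ol V}$, up to boundary effects that vanish as $\ep\to 0$. To transfer this to $D_h$, I would test the conditional independence against the countable family of events $\{D_h(O,O';V)>t\}$ for rational circles $O,O'\subset V$ and rational $t$; by lower semicontinuity these events determine the law of $D_h(\cdot,\cdot;V)$, and the $\ep$-level conditional independence is preserved in the limit. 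The $\xi$-additivity then comes from applying Proposition~\ref{prop-weyl-scaling} to the constant $-h_r(z)$, conditioning on its value and using independence of $h_r(z)$ from $h-h_r(z)$ restricted to $B_r(z)$. Completeness on $\BB C\setminus\{\text{singular points}\}$ follows because a $D_h$-Cauchy sequence has bounded Euclidean diameter via set-to-set distance estimates inherited from LFPP, hence a Euclidean accumulation point $z^\ast$; if $z^\ast$ were singular, distances from $z^\ast$ to the tail of the sequence would be infinite, a contradiction.

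The principal difficulty is the locality step, since the convergence in~\eqref{eqn-limit-law} is only lower semicontinuous and the limit metric can equal $+\infty$, so standard measurability-transfer arguments valid for continuous metrics do not apply verbatim. My plan is to reduce the desired conditional independence to countably many scalar statements about $D_h$-distances between rational circles inside $V$, where convergence of distances does hold along the subsequence, and to control the heat-kernel-mollification boundary effects uniformly in $\ep$. Once locality is established, $\xi$-additivity and completeness follow by comparatively routine applications of Proposition~\ref{prop-weyl-scaling} and the set-to-set distance bounds of the preceding sections.
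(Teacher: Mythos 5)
Your overall strategy tracks the paper's, but there is a genuine gap in the step that transfers locality from the $\ep$-level to the subsequential limit, and this is precisely where the real work lies. You propose to pass conditional independence through the limit by testing against the countable family $\{D_h(O,O';V)>t\}$, but the convergence~\eqref{eqn-limit-law} is in the lower semicontinuous topology for the metrics $D^{\ep_n}_h$ on all of $\BB C$; it does \emph{not} by itself give convergence of the internal metrics $D^{\ep_n}_h(\cdot,\cdot;V)$, nor of distances between rational circles with respect to those internal metrics. (Internal distances can jump up in the limit because minimizing paths may try to escape $V$.) The paper addresses this with Lemma~\ref{lem-internal-radius}, which produces a tail-measurable radius $R$ such that the internal metrics restricted to $B_R(x)$ do converge for each interior point $x$; one then covers $\ol W$ by finitely many such balls. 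Without that device, your ``preserved in the limit'' assertion does not go through.

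A second, related issue: you want to argue conditional independence directly at the $\ep$-level ``up to boundary effects that vanish.'' The paper instead works with the localized LFPP metric $\wh D^\ep_h$ of~\eqref{eqn-localized-lfpp}, which is \emph{exactly} (not approximately) locally determined, and it converts the statement into an exact independence between two tuples built from $h|_V$, $\rng h$, and the internal $\wh D^\ep$-metrics of $h$ and of $h - \phi\frk h$ on separated sets $\ol W$, $\ol W'$; exact independence is what can be cleanly passed through a weak limit, after which Lemma~\ref{lem-localized-compare} identifies $\wh D^{\ep_n}$ with $D^{\ep_n}$. Your sketch conflates the two and would need a rigorous substitute for both ingredients. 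The remaining parts (Axioms~\ref{item-metric-length},~\ref{item-metric-translate},~\ref{item-metric-coord}, completeness, and the derivation of $\xi$-additivity by conditioning on $h_r(z)$ and using its independence from $h - h_r(z)$ on $B_r(z)$) match the paper's route; note the paper simply cites~\cite{dg-supercritical-lfpp} for the length property and completeness rather than redoing a midpoint argument, and it defines $\frk c_r$ via~\eqref{eqn-defn-cr} and the scale relation~\eqref{eqn-equiv-cr} exactly as you suggest.
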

\medskip

\noindent
\textbf{Sections~\ref{sec-geo-behavior}-~\ref{sec-measurability}: Proving Axiom~\ref{item-metric-local}.} 
We now restrict to the case in which $h$ is a whole-plane GFF, and we show that $D_h$ satisfies Axiom~\ref{item-metric-local} by proving the following general assertion:

\begin{prop}
\label{prop-meas-general}
 Let $U \subset \BB C$, and let $h$ be a whole-plane GFF normalized so that $h_1(0) = 0$.  Let $(h,D)$ be a coupling of $h$ with a random lower semicontinuous metric $D$ on $U$.  Suppose that $D$ is a local $\xi$-additive metric for $h$ that  satisfies Axioms~\ref{item-metric-length},~\ref{item-metric-translate}, and~\ref{item-metric-coord} with $\frk c_r = r^{\xi Q + o_r(1)}$ as $\BB{Q} \ni r \rta 0$ for some $Q>0$. Moreover, suppose that 
$D$ is a complete \changes{and geodesic} metric on the set $U \backslash \{\text{singular points}\}$.

Then $D$ satisfies Axiom~\ref{item-metric-local}.  In particular, $D$ is almost surely determined by $h$.
\end{prop}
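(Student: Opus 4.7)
The plan is to upgrade the conditional-independence statement in Definition~\ref{defn-local-metric} to the measurability assertion in Axiom~\ref{item-metric-local}, adapting the general strategy of \cite{local-metrics, gm-confluence, gm-uniqueness} from the continuous $\cc < 1$ regime to the lower-semicontinuous setting. First I would reduce the axiom to the following: for any fixed open $U \subset \BB C$, the conditional law of $D(\cdot,\cdot;U)$ given $h$ is a.s.\ a point mass. Given this, approximating $U$ from within by open subsets whose closures are contained in $U$, and using that $D$ is a length space (Axiom~\ref{item-metric-length}) and is complete on $U \setminus \{\text{singular points}\}$, promotes measurability w.r.t.\ $h|_{\overline U}$ to measurability w.r.t.\ $h|_U$. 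Equivalently, I would sample two conditionally independent copies $D^1$ and $D^2$ of the metric given $h$ and aim to show $D^1 = D^2$ almost surely.

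The main tool would be a decomposition of geodesics into crossings of dyadic annuli. Using Axiom~\ref{item-metric-coord} together with the scaling $\frk c_r = r^{\xi Q + o_r(1)}$, the ``around'' and ``across'' distances of an annulus $\BB A_{r,2r}(z)$ are of order $\frk c_r e^{\xi h_r(z)}$ up to tight multiplicative fluctuations (the quantitative version of Proposition~\ref{prop-two-set-dist}). By Lemma~\ref{lem-avoids-thick}, a $D$-geodesic between nonsingular points a.s.\ avoids $\alpha$-thick points for some deterministic $\alpha < Q$, so such a geodesic can be approximated by a concatenation of annular crossings at finitely many dyadic scales, with total error tending to zero in $h|_U$-measurable fashion. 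For each annulus, the local $\xi$-additive property, together with the Markov property of the GFF, makes the corresponding annular distance conditionally independent of $h$ outside that annulus given $h$ inside. Thus $D^1$ and $D^2$ agree on every pair of nonsingular points in $U$ once an ``auxiliary'' randomness that is measurable at the annular level is controlled.

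The main obstacle is then to rule out this residual randomness. For this I would combine translation invariance (Axiom~\ref{item-metric-translate}) with the $\xi$-additivity applied at an increasing sequence of scales to set up a Kolmogorov-type $0$-$1$ law: the tail $\sigma$-algebra of the conditional law of $D$ given $h$ is invariant under independent shifts at all small scales, hence a.s.\ trivial, forcing the conditional law of $D$ given $h$ to be a point mass. Proposition~\ref{prop-singular}(a) (whose characterization of (a dense subset of) the set of singular points depends only on $h$) then upgrades the agreement of $D^1$ and $D^2$ on nonsingular points to agreement everywhere. The principal new difficulty relative to \cite{local-metrics} is handling the dense set of singular points: one cannot work with a globally finite-valued and continuous metric, so all geodesic approximations must take place on $\BB C \setminus \{\text{singular points}\}$, and the completeness hypothesis plus Lemma~\ref{lem-avoids-thick} must be invoked to ensure that the approximating concatenations stay in the finite-valued regime. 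The final ``in particular'' statement, that $D$ is determined by $h$, follows by taking $U = \BB C$.
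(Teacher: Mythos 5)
Your proposal diverges from the paper's argument at the decisive step, and the step you substitute is not adequately justified. The paper does \emph{not} use a Kolmogorov-type $0$-$1$ law; it uses the Efron--Stein inequality on a randomly shifted grid of $\ep\times\ep$ squares (Lemma~\ref{lem-threelemmas}): one writes $\Var[D(O,O')\mid h,\theta] \le \frac12\sum_{S\in\mcl S_\theta^\ep}\BB E[(D^S(O,O')-D(O,O'))^2\mid h,\theta]$, where $D^S$ resamples the internal metric on $S$, and then shows this sum tends to $0$ as $\ep\rta 0$. Your $0$-$1$ law proposal is not spelled out, and I do not see how it could be: $D(O,O')$ is manifestly \emph{not} tail-measurable with respect to the internal metrics on dyadic annuli around a point (the large-scale pieces carry a positive fraction of the distance), so the Kolmogorov $0$-$1$ law does not apply to it. Conversely, the conditional independence of internal metrics across disjoint regions given $h$ is exactly the input that makes Efron--Stein applicable but by itself gives no triviality statement about the conditional law; the quantitative variance bound is essential.

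Equally important, you cite Lemma~\ref{lem-avoids-thick} (geodesics avoid $\alpha$-thick points for some deterministic $\alpha<Q$) as the mechanism that controls the approximation error, but that is not the ingredient the proof actually needs. The crucial new technical input, and the one the paper highlights as the fundamental obstruction in the supercritical phase, is Lemma~\ref{lem-geo-doesnt-dawdle}: a near-$D$-geodesic path cannot spend a macroscopic amount of $D$-length in a small Euclidean neighborhood, i.e. $\limsup_{\ep\rta 0}\sup_{y}\len(P\cap B_\ep(y);D)\le\delta$. Without this, resampling the internal metric on a single small square $S$ could change $D(O,O')$ by a constant order amount (precisely because of the dense singular/near-singular points), so the Efron--Stein sum cannot be shown to vanish. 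Lemma~\ref{lem-avoids-thick} is a corollary of the same underlying estimate (Lemma~\ref{lem-thickbound}) but is used for a different purpose (forthcoming confluence results), and substituting it for Lemma~\ref{lem-geo-doesnt-dawdle} leaves the variance bound unproven. You also omit the preliminary bi-Lipschitz equivalence of two conditionally independent samples (Lemma~\ref{lem-bilipschitz} and Lemma~\ref{lem-5.1}), which is needed to make the Efron--Stein terms finite and to compare $\len(P\cap S;D^S)$ with $\len(P\cap S;D)$. Finally, a minor structural point: the paper sidesteps the singular-point issue not by invoking Proposition~\ref{prop-singular}(a) but by reducing to rational circles $O,O'$ and using lower semicontinuity, so no separate argument about singular points is needed in the measurability proof itself.
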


The analogous result in the continuous setting is the main result of~\cite{local-metrics}. That paper proved the result by dividing a $D$-geodesic path into small Euclidean neighborhoods and applying an Efron-Stein argument.  However, in trying to adapt this method to the supercritical setting, we encounter a fundamental obstacle: when $Q<2$, the path may spend a constant order amount of time in arbitrarily small Euclidean neighborhoods.  These neighborhoods roughly correspond to points where the field is $\alpha$-thick for $\alpha > Q$.  (See Proposition~\ref{prop-singular}.)  To overcome this issue, we show in Lemma~\ref{lem-geo-doesnt-dawdle} that almost $D$-geodesic paths \changes{(and in particular $D$-geodesic paths)} do not spend a macroscopic amount of time in a small Euclidean neighborhood. Our analysis also implies (Lemma~\ref{lem-avoids-thick}) that $D$-geodesics  a.s.\ do not contain any thick points with thickness $>\zeta$ for some deterministic (but nonexplicit) $\zeta < Q$.

After proving Proposition~\ref{prop-meas-general}, we may combine it with Proposition~\ref{prop-check-3.5-axioms} to get Axiom~\ref{item-metric-local} when $h$ is a whole-plane GFF.  We then deduce the case of general $h$ fairly easily.

\begin{lem}
\label{cor-axiom-2-check}
If $h$ is a whole-plane GFF
plus a bounded continuous function, then every subsequential limit~\eqref{eqn-limit-law} of the rescaled $\ep$-LFPP metrics~\eqref{eqn-defn-rescaled-LFPP} satisfies Axiom~\ref{item-metric-local}.
\end{lem}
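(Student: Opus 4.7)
The plan is to combine Propositions~\ref{prop-check-3.5-axioms} and~\ref{prop-meas-general} to first establish Axiom~\ref{item-metric-local} when $h$ is a whole-plane GFF, and then extend to $h$ a whole-plane GFF plus a bounded continuous function via the Weyl scaling from Proposition~\ref{prop-weyl-scaling}.

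For the first step, I would observe that Proposition~\ref{prop-check-3.5-axioms} verifies exactly the hypotheses of Proposition~\ref{prop-meas-general} whenever $h$ is a whole-plane GFF: the pair $(h, D_h)$ is a $\xi$-additive local metric satisfying Axioms~\ref{item-metric-length},~\ref{item-metric-translate},~\ref{item-metric-coord} with $\frk c_r = r^{\xi Q + o_r(1)}$, and $D_h$ is complete on the complement of its set of singular points. The conclusion of Proposition~\ref{prop-meas-general} is precisely Axiom~\ref{item-metric-local}, so there is a deterministic measurable function $\Phi = \Phi_U$ such that $D_{\tilde h}(\cdot,\cdot;U) = \Phi(\tilde h|_U)$ almost surely for any whole-plane GFF $\tilde h$ and any deterministic open set $U \subset \BB C$.

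For the extension, write $h = \tilde h + g$ with $\tilde h$ a whole-plane GFF and $g$ a bounded continuous function. Proposition~\ref{prop-weyl-scaling} gives $D_h = e^{\xi g} \cdot D_{\tilde h}$ almost surely, and taking infima over paths contained in $U$ on both sides of~\eqref{eqn-metric-f} yields the internal-metric version
\[
D_h(\cdot,\cdot; U) \;=\; e^{\xi g|_U} \cdot D_{\tilde h}(\cdot,\cdot; U) \;=\; e^{\xi g|_U} \cdot \Phi(h|_U - g|_U).
\]
To deduce $\sigma(h|_U)$-measurability, I would check that the right-hand side is insensitive to the choice of decomposition: for any two decompositions $h = \tilde h_i + g_i$ ($i=1,2$) realized on a common probability space, the difference $g_2 - g_1 = \tilde h_1 - \tilde h_2$ is a bounded continuous function on $\BB C$, so applying Weyl scaling to the identity $\tilde h_1 = \tilde h_2 + (g_2 - g_1)$ together with the whole-plane GFF locality gives $\Phi(\tilde h_1|_U) = e^{\xi(g_2 - g_1)|_U} \cdot \Phi(\tilde h_2|_U)$, and substituting back yields $e^{\xi g_1|_U} \cdot \Phi(\tilde h_1|_U) = e^{\xi g_2|_U} \cdot \Phi(\tilde h_2|_U)$.

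The main subtle point is upgrading this consistency across decompositions to the genuine statement that $D_h(\cdot,\cdot;U)$ takes the same value on any two realizations $h^{(1)}, h^{(2)}$ of a whole-plane GFF plus a bounded continuous function that agree on $U$. Fixing a canonical decomposition (e.g.\ requiring the whole-plane GFF part to have mean zero on a fixed reference ball) reduces this to the claim that the continuous part is effectively determined by $h|_U$, which can be handled via the Cameron-Martin theorem when the perturbation lies in the Cameron-Martin space of the whole-plane GFF and then extended to general bounded continuous functions by approximation, using the lower semicontinuity of the subsequential limit.
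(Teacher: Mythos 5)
Your first step — combining Propositions~\ref{prop-check-3.5-axioms} and~\ref{prop-meas-general} to obtain Axiom~\ref{item-metric-local} when $h$ is a whole-plane GFF — matches the paper. The extension to general $h$ is where you diverge from the paper and where there is a genuine gap, which you correctly flag but do not close.

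The Weyl-scaling identity $D_h(\cdot,\cdot;U)=e^{\xi g|_U}\cdot\Phi_U(h|_U-g|_U)$ expresses the internal metric as a function of the pair $(h|_U,g|_U)$, but Axiom~\ref{item-metric-local} demands $\sigma(h|_U)$-measurability, and when $g$ is random the decomposition $h=\tilde h+g$ is not recoverable from $h|_U$. Your consistency check shows only that the expression is independent of the chosen decomposition — i.e.\ it is a well-defined function of $h$ — which is strictly weaker than factoring through $h|_U$, and proving the latter is exactly the content of the axiom you are trying to verify, so the argument becomes circular without a further input. The proposed fixes do not supply that input: no normalization makes the GFF part of $h$ a function of $h|_U$; the Cameron-Martin theorem governs absolute continuity of laws rather than pathwise recovery of $g$; a bounded continuous function need not lie in the Cameron-Martin space of the whole-plane GFF; and the closing ``approximation using lower semicontinuity'' is carrying all the weight while left unexplained (lower semicontinuity gives one-sided bounds in a limit, not the preservation of $\sigma$-algebra membership). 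The paper avoids the issue entirely by never introducing a decomposition: it works with the localized LFPP metric $\wh D_h^{\ep_n}$ from Lemma~\ref{lem-localized-compare}, whose internal metric on an open set $W$ with $\ol W\subset U$ is, by construction, a.s.\ a measurable function of $h|_{B_{\ep_n^{1/2}}(W)}\subset h|_U$ once $\ep_n$ is small. Lemma~\ref{lem-internal-radius} then produces a tail-measurable (hence $\sigma(h|_U)$-measurable) radius $R_x$ on which the restricted internal metrics converge to those of $D_h$, and covering a compact exhaustion of $U$ by finitely many such balls and letting $V,W\uparrow U$ transfers $\sigma(h|_U)$-measurability to $D_h(\cdot,\cdot;U)$ directly.
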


\medskip

\noindent
\textbf{Section~\ref{sec-complete}: Almost sure convergence of metrics and distances between rational circles.}
To finish the proof of Theorem~\ref{thm-lfpp-axioms}, we first show that the convergence~\eqref{eqn-limit-law} to $(h,D_h)$ occurs not just in law, but also in probability (and therefore a.s.\ along a further subsequence).  Indeed, since $D_h$ is a.s.\ determined by $h$ by Axiom~\ref{item-metric-local}, we immediately get that the convergence occurs in probability, by the following elementary probabilistic lemma:
    
    \begin{lem}[{\cite[Lemma 4.5]{ss-contour}}]
    \label{lem-prob}
    Suppose that $(\Omega_1,d_1)$ and $(\Omega_2,d_2)$  are complete separable metric spaces, $\mu$ is a Borel probability measure on $\Omega_1$, and  $\phi_n: \Omega_1 \rta \Omega_2$ is a sequence of Borel measurable functions. Moreover, suppose that, with $X \sim \mu$, we have $(X,\phi_n(X)) \rta (X,\phi(X))$ in law as $n \rta \infty$, for some Borel measurable  function $\phi: \Omega_1 \rta \Omega_2$.  Then the functions $\phi_n$, viewed as random variables on the probability space $\Omega_1$, converge to $\phi$ in probability.
    \end{lem}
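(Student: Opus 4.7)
The plan is to augment the sequence with an extra coordinate and exploit uniqueness of the joint law on the augmented space. Define
\[
W_n := (X, \phi_n(X), \phi(X))
\]
as a random variable valued in $\Omega_1 \times \Omega_2 \times \Omega_2$. By hypothesis, the projection of $W_n$ onto the first two coordinates converges in law to $(X,\phi(X))$, while its projection onto the first and third coordinates equals $(X,\phi(X))$ for every $n$. In particular each marginal is tight, so by Prokhorov's theorem the sequence $\{W_n\}$ is tight in the Polish space $\Omega_1 \times \Omega_2 \times \Omega_2$.

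Next I would identify any subsequential limit. Fix a subsequence along which $W_n$ converges in law to some $W^* = (X^*, Y^*, Z^*)$. By continuity of projections, $(X^*,Y^*)$ has the law of $(X,\phi(X))$, and likewise $(X^*,Z^*)$ has the law of $(X,\phi(X))$. Since the graph $\{(x,y) \in \Omega_1 \times \Omega_2 : y = \phi(x)\}$ is a Borel set (being the zero set of the Borel function $(x,y) \mapsto d_2(y,\phi(x))$) that has full measure under the law of $(X,\phi(X))$, it follows that $Y^* = \phi(X^*)$ and $Z^* = \phi(X^*)$ almost surely; in particular $d_2(Y^*,Z^*) = 0$ a.s.

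To conclude, I would apply the continuous mapping theorem to the continuous function $d_2 \colon \Omega_2 \times \Omega_2 \rta [0,\infty)$: along the chosen subsequence, $d_2(\phi_n(X),\phi(X))$ converges in law to $d_2(Y^*,Z^*) = 0$. Since the limit is the same for every subsequential limit extracted from $\{W_n\}$, the full sequence $d_2(\phi_n(X),\phi(X))$ converges in law, and hence in probability, to the constant $0$. This is exactly the assertion that $\phi_n \rta \phi$ in probability on $(\Omega_1,\mu)$.

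The main subtlety, and the reason the argument is not entirely vacuous, is that $\phi$ is only assumed Borel measurable rather than continuous: one cannot directly push the trivial convergence $X \rta X$ through $\phi$ to conclude that $(\phi_n(X),\phi(X))$ jointly converges to $(\phi(X),\phi(X))$. The augmentation trick sidesteps this issue by carrying $\phi(X)$ as a redundant third coordinate and using the fact that any joint subsequential limit must respect the almost sure functional relationship $y = \phi(x)$ in both the $(X^*,Y^*)$ and $(X^*,Z^*)$ marginals, forcing $Y^* = Z^*$ a.s.
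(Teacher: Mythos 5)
Your proof is correct. The paper does not supply its own argument for this lemma---it simply cites~\cite[Lemma~4.5]{ss-contour}---so there is no in-text proof to compare against. Your route augments the sequence with a redundant third coordinate $\phi(X)$ and rests on three standard facts: a sequence of probability measures on a product of Polish spaces is tight if all of its marginal sequences are (so Prokhorov's theorem supplies subsequential weak limits); the graph of a Borel function $\phi\colon\Omega_1\rta\Omega_2$ into a Polish space is a Borel subset of $\Omega_1\times\Omega_2$, so any subsequential limit $(X^*,Y^*,Z^*)$ must satisfy $Y^*=\phi(X^*)=Z^*$ almost surely; and convergence in law to a constant implies convergence in probability. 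All three steps are valid and the argument is complete. A more common textbook route is via Lusin's theorem: restrict $\phi$ to a compact $K\subset\Omega_1$ with $\mu(K)>1-\delta$ on which $\phi$ is continuous, apply the Portmanteau theorem to the closed set $\{(x,y):x\in K,\ d_2(y,\phi(x))\geq\epsilon\}$, which is a null set for the law of $(X,\phi(X))$, and let $\delta\rta 0$. Your augmentation trick trades Lusin's theorem for Prokhorov's theorem plus the Borel-graph fact; both are standard and comparable in length, and yours has the minor aesthetic advantage of avoiding any continuity approximation of $\phi$.
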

    
Finally, in Lemma~\ref{lem-as-circles}, we show that the LFPP distances between pairs of rational circles converge almost surely along subsequences. 

We conclude the proof of Theorem~\ref{thm-lfpp-axioms} by showing that we  can define a weak LQG metric $D: h \mapsto D_h$ that agrees with the limiting metrics~\eqref{eqn-limit-law} whenever $h$ is a whole-plane GFF plus a bounded continuous function.

\subsection{Weyl scaling}
\label{sec-weyl-scaling}

To prove Proposition~\ref{prop-weyl-scaling}, we first consider a general sequence of  metrics that converges to a  metric in the lower semicontinuous topology that satisfies a continuity assumption, and a general sequence of functions that converges uniformly on compact subsets to some continuous function. We show that the desired Weyl scaling property for convergence of the metrics holds under a mild boundedness condition on geodesics.

\begin{lem}
\label{lem-general-metric-weyl}
Let $\{ D_n\}_{n \in \BB{N}}$ be a sequence of metrics in $\BB C$ that converges to a metric $D_\infty$ in the lower semicontinuous topology.  Assume that the metric space $(\BB{C},D_n)$ is a length space for each $n \in \ol{\BB{N}}$, and that the identity map from $(\BB{C}, D_\infty)$ to $(\BB{C}, |\cdot|)$ is continuous, where $|\cdot|$ denotes the Euclidean metric. 
Let $f_n$ be a sequence of continuous functions  converging uniformly on compact subsets to some continuous function $f_\infty$. 
\changes{Finally, suppose the following two properties hold for some fixed compact sets $K \subset K' \subset K''$.
\begin{enumerate}
\item 
\emph{(Boundedness of almost geodesic paths.)} 
 \label{item-assumption-1}
  Let $n \in \ol{\BB{N}}$, and suppose that $P$ is a path in $K$ (resp. $K'$) between some pair of points $z,w$, such that the $D_n$-length of $P$ is less than $D_n(z,w)+1$, or the $e^{f_n} \cdot D_n$-length of $P$ is less than $e^{f_n} \cdot D_n(z,w)+1$. Then $P$ is contained in $K'$ (resp. $K''$).  
\item
\emph{(Short distances around small annuli.)} 
    \label{item-assumption-2}
    For each nonsingular point $z \in K'$, there is a sequence of rational circles $O^m$ surrounding $z$ whose radii shrink to zero, such that if $\wh{O}^m$ denotes the rational circle with the same center as $O^m$ and twice the radius, and $A^m$ the annulus bounded by $O^m$ and $\wh{O}^m$, then
    \eqb
    \lim_{m \rta \infty} \lim_{n \rta \infty} D_n(\text{around $A^m$}) = 0
\label{eqn-second-assumption}
    \eqe
   (We can equivalently state~\eqref{eqn-second-assumption} with  $e^{f_n} \cdot D_n$  instead of $D_n$, since the metrics $e^{f_n} \cdot D_n$ and $D_n$ are bi-Lipschitz equivalent on $K''$ for all $n$ with bi-Lipschitz constant uniform in $n$.)
   \end{enumerate}} Then the metrics $e^{f_n} \cdot D_n$ restricted to $K$ converge to $e^{f_\infty} \cdot D_\infty$ restricted to $K$ in the lower semicontinuous topology.
\end{lem}

Our proof follows the argument of~\cite[Lemma 7.1]{df-lqg-metric}, with several modifications to account for the different topology of convergence and for the fact that our metrics do not induce the Euclidean topology.   Roughly speaking, to prove Lemma~\ref{lem-general-metric-weyl}, we first decompose a path of near-minimal $(e^{f_n} \cdot D_n)$-length into a collection of short segments.  By bounding the total variation of $f_n$ along a segment $S$, we can compare the $(e^{f_n} \cdot D_n)$-length of $S$ to its $D_n$-length times the value of $e^{f_n}$ at some point of $S$.  This allows us to deduce convergence of $e^{f_n} \cdot D_n$ in the lower semicontinuous topology from convergence of $f_n$ and the metrics $D_n$. 

To bound the variation of $f_n$ along these short segments, we apply the following lemma:

\begin{lem}
\label{lem-diam}
Let $\{ D_n\}_{n \in \BB{N}}$ be a sequence of metrics in $\BB C$ that converges to a metric $D_\infty$ in the lower semicontinuous topology, such that the identity map from $(\BB{C}, D_\infty)$ to $(\BB{C}, |\cdot|)$ is continuous. \changes{Let $G \subset \BB{C}$ be compact, and let $f_n$ be a sequence of continuous functions  converging uniformly on compact subsets to some continuous function $f_\infty$.}   Then we can choose a continuous function $\omega: [0,\infty) \rta [0,\infty)$ with $\omega(0)=0$ such that the following is true.  If $\{ A_n\}_{n \in \ol{\BB{N}}}$ are compact subsets of $G$ with 
\[\liminf_{n \rta \infty} \diam(A_n;D_n) \leq \delta \qquad \text{and} \qquad \diam(A_\infty;D_\infty) \leq \delta,\] then
\[
\liminf_{n \rta \infty} V_{A_n}(f_n) < \omega(\delta) \qquad \text{and} \qquad V_{A_\infty}(f_\infty) < \omega(\delta),\]
where $V_A(f)$ denotes the total variation of $f$ on the set $A$.
\end{lem}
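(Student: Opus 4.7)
The plan is to derive the desired $\omega$ by combining a uniform modulus of continuity for the family $\mcl F$ with a uniform estimate that controls Euclidean diameter in terms of $D_n$- or $D_\infty$-diameter for compact subsets of $G$.

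First, since $\mcl F$ is relatively compact in the uniform topology on $C(G)$, the Arzel\`a--Ascoli theorem gives that $\mcl F$ is equicontinuous on $G$, so there is a continuous nondecreasing $\omega_f : [0,\infty) \to [0,\infty)$ with $\omega_f(0)=0$ such that $|f(x)-f(y)| \leq \omega_f(|x-y|)$ for all $f \in \mcl F$ and $x,y \in G$. Interpreting $V_A(f)$ as the oscillation $\sup_A f - \inf_A f$, this yields $V_A(f) \leq \omega_f(\diam(A;|\cdot|))$ for every $f \in \mcl F$ and compact $A \subset G$.

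Next, I will show there exists $\eta:[0,\infty)\to[0,\infty)$ with $\eta(0)=0$ such that any compact $A \subset G$ of small $D_n$- or $D_\infty$-diameter has small Euclidean diameter in the following senses: if $\liminf_n \diam(A_n;D_n) \leq \delta$ then $\liminf_n \diam(A_n;|\cdot|) \leq \eta(\delta)$, and if $\diam(A_\infty;D_\infty) \leq \delta$ then $\diam(A_\infty;|\cdot|) \leq \eta(\delta)$. Both assertions are proved by contradiction. If either failed, one would extract a sequence of pairs $(x_m,y_m) \in G\times G$ with $|x_m-y_m| \geq c$ for some fixed $c>0$, but with either $D_{n_m}(x_m,y_m) \to 0$ along some $n_m\to\infty$ or $D_\infty(x_m,y_m)\to 0$. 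By Euclidean compactness of $G$, one passes to a subsequence along which $x_m \to x$ and $y_m \to y$, with $|x-y| \geq c$. In the first scenario, the lower semicontinuous convergence $D_n \to D_\infty$ forces $D_\infty(x,y) \leq \liminf_m D_{n_m}(x_m,y_m) = 0$; in the second, the lower semicontinuity of $D_\infty$ itself (which is built into the ambient topology on lower semicontinuous functions) yields the same bound. Since $D_\infty$ is a metric, $D_\infty(x,y)=0$ forces $x=y$, contradicting $|x-y|\geq c>0$.

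With these ingredients, one sets $\omega(\delta) := \omega_f(\eta(\delta)) + \delta$ and replaces it with a continuous nondecreasing majorant if needed; the extra $\delta$ guarantees the strict inequality demanded in the conclusion. The main subtlety is that the limiting metric $D_\infty$ can have singular points and its topology is in general strictly finer than the Euclidean one, so one cannot simply invoke uniform continuity of the identity map $(\BB C, D_\infty) \to (\BB C, |\cdot|)$ on $G$. The crucial input that closes the argument is the lower semicontinuity of $D_\infty$ as a function $\BB C \times \BB C \to \BB R \cup \{\infty\}$ with respect to the Euclidean topology, which rules out the degenerate scenario of two Euclidean-separated points having vanishing $D_\infty$-distance. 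I expect the resulting $\eta$, obtained via a soft compactness argument, to come with no explicit quantitative rate, but this is enough for the intended application in Lemma~\ref{lem-general-metric-weyl}.
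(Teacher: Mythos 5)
Your proof is correct and follows essentially the same route as the paper's: Arzel\`a--Ascoli gives a uniform modulus for $\mcl F$, and the Euclidean compactness of $G$ together with lower semicontinuity (and lower semicontinuous convergence $D_n \to D_\infty$) controls Euclidean diameter by $D_n$- or $D_\infty$-diameter. The paper constructs the modulus $\varphi$ directly and then picks diametrical pairs $z_n,w_n\in A_n$, while you obtain the modulus $\eta$ via a contradiction/compactness argument, but the underlying mechanism is identical; your remark that one cannot simply cite uniform continuity on a $D_\infty$-compact set is a valid observation that the paper leaves implicit.
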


\begin{proof}
First, we can choose an \changes{increasing} continuous function $\varphi: [0,\infty) \rta [0, \infty)$ with $\varphi(0) = 0$ such that $|x-y| < \varphi(D_\infty(x,y))$ for all $x,y \in G$. Moreover, by the Arzel\`{a}-Ascoli theorem, we can choose \changes{an increasing} continuous function $\psi: [0,\infty) \rta [0,\infty)$ with $\psi(0) = 0$ such that $|f_n(x) - f_n(y)| < \psi(|x-y|)$ for all $n \in \ol{ \BB{N}}$ and all $x,y \in G$.  Set $\omega = \psi \circ \varphi$.  

Now, for each $n \in \ol{\BB{N}}$, we choose $z_n,w_n \in A_n$ with $|z_n-w_n|$ maximal.  Then
\[
\changes{\diam(A_\infty)} = |z_\infty - w_\infty| < \varphi(D_\infty(z_\infty,w_\infty)) \leq \varphi(\delta).
\]
and so $V_{A_\infty}(f_\infty) < \psi(\varphi(\delta)) = \omega(\delta)$.

Next, we choose an increasing sequence $\{n_k\}_{k \in \BB{N}} \subset \BB{N}$ for which the corresponding subsequences $\{z_{n_k}\}_{k \in \BB{N}}$ and $\{w_{n_k}\}_{k \in \BB{N}}$ converge to some $z,w \in G$, respectively. \changes{ Since $D_n \rta D_\infty$ in the lower semicontinuous topology,}
\[
|z - w| < \varphi(D_\infty(z,w)) \leq \varphi\left(\liminf_{k \rta \infty} D_{n_k}(z_{n_k},w_{n_k})\right) \leq \varphi(\delta).
\]
Thus, for all $k \in \BB{N}$ sufficiently large, $|z_{n_k} - w_{n_k}| < \varphi(\delta)$ and so $V_{A_{n_k}}(f_{n_k}) < \psi(\varphi(\delta)) = \omega(\delta)$.
\end{proof}

\changes{We also need the following general result for metrics defined as in~\eqref{eqn-metric-f}.

\begin{lem}
\label{lem-weyl-length-lem}
For each $\delta>0$ and $\ell \in \BB{N}$, we can choose points $z=z_0,z_1,\ldots,z_{\ell-1},z_\ell=w$ with $D(z_{j-1},z_j) < \delta$ for each $j$, and such that
\eqb
e^{f} \cdot D(z,w) + \delta \geq  \sum_{j=1}^{\ell} e^{f} \cdot  D(z_{j-1},z_{j}).
\label{eqn-weyl-length-lem}
\eqe
\end{lem}

\begin{proof}
By definition of $e^f \cdot D$, we can choose a path $P:[0,L] \rta \BB{C}$ from $z$ to $w$ parametrized by $D$-length with
\eqb
\int_0^{L} e^{f(P(t))} dt < e^{f} \cdot D(z,w) + \delta.
\label{eqn-weyl-length-pf-1}
\eqe
Choose $\ell > L/\delta$, let $z_j = P( jL/\ell)$, and let $P_j: [0,L/\ell] \rta \BB{C}$ be the path from $z_{j-1}$ to $z_j$ along $P$; i.e., $P_j(t) = P((j-1)L/\ell + t)$.  We have  $D(z_{j-1},z_j) \leq \len(P_i;D) < \delta$ for each $j$.  Moreover,
\eqb
\int_0^{L} e^{f(P(t))} dt 
=
\sum_{j=1}^\ell \int_0^{L/\ell} e^{f(P_j(t))} dt 
\geq 
\sum_{j=1}^\ell e^f \cdot D(z_{j-1},z_j).
\label{eqn-weyl-length-pf-2}
\eqe
Combining~\eqref{eqn-weyl-length-pf-1} and~\eqref{eqn-weyl-length-pf-2} yields~\eqref{eqn-weyl-length-lem}.
\end{proof}
}

\begin{proof}[Proof of Lemma~\ref{lem-general-metric-weyl}]
To show that the metrics $e^{f_n} \cdot D_n$ converge to $e^{f_\infty} \cdot D_\infty$ in the lower semicontinuous topology, we must check the following two conditions:
\begin{enumerate}
\item \label{beer-a}
If $(z_n,w_n)$ is a sequence  in $K \times K$ that converges to a point $(z,w)$, then $\liminf_{n \rta \infty} e^{f_n} \cdot D_n(z_n,w_n) \geq e^{f_\infty} \cdot D_\infty(z,w)$.
\item \label{beer-b}
If $(z,w) \in K \times K$, then there  exists a sequence $(z_n,w_n)$   in $K \times K$ that converges to  $(z,w)$ such that $\liminf_{n \rta \infty} e^{f_n} \cdot D_n(z_n,w_n) = e^{f_\infty} \cdot D_\infty(z,w)$.
\end{enumerate}

 \medskip

\noindent\textit{Proof of Condition~\ref{beer-a}.}
\medskip

\noindent First, if $\liminf_{n \rta \infty} e^{f_n} \cdot D_n(z_n,w_n)$ is infinite, then the result trivially holds; so assume that $\liminf_{n \rta \infty} e^{f_n} \cdot D_n(z_n,w_n)$ is finite. To prove condition~\ref{beer-a}, we show that, for any infinite increasing sequence $\mcl N'$ of natural numbers, we can find an infinite subsequence $\mcl N$ such that $\liminf_{\mcl N \ni n \rta \infty} e^{f_n} \cdot D_n(z_n,w_n) \geq e^{f_\infty} \cdot D_\infty(z,w)$.   

Let  $\mcl N'$ be such a sequence, and fix $\delta > 0$. \changes{
Let $\mcl N$ be an infinite subsequence of $\mcl N'$ for which the distances $d_n := e^{f_n} \cdot D_n(z_n,w_n) + \delta$ are uniformly bounded
by $\ell \delta$ for some $\ell \in \BB{N}$.  For each $n \in \mcl N$, we can consider a path from $z_n$ to $w_n$ with $D_n$-length at most $d_n$.  Since $d_n \leq \ell \delta$, we can choose points $z_n=z_n^0,z_n^1,\ldots,z_n^{\ell-1}, z_n^{\ell}=w_n \in K'$ along this path, such that $D_n(z_n^j,z_n^{j+1}) \leq \delta$ for each $j$ and
\eqb
e^{f_n} \cdot D_n(z_n,w_n) + \delta \geq \sum_{j=0}^{\ell-1} e^{f_n} \cdot  D_n(z_n^j,z_n^{j+1}).
\label{eqn-geodesic-decomp}
\eqe}
Moreover, by replacing $\mcl N$ by a subsequence if necessary, we can assume that, for each $j$, the points $z_{n}^j$ converge to some point $ z_\infty^j \in K'$ as $n \rta \infty$ along $\mcl N$.  Since $ D_n \rta D_\infty$ in the lower semicontinuous topology, we also have $D_\infty(z^j_\infty,z_\infty^{j+1}) \leq \delta$ for each $j$.

\changes{By property~\ref{item-assumption-1} in the lemma statement, for each $n \in \ol{\BB{N}}$ and each $j$, each path between $z_n^j$ and $z_n^{j+1}$  with $D_n$-length sufficiently close to $D_n(z_n^j,z_n^{j+1})$ is a subset of $K''$  with $D_n$-diameter $\leq 2\delta$.  Since the functions $\{f_n\}_{n \in \ol{\BB{N}}}$ are uniformly bounded on $K''$ by some $M>0$, we deduce that each path between $z_n^j$ and $z_n^{j+1}$  with $(e^{f_n} \cdot D_n)$-length sufficiently close to $(e^{f_n} \cdot D_n)(z_n^j,z_n^{j+1})$ is a subset of $K''$ with $D_n$-diameter at most $2 e^{M} \delta$.  By Lemma~\ref{lem-diam}, we can choose a continuous function $\omega: [0,\infty) \rta [0,\infty)$ with $\omega(0)=0$ such that the total variation of $f_n$ on all of these paths is at most $\omega(\delta)$.  It follows that, for each $n \in \ol{\mathbb{N}}$ and $j$,
\eqb
\label{eqn-f-along-segment}
e^{-\omega(\delta)}  \leq 
\frac{(e^{f_n} \cdot D_n)(z_n^j,z_n^{j+1})}{e^{f_n(z_n^j)} \cdot D_n(z_n^j,z_n^{j+1}) } \leq e^{\omega(\delta)}
\eqe
}
Hence,
\[
    e^{f_n} \cdot D_n(z_n,w_n) + \delta
\geq      \sum_{j=0}^{\ell-1} e^{f_n} \cdot  D_n(z_n^j,z_n^{j+1})
\geq     e^{-\omega(\delta)} \sum_{j=0}^{\ell-1} e^{f_n(z_n^j)} \cdot  D_n(z_n^j,z_n^{j+1}) 
\]
for each $n \in \mcl N$, and so
\alb
\liminf_{\mcl N \ni n \rta \infty} e^{f_n} \cdot D_n(z_n,w_n) + \delta
&\geq     e^{-\omega(\delta)} \liminf_{\mcl N \ni n \rta \infty}  \sum_{j=0}^{\ell-1} e^{f_n(z_n^j)} \cdot  D_n(z_n^j,z_n^{j+1}) \\
&\geq e^{-\omega(\delta)} \sum_{j=0}^{\ell-1} e^{f_\infty(z_\infty^j)} \cdot  D_\infty(z_\infty^j,z_\infty^{j+1}) 
\\
&\geq e^{-2\omega(\delta)}  \sum_{j=0}^{\ell-1} e^{f_\infty} \cdot  D_\infty(z_\infty^j,z_\infty^{j+1})   \qquad \text{\changes{by~\eqref{eqn-f-along-segment}}}
\\
&\geq e^{-2\omega(\delta)}   e^{f_\infty} \cdot  D_\infty(z,w).
\ale
Condition~\ref{beer-a} follows from taking $\delta$ arbitrarily small and applying a diagonal argument.
\medskip

\noindent\textit{Proof of Condition~\ref{beer-b}.}
\medskip

\noindent
First, if $D_\infty(z,w) = \infty$, then since  $D_n$ converges to $D_\infty$ a.s.\ in the lower semicontinuous topology, 
\changes{we have $D_n(z_n,w_n) \rta \infty$ as $n \rta \infty$ for some sequences $\{z_n\}$ and $\{w_n\}$. Since the functions $\{f_n\}$ are uniformly bounded, this implies that $e^{f_n} \cdot D_n(z_n,w_n) \rta \infty$ as $n \rta \infty$.} Since $e^{f_\infty}\cdot D_\infty(z,w) = \infty$, the condition is satisfied. So assume that $D_\infty(z,w)$ is finite.

\changes{
By Lemma~\ref{lem-weyl-length-lem},} we can find points $z= z_\infty^0,z_\infty^1,\ldots,z_\infty^\ell=w \in K'$ with
\eqb
e^{f_\infty} \cdot D_\infty(z,w) + \delta \geq  \sum_{j=0}^{\ell-1} e^{f_\infty} \cdot  D_\infty(z_\infty^j,z_\infty^{j+1}) 
\label{eqn-geodesic-decomp-b}
\eqe
and $D_\infty(z_\infty^j,z_\infty^{j+1}) < \delta$ for each $j$.  Since $D_n \rta D_\infty$ in the lower semicontinuous topology, we can choose points $z_n^j$ and $\changes{\wt z_n^{j+1}}$ converging to $z_\infty^j$ and $\changes{z_\infty^{j+1}}$ for each $j \in [0,\ell-1]_{\BB Z}$, such that \eqb
\liminf_{n \rta \infty}  D_n(z_n^j,\changes{ \wt z_n^{j+1}}) \changes{=} D_\infty(z_\infty^j,z_\infty^{j+1}), \label{eqn-liminf-assume-b}
\eqe 
for each $j$.
In particular, we can choose an infinite increasing sequence $\mcl N$ of natural numbers such that $D_n(z_n^j,\changes{\wt z_n^{j+1}})< \delta$ for each $n \in \mcl N$.
\changes{As in~\eqref{eqn-f-along-segment}, we may apply Lemma~\ref{lem-diam} to deduce that, for each $n \in \mcl N$ and $j \in [0,\ell-1]_{\BB{Z}}$, 
\eqb
\label{eqn-f-along-segment-b}
e^{-\omega(\delta)}  \leq 
\frac{(e^{f_n} \cdot D_n)(z_n^j,\changes{ \wt z_n^{j+1}})}{e^{f_n(z_n^j)} \cdot D_n(z_n^j, \changes{ \wt z_n^{j+1}}) } \leq e^{\omega(\delta)}
\eqe
Moreover, by  property~\ref{item-assumption-2} in the statement of the lemma, for each $j \in [1,\ell-1]_{\BB Z}$, we can choose an annulus $A^j$ surrounding $z_\infty^j$ with diameter at most $\delta$, such that \eqb
\label{eqn-around-bound-2}
\liminf_{n \rta \infty} e^{f_n} \cdot D_n(\text{around $A^j$}) \leq \delta/\ell.
\eqe}
 Therefore, setting $z_n = z_n^0$ and $w_n = z_n^\ell$,
\alb
\liminf_{n \rta \infty} e^{f_n} \cdot D_n(z_n,w_n) &\leq \liminf_{n \rta \infty} \sum_{j=0}^{\ell-1} e^{f_n} \cdot  D_n(z_n^j, \changes{ \wt z_n^{j+1}}) \changes{ +  \liminf_{n \rta \infty}   \sum_{j=1}^{\ell-1} e^{f_n} \cdot  D_n(\text{around $A^j$})} \\
&\leq 
\liminf_{n \rta \infty}  \sum_{j=0}^{\ell-1} e^{f_n} \cdot  D_n(z_n^j, \changes{ \wt z_n^{j+1}}) \changes{+ \delta} \qquad \text{\changes{by~\eqref{eqn-around-bound-2}}}\\
&\leq 
\liminf_{n \rta \infty}  e^{\omega(\delta)} \sum_{j=0}^{\ell-1} e^{f_n(z_n^j)} \cdot  D_n(z_n^j, \changes{ \wt z_n^{j+1}}) \changes{+ \delta}\qquad \text{\changes{by~\eqref{eqn-f-along-segment-b}}}
\ale
for each $n \in \mcl N$,
and so
\alb
\liminf_{\mcl N \ni n \rta \infty} e^{f_n} \cdot D_n(z_n,w_n) 
&\leq     e^{\omega( \delta)} \liminf_{\mcl N \ni n  \rta \infty}  \sum_{j=0}^{\ell-1} e^{f_n(z_n^j)} \cdot  D_n(z_n^j,z_n^{j+1}) \changes{+ \delta}\\
&\leq e^{\omega(\delta )} \sum_{j=0}^{\ell-1} e^{f_\infty(z_\infty^j)} \cdot  D_\infty(z_\infty^j,
\changes{\wt z_\infty^{j+1}})  \changes{+ \delta} \qquad \text{by~\eqref{eqn-liminf-assume-b}}
\\
&\leq e^{2\omega( \delta )}  \sum_{j=0}^{\ell-1} e^{f_\infty} \cdot  D_\infty(z_\infty^j,z_\infty^{j+1})   \changes{+ \delta} \qquad \text{\changes{by~\eqref{eqn-f-along-segment-b}}}
\\
&\leq e^{2\omega(  \delta )}  ( e^{f_\infty} \cdot  D_\infty(z,w) + \delta)  \changes{+ \delta}. \qquad \text{by~\eqref{eqn-geodesic-decomp-b}}
\ale
Condition~\ref{beer-b} follows from taking $\delta$ arbitrarily small and applying a diagonal argument.
\end{proof}

Finally, we prove a lemma proving the convergence of distances between pairs of rational circles with respect to a general sequence of metrics that converges in the lower semicontinuous topology.

\begin{lem}
\label{lem-general-rational-weyl}
In the setting of Lemma~\ref{lem-general-metric-weyl}, if
    \eqb
    D_n(O,O') \rta D_\infty(O,O') \qquad \text{for all rational circles $O,O'$ in $K'$,}
    \label{eqn-first-assumption}
    \eqe 
then 
    \eqb
   e^{f_n} \cdot D_n(O,O') \rta e^{f_\infty} \cdot D_\infty(O,O') \qquad \text{for all rational circles $O,O'$ in $K$.}
    \eqe 
\end{lem}

\begin{proof}
The result is trivial if $D_\infty(O,O') = \infty$, so suppose that $D_\infty(O,O') < \infty$. It suffices to show that $\liminf_{n \rta \infty} e^{f_n} \cdot D_n(O,O') \leq e^{f_\infty}\cdot D_\infty(O,O')$. 

Fix $\delta  > 0$. 
By Lemma~\ref{lem-weyl-length-lem}, we can find points $ z^0,z^1,\ldots,z^\ell\in K'$, with $z^0 \in O$ and $z^\ell \in O'$, such that
\[
e^{f_\infty} \cdot D_\infty(O,O') + \delta \geq  \sum_{j=0}^{\ell-1} e^{f_\infty} \cdot  D_\infty(z^j,z^{j+1}) 
\]
and $D_\infty(z^j,z^{j+1}) < \delta$ for each $j$.  
\changes{Fix $j \in [1,\ell-1]_{\BB Z}$. Since the set of rational points is dense in $\BB{C}$, there exists a sequence of rational circles containing $z^j$ with radii shrinking to zero. By property~\ref{item-assumption-2} in the statement of Lemma~\ref{lem-general-metric-weyl},} this implies that we can choose a rational circle  $O^j \subset K'$ containing $z^j$ with diameter at most $\delta$, such that, if $\wh{O}^j$ denotes the circle with the same center as $O^j$ and twice the radius, and $A^j$ is the annulus bounded by $O^j$ and $\wh{O}^j$, then \[
\liminf_{n \rta \infty} e^{f_n} \cdot D_n(\text{around $A^j$}) \changes{\leq \delta/\ell}.
\]
\changes{Also, property~\ref{item-assumption-2}  implies that we may choose the radii of these annuli to be small enough that the annuli are disjoint for different $j$.}
Thus, denoting $O^0 = O$ and $O^\ell = O'$, we have \eqb
e^{f_\infty} \cdot D_\infty(O,O') + \delta \geq \sum_{j=0}^{\ell-1} e^{f_\infty} \cdot  D_\infty(O^j,O^{j+1}) 
\label{eqn-geodesic-decomp1}
\eqe
and
\allb\nonumber
\liminf_{n \rta \infty} e^{f_n} \cdot D_n(O,O') &\leq \liminf_{n \rta \infty} \sum_{j=0}^{\ell-1} e^{f_n} \cdot  D_n(O^j,O^{j+1}) +  \liminf_{n \rta \infty}   \sum_{j=1}^{\ell-1} e^{f_n} \cdot  D_n(\text{around $A^j$}) \\
&\leq 
\liminf_{n \rta \infty}  \sum_{j=0}^{\ell-1} e^{f_n} \cdot  D_n(O^j,O^{j+1}) + \delta.
\label{eqn-geodesic-decomp2}
\alle
By Lemma~\ref{lem-diam} and property~\ref{item-assumption-1} in the statement of Lemma~\ref{lem-general-metric-weyl}, we can choose a continuous function $\omega: [0,\infty) \rta [0,\infty)$ with $\omega(0)=0$ such that, for each $n \in \ol{\BB{N}}$ and each $j \in [0,\ell-1]_{\BB{Z}}$, the total variation of $f_n$ along $O^j$ satisfies
\eqb
\label{eqn-total-var-O}
V_{O^j}(f_n) \leq \omega(\delta).
\eqe
\changes{Moreover, as in~\eqref{eqn-f-along-segment}, we may apply Lemma~\ref{lem-diam} to deduce that, for each $n \in \mcl N$ and $j \in [0,\ell-1]_{\BB{Z}}$, 
\eqb
\label{eqn-f-along-segment-O}
e^{-\omega(\delta)}  \leq 
\frac{(e^{f_n} \cdot D_n)(O^j,O^{j+1})}{e^{f_n(z_n^j)} \cdot D_n(O^j,O^{j+1}) } \leq e^{\omega(\delta)}
\eqe for some  $z_n^j \in O^j$. Hence,}
\alb
\liminf_{n \rta \infty} e^{f_n} \cdot D_n(O,O') 
&\leq     e^{\omega(\delta)} \liminf_{n \rta \infty}  \sum_{j=0}^{\ell-1} e^{f_n(z_n^j)} \cdot  D_n(O^j,O^{j+1}) + \delta \qquad \text{\changes{by~\eqref{eqn-f-along-segment-O}}}\\
&\leq e^{\omega(\delta)} \sum_{j=0}^{\ell-1} \left(\liminf_{n \rta \infty} e^{f(z_n^j)}\right) \cdot  D_\infty(O^j,O^{j+1}) + \delta\qquad \text{by~\eqref{eqn-first-assumption}}
\\
&\leq e^{2\omega(\delta) + \omega(\delta) }  \sum_{j=0}^{\ell-1} e^{f_\infty} \cdot  D_\infty(O^j,O^{j+1})   + \delta \qquad \text{\changes{by~\eqref{eqn-total-var-O} and~\eqref{eqn-f-along-segment-O}}}
\\
&\leq e^{2\omega(\delta) + \omega(\delta)}   (e^{f_\infty} \cdot  D_\infty(z,w) + \delta) + \delta \qquad \text{by~\eqref{eqn-geodesic-decomp1}}
\ale
The result follows from taking $\delta$ arbitrarily small and applying a diagonal argument. 
\end{proof}

To apply Lemmas~\ref{lem-general-metric-weyl} and~\ref{lem-general-rational-weyl}  to our setting, we must check that the conditions in the lemma hold in our setting with high probability.

\begin{lem}
\label{lem-geo-annuli}
For every  $C>0$ \changes{and $r>0$},
\[
\lim_{R \rta \infty}  \BB P(D_h(\text{around $\BB A_{r,2r}(0)$}) \changes{ + C < (1/C) \cdot} D_h(\text{across $\BB A_{2r,R}(0)$})) = 1.
\]
\end{lem}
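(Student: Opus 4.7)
The plan is to bound the distance around the small annulus from above and the distance across the large annulus from below, showing that the latter grows without bound as $R \rta \infty$.

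First, for any $\varepsilon > 0$, the tightness in Axiom~\ref{item-metric-coord} lets me choose a constant $M > 0$ such that, with probability at least $1 - \varepsilon/2$,
\[
D_h(\text{around } \BB A_{r,2r}(0)) \leq M\,\frk c_r e^{\xi h_r(0)} .
\]
Next, I would set $A_k := \BB A_{2^k r, 2^{k+1} r}(0)$ for $k = 1, \dots, N-1$, where $N = \lfloor \log_2(R/r) \rfloor$.  Since any path realizing the across-distance of $\BB A_{2r,R}(0)$ must cross every $A_k$,
\[
D_h(\text{across } \BB A_{2r,R}(0)) \geq \max_{k \in [1,N-1]_{\BB Z}} D_h(\text{across } A_k) ,
\]
and Axiom~\ref{item-metric-coord} provides universal constants $c_0, p > 0$ such that the event
\[
E_k := \{ D_h(\text{across } A_k) \geq c_0\, \frk c_{2^k r} e^{\xi h_{2^k r}(0)} \}
\]
has probability at least $p$ for every $k$.

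I would then show that the ratio $\frk c_{2^k r} e^{\xi h_{2^k r}(0)} / (\frk c_r e^{\xi h_r(0)})$ tends to infinity in probability as $k \rta \infty$.  For this I would use the scaling $\frk c_r = r^{\xi Q + o_r(1)}$ as $r \rta 0$, which for LFPP subsequential limits follows from~\eqref{eqn-defn-Q(xi)} together with the definition~\eqref{eqn-defn-cr}, and which gives $\frk c_{2^k r}/\frk c_r \sim 2^{k \xi Q}$.  Meanwhile $h_{2^k r}(0) - h_r(0)$ is a Brownian-motion increment of variance $k \log 2$ (since $t \mapsto h_{e^{-t}}(0)$ is a two-sided Brownian motion up to additive constant), so it is only $O(\sqrt{k})$ in absolute value.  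The polynomial growth $2^{k \xi Q}$ therefore dominates, and the ratio blows up in probability.

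The last step is to combine these ingredients: the events $E_k$ depend on $h$ on the disjoint annuli $A_k$, but they are not independent because of the circle-average correlations.  Using the Markov decomposition of the whole-plane GFF into a harmonic function (determined by the circle averages) plus independent zero-boundary GFFs on disjoint annuli, the $E_k$ become conditionally independent given the collection $\{ h_{2^j r}(0) \}_j$.  A Borel--Cantelli argument across $k \in [1, N-1]_{\BB Z}$ then shows that, for $R$ sufficiently large, with probability at least $1 - \varepsilon/2$ there exists some $k$ for which $E_k$ holds and simultaneously $c_0 \frk c_{2^k r} e^{\xi h_{2^k r}(0)} \geq (M/C) \frk c_r e^{\xi h_r(0)}$.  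Combined with the upper bound from the first step, this proves the lemma.  The main obstacle is this final combination, which requires careful use of the Markov decomposition of the GFF together with the quantitative scaling $\frk c_r \sim r^{\xi Q}$; the weaker polynomial bound in~\eqref{eqn-scaling-constant} alone would not suffice to control the Gaussian circle-average fluctuations.
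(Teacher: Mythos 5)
Your proposal has a circularity problem that makes it unusable at this point in the paper. Lemma~\ref{lem-geo-annuli} is proved in Section~\ref{sec-weyl-scaling}, where the only thing known about $D_h$ is the weak convergence~\eqref{eqn-limit-law}: $D_h$ is a subsequential limit of rescaled LFPP metrics. Nothing has yet been verified about Axiom~\ref{item-metric-coord} (tightness across scales) or the scaling constants $\frk c_r$ for $D_h$. Those are established only later, in Proposition~\ref{prop-check-3.5-axioms} (Section~\ref{sec-three-axioms-check}) and Proposition~\ref{prop-metric-scaling} (Section~\ref{sec-properties}). Crucially, the proof of Proposition~\ref{prop-check-3.5-axioms} invokes Proposition~\ref{prop-weyl-scaling}, and the proof of Proposition~\ref{prop-weyl-scaling} invokes Lemma~\ref{lem-geo-annuli}. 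So your appeals to Axiom~\ref{item-metric-coord} in the first step, and to $\frk c_r = r^{\xi Q + o_r(1)}$ in the third, are using downstream results whose proofs depend on the very lemma you are trying to prove.

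The paper avoids this entirely by never touching the axioms: it works directly with the prelimiting LFPP metrics $\frk a_\ep^{-1} D_h^\ep$. The tightness of $\frk a_\ep^{-1} D_h^\ep(\text{around } \BB A_{r,2r}(0))$ is cited from [dg-supercritical-lfpp, Proposition 4.1], and the divergence (as $R \to \infty$) of $\frk a_\ep^{-1} D_h^\ep(\partial B_{2r}(0), \partial B_R(0))$ from [dg-supercritical-lfpp, Lemma 4.12]. These give the statement for LFPP, and the conclusion for $D_h$ follows by passing to the weak limit in the lower semicontinuous topology (which only decreases across-distances and preserves the inequality in the correct direction). No axioms for $D_h$ are needed.

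Separately, even setting the circularity aside, your final ``Borel--Cantelli plus conditional independence'' step is more delicate than you acknowledge. The events $E_k$ you define depend both on the field restricted to an annulus and on the circle average $h_{2^k r}(0)$; conditioning on the collection $\{h_{2^j r}(0)\}_j$ does not render them independent, and the relevant Markov decomposition gives independence given full boundary data, not just circle averages. The correct tool in this paper for iterating annulus events is Lemma~\ref{lem-annulus-iterate}, which requires events measurable with respect to the \emph{recentered} field $(h - h_{r_j}(0))$ restricted to an annulus --- a structure your $E_k$ do not quite have, and a lemma that is in any case introduced later in the paper. If you want to pursue a GFF-based argument for this statement, you would need to reformulate $E_k$ in the recentered form and establish the needed quantitative inputs from the LFPP side directly, rather than via the yet-unproven axioms.
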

\begin{proof}
By~\cite[Proposition 4.1]{dg-supercritical-lfpp},
\eqb
\lim_{T \rta \infty} \lim_{\ep \rta 0}
\BB P(
\frk a_{\ep}^{-1}  D_h^{\ep}( \text{around $\BB{A}_{r,2r}(0)$}) \leq T) = 1.
\label{eqn-lem-geo-annuli-2a}
\eqe
On the other hand, for each fixed $T$,~\cite[Lemma 4.12]{dg-supercritical-lfpp} gives
\eqb
\lim_{R \rta \infty} \lim_{\ep \rta 0}
\BB P(
\frk a_{\ep}^{-1}  D_h^{\ep}( \partial B_R(0), \partial B_{2r}(0) ) \geq T) = 1.
\label{eqn-lem-geo-annuli-2b}
\eqe
Combining~\eqref{eqn-lem-geo-annuli-2a} and~\eqref{eqn-lem-geo-annuli-2b} gives
\eqb
\lim_{R \rta \infty} \lim_{\ep \rta 0} \BB P(\frk a_{\ep}^{-1}  D_h^{\ep}(\text{around $\BB A_{r,2r}(0)$}) \changes{ + C < (1/C) \cdot} \frk a_{\ep}^{-1}  D_h^{\ep}(\text{across $\BB A_{2r,R}(0)$})) = 1.
\label{eqn-lem-geo-annuli-2}
\eqe
The result follows from~\eqref{eqn-lem-geo-annuli-2} and 
the weak subsequential convergence $\frk a_{\ep}^{-1}  D_h^{\ep} \rta D_h$ in the lower semicontinuous topology.
\end{proof}

\begin{proof}[Proof of Proposition~\ref{prop-weyl-scaling}]
We first prove convergence of the metrics in the lower semicontinuous topology. Let $f_{\ep_n}^{*,n} = f^n*p_{\ep_n^2/2}$ be defined as in~\eqref{eqn-gff-convolve} with  $f^n$ in place of $h$. 
Then $f_{\ep_n}^{*,n} \rta f$ uniformly on compact subsets of $\BB C$.
By the definition~\eqref{eqn-lfpp} of LFPP, we have $D_{h+f^n}^{\ep_n} = e^{\xi f_{\ep_n}^{*,n}} \cdot D_h^{\ep_n}$. 
Thus, to prove the convergence of metrics in the proposition, it suffices to show that 
$
e^{f_n} \cdot D_n \rta e^{f_\infty} \cdot D_\infty
$ a.s.\ in the lower semicontinuous topology, where
\[ D_n = \frk a_{\ep_n}^{-1}   D_h^{\ep_n}, \qquad D_\infty = D_h, \qquad f_n = \xi f_{\ep_n}^{*,n}, \qquad \text{and} \quad f_\infty = \xi f. \]
By definition of the lower semicontinuous topology, it is enough to prove convergence of the metrics restricted to $B_r(0)$ on an event with probability $1 - \kappa$, for each fixed $r>0$ and $\kappa \in (0,1)$.

By Lemma~\ref{lem-geo-annuli}, we can choose $r',r'' > 0$ such that the following holds on an event $\mcl E_\kappa$ with probability at least $1 - \kappa$, for some sufficiently large integer $N$.  Suppose $D = D_n$ or $D = e^{f_n} \cdot D_n$ for $n = \infty$ or some integer $n > N$.  \changes{Then, if $P$ is a path between a pair of points $z,w$ in $B_r(0)$ (resp. $B_r'(0)$) with $D$-length less than $D(z,w) + 1/\kappa$, then $P$ is contained in $B_{r'}(0)$   (resp. $B_{r''}(0)$).  }

In other words, on the event $\mcl E_\kappa$, the metrics $D_n$ for $n \in \Ninf$ with $n \geq N$ satisfy the path boundedness condition of Lemma~\ref{lem-general-metric-weyl}  with $K = B_{r}(0)$, $K' = B_{r'}(0)$ and $K'' = B_{r''}(0)$.
Also, the local H\"older continuity assumption holds by~\cite[Theorem 1.3, Assertion 3]{dg-supercritical-lfpp}.  Thus, the a.s.\ convergence of the metrics  $\left( e^{f_n} \cdot D_n \right)|_{B_r(0)}$ on the event $\mcl E_\kappa$ follows from Lemma~\ref{lem-general-metric-weyl}.

Next, we prove the second part of the proposition.  Let $O,O'$ be rational circles, and choose $r>0$ so that $O,O' \subset B_r(0)$. Again, it suffices to prove that $e^{f_n} \cdot D_n(O,O') \rta e^{f_\infty} \cdot D_\infty(O,O')$  on an event with probability $1 - \kappa$ for each fixed $\kappa > 0$. We will prove this assertion by applying Lemma~\ref{lem-general-rational-weyl}.  To apply this lemma, the metrics $D_n$ must satisfy (for large enough $n$) both the hypotheses of   Lemma~\ref{lem-general-metric-weyl} (with $K = B_r(0)$) and properties~\ref{item-assumption-1} and~\ref{item-assumption-2} in the statement of Lemma~\ref{lem-general-rational-weyl}.  We have already shown in the previous paragraph that the metrics $D_n$ satisfy the hypotheses of   Lemma~\ref{lem-general-metric-weyl} for large enough $n$, and we have assumed in the statement of Proposition~\ref{prop-weyl-scaling} that the metrics satisfy property~\ref{item-assumption-1}.  Finally,  property~\ref{item-assumption-2} is immediate from~\cite[Lemma 5.12]{dg-supercritical-lfpp} in the special case in which $h$ is a whole-plane GFF.  This implies that property~\ref{item-assumption-2} must hold for general $h$, since for every bounded continuous function $f$, the metrics  $D_h^{\ep}$ and $D_{h+f}^\ep$ are bi-Lipschitz equivalent with bi-Lipschitz constant uniform in $\ep>0$.  Hence,
we may apply Lemma~\ref{lem-general-rational-weyl} to obtain the desired convergence $e^{f_n} \cdot D_n(O,O') \rta e^{f_\infty} \cdot D_\infty(O,O')$.
\end{proof}
\subsection{Checking Axioms~\ref{item-metric-length},~\ref{item-metric-translate} and~\ref{item-metric-coord}, and a locality property}
\label{sec-locality-2}
\label{sec-three-axioms-check}

In this section, we prove Proposition~\ref{prop-check-3.5-axioms}, which asserts that every weak subsequential limit $(h,D_h)$ satisfies Axioms~\ref{item-metric-length},~\ref{item-metric-translate} and~\ref{item-metric-coord}  with $\frk c_r = r^{\xi Q + o_r(1)}$ as $\BB{Q} \ni r \rta 0$ for some $Q>0$.  Moreover, the proposition states that, in the special case in which $h$ is a whole-plane GFF, $D_h$ is a complete metric on the set $U \backslash \{\text{singular points}\}$, as defined in Definition~\ref{defn-singular}, and $(h,D_h)$ is a $\xi$-additive local metric, as defined in Definitions~\ref{defn-local-metric} and~\ref{defn-xi-additive}.  
The $\xi$-additive locality property is a weaker version of \changes{Axiom~\ref{item-metric-local}}; in the sections that follow, we will combine it with the other properties listed in the proposition to obtain  \changes{Axiom~\ref{item-metric-local}.  }

Most of the work of this section consists of proving this $\xi$-additive locality property.  Our proof closely follows the approach of~\cite[Section 2.5]{lqg-metric-estimates}.  We begin by stating a version of this property in which we restrict to values of $r>0$  for which $B_r(z) \subset V$.

\begin{lem}
\label{lem-property-1-check-simp}
Let $(h,D_h)$ be as in~\eqref{eqn-limit-law}.  
Let $r> 0$ and $z \in \BB{C}$, and let $V$ be an open subset of $\BB{C}$ that contains $B_{r}(z)$ and whose closure is not all of $\BB{C}$.  Set $\hf = h - h_{r}(z)$. Then the internal metric $ D(\cdot,\cdot;  V)$ is conditionally independent from the pair $ (\hf  ,  D_\hf(\cdot,\cdot; U\setminus \ol V))$ given $\hf|_{\ol V}$.
\end{lem}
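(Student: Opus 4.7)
The plan is to derive the conditional independence from an analogous property of LFPP, leveraging the subsequential convergence $\frk a_{\ep_n}^{-1} D_h^{\ep_n} \rta D_h$ together with the Markov decomposition of the whole-plane GFF. This follows the approach of~\cite[Section 2.5]{lqg-metric-estimates}, with modifications to account for the lower semicontinuous topology (rather than local uniform convergence) and the presence of singular points. Throughout, the subtraction of $h_r(z)$ in $\hf = h - h_r(z)$ is bookkeeping to make things well-defined for the whole-plane GFF; since $h_r(z)$ is measurable with respect to $\hf|_{\ol V}$, it is harmless.

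First I would set up the Markov decomposition. Write $h = \mathring{h}^V + \wt{h}^V$, where $\wt{h}^V$ is the distributional harmonic extension of $h|_{\BB C \setminus V}$ to $V$ and $\mathring{h}^V$ is an independent zero-boundary GFF on $V$. The conditional law of $h|_V$ given $h|_{\BB C \setminus V}$ is that of $\wt{h}^V + \mathring{h}^V$, and consequently $\hf|_{\ol V}$ together with the independent bulk $\mathring h^V$ determines $h|_V$ (up to the normalization), while conditionally on $\hf|_{\ol V}$, the bulk $\mathring h^V$ is independent of $h|_{\BB C \setminus \ol V}$, hence of the pair $(\hf, D_\hf(\cdot,\cdot;U\setminus \ol V))$.

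Next I would transport locality from LFPP to the limit. For any $V' \Subset V$ and $\ep$ small, the LFPP internal metric $D_h^\ep(\cdot,\cdot;V')$ is a deterministic functional of the mollified field $h_\ep^*|_{V'}$. The heat-kernel mollification does not have compact support, so $h_\ep^*|_{V'}$ depends on $h|_{\BB C\setminus V}$ through a Gaussian-tail contribution; I would split $h_\ep^* = (h|_V)_\ep^* + (h|_{\BB C \setminus V})_\ep^*$ and note that the second term, restricted to $V'$, is a continuous function that converges to zero uniformly on compact subsets of $V$ as $\ep\rta 0$, on the event that the field values on $\BB C \setminus V$ grow at most polynomially (which occurs with probability $1$). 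Using the Weyl scaling established in Proposition~\ref{prop-weyl-scaling}, this leakage can be absorbed as an approximate multiplicative prefactor $e^{\xi g_\ep}$ with $g_\ep \rta 0$ uniformly on $V'$, which disappears in the subsequential limit. Combined with Lemma~\ref{lem-geo-annuli} to ensure that geodesics realizing the internal distance stay in a bounded region, the lower semicontinuous convergence then gives $\frk a_{\ep_n}^{-1} D_h^{\ep_n}(\cdot,\cdot;V') \rta D_h(\cdot,\cdot;V')$ along the subsequence, and exhausting $V$ by open sets $V' \Subset V$ yields $D_h(\cdot,\cdot;V)$ as a measurable function of $h|_V$ (equivalently, of $\hf|_{\ol V}$ and $\mathring h^V$).

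Combining the two ingredients: given $\hf|_{\ol V}$, the metric $D(\cdot,\cdot;V)$ is a measurable function of $\mathring h^V$ alone, and $\mathring h^V$ is independent of $(\hf|_{\BB C\setminus \ol V}, D_\hf(\cdot,\cdot; U\setminus\ol V))$ conditionally on $\hf|_{\ol V}$, which gives the asserted conditional independence. The principal obstacle is the Gaussian tail of the mollification kernel, which obstructs exact locality at the LFPP level; quantifying the error via the Weyl scaling of Proposition~\ref{prop-weyl-scaling} and showing it is negligible in the lower semicontinuous topology is the delicate step. A secondary technical point is that the internal metric of a lower semicontinuous limit need not be the limit of internal metrics, which forces the exhaustion by $V'\Subset V$ together with the geodesic-containment input from Lemma~\ref{lem-geo-annuli}.
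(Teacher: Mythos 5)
Your argument has a genuine circularity. You claim to show that $D_h(\cdot,\cdot;V)$ (equivalently, after exhausting $V$ by $V'\Subset V$) is a measurable function of $h|_V$, and then derive the conditional independence from this. But measurability of the internal metric with respect to the restricted field is precisely Axiom~\ref{item-metric-local}, which at this point in the argument has \emph{not} been established: it is proved only in Proposition~\ref{prop-meas-general}, and the $\xi$-additive locality that the present lemma supplies is an \emph{input} to that proposition. At the moment this lemma is proved, the convergence~\eqref{eqn-limit-law} is only in joint law (not a.s., not in probability), so the fact that each $\frk a_{\ep_n}^{-1} D_h^{\ep_n}(\cdot,\cdot;V')$ is a deterministic function of (mollified) $h$ does not imply that the subsequential limit $D_h(\cdot,\cdot;V')$ is $\sigma(h|_V)$-measurable. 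This is exactly why the paper proves an \emph{independence} statement rather than a measurability statement: one writes the LFPP-level locality as a claim that the pairs
\[
\left( \hf|_V , \frk a_\ep^{-1} \wh D_\hf^\ep(\cdot,\cdot; \ol W) \right) \quad \text{and} \quad \left( \rng\hf , \frk a_\ep^{-1} \wh D_{\hf - \phi\frk \hf}^\ep(\cdot,\cdot ; \ol W')\right)
\]
are independent, and independence of two sub-$\sigma$-algebras survives passage to joint-law limits, whereas measurability does not. Your final step (``given $\hf|_{\ol V}$, the metric $D(\cdot,\cdot;V)$ is a measurable function of $\mathring h^V$ alone'') would in fact make the lemma trivial, which is a warning sign that you are implicitly invoking the conclusion of a later proposition.

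Two secondary mismatches. First, the paper handles the nonlocality of the heat-kernel mollification not by absorbing the tail as a vanishing Weyl factor but by replacing $h_\ep^*$ with a compactly supported mollification $\wh h_\ep^*$ and the corresponding localized LFPP metric $\wh D_h^\ep$, which is \emph{exactly} local at scale $\ep^{1/2}$ (Lemma~\ref{lem-localized-compare}); the comparison to the original LFPP metric is then a deterministic uniform estimate on $h_\ep^*-\wh h_\ep^*$. Your Weyl-factor approach might be made to work, but it needs an argument that the resulting perturbation is negligible in the lower semicontinuous topology \emph{jointly} with the independence structure, and you have not supplied it. Second, the difficulty with internal metrics under lower semicontinuous convergence is not that near-geodesics might escape to infinity (so Lemma~\ref{lem-geo-annuli} is the wrong tool); it is that near-geodesics for $D_n(\cdot,\cdot;V')$ might cling to $\partial V'$, so that the internal metric of the limit need not be the limit of internal metrics. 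The paper's Lemma~\ref{lem-internal-radius} addresses this by producing, near each interior point, a random radius within which the internal and ambient metrics agree, and this radius is measurable with respect to the tail $\sigma$-algebra of the converging sequence, so it can be carried through the joint-law limit along with the independence.
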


It is clear that, if $(h,D_h)$ is a $\xi$-additive local metric, then  Lemma~\ref{lem-property-1-check-simp} holds.  In fact, the two statements are equivalent, though we will not explain the proof of this equivalence in detail since the proof is identical to  the analogous result~\cite[Lemma 2.19]{lqg-metric-estimates} in the continuous metric setting.  The reason for considering the version of the $\xi$-additive locality property stated in Lemma~\ref{lem-property-1-check-simp} is that, with the restriction on the value of $r>0$, the field $\hf$ satisfies a Markov property for the whole-plane GFF from~\cite{gms-harmonic} (see also~\cite[Lemma 2.18]{lqg-metric-estimates}), which will be useful in our proof of Lemma~\ref{lem-property-1-check-simp}:

\begin{lem}[{\cite{gms-harmonic}}]
\label{lem-markov}
With $\hf$ and $V$ as in the statement of Lemma~\ref{lem-property-1-check-simp}, 
we can write
\eqb
\hf|_{\BB C\setminus \ol V} = \frk h + \rng h
\label{eqn-field-V-decomp}
\eqe
where $\frk h$ is a random harmonic function on $\BB C\setminus \ol V$ which is determined by $\hf|_{V}$ and $\rng h$ is a zero-boundary GFF in $\BB C\setminus \ol V$ which is independent from $\hf|_V$. 
\end{lem}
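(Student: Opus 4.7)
The plan is to reduce the statement to the standard Markov decomposition of the whole-plane GFF, which holds in the quotient space of distributions modulo additive constants, and then to verify that fixing the normalization $h_r(z) = 0$ (which uses only information measurable with respect to $\hf|_V$) does not disturb the decomposition.

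First I would invoke the whole-plane GFF Markov property on the domain $W := \BB C \setminus \ol V$ (following, e.g., the construction in \cite{gms-harmonic}). This property is the distributional analogue of the orthogonal decomposition of the appropriate Cameron-Martin Hilbert space into the subspace of functions supported in $\ol V$ with zero trace on $\partial V$ and the subspace of functions harmonic on $W$; it yields a decomposition
\[
h|_W = \mathfrak h_0 + \rng h,
\]
where $\mathfrak h_0$ is a.s.\ harmonic on $W$ and measurable with respect to $\sigma(h|_V)$, and $\rng h$ is a zero-boundary GFF on $W$ independent of $h|_V$. The hypotheses $B_r(z) \subset V$ and $\ol V \neq \BB C$ guarantee that $W$ is a nonempty open set whose boundary is nonpolar, so that the zero-boundary GFF on $W$ exists as a bona fide random distribution.

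Next I would transfer this decomposition from $h$ to $\hf = h - h_r(z)$. Because $B_r(z) \subset V$, the circle average $h_r(z)$ is the distributional pairing of $h$ with the uniform probability measure on $\partial B_r(z) \subset V$, and in particular is measurable with respect to $\sigma(h|_V)$. Hence $\sigma(\hf|_V)$ and $\sigma(h|_V)$ agree once the global normalization of $h$ is fixed. Setting
\[
\frk h := \mathfrak h_0 - h_r(z),
\]
the function $\frk h$ is still harmonic on $W$ (subtracting a constant preserves harmonicity), is still measurable with respect to $\sigma(\hf|_V)$, and the identity $\hf|_W = \frk h + \rng h$ follows by subtracting $h_r(z)$ from both sides of the decomposition of $h|_W$. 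Independence of $\rng h$ from $\hf|_V$ is inherited from its independence of $h|_V$.

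The main technical point I expect is the construction of the zero-boundary GFF on the unbounded domain $W = \BB C \setminus \ol V$ with the correct measurability and independence properties. One must ensure that $W$ admits a Dirichlet Green's function, which in the planar setting follows from the fact that two-dimensional Brownian motion started in $W$ almost surely hits the compact boundary $\partial V$ by recurrence. Since all of this is handled cleanly in \cite{gms-harmonic}, I would simply cite their construction rather than reproduce it, and the remaining content of the lemma reduces to the elementary bookkeeping above.
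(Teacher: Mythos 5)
The paper does not prove this lemma at all: it is quoted verbatim from \cite{gms-harmonic} (with a pointer to \cite[Lemma 2.18]{lqg-metric-estimates} for a restatement), and the surrounding discussion makes clear that the content of the lemma---including the hypothesis $B_r(z)\subset V$---is exactly what is proved in that reference for the field $\hf=h-h_r(z)$. Your proposal, which also ultimately defers to \cite{gms-harmonic}, is therefore aligned in spirit, but the ``transfer'' step you interpose is either unnecessary (if the cited result is already stated for $\hf$, which it is) or, as written, not quite careful on the one point where care is actually needed.

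The problematic step is the assertion that ``$\sigma(\hf|_V)$ and $\sigma(h|_V)$ agree once the global normalization of $h$ is fixed.'' This is true only when that normalization is itself a measurable function of $h|_V$; for a normalization such as $h_{r'}(z')=0$ with $B_{r'}(z')\not\subset V$, one has $\sigma(\hf|_V)\subsetneq\sigma(h|_V)$, and in that same situation the input you want to cite---that $\mathfrak h_0$ is determined by $h|_V$ and $\rng h$ is independent of $h|_V$---also fails, because the pinned value of $h$ couples the two sides of the decomposition across $\partial V$. This is precisely the role of the hypothesis $B_r(z)\subset V$: it guarantees that the additive constant is pinned by data inside $V$. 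The conclusion you want can still be salvaged (the harmonic piece $\frk h=\mathfrak h_0-h_r(z)$ is invariant under adding a constant to $h$, hence is a functional of $h|_V$ modulo constants, hence of $\hf|_V$), but your write-up neither states the needed compatibility of the normalization with $V$ nor gives the invariance argument that bypasses it. As a minor further correction, your Cameron--Martin description has $V$ and $W$ interchanged: the orthogonal decomposition is $\dot H^1=H^1_0(W)\oplus\operatorname{Harm}(W)$, with $H^1_0(W)$ (functions supported in $\ol W$ with zero trace on $\partial W$) producing $\rng h$ and $\operatorname{Harm}(W)$ producing the harmonic projection.
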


Roughly speaking, the idea of the proof of Lemma~\ref{lem-property-1-check-simp} is to pass a locality result for $D^\ep_h$ to the subsequential limit.  Note, however, that  $D^\ep_\hf$-lengths of paths are not locally determined by $\hf$, since the mollified version $\hf_\ep^*(z)$ of $\hf$ we defined in~\eqref{eqn-gff-convolve} does not \emph{exactly} depend locally on $\hf$.  Thus, we instead work with  a localized version $\wh D^\ep_\hf$ of the LFPP metric $D^\ep_\hf$ defined in terms of a localized variant of the mollification $\hf_\ep^*$.

\begin{defn}[Localized LFPP metric]
Let $h$ be a whole-plane GFF with some normalization plus a bounded continuous function.  We define $\wh h_\ep^*(z)$ as
the integral (interpreted in the sense of distributional pairing)
\eqb \label{eqn-localized-def}
\int_{\BB C} \psi(\ep^{-1/2}(z-w)) h(w) p_{\ep^2/2} (z,w) \, dw ,
\eqe
where $\psi$ is a deterministic, smooth, radially symmetric bump function with compact support \changes{contained in $B_1(0)$} that equals $1$ in a neighborhood of the origin.
We then define the \emph{localized $\ep$-LFPP metric} by
\eqb \label{eqn-localized-lfpp}
\wh D_{h}^\ep(z,w) := \inf_{P : z\rta w} \int_0^1 e^{\xi \wh h_\ep^*(P(t))} |P'(t)| \,dt ,
\eqe
where the infimum is over all piecewise continuously differentiable paths from $z$ to $w$. 
\end{defn}

The following lemma allows us to work with $\wh D_\hf^\ep$ instead of $D_\hf^\ep$ and use a locality property of $\wh D_h^\ep$.

\begin{lem}
\label{lem-localized-compare}
For any open set $U$,  the internal metric $\wh D_{h}^\ep(\cdot,\cdot; U)$ is a.s.\ determined by $h|_{B_{\ep^{1/2}}(U)}$.  Moreover, if $\ep_n$ is a sequence tending to zero such that $(h, \frk a_{\ep_n}^{-1} D_h^{\ep_n}) \rta (h, D_h)$ weakly in the lower semicontinuous topology as $n \rta \infty$, then $(h, \frk a_{\ep_n}^{-1} \wh D_h^{\ep_n}) \rta (h, D_h)$ weakly in the lower semicontinuous topology.
\end{lem}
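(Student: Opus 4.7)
The first claim follows immediately from the support properties of $\psi$. Since $\psi$ is supported in some ball $B_R(0)$ (which we may take to be $B_1(0)$ after rescaling), the integrand defining $\wh h_\ep^*(z)$ in~\eqref{eqn-localized-def} vanishes unless $|z-w|\leq\ep^{1/2}$, so $\wh h_\ep^*(z)$ is determined by $h|_{B_{\ep^{1/2}}(z)}$. Consequently the internal metric $\wh D_h^\ep(\cdot,\cdot;U)$, defined as an infimum over paths in $U$, only involves values $\wh h_\ep^*(P(t))$ with $P(t)\in U$ and is therefore measurable with respect to $h|_{B_{\ep^{1/2}}(U)}$.

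For the convergence claim, a reparametrization of any path $P$ by its $D_h^\ep$-length gives the deterministic identity
\eqbn
\wh D_h^\ep \;=\; e^{\xi g^\ep}\cdot D_h^\ep, \qquad g^\ep := \wh h_\ep^* - h_\ep^*,
\eqen
with the right-hand side interpreted in the sense of~\eqref{eqn-metric-f}. Since $g^\ep$ is a deterministic functional of $h$, passing from the weak convergence hypothesis to an almost sure coupling (via Skorokhod's representation theorem and a further subsequence) we may assume $\frk a_{\ep_n}^{-1}D_h^{\ep_n}\rta D_h$ almost surely in the lower semicontinuous topology. The plan is then to invoke Lemma~\ref{lem-general-metric-weyl} with $f_n := \xi g^{\ep_n}$ and $f_\infty\equiv 0$, thereby reducing matters to two sub-claims: (a) $g^{\ep_n}\rta 0$ uniformly on compact subsets of $\BB C$ almost surely, and (b) the geodesic-containment hypothesis of Lemma~\ref{lem-general-metric-weyl} holds for the sequences $\{\frk a_{\ep_n}^{-1} D_h^{\ep_n}\}$ and $\{\frk a_{\ep_n}^{-1} \wh D_h^{\ep_n}\}$. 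Given (a), sub-claim (b) is immediate: the metrics $\wh D_h^{\ep_n}$ and $D_h^{\ep_n}$ become uniformly bi-Lipschitz equivalent on every compact set, so the geodesic-containment bounds for the latter (established as in the proof of Proposition~\ref{prop-weyl-scaling} using Lemma~\ref{lem-geo-annuli} and the tightness estimates of~\cite{dg-supercritical-lfpp}) carry over. Finally, weak convergence of the localized metrics along the full original sequence $\ep_n$ follows from the usual subsequential argument, since every subsequence has a further subsequence for which the above a.s.\ analysis applies.

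The real work is (a). Writing $g^\ep(z) = h(\phi_z^\ep)$ with $\phi_z^\ep(w) := (\psi(\ep^{-1/2}(z-w))-1)\,p_{\ep^2/2}(z,w)$, observe that $\phi_z^\ep$ is supported where $|z-w|\geq r_0\ep^{1/2}$ (the region on which $1-\psi$ can be nonzero), and on this region $p_{\ep^2/2}(z,w)\leq \frac{1}{\pi\ep^2}\exp(-r_0^2/\ep)$. Hence the $L^1$, $L^2$, $H^{-1}$, and relevant higher Sobolev norms of $\phi_z^\ep$, as well as of its derivatives in $z$, are all bounded by a polynomial in $\ep^{-1}$ times $e^{-c/\ep}$, uniformly for $z$ in any compact set. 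A standard variance computation using the logarithmic covariance of the whole-plane GFF then yields $\Var(g^\ep(z))\leq e^{-c'/\ep}$ uniformly on compact sets. The main obstacle is upgrading this pointwise bound to the uniform-in-$z$ statement needed to feed $g^{\ep_n}$ into Lemma~\ref{lem-general-metric-weyl}; I handle this by combining the super-exponential decay of the $z$-derivatives of $\phi_z^\ep$ with a chaining or Kolmogorov--Chentsov argument for the smooth Gaussian field $z\mapsto g^\ep(z)$ and the Borell--TIS concentration inequality, the polynomial-in-$\ep^{-1}$ losses being absorbed by the $e^{-c/\ep}$ decay, to conclude $\sup_{z\in K}|g^\ep(z)|\rta 0$ almost surely along a further subsequence.
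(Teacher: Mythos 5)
Your proof is correct in substance but takes a considerably heavier route than the paper's.  The paper cites \cite[Lemma 2.1]{lqg-metric-estimates} for the estimate $\sup_{z\in \ol U}|h_\ep^*(z) - \wh h_\ep^*(z)|\rta 0$ --- which is exactly your sub-claim (a) --- and then observes that since both $\wh D_h^\ep$ and $D_h^\ep$ are infima of $\int e^{\xi(\cdot)}|P'(t)|\,dt$ with exponents differing by at most $\delta_\ep := \xi\sup_{\ol U}|h_\ep^*-\wh h_\ep^*|$, one has the deterministic multiplicative squeeze $e^{-\delta_\ep}D_h^\ep(z,w;U) \le \wh D_h^\ep(z,w;U) \le e^{\delta_\ep}D_h^\ep(z,w;U)$.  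A multiplicative squeeze with constants tending to $1$ transfers lower-semicontinuous convergence immediately: for condition (a), $\liminf_n \wh D_n(z_n,w_n)\ge \liminf_n e^{-\delta_n}D_n(z_n,w_n)\ge D_\infty(z,w)$, and for condition (b) the same approximating sequence works because $\wh D_n(z_n,w_n)/D_n(z_n,w_n)\rta 1$.  So there is no need to reparametrize by $D^\ep_h$-length to write $\wh D_h^\ep = e^{\xi g^\ep}\cdot D_h^\ep$, to invoke Lemma~\ref{lem-general-metric-weyl}, to verify geodesic-containment hypotheses, or to pass through Skorokhod's representation theorem: your reduction to Lemma~\ref{lem-general-metric-weyl} is built for the genuinely harder situation where $f_\infty$ is nonzero and cannot be bounded out multiplicatively; with $f_\infty\equiv 0$ the crude squeeze bound already suffices.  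Your Gaussian-field sketch for step (a) --- super-exponential $L^1$ decay of $\phi_z^\ep$, a variance computation against the logarithmic covariance, and a chaining/Borell--TIS upgrade to a uniform bound --- is plausible and would work, but you are re-proving the cited \cite[Lemma 2.1]{lqg-metric-estimates} from scratch rather than using it.  One small point of care on the first claim: $\psi$ is only required to have compact support, so the determinism statement is really with respect to $h|_{B_{R\ep^{1/2}}(U)}$ where $R$ is the support radius; the statement as written tacitly normalizes $\psi$ to be supported in $B_1(0)$, as you noted.
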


\begin{proof}
The first assertion is immediate from the definition~\eqref{eqn-localized-def} of $\wh h_\ep^*$.  For the second assertion, by comparing $h_\ep^*$ and $\wh h_\ep^*$, one can show~\cite[Lemma 2.1]{lqg-metric-estimates} that
$
\lim_{\ep\rta 0} \sup_{z\in \ol U} |h_\ep^*(z) - \wh h_\ep^*(z)|  = 0,
$
which immediately implies that a.s.\ 
$
\lim_{\ep\rta 0}  \frac{\wh D_{h}^\ep(z,w;U)}{D_{h}^\ep(z,w;U)} = 1
$
uniformly over all $z,w\in U$ with $z\not=w$.  This proves the second assertion. \end{proof}

Lemma~\ref{lem-localized-compare} gives a locality property of the internal metric $\wh D_{\hf}^\ep(\cdot,\cdot; U)$ for any open set $U$. To translate this property into a locality property for the metric  $D_\hf(\cdot,\cdot;U)$, we need to know that the internal metrics  $\wh D_{\hf}^\ep(\cdot,\cdot; U)$ converge to $D_\hf(\cdot,\cdot;U)$.  The following lemma asserts that these internal metrics do converge when restricted to a sufficiently small neighborhood.

\begin{lem} \label{lem-internal-radius}
\changes{
Let $Y \subset X \subset \BB{C}$, and let $\{ D_n\}_{n \in \BB{N}}$ be a sequence of metrics in $X$ that converges to a metric $D_\infty$ in the lower semicontinuous topology. Fix $\ep>0$ such that $|x-\partial Y| > 2\ep$, and set $R = \frac{1}{4} \liminf_{n \rta \infty} D_n(B_\ep(x),B_\ep(\partial Y))$.  
Then $R>0$, and the internal metrics $D_n(\cdot,\cdot;Y)|_{B_R(x)}$ converge to $D_\infty(\cdot,\cdot;Y)|_{B_R(x)}$ in the lower semicontinuous topology.}
\end{lem}

\begin{proof}
For each $n \in \ol{\BB{N}}$, the metrics $D_n$ and $D_n(\cdot,\cdot;Y)$ agree on the ball centered at $x$ with radius $r_x^n := \frac{1}{2} D_n(x,\partial Y)$.   \changes{
Since $D_n \rta D_\infty$ in the lower semicontinuous topology, $R \leq r_x^\infty$  and
\[R =  \frac{1}{4} \liminf_{n \rta \infty} D_n(B_\ep(x),B_\ep(\partial Y)) \geq \frac{1}{4}  D_\infty(B_\ep(x),B_\ep(\partial Y)) > 0\]
Moreover,  $R \leq r_x^n$ for all $n \in \BB{N}$ sufficiently large, and $R \leq r_x^\infty$ since $D_n \rta D_\infty$ in the lower semicontinuous topology.  Hence, for $n = \infty$ and $n \in \BB{N}$ sufficiently large,} the metrics $D_n$ and $D_n(\cdot,\cdot;Y)$ agree on the ball $B_{R}(x)$.  This implies that $D_n(\cdot,\cdot;Y)|_{B_{R}(x)}$ converges to $D_\infty(\cdot,\cdot;Y)|_{B_{R}(x)}$ in the lower semicontinuous topology as $n \rta \infty$, as desired.  
\end{proof}

\begin{proof}[Proof of Lemma~\ref{lem-property-1-check-simp}]
To pass the locality property of Lemma~\ref{lem-localized-compare} to the distributional limit, we first rephrase this property as an exact independence statement. Let $W,W'$ be open sets whose closures are contained in $V$ and $\BB C \setminus \ol V$, respectively.  Also, let $\phi$ be a deterministic, smooth, compactly supported bump function which is identically equal to 1 on a neighborhood of $\ol W'$ and which vanishes outside of a compact subset of $\BB C\setminus \ol V$. Finally, let $\frk h$ and $\rng h$ be as in Lemma~\ref{lem-markov}.

Observe that the restrictions of the fields \changes{$\hf - \phi \frk h $} and $\rng h $ to the set $\phi^{-1}(1)\supset \ol W'$ are identical. 
By Lemma~\ref{lem-localized-compare}, if $\ep > 0$ is small enough that \changes{$B_{\ep^{1/2}}(W') \subset \phi^{-1}(1)$}, then the $\ep$-LFPP metric for \changes{$\hf - \phi \frk h $} satisfies 
  $\wh D_{\changes{\hf - \phi \frk h} }^\ep (\cdot,\cdot ; \ol W')  \in \sigma\left(\rng h \right)$. 
Similarly, for small enough $\ep > 0$ the metric $\wh D_{\changes{\hf}}^\ep(\cdot,\cdot ; \ol W)$ is a.s.\ determined by $\changes{\hf}|_V$. 
Since $\changes{\hf} |_V$ and $\rng h $ are independent, this implies that
\eqb \label{eqn-internal-metric-ind-ep}
\left( \hf|_V , \frk a_\ep^{-1} \wh D_\hf^\ep(\cdot,\cdot; \ol W) \right) \quad \text{and} \quad
\left( \changes{\rng h} , \frk a_\ep^{-1} \wh D_{\changes{\hf - \phi \frk h}}^\ep(\cdot,\cdot ; \ol W')\right) \quad \text{are independent} .
\eqe
Now, recalling the definition of $(h,D_h)$ in~\eqref{eqn-limit-law} as a weak limit of LFPP metrics, we now want to pass the independence~\eqref{eqn-internal-metric-ind-ep} through to the limit as $\ep \rta 0$ along some deterministic subsequence of the sequence $\{\ep_n\}$.  First, by Lemma~\ref{lem-localized-compare}, we have $(h ,   \frk h , \frk a_{\ep}^{-1} \wh D_h^{\ep}) \rta (h  , \frk h , D_h)  $ in law along some deterministic subsequence $\mcl E$ of $\{\ep_n\}$, where here the second coordinate is given the local uniform topology on $\BB C\setminus \ol V$. Second, by the analog of Proposition~\ref{prop-weyl-scaling} with $\wh D_\cdot^\ep$ in place of $D_\cdot^\ep$ (which is proven in an identical manner), if we set $D_{h-\phi\frk h} = e^{-\xi \phi \frk h} \cdot D_h$, then along $\mcl E$  we have the convergence of joint laws
\[
\left(h ,   \frk h , \frk a_{\ep}^{-1} \wh D_h^{\ep} ,  \frk a_{\ep}^{-1} \wh D_{h - \phi\frk h}^{\ep} \right)
 \rta \left( h  , \frk h , D_h , D_{h - \phi\frk h} \right).
\]
By Lemma~\ref{lem-internal-radius}, for each $x \in W$ and $x' \in W'$, we can choose $R_x,R_{x'} > 0$ such that
\[
\left( \hf|_V , R_x, \left.D_\hf(\cdot,\cdot; \ol W)\right|_{B_{R_x}(x)} \right) \quad \text{and} \quad
\left( \changes{\rng h} , R_{x'},  \left.D_{\changes{\hf - \phi \frk h}}(\cdot,\cdot ; \ol W')\right|_{B_{R_{x'}}(x')}\right) \quad \text{are independent} .
\]
Since we can cover every compact subset of $W$ by finitely many balls of the form $B_{R_x}(x)$, and the same is true of $W'$, we deduce that
\[
\left( \hf|_V , D_\hf(\cdot,\cdot; \ol W) \right) \quad \text{and} \quad
\left( \changes{\rng h} ,  D_{\changes{\hf - \phi \frk h}}(\cdot,\cdot ; \ol W')\right) \quad \text{are independent} .
\]
To complete the proof, we observe that, since $\frk h$ is determined by $h|_V$ \changes{(Lemma~\ref{lem-markov})}, we have that
\begin{itemize}
    \item \changes{$\hf$} is a measurable function of \changes{$\hf|_V$} and $\rng h$, and
\item $D_\changes{\hf}(\cdot,\cdot;\ol W')$ is a measurable function of \changes{$\hf|_V$} and $D_{\changes{\hf - \phi \frk h}}(\cdot,\cdot;\ol W')$.
\end{itemize}
Thus, $D_h(\cdot,\cdot;\ol W)$ is conditionally independent from $(h,D_h(\cdot,\cdot;\ol W'))$ given $h|_V$. The result follows from letting $W$ and $W'$ increase to $V$ and $\BB C \setminus \ol V$, respectively.
\end{proof}

\begin{proof}[Proof of Proposition~\ref{prop-check-3.5-axioms}]
    The metric $D_h$ satisfies Axiom~\ref{item-metric-length}  by~\cite[Theorem 1.3, Assertion 5]{dg-supercritical-lfpp} when $h$ is a whole-plane GFF; the axiom follows for general $h$ by applying Proposition~\ref{prop-weyl-scaling}.  Axiom~\ref{item-metric-translate} is immediate from the definition of LFPP. 

To prove Axiom~\ref{item-metric-coord},  \changes{we start with the scale invariance property
\[
\left( D_h^{\ep/r}(z,w) \right)_{z, w \in \BB{C}} \stackrel{d}{=} \left( r^{-1} e^{-\xi h_r(0)} D^\ep_h(rz,rw) \right)_{z,w \in \BB{C}}
\]
stated in~\cite[Equation 4.29]{dg-supercritical-lfpp} in the special case of a whole-plane GFF $h$.  We can extend this result to general $h$ by applying Proposition~\ref{prop-weyl-scaling}. From this, we deduce the following scale invariance property for a fixed annulus $\BB{A}$:}
\eqb
 e^{-\xi h_r(0)}  \frac{ \frk a_\ep}{r \frk a_{\ep/r}} \frk a_{\ep }^{-1} D_h^{\ep } (\text{\changes{around $r\BB{A}$}} )
\eqD    \frk a_{\ep/r}^{-1} D_{h  }^{\ep/r}(\text{\changes{around $\BB{A}$}}) ,\quad \forall r >0, \ep \in (0,r).
\label{eqn-equiv-cr}
\eqe
Set $\frk c_r$ equal to~\eqref{eqn-defn-cr} for $r \in \BB{Q}$ and $\frk c_r = \liminf_{\BB{Q} \ni q > r} \frk c_q$ for $r \notin \BB{Q}$.  Thus, for rational $r>0$, the laws of the \changes{distances} on the left-hand side of~\eqref{eqn-equiv-cr} converge to that of \eqb e^{-\xi h_r(0)} \frk c_r^{-1} D_h(\text{\changes{around $r\BB{A}$}}) \label{eqn-separate-line} \eqe along a subsequence. Moreover, the law of the \changes{distance}~\eqref{eqn-separate-line} for $ r \notin \BB{Q}$ can be expressed as the limit of the corresponding laws for rational values of $r$.  We deduce that, for all $r>0$, the law of~\eqref{eqn-separate-line} is a subsequential limit of the laws of the \changes{distances} $(\frk a_{\ep/r}^{-1} D_{h^r}^{\ep/r}(\text{\changes{around $\BB{A}$}}))_{\ep > 0}$.  Hence, the set of laws of the \changes{distances}~\eqref{eqn-separate-line} for  $r>0$ is tight.  \changes{We can apply the same argument to distances around annuli.} Finally, by~\cite[Theorem 2, Assertion 4]{dg-supercritical-lfpp}, $\frk c_r$   satisfies the required scaling property~\eqref{eqn-scaling-constant}. This proves Axiom~\ref{item-metric-coord}.

Also, the fact that $D_h$ is a complete \changes{and geodesic} metric on the set $U \backslash \{\text{singular points}\}$ when $h$ is a whole-plane GFF is~\cite[Theorem 1.3, Assertions 4 and 5]{dg-supercritical-lfpp}.

Finally, we check that $D_h$ is a $\xi$-additive local metric for $h$  when $h$ is a whole-plane GFF.  Indeed, this property is vacuously true when $\ol V = \BB{C}$.  \changes{For other choices of $V$, we have already stated in Lemma~\ref{lem-property-1-check-simp} that  $D_h$ is a $\xi$-additive local metric for $h$   in the special case $B_r(z) \subset V$.  To deduce the case for general $B_r(z)$, we can directly extend the result~\cite[Lemma 2.19]{lqg-metric-estimates} from the continuous metric setting.  This lemma asserts in the continuous metric setting that, if we know that  $D_h$ is a $\xi$-additive local metric for $h$  in the special case when  $B_r(z) \subset V$, then we can deduce the general case roughly as follows. (We are summarizing their argument briefly here; see the proof of~\cite[Lemma 2.19]{lqg-metric-estimates} for the full details.) If we first consider a field $h$ normalized by a ball $B_{r_0}(z) \subset V$ and then change to a field $\wt h$ normalized by a ball  $B_{r}(z)$, the two fields and internal metrics are related by 
\eqb
\label{eqn-rev-1}
\wt h - h = \wt h_{r_0}(z_0) \qquad \text{and} \qquad D_{\wt h}(\cdot,\cdot;V) = e^{-\xi \wt h_{r_0}(z_0)} D_h(\cdot,\cdot;V) \eqe
Since $B_{r_0}(z) \subset V$, the term $\wt h_{r_0}(z_0)$ in~\eqref{eqn-rev-1} is determined by the field $\wt h$ in $\ol V$. From this, it follows directly from the definition that the $\xi$-additive locality property extends from $h$ to $\wt h$.  Since this argument is equally valid in our more general setting, we may extend~\cite[Lemma 2.19]{lqg-metric-estimates} to our setting and conclude  that $D_h$ is a $\xi$-additive local metric for $h$  for general $B_r(z)$.}
\end{proof}

\subsection{Controlling the behavior of geodesic paths}
\label{sec-geo-behavior}

As we described in Section~\ref{sec-thm-outline}, the main difficulty in extending the methods of~\cite{local-metrics} to the \changes{non-}continuous setting is that, \emph{a priori}, a segment of a $D$-geodesic path in a neighborhood of microscopic Euclidean size could have constant order $D$-length; such neighborhoods roughly correspond to $Q$-thick points of the underlying field.  In this section, we show that this cannot happen.

Throughout this and the next section, we take $h$ to be a whole-plane GFF normalized so that $h_1(0) = 0$.
We begin by defining the event $E_r(z;C)$, for all $z \in \BB{C}$ and $r>0$ for which $B_{2r}(z) \subset U$, as the event that
\eqb
D(\text{across $\BB{A}_{r/2,r}(z)$}) > C^{-1} \frk c_r e^{\xi h_r(z)}
\label{eqn-dist-est-1}
\eqe and
\eqb
D(\text{around $\BB{A}_{r,2r}(z)$}) < C \frk c_r e^{\xi h_r(z)}.
\label{eqn-dist-est-2}
\eqe

The following crucial lemma is our starting point for analyzing the behavior of distances and geodesics for metrics satisfying the conditions of Proposition~\ref{prop-meas-general}.  

\begin{lem}
\label{lem-iterate}
Let $U \subset \BB C$, and let $(h,D)$ be a coupling of  $h$ with a random lower semicontinuous metric $D$ on $U$ that satisfies Axioms~\ref{item-metric-length},~\ref{item-metric-translate}, and~\ref{item-metric-coord}  and is a local $\xi$-additive metric for $h$.
For each $m > 0$ and each $q > 0$, there exists $C  = C(m,q) >1$ such that for each  $\BB r > 0$ and each $z \in U$, the event $E_r(z;C)$ occurs for at least $C^{-1} \log \ep^{-1}$ many values of $r\in [\ep^{1+m} \BB r , \ep \BB r] \cap \{4^{-k} \BB r\}_{k\in\BB N}$ 
with probability $1-O_\ep(\ep^q)$ as $\ep\rta 0$ (with the rate uniform in  $\BB r$ and $z \in K$ for each compact subset $K \subset U$).
\end{lem}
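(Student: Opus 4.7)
\textbf{Proof plan for Lemma~\ref{lem-iterate}.}

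The overall strategy is to work along the geometric scale sequence $r_k = 4^{-k}\BB r$, for $k$ ranging over those integers with $\ep^{1+m}\BB r \le r_k \le \ep\BB r$ (so there are $N = \Theta(\log \ep^{-1})$ such scales), and to show that a fixed positive fraction of the indicators $\mathbbm 1_{E_{r_k}(z;C)}$ are equal to $1$. This will be done in three steps: a one-scale estimate, an approximate-independence statement across scales based on $\xi$-additive locality plus the Gibbs--Markov decomposition of the GFF (Lemma~\ref{lem-markov}), and a Chernoff-type concentration. The factor $4$ between consecutive scales is chosen so that the annuli $\BB A_{r_k/2, 2r_k}(z)$ are pairwise disjoint, which is essential for the independence step.

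For the one-scale estimate, I would first use Axiom~\ref{item-metric-translate} and the scale-invariance identity for $\frk c_r$ established in the proof of Proposition~\ref{prop-check-3.5-axioms} (cf.~\eqref{eqn-equiv-cr}) to reduce the laws of $\frk c_{r_k}^{-1} e^{-\xi h_{r_k}(z)} D(\text{across/around } \BB A_{\cdot,\cdot}(z))$ at arbitrary $(r_k, z)$ to the corresponding laws at scale $1$ and center $0$. Then Axiom~\ref{item-metric-coord} yields
\[
\BB P[E_{r_k}(z;C)] \ge 1 - \eta(C), \qquad \eta(C) \xrta{C\to\infty} 0,
\]
uniformly in $r_k > 0$ and in $z$ over compact subsets of $U$.

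For the approximate independence, I would iterate outward to inward as follows. Let $\mcl F_k := \sigma\bigl(h|_{\BB C \setminus B_{2 r_k}(z)}\bigr)$. Applying Lemma~\ref{lem-markov} to $V = B_{2 r_k}(z)$ produces a decomposition $h|_{B_{2 r_k}(z)} = \frk h_k + \mathring h_k$ with $\frk h_k$ harmonic and $\mcl F_k$-measurable and $\mathring h_k$ a zero-boundary GFF in $B_{2 r_k}(z)$ independent of $\mcl F_k$. By the $\xi$-additive locality assumption, the indicator of $E_{r_k}(z;C)$ is a measurable function of the centered restricted field $(h - h_{r_k}(z))|_{\BB A_{r_k/2, 2 r_k}(z)}$, and all of the events $E_{r_j}(z;C)$ for $j \le k-1$ are $\mcl F_k$-measurable since the corresponding annuli lie outside $B_{2 r_k}(z)$. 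Hence, if one can establish a lower bound
\[
\BB P\bigl[E_{r_k}(z;C) \,\big|\, \mcl F_k\bigr] \ge \delta(C) > 0
\]
on an $\mcl F_k$-measurable event $\mcl G_k$ of probability at least $1 - \ep^{q+1}/N$, the indicators $\mathbbm 1_{E_{r_k}(z;C)}$ conditionally stochastically dominate a Bernoulli($\delta(C)$) sequence on $\bigcap_k \mcl G_k$, and a standard Chernoff bound delivers at least $\tfrac{1}{2}\delta(C)\, N$ successes with probability $1 - O_\ep(\ep^q)$. Choosing $C$ large in $m, q$ and absorbing constants produces the quantity $C^{-1} \log \ep^{-1}$ appearing in the conclusion.

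The main obstacle is precisely the conditional bound $\BB P[E_{r_k}(z;C) \mid \mcl F_k] \ge \delta(C)$. The unconditional bound from Step~1 does not transfer for free, because conditioning on $\mcl F_k$ freezes the harmonic part $\frk h_k - (\frk h_k)_{r_k}(z)$ on $\BB A_{r_k/2, 2 r_k}(z)$, which can in principle be arbitrary. To handle this, I would define $\mcl G_k$ to be the event that $\|\frk h_k - (\frk h_k)_{r_k}(z)\|_{L^\infty(\BB A_{r_k/2, 2r_k}(z))} \le M(C)$ for a constant $M(C)$, use standard harmonic-function and GFF circle-average estimates to show $\BB P[\mcl G_k^c] \le \ep^{q+1}/N$ for suitable $M(C)$, and on $\mcl G_k$ compare the conditional law of the metric to its unconditional law by feeding the harmonic perturbation back through $\xi$-additive locality. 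Absent Axiom~\ref{item-metric-f}, the perturbation must be handled via absolute continuity of the conditional zero-boundary GFF against a suitable unconditional field, combined with finitely many applications of $\xi$-additivity at smaller scales inside the annulus --- this is the technical heart of the argument and the step that will require the most care.
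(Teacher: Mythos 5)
Your proposal captures the correct global architecture — iterate a one-scale event over disjoint concentric annuli separated by a factor $4$, argue approximate independence across scales via the Gibbs--Markov decomposition of the GFF combined with locality of the metric, and conclude via concentration. The paper, however, does not rebuild this machinery: it invokes Lemma~\ref{lem-annulus-iterate} (imported from \cite[Lemma 3.1]{local-metrics}), which packages exactly this ``percolation across annuli at different scales'' argument as a black box. As a result the paper's proof of Lemma~\ref{lem-iterate} is very short: (i) a one-scale bound $\BB P[E_r(z;C)] \ge p$ from Axioms~\ref{item-metric-translate} and~\ref{item-metric-coord}, uniformly in $z\in K$ and small $r$; and (ii) a verification of the measurability/locality hypothesis of Lemma~\ref{lem-annulus-iterate}, namely that $E_r(z;C)$ is determined by $e^{-\xi h_r(z)} D(\cdot,\cdot;\BB A_{r/2,2r}(z))$, which (after translating by $z$ and rescaling by $r$) is a local metric for a field with the law of $h$ because $D$ is local and $\xi$-additive. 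Since Lemma~\ref{lem-annulus-iterate} is stated in the paper immediately before Lemma~\ref{lem-iterate}, recognizing it as the right tool would have collapsed your three-step program into a verification of hypotheses. The step you call ``the technical heart'' --- establishing $\BB P[E_{r_k}\mid\mcl F_k] \ge \delta(C)$ on a good event --- is precisely what that lemma proves once and for all.

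One imprecision in your sketch is worth flagging: you assert that ``the indicator of $E_{r_k}(z;C)$ is a measurable function of the centered restricted field $(h-h_{r_k}(z))|_{\BB A_{r_k/2,2r_k}(z)}$.'' This is too strong. The event is determined by the internal metric $D(\cdot,\cdot;\BB A_{r_k/2,2r_k}(z))$, and $\xi$-additive locality (Definitions~\ref{defn-local-metric}--\ref{defn-xi-additive}) gives only \emph{conditional independence} of that internal metric from everything outside, given the field on the annulus --- not that the metric is a deterministic function of the field on the annulus. (Indeed, that the metric is determined by the field is exactly what Proposition~\ref{prop-meas-general} is being set up to prove, so you cannot assume it here.) The iteration lemma in \cite{local-metrics} is phrased to accommodate this weaker, conditional-independence structure, and the paper's one-sentence verification is careful to invoke locality of the rescaled internal metric rather than measurability with respect to the field. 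Your absolute-continuity workaround could probably be made to function, but it amounts to re-deriving the cited lemma, and without the correct statement of the measurability hypothesis the argument as sketched does not close.
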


In this section, we always take the parameter $\BB{r}$ in the lemma to equal $1$, but we include the  statement for general $\BB{r} > 0$ because it is an important input for proving distance estimates like Proposition~\ref{prop-two-set-dist}. 

To prove Lemma~\ref{lem-iterate}, we apply a local independence property~\cite[Lemma 3.1]{local-metrics}  for events that depend on the field in disjoint concentric annuli, which we now state.

\begin{lem} \label{lem-annulus-iterate}
Fix $0 < s_1<s_2 < 1$. Let $\{r_j\}_{j\in\BB N}$ be a decreasing sequence of positive numbers with that $r_{j+1} / r_j \leq s_1$ for each $j\in\BB N$, and let $\{E_{r_j} \}_{j\in\BB N}$ be events such that \[ E_{r_j} \in \sigma\left( (h-h_{r_j}(0)) |_{\BB A_{s_1 r_j , s_2 r_j}(0)  } \right) \qquad \forall j \in \BB{N}.\]
 For each $a > 0$ and each $b\in (0,1)$, there exists $p  = p(a,b,s_1,s_2) \in (0,1)$ and $c = c(a,b,s_1,s_2) > 0$ such that, if $
\BB P\left[ E_{r_j}  \right] \geq p$ for all $j\in\BB N$, 
then for every $J \in \BB{N}$, the event $E_{r_j}$ occurs for at least $bJ$ values of $j$ with probability at least $1 - c e^{-aJ}$.
\end{lem}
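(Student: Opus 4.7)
My plan is to apply Lemma~\ref{lem-annulus-iterate} to a subsequence of the dyadic scales $\{4^{-k}\BB r\}_{k\in\BB N}$ lying in $[\ep^{1+m}\BB r, \ep\BB r]$. Two inputs are needed: (a) a uniform probability lower bound $\BB P[E_r(z;C)] \geq p$ (with $p$ close to $1$) uniform in $r$, $z$, and $\BB r$; and (b) the measurability of $E_r(z;C)$ (after translating $z$ to the origin) with respect to $h - h_{\tilde r}(0)$ restricted to a concentric annulus $\BB A_{s_1\tilde r, s_2\tilde r}(0)$ for some auxiliary scale $\tilde r$ and some $0 < s_1 < s_2 < 1$. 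The number of admissible $r$ values in $[\ep^{1+m}\BB r, \ep\BB r]$ is $J \asymp m \log \ep^{-1}$, so choosing the parameter $a$ in Lemma~\ref{lem-annulus-iterate} large depending on $q$ will make the failure probability $c e^{-aJ}$ be $O_\ep(\ep^q)$, and the guaranteed success count $bJ$ will give the claimed lower bound $C^{-1}\log\ep^{-1}$.

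For the probability lower bound, Axiom~\ref{item-metric-translate} reduces to the case $z=0$. Reading the tightness statement of Axiom~\ref{item-metric-coord} as two-sided tightness in $(0,\infty)$ uniform in $r$ (so that the rescaled distances stay both bounded and bounded away from $0$ in probability), for every $p \in (0,1)$ I can choose $C = C(p)$ so that both $\frk c_r^{-1}e^{-\xi h_r(0)} D(\text{across } \BB A_{r/2, r}(0)) > C^{-1}$ and $\frk c_r^{-1}e^{-\xi h_r(0)} D(\text{around } \BB A_{r, 2r}(0)) < C$ hold simultaneously with probability at least $p$, uniformly over all $r > 0$.

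For the measurability, I use that $e^{-\xi h_r(0)} D$ is local for $h - h_r(0)$ by $\xi$-additivity, so the two rescaled distances appearing in $E_r(0;C)$ are determined by $(h - h_r(0))|_{\BB A_{r/2, 2r}(0)}$. To fit Lemma~\ref{lem-annulus-iterate}, which requires the annulus to sit strictly inside $B_{\tilde r}(0)$, I reparametrize by $\tilde r := 4r$ and take $(s_1, s_2) = (1/8, 1/2)$, so that $\BB A_{r/2, 2r}(0) = \BB A_{s_1\tilde r, s_2\tilde r}(0)$. Since the circle $\partial B_r(0)$ of radius $r = \tilde r/4$ sits inside this annulus, the circle average $h_r(0)$ is measurable with respect to $h|_{\BB A_{s_1\tilde r, s_2\tilde r}(0)}$, and changing the normalization from $h_r(0)$ to $h_{\tilde r}(0)$ only shifts by a global constant that cancels out of the event. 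Hence $E_r(0;C) \in \sigma\bigl((h - h_{\tilde r}(0))|_{\BB A_{s_1\tilde r, s_2\tilde r}(0)}\bigr)$.

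To enforce the ratio condition $\tilde r_{k+1}/\tilde r_k \leq s_1 = 1/8$, I restrict to every other dyadic scale: set $\tilde r_k := 4 \cdot 16^{-k}\BB r$, corresponding to $r = 16^{-k}\BB r$ from the original sequence. The set of $k$ for which $16^{-k}\BB r \in [\ep^{1+m}\BB r, \ep\BB r]$ has size $J \asymp m\log\ep^{-1}$. Given $q>0$, I fix $b = 1/2$, pick $a$ large enough (depending on $m$ and $q$) so that $c e^{-aJ} = O_\ep(\ep^q)$, let $p = p(a, 1/2, 1/8, 1/2)$ from Lemma~\ref{lem-annulus-iterate}, and then take $C = C(m,q)$ large enough for the probability bound of step~(a). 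Lemma~\ref{lem-annulus-iterate} then yields at least $J/2 \asymp m\log\ep^{-1}$ successful scales off an event of probability $O_\ep(\ep^q)$; enlarging $C$ absorbs the implicit constant to give the stated $C^{-1}\log\ep^{-1}$ count. Uniformity in $\BB r$ and in $z$ over compact sets follows from translation invariance and from the fact that all the constants in~(a) are scale- and location-independent. The hard part will be the measurability step~(b): specifically, confirming that the "across" and "around" distances are genuinely internal metric quantities to which $\xi$-additive locality applies, and that the pointwise circle average $h_r(0)$ can indeed be recovered from $h$ on a concentric annulus containing $\partial B_r(0)$ (a standard but technical fact about the GFF that must be invoked carefully).
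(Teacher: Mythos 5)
Your proposal does not prove the statement in question. Its very first sentence invokes Lemma~\ref{lem-annulus-iterate} as a known tool, and what you then build is an argument for Lemma~\ref{lem-iterate} (that the events $E_r(z;C)$ hold for at least $C^{-1}\log\ep^{-1}$ dyadic scales in $[\ep^{1+m}\BB r,\ep\BB r]$ with probability $1-O_\ep(\ep^q)$). But the target is Lemma~\ref{lem-annulus-iterate} itself: the quantitative near-independence statement asserting that events $E_{r_j}$ measurable with respect to $(h-h_{r_j}(0))|_{\BB A_{s_1 r_j,s_2 r_j}(0)}$, each of probability at least $p$, must occur for at least $bJ$ indices with failure probability $ce^{-aJ}$. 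Nothing in your write-up touches the probabilistic structure of the whole-plane GFF across disjoint concentric annuli, which is the entire content of that lemma; assuming it in order to derive Lemma~\ref{lem-iterate} is circular with respect to the stated goal. (For the record, the paper does not reprove Lemma~\ref{lem-annulus-iterate} either; it is quoted verbatim from \cite[Lemma 3.1]{local-metrics}.)

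A genuine proof would have to use that the hypothesis $r_{j+1}/r_j\le s_1$ forces the annuli $\BB A_{s_1 r_j,s_2 r_j}(0)$ to be pairwise disjoint and nested, and that recentering by the circle average $h_{r_j}(0)$ removes the dominant long-range coupling between scales, so that the conditional probability of $E_{r_j}$ given the realizations of the field in the other annuli is still bounded below by a quantity close to $p$ (this is where an absolute-continuity or Radon--Nikodym comparison between the conditional and unconditional laws of the recentered field on a single annulus is needed). One then explores the annuli sequentially, stochastically dominates the indicator sequence from below by i.i.d.\ Bernoulli variables, and concludes with a Chernoff bound to produce the constants $p(a,b,s_1,s_2)$ and $c(a,b,s_1,s_2)$. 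None of these steps appears in your proposal. As a secondary remark, your sketch of Lemma~\ref{lem-iterate} does track the paper's own proof of that lemma (translation invariance via Axiom~\ref{item-metric-translate}, the uniform probability bound via Axiom~\ref{item-metric-coord}, measurability via $\xi$-additive locality), so the work is not wasted --- it simply answers a different question than the one posed.
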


\changes{Since the metric $D$ in Lemma~\ref{lem-iterate}  is a $\xi$-additive local metric, we can apply Lemma~\ref{lem-annulus-iterate} to prove Lemma~\ref{lem-iterate} by iterating the event $E_r(z;C)$ over concentric annuli:}

\begin{proof}[Proof of Lemma~\ref{lem-iterate}]
Let $K \subset U$ be compact. 
By Axiom~\ref{item-metric-coord}, for each $p \in (0,1)$ and each fixed $z \in K$, we can choose $C>1$ such that $\BB{P}(E_r(z;C)) \geq p$ for every sufficiently small $r>0$.  By Axiom~\ref{item-metric-translate}, for this value of $C$ and all $r>0$ sufficiently small (in a manner depending on $K$ but not on $z$), we have $\BB{P}(E_r(z;C)) \geq p$ for \emph{every} $z \in K$.

Next, \changes{since $D$ is a $\xi$-additive local metric,}  the event $E_r(z;C)$ is a.s.\ determined by the metric \eqb e^{-\xi h_{r}(z)} D(\cdot,\cdot;\BB A_{r/2,2r}(z)).\label{eqn-rev-22} \eqe  Since $D$ is a $\xi$-additive local metric for $h$ and the locality condition is preserved under translations and rescalings, the metric~\eqref{eqn-rev-22} is local for the field $(h(r \cdot + z) - h_{r}(z))|_{r^{-1}(U-z)}$.  
The latter field has the same law as $h$.  Thus, we obtain the desired result by applying Lemma~\ref{lem-annulus-iterate} with $a = q \log 2$ over logarithmically (in $\ep^{-1}$) many values of $r_k \in [\ep^{1+m} \BB r , \ep \BB r] \cap \{4^{-k} \BB r\}_{k\in\BB N}$.
\end{proof}

 \changes{
 Lemma~\ref{lem-iterate} easily implies that a.s.\ $D$-distances between pairs of rational circles are finite.

 \begin{lem}
 \label{lem-rational-dist-finite}
 Let $U \subset \BB{C}$, and let $(h,D)$ satisfy the conditions of Lemma~\ref{lem-iterate}.  Then, if $S$ and $S'$ are fixed connected subsets of $U$ with positive (Euclidean) diameter, then a.s.\ $D(S,S')$ is finite.  In particular, a.s.\ the $D$-distance between all pairs of rational circles in $U$ is finite.
 \end{lem}
 
\begin{proof} 
 Let $K \subset U$ be compact. By Lemma~\ref{lem-iterate}, a.s.\ for $\ep>0$ sufficiently small, we can cover $K$ by finitely many balls $\{B_{r_i}(x_i)\}_{i=1}^n$ with $r_i < \ep$ such that, for each $i$, the $D$-distance around the annulus $\BB{A}_{r_i,2r_i}(x_i)$ is finite.  We can choose $\ep$ sufficiently small such that, for some choice of $i,j \in [1,n]_{\BB{Z}}$, the sets $S$ and $S'$ intersect paths with finite $D$-length around the annuli $\BB{A}_{r_i,2r_i}(x_i)$ and $\BB{A}_{r_j,2r_j}(x_j)$, respectively. Therefore, we can construct a path with finite $D$-length between $S$ and $S'$  by concatenating finitely many segments of paths around small annuli.
 \end{proof}
 }

We now use Lemma~\ref{lem-iterate} to prove that, roughly speaking, paths with close to minimal $D$-length between pairs of points must avoid regions in which the field is too large.  

\begin{lem}
\label{lem-thickbound}
Let $U \subset \BB C$, and let $(h,D)$ be a coupling of  $h$ with a random lower semicontinuous metric $D$ on $U$ that satisfies Axioms~\ref{item-metric-length},~\ref{item-metric-translate}, and~\ref{item-metric-coord} with $\frk c_r = r^{\xi Q + o_r(1)}$ as $\BB{Q} \ni r \rta 0$ for some $Q>0$.  Suppose also that $D$ is a local $\xi$-additive metric for $h$.  For each fixed $M > 0$ \changes{and $q>0$}, we can deterministically choose  $C >1$, $\zeta < Q$, and a function $\delta: [0,\infty) \rta [0,\infty)$ with $\lim_{\ep \rta 0} \delta(\ep) = 0$, such that, for each $x \in U$, the following holds  \changes{on an event $\mcl F = \mcl F(x,\ep)$} with probability $1 - O_\ep(\ep^q)$ (with the rate uniform in $x \in K$ for each compact subset $K \subset U$). Suppose there is a path $P$ between two points $z,w \in U$ with $\len(P;D) \leq D(z,w) + \delta(\ep)$ such that $P$ hits $\partial B_{\ep^2}(x)$ between two times that it hits $\partial B_{2\ep}(x)$. Then there is some $r \in [\ep^{2}  , \ep ] \cap \{4^{-k} \}_{k\in\BB N}$  for which the event $E_r(x;C)$ holds and $\frac{h_r(x)}{\log r^{-1}} \leq \zeta$.  
\end{lem}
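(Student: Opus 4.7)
The plan is to reduce the conclusion to Lemma~\ref{lem-iterate} via a detour argument: given a path $P$ satisfying the hypothesis, one uses almost-minimality of $\len(P;D)$ to constrain $h_r(x)$ at every scale $r$ where $E_r(x;C)$ holds. First, I would apply Lemma~\ref{lem-iterate} (with parameters large enough in terms of $q$ and the exponent $a$ introduced below) to produce a constant $C_0 > 1$ such that, with probability $1 - O_\ep(\ep^q)$ uniformly in $x \in K$, the event $E_r(x;C_0)$ holds for at least $C_0^{-1}\log\ep^{-1}$ many $r \in [\ep^2,\ep]\cap\{4^{-k}\}_{k\in\BB N}$. On this event, any such $r$ will serve as the scale sought in the conclusion, and it remains only to verify the thickness bound $h_r(x)/\log r^{-1} \leq \zeta$ for some deterministic $\zeta < Q$.

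Fix $r \in [\ep^2,\ep]\cap\{4^{-k}\}_{k\in\BB N}$ with $E_r(x;C_0)$, and suppose $P$ satisfies the hypothesis. Let $t_1$ be the first time after the initial visit to $\partial B_{2\ep}(x)$ at which $P$ hits $\partial B_{2r}(x)$, and let $t_2$ be the last such time before the final visit to $\partial B_{2\ep}(x)$. Since $P$ also hits $\partial B_{\ep^2}(x) \subset B_{r/2}(x)$ between $t_1$ and $t_2$, the segment $P|_{[t_1,t_2]}$ crosses $\BB A_{r/2,r}(x)$ at least twice, so the first inequality defining $E_r(x;C_0)$ gives
\[
\len(P|_{[t_1,t_2]}; D) \;\geq\; 2 D(\text{across } \BB A_{r/2,r}(x)) \;\geq\; 2 C_0^{-1} \frk c_r e^{\xi h_r(x)}.
\]
On the other hand, the second inequality in $E_r(x;C_0)$ produces a loop in $\ol{\BB A_{r,2r}(x)}$ separating the two boundaries and having $D$-length at most $C_0 \frk c_r e^{\xi h_r(x)}$. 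Using this loop together with short arcs connecting $P(t_1), P(t_2) \in \partial B_{2r}(x)$ to it (bounded in $D$ via Axiom~\ref{item-metric-length} combined with short connectors produced by further applications of Lemma~\ref{lem-iterate} on finer annuli inside $\BB A_{r,2r}(x)$), one obtains a path in $\ol{\BB A_{r,2r}(x)}$ from $P(t_1)$ to $P(t_2)$ of $D$-length at most $C''\frk c_r e^{\xi h_r(x)}$ for some constant $C''=C''(C_0)$. Provided that $C_0$ is chosen so that $2C_0^{-1} > C''(C_0)$, replacing $P|_{[t_1,t_2]}$ by this detour yields a strictly shorter path from $z$ to $w$, and almost-minimality $\len(P;D) \leq D(z,w) + \delta(\ep)$ forces
\[
\bigl(2 C_0^{-1} - C''(C_0)\bigr)\, \frk c_r e^{\xi h_r(x)} \;\leq\; \delta(\ep).
\]

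Setting $\delta(\ep) := \ep^{a}$ for a small fixed $a > 0$ and using $\frk c_r = r^{\xi Q + o_r(1)}$, taking logarithms in the last display yields $h_r(x) \leq Q \log r^{-1} + \xi^{-1}\log \delta(\ep) + O(1)$, equivalently $h_r(x)/\log r^{-1} \leq Q - (a/\xi)\,(\log \ep^{-1})/\log r^{-1} + o_\ep(1)$. Since $r \leq \ep$, the ratio $(\log\ep^{-1})/\log r^{-1}$ is at least $1/2$, so the choice $\zeta := Q - a/(4\xi)<Q$ delivers the required bound for all sufficiently small $\ep$. The main obstacle, and the reason the proof cannot simply copy~\cite{local-metrics}, is producing the detour constant $C''(C_0)$: passing from the short ``around'' loop to a short internal $D$-path between two arbitrary points of $\partial B_{2r}(x)$ requires controlling the internal $D$-diameter of $\ol{\BB A_{r,2r}(x)}$, which in the supercritical regime can be large on the rare occurrences of very thick points. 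I expect this can be handled by running Lemma~\ref{lem-iterate} at a geometrically spaced sequence of scales inside $\BB A_{r,2r}(x)$ to link arbitrary boundary pairs to the separating loop through a bounded chain of short loops, at the cost of enlarging $C_0$ while preserving both $2C_0^{-1} > C''(C_0)$ and $\zeta < Q$.
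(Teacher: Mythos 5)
Your approach has a genuine gap: the condition $2C_0^{-1} > C''(C_0)$ cannot be satisfied. The event $E_r(x;C_0)$ only yields $D(\text{across }\BB A_{r/2,r}(x)) > C_0^{-1}\frk c_r e^{\xi h_r(x)}$ and $D(\text{around }\BB A_{r,2r}(x)) < C_0\frk c_r e^{\xi h_r(x)}$, so your detour — which traverses the ``around'' loop plus connectors — costs at least roughly $C_0\frk c_r e^{\xi h_r(x)}$, while the segment $P|_{[t_1,t_2]}$ you propose to replace is bounded below only by $2C_0^{-1}\frk c_r e^{\xi h_r(x)}$. The detour is shorter than the segment only if $C_0 < 2C_0^{-1}$, i.e., $C_0 < \sqrt{2}$, and the connectors push this even lower. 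But $C_0$ is forced to be large: Lemma~\ref{lem-iterate} requires the per-scale probability of $E_r(x;C_0)$ to be close to $1$ (in order to feed Lemma~\ref{lem-annulus-iterate}), and that probability increases to $1$ only as $C_0 \rta \infty$. In particular, enlarging $C_0$ makes $2C_0^{-1}$ smaller and $C''(C_0)$ larger, so your final remark — that enlarging $C_0$ can preserve $2C_0^{-1} > C''(C_0)$ — goes in exactly the wrong direction. There is no value of $C_0$ that makes the single-scale detour strictly shorter.

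The paper avoids comparing ``around'' with ``across'' at any single scale. Because the path $P$ penetrates to $\partial B_{\ep^2}(x)$, it crosses every annulus $\BB A_{r_i/2,r_i}(x)$ for $i\leq j$; the detour around $\BB A_{r_j,2r_j}(x)$ then shows, upon invoking near-minimality of $P$, that
$D(\text{around }\BB A_{r_j,2r_j}(x)) \geq \sum_{i=1}^{j} D(\text{across }\BB A_{r_i/2,r_i}(x)) - \delta(\ep)$. Writing $x_i := D(\text{across }\BB A_{r_i/2,r_i}(x))$ and using the two halves of $E_{r_j}(x;C)$ converts this to $x_j \geq \tfrac{1}{2C^2}\sum_{i\leq j}x_i$ for all $j$, i.e., every term is a fixed fraction of its running sum. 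By the deterministic counting Lemma~\ref{lem-deterministic}, this can hold for at most $O\bigl(\log(\max_j x_j / x_1)\bigr)$ indices. Since Lemma~\ref{lem-iterate} guarantees at least $N\gtrsim \log\ep^{-1}$ such indices, the ratio $\max_j x_j / x_1$ must be exponentially large in $\log\ep^{-1}$, forcing $x_1$ — and hence $e^{\xi h_{r_1}(x)}$, after normalizing by $\frk c_{r_1}$ — to be small. That geometric accumulation across scales is the mechanism that produces the thickness bound; it is simply not available from a one-scale comparison.
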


The proof of Lemma~\ref{lem-thickbound} will apply the following deterministic lemma.

\begin{lem}
\label{lem-deterministic}
Let $x_1,\ldots,x_N$ be nonnegative real numbers.  For each $c>0$,
\[
\# \left\{ j \in [1,N]_{\BB{Z}} : x_j > c \sum_{i=1}^{j-1} x_i \right\}
\leq \max\left\{ 1, \frac{1}{\log(1 + c)} \log\left(\frac{1}{x_1} \max_{j \in [1,N]_{\BB{Z}}} x_j \right) - \frac{\log c}{\log(1+c)} + 2 \right\}.
\]
\end{lem}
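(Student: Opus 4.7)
The plan is to set $S_j := \sum_{i=1}^{j} x_i$ and observe that the condition $x_j > c S_{j-1}$ is equivalent to the geometric growth condition $S_j > (1+c) S_{j-1}$. Hence, along the subsequence of indices $j_1 < j_2 < \cdots < j_k$ at which the condition holds, the partial sums grow by at least a factor of $1+c$ each time, producing a geometric sequence. A matching upper bound in terms of $\max_j x_j$ then forces $k$ to be at most logarithmic in $\max_j x_j / x_1$.

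More precisely, first I would dispose of the trivial cases: if $x_1 = 0$, the right-hand side is $+\infty$ and there is nothing to prove; if $\max_j x_j \leq x_1$, I will check directly that $k \leq 1$ (for $c \geq 1$ at most $j_1 = 1$ can contribute, while for $c < 1$ the asserted bound reduces to this observation via the maximum with $1$). Then I assume $x_1 > 0$ and let $j_1 < \cdots < j_k$ be the indices in the set. Since $S_{j_\ell} = S_{j_\ell - 1} + x_{j_\ell} > (1+c) S_{j_\ell - 1}$ and $S$ is nondecreasing, iterating gives
\[
S_{j_k} \geq (1+c)^{k-1} S_{j_1}.
\]

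For the upper bound on $S_{j_k}$, the relation $x_{j_k} > c S_{j_k - 1} = c (S_{j_k} - x_{j_k})$ rearranges to $S_{j_k} < \tfrac{1+c}{c} x_{j_k} \leq \tfrac{1+c}{c} \max_j x_j$. For the lower bound on $S_{j_1}$, just use $S_{j_1} \geq S_1 = x_1$. Combining yields
\[
(1+c)^{k-1} x_1 \leq S_{j_k} < \tfrac{1+c}{c} \max_j x_j,
\]
i.e.\ $(1+c)^{k-2} < \tfrac{1}{c x_1} \max_j x_j$. Taking logs gives the claimed bound
\[
k \leq \frac{1}{\log(1+c)} \log\!\left(\frac{\max_j x_j}{x_1}\right) - \frac{\log c}{\log(1+c)} + 2.
\]
Finally, one notes the $\max\{1,\cdot\}$ on the right-hand side is harmless: it only matters in edge cases where the logarithmic term is small or negative, and those are already covered by the trivial-case analysis above.

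There is no substantial obstacle here — the lemma is a purely combinatorial statement and the entire proof consists of the geometric-growth observation together with one application of $x_{j_k} > cS_{j_k-1}$ to trade the partial sum for the maximum. The only care needed is bookkeeping the additive constants so that the final $+2$ and the $-\log c/\log(1+c)$ term match the statement exactly, which is why the case $j_1 = 1$ (giving one less factor of $1+c$ in the lower bound) is the tight one and dictates the constants.
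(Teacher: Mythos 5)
Your argument is correct and follows essentially the same route as the paper's: iterate the geometric growth $S_{j_\ell} > (1+c)S_{j_{\ell-1}}$ of the partial sums along the marked indices, then use the defining condition at the last marked index to extract the factor of $c$ before taking logarithms. A minor slip in your side remarks: the edge-case claim that $\max_j x_j \le x_1$ forces $k\le 1$ fails for $c<1$ (take $x_1 = x_2 = 1$ and $c = 1/10$, giving $k = 2$), but this is harmless since your main argument already covers that case once $x_1>0$, so the separate treatment is unnecessary.
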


\begin{proof}
Suppose that $j_1 < \cdots < j_K$ are integers in $[1,N]_{\BB{Z}}$ with $K \geq 2$, such that
\eqb
x_{j_k} > c \sum_{i=1}^{j_k-1} x_i \qquad \forall k \in [1,K]_{\BB{Z}}
\label{eqn-defn-xjK}
\eqe
Then
\[
\sum_{i=1}^{j_k} x_i > 
(c+1) \sum_{i=1}^{j_k-1} x_i 
\geq (c+1) \sum_{i=1}^{j_{k-1}} x_i
\qquad \forall k \in [2,K]_{\BB{Z}}.
\]
Iterating this inequality $K-2$ times yields
\[
\sum_{i=1}^{j_{K-1}} x_i > 
(c+1)^{K-2} \sum_{i=1}^{j_1} x_i 
\geq (c+1)^{K-2}x_1.
\]
Therefore, by~\eqref{eqn-defn-xjK} with $k=K$,
\[
x_{j_K} > c (c+1)^{K-2}x_1.
\]
Taking the logarithms of both sides and rearranging gives the desired result.
\end{proof}

\begin{proof}[Proof of Lemma~\ref{lem-thickbound}]
Fix $q>0$, let $C =C(1,q)>0$ be as in Lemma~\ref{lem-iterate}, and for each $x \in U$ and $\ep \in (0,1)$, let $\mcl F = \mcl F(x,\ep)$ be the event of Lemma~\ref{lem-iterate} for these values of $m,q,$ and $C$ and $\BB{r} = 1$, so that $\BB{P}(\mcl F) = 1 - O_\ep(\ep^q)$, with the rate uniform in $x \in K$ for each compact subset $K \subset U$.  On the event $\mcl F$, setting $N := \left\lceil C^{-1}\log \ep^{-1} \right\rceil$, 
we can choose $r_1 < \cdots < r_N$ in $[\ep^{2}  , \ep ] \cap \{4^{-k} \}_{k\in\BB N}$ such that the event $E_{r_j}(x;C)$ holds for each $j \in [1,N]_{\BB{Z}}$.  Since we have $\frk c_r = r^{\xi Q + o_r(1)}$ as $r \rta 0$, we deduce that, for each $\lambda > 0$, the bounds
\eqb
D(\text{around $\BB{A}_{r_{j},2r_{j}}(x)$}) \leq C r_j^{\xi Q - \lambda} e^{\xi h_{r_j}(x)}
 \label{eqn-C-bound-around}
 \eqe
and
\eqb
 D(\text{across $\BB{A}_{r_{j}/2,r_{j}}(x)$}) \geq C^{-1} r_j^{\xi Q + \lambda} e^{\xi h_{r_j}(x)} 
  \label{eqn-C-bound-across}
\eqe
hold on the event $\mcl F$ for all $j \in [1,N]_{\BB{Z}}$ whenever $\ep>0$ is  smaller than some deterministic threshold.

\begin{figure}[ht!] \centering
\begin{tabular}{ccc} 
\includegraphics[width=0.4\textwidth]{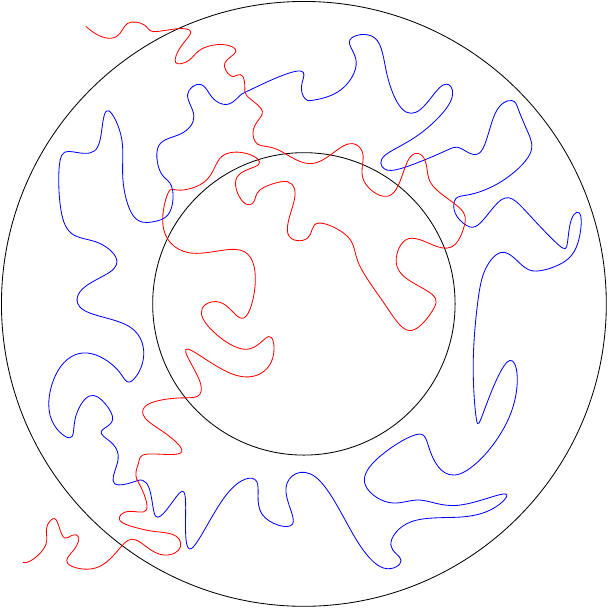}
& \qquad  \qquad &
\includegraphics[width=0.4\textwidth]{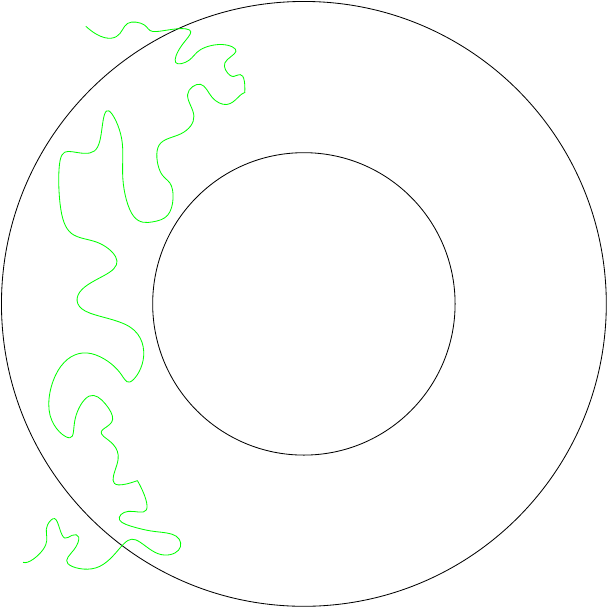}
\end{tabular}
\caption{An illustration of a key step of the proof of Lemma~\ref{lem-thickbound}. The idea is that, if an almost geodesic crosses an annulus, then we can bound the distance across the annulus from above in terms of the length of the path inside the region bounded by the inner circle. We do this by ``rerouting'' $P$ along a path around the annulus to avoid the region bounded by the inner circle. \textbf{Left:} The  path $P$ (red) between $z$ and $w$, whose length is at most $D(z,w) + \delta(\ep)$, hits the circle $\partial B_{r_j}(x)$ between two times that it hits $\partial B_{2r_j}(x)$.  Therefore, given a path $\wt P$ (blue) around the annulus $\BB{A}_{r_j,2r_j}(x)$, \textbf{Right:} we can construct a new path (green) from $z$ to $w$ whose length is at most $\len(\wt P) + \len(P \cap B_{r_j}(x)^c)$. This implies that the distance around the annulus is bounded below by $\len(P \cap B_{r_j}(x)) - \delta(\ep)$.}
\label{fig-avoids-path}
\end{figure}

Now, observe that if  $\wt P$ is any path around $\BB{A}_{r_j,2r_j}(x)$, then we can use $\wt P$ to construct a new path from $z$ to $w$ that avoids $B_{r_j}(x)$ entirely, and whose length is at most $\len(\wt P) + \len(P \cap B_{r_j}(x)^c)$.  (See Figure~\ref{fig-avoids-path}.) Therefore, for each $j \in [1,N]_{\BB{Z}}$,
\[
D(z,w) \leq D(\text{around $\BB{A}_{r_{j},2r_{j}}(x)$}) + \len(P \cap B_{r_j}(x)^c).\]
Since $\len(P;D) \leq D(z,w) + \delta(\ep)$, we deduce that, for each $j \in [1,N]_{\BB{Z}}$,
\[
\len(P;D) \leq D(\text{around $\BB{A}_{r_{j},2r_{j}}(x)$}) + \len(P \cap B_{r_j}(x)^c) + \delta(\ep)\]
and therefore \changes{(on the event $\mcl F$)} \[
D(\text{around $\BB{A}_{r_{j},2r_{j}}(x)$}) \geq \len(P \cap B_{r_j}(x)) - \delta(\ep) \geq \sum_{i=1}^j D(\text{across $\BB{A}_{r_{i}/2,r_{i}}(x)$}) - \delta(\ep).
\]
By~\eqref{eqn-C-bound-across} for $j=1$ and a Gaussian tail bound, we can choose $\delta(\cdot)$ deterministically such that $\delta(\ep) < \frac{1}{2} D(\text{across $\BB{A}_{r_{1}/2,r_{1}}(x)$})$ on an event $\mcl F^* \subset \mcl F$ with probability $1 - O_\ep(\ep^q)$ (also at a rate uniform in $x \in K$).  From now on, we work on the event $\mcl F^*$.  We have
\[
D(\text{around $\BB{A}_{r_{j},2r_{j}}(x)$}) \geq \frac{1}{2} \sum_{i=1}^j D(\text{across $\BB{A}_{r_{i}/2,r_{i}}(x)$}) \qquad \forall j \in [1,N]_{\BB{Z}}.
\]
Combining this inequality with~\eqref{eqn-dist-est-1} and~\eqref{eqn-dist-est-2} yields \[
D(\text{across $\BB{A}_{r_{j}/2,r_{j}}(x)$}) \geq \frac{1}{2C^2} \sum_{i=1}^j D(\text{across $\BB{A}_{r_{i}/2,r_{i}}(x)$}) \qquad \forall j \in [1,N]_{\BB{Z}}.
\]
We now set $x_j = D(\text{across $\BB{A}_{r_{j}/2,r_{j}}(x)$})$  for $j \in [1,N]_{\BB{Z}}$ and apply Lemma~\ref{lem-deterministic}.  We get
\[
N = \#\left\{ j: x_j > \frac{1}{2C^2} \sum_{i=1}^j x_i\right\} \leq \frac{1}{\log(1 + 1/(2C^2))} \log\left(\frac{1}{x_1} \max_{j \in [1,N]_{\BB{Z}}} x_j \right) + 2.
\]
Since $\max_{j \in [1,N]_{\BB{Z}}} x_j \leq \len(P)$, this implies
\eqb
N \leq c + \frac{1}{\log(1 + 1/(2C^2))} \log(1/x_1) ,
\label{eqn-N2}
\eqe
\changes{where $c>0$ is a function of $C$ and $\len(P)$.}  Now,~\eqref{eqn-C-bound-across} yields
\[
\frac{1}{x_1} \leq C r_1^{-\xi \left( Q +
\changes{\lambda/\xi} -  \frac{h_{r_1}(x)}{\log r_1^{-1}} \right)}
\]
Plugging this into~\eqref{eqn-N2} and using the fact that $N \geq C^{-1}\log r_1^{-1}$, we obtain
\[
C^{-1}\log r_1^{-1} \leq c + \frac{\xi}{\log(1 + 1/(2C^2))} \left( Q +\changes{\lambda/\xi} - \frac{h_{r_1}(x)}{\log r_1^{-1}} \right) \log r_1^{-1}
\]
\changes{
or
\[
\left[\frac{\xi}{\log(1 + 1/(2C^2))} \left( Q +\changes{\lambda/\xi} - \frac{h_{r_1}(x)}{\log r_1^{-1}}\right) - C^{-1}  \right]\log r_1^{-1} \geq -c.
\]
Therefore, for any fixed $\eta>0$, if $\ep>0$ is sufficiently small,
\[
\frac{\xi}{\log(1 + 1/(2C^2))} \left( Q +\changes{\lambda/\xi} - \frac{h_{r_1}(x)}{\log r_1^{-1}}\right) - C^{-1}  \geq -\eta
\]
In particular, if we take $\eta = 1/(2C)$, then for $\ep>0$  sufficiently small,
\eqb
\frac{h_{r_1}(x)}{\log r_1^{-1}} \leq  Q +\lambda- \frac{\log(1 + 1/(2C^2))}{2C\xi} \label{eqn-thick-lambda-corr}
\eqe
Fix some value of $\lambda$ small enough that the right-hand side of~\eqref{eqn-thick-lambda-corr} equals some $\zeta < Q$. Then, for this value of $\lambda$ we have shown that, on the event $\mcl F$, for any $\ep>0$ less than some deterministic threshold,
\[
\frac{h_{r}(x)}{\log r^{-1}} \leq  \zeta \qquad \text{for some $r \in [\ep^{2}  , \ep ] \cap \{4^{-k} \}_{k\in\BB N}$}.
\]
}
\end{proof}

From Lemma~\ref{lem-thickbound}, we deduce that \changes{almost} $D$-geodesic segments in small Euclidean neighborhoods must indeed have small $D$-length:

\begin{lem}
\label{lem-geo-doesnt-dawdle}
Suppose we are in the setting of Lemma~\ref{lem-thickbound}, and suppose that $D$ is a complete metric on $U \backslash \{\text{singular points}\}$. 
\changes{Let $K \subset U$ be a compact sets, let $z,w \in K $ be nonsingular points}, and let $P$ be a path from $z$ to $w$ in $K$ with $D$-length at most $D(z,w) + \delta$ for some $\delta \geq 0$.  Then a.s.\
\[
\limsup_{\ep \rta 0} \sup_{y \in K} \len(P \cap B_{\ep}(y)) \leq \delta
\]
\end{lem}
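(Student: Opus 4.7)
The plan is to argue by contradiction via an ``excursion-collapse'' trick: if $P$ spent too much $D$-length inside a shrinking Euclidean ball, then by extracting a limit we would find that $P$ makes a genuine excursion that returns to the same point, and excising it would shorten $P$ below $D(z,w)$. Concretely, suppose for some $\eta > 0$ there were sequences $\ep_n \rta 0$ and $y_n \in K$ with $\len(P \cap B_{\ep_n}(y_n) ; D) > \delta + \eta$ for all $n$. First I would view $P$ as a continuous map $P : [0,1] \rta K$ in the Euclidean topology and, using compactness of $K$ and $[0,1]$, pass to a subsequence along which $y_n \rta x \in K$, the first entrance time $t_1^n := \inf\{t \in [0,1] : P(t) \in \ol{B_{\ep_n}(y_n)}\}$ converges to some $t_1^*$, and the last exit time $t_2^n := \sup\{t \in [0,1] : P(t) \in \ol{B_{\ep_n}(y_n)}\}$ converges to some $t_2^*$.

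Next I would show $P(t_1^*) = P(t_2^*) = x$. The relation $P(t_i^n) \in \ol{B_{\ep_n}(y_n)}$ combined with $y_n \rta x$ and $\ep_n \rta 0$ forces $P(t_i^n) \rta x$ in the Euclidean metric, while Euclidean continuity of $P$ yields $P(t_i^n) \rta P(t_i^*)$; uniqueness of Euclidean limits then gives $P(t_i^*) = x$ for $i = 1,2$. Since $\{t : P(t) \in B_{\ep_n}(y_n)\} \subseteq [t_1^n, t_2^n]$ and the $D$-length functional is additive over subintervals,
\[
\len(P|_{[t_1^n, t_2^n]} ; D) \geq \len(P \cap B_{\ep_n}(y_n) ; D) > \delta + \eta.
\]
Because $P$ has total $D$-length $L \leq D(z,w) + \delta < \infty$ and stays in the nonsingular part of $U$ (where $D$ is complete and finite-valued by Proposition~\ref{prop-check-3.5-axioms}), the cumulative length $t \mapsto \len(P|_{[0,t]} ; D)$ is continuous, so this bound passes to the limit: $\len(P|_{[t_1^*, t_2^*]} ; D) \geq \delta + \eta$.

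To conclude, form the spliced path $P' := P|_{[0, t_1^*]} \cdot P|_{[t_2^*, 1]}$, which is a continuous path from $z$ to $w$ because its two pieces agree at $P(t_1^*) = P(t_2^*) = x$. Its $D$-length satisfies
\[
\len(P' ; D) = \len(P ; D) - \len(P|_{[t_1^*, t_2^*]} ; D) \leq (D(z,w) + \delta) - (\delta + \eta) = D(z,w) - \eta,
\]
contradicting $\len(P' ; D) \geq D(z,w)$. The main obstacle I anticipate is the continuity of the cumulative $D$-length function at the random times $t_1^*, t_2^*$, which underlies passing the length lower bound to the limit. I would handle this either by reparametrizing $P$ by $D$-arc length (so that the length function becomes the identity and $P$ is $1$-Lipschitz with respect to $D$) or by appealing to the fact that, since $P$ avoids the singular set and has finite $D$-length, its cumulative length cannot have jumps. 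This is the step most in the spirit of the rerouting identity used in the proof of Lemma~\ref{lem-thickbound}, where near-minimality of a path is similarly converted into a bound on how much of the path can be absorbed into a small set.
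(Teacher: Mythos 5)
Your proposal is correct in spirit and takes a genuinely different route from the paper's. The paper's proof runs a case analysis (near an endpoint vs.\ away from both) and, in the second case, applies Lemma~\ref{lem-thickbound} at lattice points $x \in \frac{\ep^2}{4}\BB{Z}^2$ via a union bound; it is inherently probabilistic, deriving the estimate on an event $\mcl F_\ep$ of probability tending to $1$ and then upgrading to almost sure via a monotonicity observation. Your ``excursion collapse'' argument is, once the one missing step is filled in, deterministic conditional on the hypotheses of the lemma. Both proofs lean on the completeness assumption, so the two approaches are not very different in which tools from the paper they ultimately require, but your route packages the argument more cleanly and avoids the lattice union bound entirely.

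The gap, which you correctly flag, is the continuity of the cumulative length function $L(t) := \len(P|_{[0,t]};D)$. Neither of your two suggested fixes is quite adequate as stated. ``Reparametrize by $D$-arc length'' is circular: the arc-length reparametrization is $1$-Lipschitz with respect to $D$ only if $L$ is already continuous; if $L$ had a jump at $t_0$, the generalized inverse would collapse the jump interval to a constant and the reparametrized curve would have strictly smaller length than $P$, so it cannot be used to deduce continuity. ``Cumulative length cannot have jumps'' is the right statement, but it is not automatic from finite length alone in a metric space where the ambient topology ($D$) can be strictly finer than the Euclidean one. The correct justification, using exactly the hypotheses at hand, is as follows. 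Since $\len(P;D) < \infty$, every $P(t)$ is nonsingular. Fix $t_0$ and take $t_n \searrow t_0$ monotone. Then $D(P(t_n),P(t_m)) \le |L(t_n)-L(t_m)|$, and since $L$ is nondecreasing and bounded, the right side tends to $0$; so $(P(t_n))$ is $D$-Cauchy in $U \setminus \{\text{singular points}\}$ and by completeness converges in $D$ to some nonsingular $q$. Now use \emph{lower semicontinuity} of $D$: since $P(t_n) \rta P(t_0)$ in the Euclidean topology and $q$ is fixed, $D(P(t_0),q) \le \liminf_n D(P(t_n),q) = 0$, forcing $q = P(t_0)$, i.e.\ $P$ is $D$-right-continuous at $t_0$ (and symmetrically left-continuous). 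Finally, $D$-continuity of a finite-length curve implies continuity of its length function by the standard partition-refinement argument: choose a partition $T$ with $t_0 \in T$, say $t_0 = s_j$, and with partition sum within $\ep$ of $\len(P;D)$; one then reads off $\len(P|_{[t_0,t]};D) < D(P(t),P(t_0)) + \ep$ for $t \in (t_0,s_{j+1})$, and lets $t \downarrow t_0$. With this step supplied, your excision argument goes through as written.

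A couple of smaller points: the citation to Proposition~\ref{prop-check-3.5-axioms} is out of place, since the lemma's hypotheses (``the setting of Lemma~\ref{lem-thickbound}'') do not refer to subsequential limits of LFPP; finiteness of $D$ along $P$ follows directly from the finite-length hypothesis, which is all you need. Also note that lower semicontinuity of $D$ is the decisive ingredient in showing $q = P(t_0)$; one does not need the H\"older continuity of the identity map from $(\BB{C},D)$ to $(\BB{C},|\cdot|)$.
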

 
 \changes{
We use the following lemma to prove Lemma~\ref{lem-geo-doesnt-dawdle}.}

\changes{\begin{lem}
\label{lem-complete-tez}
In the setting of Lemma~\ref{lem-geo-doesnt-dawdle}, let $t_z^\ep$ denote the last exit of $P$ from $B_\ep(z)$, and  $t_w^\ep$ the first entry of $P$ into  $B_{\ep}(w)$. Then, in the $\ep \rta 0$ limit, we have $P(t_z^\ep) \rta z$ and $P(t_w^\ep) \rta w$ in $(U,D)$.
\end{lem}}

\changes{
\begin{proof}
Since $f(\ep) := \len(P[0,t_z^\ep])$ is a monotone function  $[0,|z-w|] \rta \BB{R}$, the right limit $\lim_{\ep \searrow 0} f(\ep)$ exists.  Therefore, for every $\eta>0$, we can choose $\ep>0$ small enough that $f(a) - f(b) = \len(P[t_z^{a},t_z^b]) < \eta$ for all $0<a<b<\ep$.   It follows that the sequence $\{P(t_z^{1/n})\}_{n \in \BB{N}}$ is a $D$-Cauchy sequence in $U \backslash \{\text{singular points}\}$.  Hence, the sequence converges to a nonsingular point $z' \in U$ in the metric $D$.  By lower semicontinuity of $D$, we have $D(z',z) \leq \liminf_{n \rta \infty} D(z',P(t_z^\ep)) = 0$, i.e., $z'=z$.  The same argument yields $P(t_w^\ep) \rta w$ in $(U,D)$.
\end{proof}
}

\begin{proof}[Proof of Lemma~\ref{lem-geo-doesnt-dawdle}]
Assume that $0< \ep < |z-w|/8$. \changes{As in the statement of the previous lemma, let $t_z^\ep$  denote the last exit of $P$ from $B_{\ep}(z)$, and $t_w^\ep$ the first entry of $P$ into  $B_{\ep}(w)$.}  We will show that, with probability tending to $1$ as $\ep \rta 0$,
\eqb
\changes{ \sup_{y \in K}} \len(P \cap B_{\ep^2/2}(y)) \leq \max\{
\len(P[0,t_z^{4\ep}])
, \len(P[t_w^\ep,T]), 
C \ep^b+\delta\}
\label{eqn-max-bound}
\eqe
where $C= C(1,3)>1$ is as in Lemma~\ref{lem-iterate} and $b$ is some positive constant. \changes{First, we explain why this implies the statement of the lemma. Suppose that~\eqref{eqn-max-bound} holds with probability tending to $1$ as $\ep \rta 0$. Then a.s.\ we can choose a sequence  of values of $\ep$ tending to $0$ such that a.s.\ ~\eqref{eqn-max-bound} holds for infinitely many values of $\ep$ in the sequence.  Since the left-hand side of~\eqref{eqn-max-bound} is nonincreasing and the right-hand side  a.s.\ converges to $\delta$ (by Lemma~\ref{lem-complete-tez}), this means that $\limsup_{\ep \rta 0} \sup_{y \in K}\len(P \cap B_\ep(y)) \leq \delta$.}

To prove that~\eqref{eqn-max-bound} holds with probability tending to $1$ as $\ep \rta 0$, we first note that $\ep$ is small enough that $B_{3 \ep}(y)$ cannot contain both $z$ and $w$ \changes{for any choice of $y \in K$}. Thus, we can classify the points $y \in K$ into two categories.

\medskip

\begin{figure}[ht!] \centering
\begin{tabular}{ccc} 
\includegraphics[width=1\textwidth]{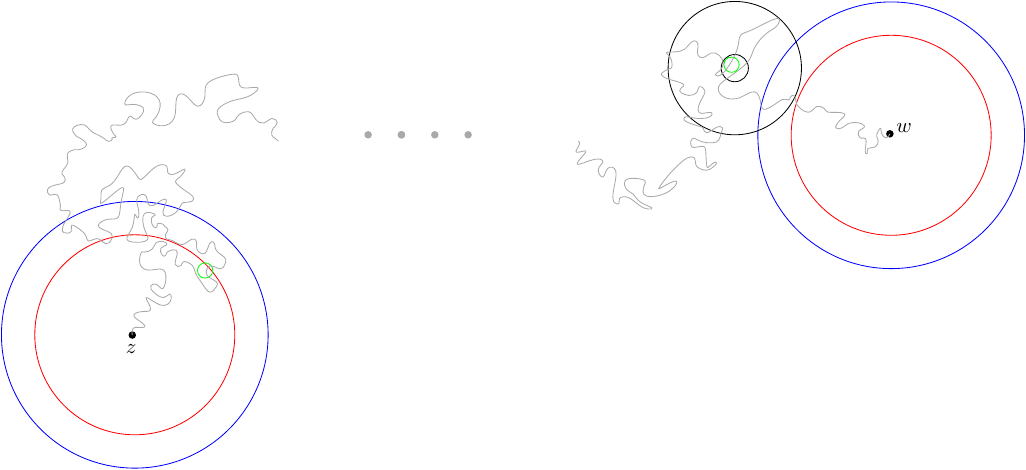}
\end{tabular}
\caption{A sketch of the proof of Lemma~\ref{lem-geo-doesnt-dawdle}.  We want to show that, by choosing $\ep>0$ sufficiently small, we can make the $D$-length of the path $P$ (gray) inside the ball $B_{\ep^2/2}(y)$ arbitrarily small simultaneously for \emph{all} choices of $y \in K$.  
To prove this fact, we choose balls of radius $3\ep$ (boundaries in red) and $4\ep$ (boundaries in blue) centered at the points $z$ and $w$, and we consider separately the cases where $y$ is or is not contained in one of the red balls. (In illustrating both cases, we have colored the boundary of $B_{\ep^2/2}(y)$ in green.)
If $y$ is in one of the red balls, then $B_{\ep^2/2}(y)$ must be contained in the corresponding blue ball, so the parts of the path $P$ in $B_{\ep^2/2}(y)$ must have small $D$-length by the completeness property of $D$. 
If $y$ is not in one of the red balls, then we can choose a point $x$ from a deterministic lattice of $\sim \ep^{-2}$ points such that $B_{\ep^2/2}(y)$ is separated from $z,w$ by the annulus $\BB{A}_{\ep^2,2\ep}(x)$ (black). By a union bound over the  deterministic lattice, we can apply Lemma~\ref{lem-thickbound} to upper bound the $D$-distance around the annulus $\BB{A}_{\ep^2,2\ep}(x)$. 
As we showed in Figure~\ref{fig-avoids-path}, we can use this upper bound to derive an upper bound for the $D$-length of $P$ in $B_{\ep^2}(x)$ (and therefore in $B_{\ep^2/2}(y)$) by ``rerouting'' the path $P$ to avoid the inner ball $B_{\ep^2}(x)$ of the annulus.}
\label{fig-avoids-path-3}
\end{figure}

\noindent\textit{Case 1: $B_{3 \ep}(y)$ contains exactly one of the points $z,w$.}
\medskip

\changes{First, we consider $y \in K$ for which} $z \in B_{3 \ep}(y)$ and $w \notin B_{3 \ep}(y)$.  Then $B_{\ep^2/2}(y) \subset B_{4\ep}(z)$, and the path $P$ can intersect $B_{\ep^2/2}(y)$ only until the last time $t_z^{4\ep}$ that $P$ exits $B_{4\ep}(z)$.  Thus, $\len(P \cap B_{\ep^2/2}(y)) \leq P([0,t_z^{4\ep}])$. Similarly, if \changes{$y$ is such that} $w \in B_{3\ep(y)}$ and $z \notin B_{3\ep}(y)$, then $\len(P \cap B_{\ep^2/2}(y)) \leq  \len(P[t_w^{4\ep},T])$.
\medskip

\noindent\textit{Case 2: $B_{3 \ep}(y)$ contains neither of the points $z,w$.}
\medskip

\changes{Next, we consider $y \in K$ for which} that $z, w \notin B_{\ep^{1/2}}(y)$.  \changes{Since we  must consider all such $y$ simultaneously,} we cannot apply Lemma~\ref{lem-thickbound} to $y$ directly; instead, choose $x \in B_\ep(K) \cap \left( \frac{\ep^2}{4} \BB{Z}^2 \right)$ such that $|x-y|<\frac{\ep^2}{2}$. Observe that $z,w$ lie outside the ball $B_{2\ep}(x)$. Also, if $P$ intersects the ball $B_{\ep^2/2}(y)$, then $P$ must intersect the ball $B_{\ep^2}(x)$.  
By Lemma~\ref{lem-thickbound} and a union bound, on an event $\mcl F_\ep$ with probability tending to $1$ as $\ep \rta 0$, we can find $r  \in [\ep^{2}  , \ep ] \cap \{4^{-k} \}_{k\in\BB N}$ for which the event $E_r(x;C)$ holds and $\frac{h_r(x)}{\log r^{-1}} \leq \zeta$, where $\zeta < Q$.  Henceforth, we assume that $\mcl F_\ep$ occurs. As in the proof of Lemma~\ref{lem-thickbound}, we can use any path $\wt P$ around $\BB{A}_{r,2r}(x)$ to construct a path from $z$ to $w$  that does not intersect $B_{r}(x)$ and whose length is at most $\len(\wt P) + \len(P \cap B_{r}(x)^c)$.  Therefore
\eqb
\len(P \cap B_{\ep^2}(x)) \leq D(\text{around $\BB{A}_{r,2r}(x)$}) + \delta
\label{eqn-B2ep}
\eqe
Since $E_r(x;C)$ occurs and $\frac{h_r(x)}{\log r^{-1}} \leq \zeta$,~\eqref{eqn-B2ep} implies that
\[
\len(P \cap B_{\ep^2}(x)) 
\leq C \frk c_r r^{\xi h_r(x)} + \delta \leq C \frk c_r r^{-\xi \zeta} + \delta.\]  Since $\frk c_r = r^{\xi Q + o_r(1)}$ as $r \rta 0$, we can choose $b>0$ such that $\len(P \cap B_{\ep^2}(x)) 
\leq C  r^{b} + \delta$  for all  $\ep>0$ sufficiently small.
Since $B_{\ep^2/2}(y)$ is contained in $B_{\ep^2}(x)$, we conclude that, if $z, w \notin B_{\ep^{1/2}}(y)$, then, on the event $\mcl F_\ep$, we have $\len(P \cap B_{\ep}(y)) \leq C \ep^{ b} + \delta$.  
\end{proof}

We can also use Lemma~\ref{lem-thickbound} to  bound from above the thickness of points along $D$-geodesics by some deterministic constant strictly less than $Q$:

\begin{lem}
\label{lem-avoids-thick}
In the setting of Lemma~\ref{lem-thickbound},  there is a deterministic $\zeta < Q$ such that, almost surely, for every geodesic in $U$ and every point $y$ on the geodesic minus its endpoints, we have \[ \liminf_{r \rta 0} \frac{h_r(y)}{\log{r^{-1}}} \leq \zeta.\]
\end{lem}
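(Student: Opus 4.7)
The plan is to discretize an interior point $y$ of a $D$-geodesic by a deterministic spatial lattice at each of a geometric sequence of scales, apply Lemma~\ref{lem-thickbound} at every lattice point together with a union bound and Borel--Cantelli, and then transfer the resulting thickness bound from the nearby lattice point to $y$ itself using spatial regularity of the whole-plane GFF circle-average process. The constant $\zeta < Q$ will be the one produced by Lemma~\ref{lem-thickbound} for some fixed choice of $M$. Since $U$ can be exhausted by countably many compact sets, it suffices to fix a compact $K \subset U$ and show that, almost surely, every interior point of every $D$-geodesic contained in $K$ satisfies the stated $\liminf$ bound.

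Fix a small $\kappa \in (0,1)$ and a large $q > 5 + 2\kappa$. For each $n \in \BB N$ set $\ep_n = 2^{-n}$ and
\[
\mcl L_n := (\ep_n^{2+\kappa}\, \BB Z^2) \cap B_1(K),
\]
so that $|\mcl L_n| = O(\ep_n^{-4-2\kappa})$ and every $y \in K$ has a lattice point $x \in \mcl L_n$ with $|x - y| \leq \ep_n^{2+\kappa}$. Since any $D$-geodesic satisfies the length bound of Lemma~\ref{lem-thickbound} trivially (with $\delta = 0$), applying that lemma at each $x \in \mcl L_n$ and union-bounding yields an event $\mcl F_n$ of probability at least $1 - O(\ep_n^{q-4-2\kappa})$ on which the conclusion of Lemma~\ref{lem-thickbound} holds simultaneously for every $x \in \mcl L_n$ and every $D$-geodesic in $K$. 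By Borel--Cantelli, almost surely $\mcl F_n$ occurs for all $n \geq N_0$ for some random $N_0 < \infty$.

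Let $P : [0,T] \rta K$ be any $D$-geodesic and $y = P(t_0)$ an interior point, and set $d_y := \min(|y - P(0)|, |y - P(T)|) > 0$. For each $n \geq N_0$ large enough that $2\ep_n + \ep_n^{2+\kappa} < d_y$ (which forces $\ep_n^{2+\kappa} < \ep_n^2$ as well), pick $x_n \in \mcl L_n$ with $|x_n - y| \leq \ep_n^{2+\kappa}$; then $P(0), P(T) \notin B_{2\ep_n}(x_n)$ while $y \in B_{\ep_n^2}(x_n)$, so by continuity $P$ must hit $\partial B_{\ep_n^2}(x_n)$ between two times it hits $\partial B_{2\ep_n}(x_n)$. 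Hence on $\mcl F_n$ there exists $r_n \in [\ep_n^2, \ep_n]$ with $h_{r_n}(x_n)/\log r_n^{-1} \leq \zeta$, and $r_n \rta 0$.

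It remains to transfer the bound from $x_n$ to $y$. We have $|x_n - y|/r_n \leq \ep_n^\kappa \rta 0$. Standard H\"older regularity of the whole-plane GFF circle-average process, obtained, for example, by a Kolmogorov--Chentsov argument applied to the Gaussian process $(r,z) \mapsto h_r(z)$ using its explicit covariance, gives a.s.\ a random finite constant $C_y$ and an exponent $\eta > 0$ such that, for all small $r$ and all $x$ with $|x-y| \leq r$,
\[
|h_r(y) - h_r(x)| \leq C_y \, (|x-y|/r)^\eta \sqrt{\log(1/r)} + C_y.
\]
Specializing to $x = x_n$ and $r = r_n$ gives $|h_{r_n}(y) - h_{r_n}(x_n)| = O(\ep_n^{\kappa \eta} \sqrt{\log \ep_n^{-1}}) = o(\log r_n^{-1})$ as $n \rta \infty$, so $h_{r_n}(y)/\log r_n^{-1} \leq \zeta + o(1)$ and therefore $\liminf_{r \rta 0} h_r(y)/\log r^{-1} \leq \zeta$. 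The main obstacle is this final transfer step: the natural uniform modulus of continuity for $h_r(\cdot)$ must beat the $\log r_n^{-1}$ normalization at the smallest relevant scales $r_n \sim \ep_n^2$, which is ensured by the polynomial separation $|x_n - y| \leq \ep_n^\kappa r_n$ built into the lattice.
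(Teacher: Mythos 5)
Your proof follows the same overall plan as the paper's---discretize onto a deterministic lattice, apply Lemma~\ref{lem-thickbound} at each lattice point with a union bound and Borel--Cantelli, then transfer the thickness bound from the nearby lattice point $x_n$ to the geodesic point $y$---but executes the transfer step differently. The paper uses the coarser lattice $\frac{\ep^2}{4}\BB Z^2$, so that $|x_n-y|$ is merely comparable to the smallest radius $r_n\sim\ep^2$, and compensates with the \emph{scale-change} estimate~\cite[Lemma 3.15]{ghm-kpz}, comparing $h_r(x_n)$ with $h_{r^{1-b}}(y)$ at \emph{different} radii; the additive error $3\sqrt{10b}\log r^{-1}$ is absorbed by shrinking $b$, giving $\zeta=\tfrac{a+3\sqrt{10b}}{1-b}$. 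You refine the lattice to spacing $\ep_n^{2+\kappa}$, so $|x_n-y|/r_n\leq\ep_n^\kappa\rta 0$, and compare circle averages at the \emph{same} radius with a spatial H\"older modulus, making the transfer error $o(\log r_n^{-1})$ directly with no $b$-dependent loss. This trades a finer lattice (hence a larger $q$ in Lemma~\ref{lem-thickbound}) for a cleaner transfer; both are valid. Two bookkeeping points. First, the modulus you quote, $C_y(|x-y|/r)^\eta\sqrt{\log(1/r)}+C_y$, is not the standard statement: the joint circle-average H\"older estimate has the form $M(\log r^{-1})^{\zeta'}\bigl(|z-w|/r\bigr)^\gamma r^{-\ep'}$ with an extra $r^{-\ep'}$ penalty. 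Since $r_n\geq\ep_n^2$ that penalty is $O(\ep_n^{-2\ep'})$, which is swallowed by the factor $\ep_n^{\kappa\gamma}$ once $\ep'<\kappa\gamma/2$, so your conclusion $|h_{r_n}(y)-h_{r_n}(x_n)|=o(\log r_n^{-1})$ stands, but the estimate should be cited in its correct form rather than asserted. Second, the constant $\zeta$ produced by Lemma~\ref{lem-thickbound} depends on the exponent $q$ you demand (through $C=C(1,q)$ in Lemma~\ref{lem-iterate}), so you should fix $\kappa$ and $q>4+2\kappa$ before extracting $\zeta$; this still gives $\zeta<Q$, so no harm is done.
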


\begin{proof}
Let $P$ be a geodesic in $U$, and let $K \subset U$ be a compact set that contains $P$.    As in the proof of Lemma~\ref{lem-geo-doesnt-dawdle}, we cannot apply Lemma~\ref{lem-thickbound} to a random point $y$ to bound the circle average of the field at $y$.  Instead, we will choose $x$ from a deterministic lattice, sufficiently close to $y$ that the circle averages at $x$ and $y$ are probably close  in value, and we apply Lemma~\ref{lem-thickbound} at $x$ to get a bound for the circle average at $x$.  
Specifically, we first observe that if $y$ is a point on the geodesic $P$ that is at distance greater than $2\ep + \frac{\ep^2}{2}$ from the endpoints of $P$, then we can choose  $x \in B_\ep(K) \cap \left( \frac{\ep^2}{4} \BB{Z}^2 \right)$ such that $P$ hits $\partial B_{\ep^2}(x)$ between two times that it hits $\partial B_{2\ep}(x)$. Now, for fixed $a \in \BB{R}$ and $b \in (0,1/2)$, let $\mcl F_{\ep,a,b}$ be the event that the following two properties hold:
\begin{enumerate}
    \item For every $x \in B_\ep(K) \cap \left( \frac{\ep^2}{4} \BB{Z}^2 \right)$, if a geodesic hits $\partial B_{\ep^2}(x)$ between two times that it hits $\partial B_{2\ep}(x)$, then $h_r(x) \leq a \log r^{-1}$ for some $r \in [\ep^{2}  , \ep ] \cap \{4^{-k} \}_{k\in\BB N}$.
    \item 
    For all $x,y \in K$ with $|x-y|<2r$ and $r \in (0,\ep)$,
    \[
    |h_r(x) - h_{r^{1-b}}(y)| \leq 3\sqrt{10b} \log r^{-1}.
    \]
    \end{enumerate}
By Lemma~\ref{lem-thickbound} and~\cite[Lemma 3.15]{ghm-kpz}, we can choose $a<Q$ such that, for each choice of $b \in (0,1/2)$, we have $\BB{P}(\mcl F_{\ep,a,b}) \rta 1$ as $\ep \rta 0$.  Hence, for such $a$ and $b$, it is almost surely the case that $\mcl F_{\ep,a,b}$ holds for arbitrarily small values of $\ep$. On the event $\mcl F_{\ep,a,b}$, every point $y$ on a geodesic $P$ that is at distance greater than $2\ep + \frac{\ep^2}{2}$ from the endpoints of $P$ must satisfy $h_{r^{1-b}}(y) \leq (a + 3\sqrt{10b}) \log r^{-1}$ for some $r \in [\ep^{2}  , \ep ] \cap \{4^{-k} \}_{k\in\BB N}$.   Since it is a.s.\ the case that $\mcl F_{\ep,a,b}$ holds for arbitrarily small values of $\ep$, this means that for every point $y$ that lies on some geodesic minus its endpoints, we have $h_{r^{1-b}}(y) \leq (a + 3\sqrt{10b}) \log r^{-1}$ for arbitrarily small values of $r$ and so \[ \liminf_{r \rta 0} \frac{h_r(y)}{\log r^{-1}} \leq \zeta:= \frac{a + 3\sqrt{10b}}{1-b}.\] By choosing $b \in (0,1/2)$  small enough that $\zeta < Q$, we obtain the desired result.
\end{proof}

\subsection{Proving Axiom~\ref{item-metric-local}}
\label{sec-measurability}

We now proceed with the proof of Proposition~\ref{prop-meas-general}. 

\changes{
From the fact that $D$ is a $\xi$-additive local metric, we already know that the $D_h$-internal metric $D_h(\cdot,\cdot;U)$ is conditionally independent from $h$ given $h|_U$.  If we also knew that this metric is a.s.\ determined by $h$, then this would imply that $D_h(\cdot,\cdot;U)$ is a.s.\ determined by $h|_U$,\footnote{\changes{This is a general probability fact: if $X,Y$ are random variables and $X$ and $f(X)$ are conditionally independent given $Y$ for some measurable $f$, then given an event $E$ in the $\sigma$-algebra generated by $f(X)$, we have $E \in \sigma(X)$ and therefore $P(E| \sigma(Y)) = P(E| \sigma(Y))^2$; i.e., $E \in \sigma(Y)$.}}  which is the statement of Axiom~\ref{item-metric-local}.} Therefore, to prove Proposition~\ref{prop-meas-general}, it is enough to show that $D$ is a.s.\ determined by $h$.  In other words, it suffices to show that, if $D$ and $\wt{D}$ are conditionally independent samples from the conditional distribution of $D$ given $h$, then a.s.\ $D=\wt{D}$.  

\changes{To prove this result in the continuous metric setting~\cite{local-metrics}, it is enough to show that a.s.\ $D(z,w)=\wt{D}(z,w)$ for each fixed $z,w \in U$, since this means that a.s.\ $D(z,w)=\wt{D}(z,w)$  simultaneously for all $z,w \in \BB{Q}^2 \cap U$, and we can then extend to all $z,w \in U$ since $D$ and $\wt D$ are continuous.  The challenge in our setting is that the metrics are required only to be lower semicontinuous, so we cannot extend the equality $D(z,w) = \wt D(z,w)$ for pairs of rational points $z,w \in U$ to all pairs of points in $U$. However, as we showed in footnote~\ref{footnote-rational}, if we can show that a.s.\ $D(O,O') = \wt D(O,O')$ for all pairs of rational \emph{circles} $O,O'$ in $U$, then we can apply the lower semicontinuity of $D$ and $\wt D$ to deduce that a.s.\ $D = \wt D$.  Thus, we can implement the general approach in~\cite{local-metrics}, where we modify their approach to analyze  distances between pairs of fixed rational circles rather than distances between pairs of fixed points.
}

We first prove a proposition that implies a weaker result: namely, that $D$ and $\wt{D}$ are a.s.\ bi-Lipschitz equivalent with deterministic Lipschitz constant.  \changes{(The proof is very similar to that of~\cite[Theorem 1.6]{local-metrics}.)}

\begin{lem}[Bi-Lipschitz equivalence]
\label{lem-bilipschitz}
Suppose that $(h,D,\wt{D})$ is a coupling of  $h$ with two random lower semicontinuous metrics $D,\wt{D}$ on a connected open set $U \subset \BB{C}$ that are \changes{conditionally iid given $h$. Moreover, assume that each metric}  is local and  $\xi$-additive for $h$ and satisfies Axioms~\ref{item-metric-length},~\ref{item-metric-translate} and~\ref{item-metric-coord}. Then there is a deterministic constant $C$ such that a.s., for any fixed rational circles $O,O'$ in $U$, \[ \wt D( O,O') \leq C D(O,O' ). \]  \changes{(In particular, by lower semicontinuity of $D$ and $\wt D$, this implies that a.s.\ $\wt D(z,w) \leq C D(z,w)$ for any fixed $z,w \in U$.)}
\end{lem}

\begin{proof}
Let $\delta > 0$, and let $P: [0,T] \rta U$ be a path (chosen in a measurable manner) from \changes{a point $z \in O$} to \changes{a point $w \in O'$} parametrized by $D$-length and  whose $D$-length $T$ is at most \changes{$D(O,O') + \delta$}. We may choose a compact subset $K \subset U$ such that $P \subset K$ on an event $\mcl F^*$ with probability $\geq 1 - \delta$. \changes{Exactly as in the continuous metric setting (see \cite[Corollary 1.8]{local-metrics}), we may deduce that $D$ and $\wt D$ are \emph{jointly} local and $\xi$-additive for $h$.  (We do not include the proof of this last statement, instead referring the reader to the proof of~\cite[Corollary 1.8]{local-metrics}, since their proof extends to our setting without any modifications.)  Moreover, since $D$ and $\wt D$ are conditionally iid given $h$, we can take the constants $\frk c_r$ corresponding to the two metrics to be the same.  Thus, by the same argument as in the proof of Lemma~\ref{lem-iterate},} we can choose $C>0$ deterministically such that, on an event $\mcl F \subset \mcl F^*$ with probability  $1  -\delta - O_{\ep}(\ep^q)$,  the following is true. For each $x \in U \cap B_\ep(K) \cap \left( \frac{\ep^2}{4} \BB{Z}^2 \right)$, we can choose $r \in [\ep^{1+m} \BB{r}, \ep \BB{r}]$ such that \eqb \wt D( \text{around $\BB A_{r,2r}(x)$} ) \leq C D( \text{across $\BB A_{r/2,r}(x)$} ) \label{eqn-around-C-across} \eqe 
For the rest of the proof, we work on the event $\mcl F$.  

We now construct a collection of balls $\{B_{r_j/2}(x_j)\}_{j \in [1,J]_{\BB{N}}}$ for some $J \in \BB{N}$, with \eqb x_j \in U \cap B_\ep(K) \cap \left( \frac{\ep^2}{4} \BB{Z}^2 \right) \qquad \text{and} \qquad  r_j \in [\ep^{1+m}, \ep ] \qquad \forall j,\label{eqn-xr} \eqe such that~\eqref{eqn-around-C-across} holds with $x = x_j$ and $r = r_j$ for each $j$.
We construct this collection of balls by the following finite inductive procedure:
\begin{itemize}
    \item 
First, we choose $x_1$ and $r_1$ satisfying~\eqref{eqn-xr}  such that $z \in B_{r_1/2}(x_1)$  and such that~\eqref{eqn-around-C-across} holds with $x = x_1$ and $r = r_1$.  We also set $t_1 = 0$.
\item
Suppose that we have defined $x_j$ and $r_j$ for some $j$, and also some time $t_j \in [0,T]$.  If the ball $B_{r_{j}/2}(x_j)$ contains the entire segment $P([t_{j},T])$, then we set $j = J$ and the procedure terminates.  Otherwise, we let $t_{j+1}$ be the first time after $t_{j}$ that $P$ exits the ball $B_{r_{j}}(x_{j})$, and we choose $x_{j+1}$ and $r_{j+1}$ satisfying~\eqref{eqn-xr} such that $P(t_{j+1}) \in B_{r_{j+1}/2}(x_{j+1})$ and such that~\eqref{eqn-around-C-across} holds with $x = x_{j+1}$ and $r = r_{j+1}$. 
\end{itemize}
Let $C_{r_j}(x_j)$ be a path that disconnects $\partial B_{r_j}(x_j)$ and $\partial B_{2r_j}(x_j)$ whose $\wt{D}$-length is at most $\ep/J$ plus the $\wt{D}$-distance around the annulus $\BB A_{r_j,2r_j}(x_j)$.  
Since the balls $B_{r_{j/2}}(x_{j})$ for $j=[1,J]_{\BB{N}}$ cover $P([0,t_J])$, we can deduce from topological considerations (as in~\cite[Lemma 4.3]{local-metrics}) that the union of the paths $C_{r_{j}}(x_{j})$ is connected.  Moreover, the union of the paths $C_{r_{j}}(x_{j})$ contains a path from $B_{2\ep}(O)$ to $B_{2\ep}(O')$. 
 It follows that, on the event $\mcl F$, the $\wt D$-distance from 
 $B_{2\ep}(O)$ to $B_{2\ep}(O')$ is at most 
\alb
\sum_{j=1}^J \len(C_{r_{j}}(x_{j});\wt D) 
&\leq\sum_{j=1}^J \left[ \wt D( \text{around $\BB A_{r_j,2r_j}(x_j)$} ) + \ep/J\right] \\
&\leq \ep + C \sum_{j=1}^J D( \text{across $\BB A_{r_j/2,r_j}(x_j)$} ) \\
&\leq \ep +  C \sum_{j=1}^J (t_{j+1} - t_{j}) \\
&\leq \ep + C T \leq C D(O,O') + C \delta.
\ale
Since $\wt D$ is lower semicontinuous, sending $\ep \rta 0$ and then $\delta \rta 0$ yields the desired result.
\end{proof}

The proof of Proposition~\ref{prop-meas-general}, like its counterpart in the continuous metric setting, involves applying the Efron-Stein inequality.  To set up our application of this inequality, we introduce some notation.

\begin{defn}[Defining a $\theta$-shifted grid of $\ep \times \ep$ squares]
For $\theta \in [0,1]^2$, and let $\mcl G_\theta$ be the randomly shifted square grid given by the horizontal and vertical lines joining points of $\BB Z^2 + \theta$.  We define the set of squares $\mcl S_\theta^\ep$ as the set of open $\ep \times \ep$ squares given by the connected components of the complement of the rescaled grid $\ep \mcl G_\theta$. 
\end{defn}

We will use this grid of squares, with $\theta$ chosen randomly from Lebesgue measure on $[0,1]^2$, to decompose the variance of $D(O,O')$ given $h$ and $\theta$, for a pair of rational circles $O,O'$, in terms of the metrics $D^S$ for $S \in \mcl S_{\theta}^\ep$, where $D^S$ denotes the new metric obtained by $D$ by resampling $D(\cdot,\cdot;S\cap U)$ from its conditional law given $(h,\theta)$.
We will apply the Efron-Stein inequality to write
\[
\Var[D(O,O')|h,\theta] \leq \frac{1}{2}\sum_{S \in \mcl S_{\theta}^\ep} \BB E\left[(D^S(O,O') - D(O,O'))^2 | h,\theta\right].
\]
To justify this application of the Efron-Stein inequality, we require the following two lemmas, which generalize the results stated in~\cite[Lemmas 5.1-5.4]{local-metrics}.

\begin{lem}
\label{lem-5.1}
In the setting of Proposition~\ref{prop-meas-general}, we can choose a deterministic $C>0$ such that, for each fixed $V \subset U$ and rational circles $O,O' \subset V$, a.s.\ \eqb 
\label{eqn-5.1}
C^{-1} \BB E[D(O,O';V)|h] \leq D(O,O';V) \leq C \BB E[D(O,O';V)|h]
\eqe 
and\changes{, for any fixed $q>0$,} \eqb 
 \BB{E}[D(O,O';V)^q|h] < \infty \qquad 
 \label{eqn-finite-cond-moments}
 \eqe
 Moreover, if $(h,D) \eqD (h,D')$, then a.s.\ $D,D'$ are bi-Lipschitz equivalent with Lipschitz constant $C^2$. 
\end{lem}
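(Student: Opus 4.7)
The plan is to deduce all three assertions from Lemma~\ref{lem-bilipschitz}, suitably adapted to internal metrics, together with the locality/$\xi$-additivity hypotheses. First I would re-examine the proof of Lemma~\ref{lem-bilipschitz} and observe that, with only cosmetic modifications, it yields the same bi-Lipschitz statement for internal metrics on any fixed open $V \subset U$: the key step is to cover a near-optimal $D$-geodesic from $O$ to $O'$ within $V$ by a finite collection of annuli $\BB A_{r_j,2r_j}(x_j)$ whose closures lie in $V$, using Lemma~\ref{lem-iterate} applied in a compact set $K \Subset V$ to ensure the required good events occur for dyadic radii strictly smaller than $\op{dist}(K,\partial V)$. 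The joint locality of $D$ and $\wt D$ then forces $\wt D(\text{around }\BB A_{r_j,2r_j}(x_j))$ to be bounded in terms of $D(\text{across }\BB A_{r_j/2,r_j}(x_j))$, exactly as before, so we conclude $C^{-1} \wt D(O,O';V) \leq D(O,O';V) \leq C\wt D(O,O';V)$ almost surely for some deterministic $C$.

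Next I would apply this to a pair $(D,\wt D)$ of conditionally independent samples from the conditional law of $D$ given $h$. Since $D,\wt D$ are conditionally i.i.d.\ given $h$ and each is a $\xi$-additive local metric for $h$, the pair is jointly $\xi$-additive local, so the extended Lemma~\ref{lem-bilipschitz} gives $D(O,O';V) \leq C \wt D(O,O';V)$ almost surely. Taking conditional expectation given $(h,D)$ and using that $\BB E[\wt D(O,O';V)\mid h,D] = \BB E[\wt D(O,O';V)\mid h] = \BB E[D(O,O';V)\mid h]$ by conditional independence, this yields $D(O,O';V) \leq C \BB E[D(O,O';V)\mid h]$, and the reverse bound follows symmetrically (or by taking conditional expectation given $(h,\wt D)$ in $\wt D(O,O';V) \leq C D(O,O';V)$ and then relabeling). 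This establishes~\eqref{eqn-5.1}.

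For the moment bound~\eqref{eqn-finite-cond-moments}, I would first argue that $D(O,O';V)$ has finite (unconditional) $q$-th moment for every $q > 0$, by combining the tightness of $e^{-\xi h_r(0)} \frk c_r^{-1} D(\cdot,\cdot)$ across scales (Axiom~\ref{item-metric-coord}) with standard Gaussian moment bounds on $h_r(z)$ and a finite covering of a near-optimal path by annular crossings in a compact exhaustion of $V$. Once $\BB E[D(O,O';V)^q] < \infty$, inequality~\eqref{eqn-5.1} gives $D(O,O';V)^q \leq C^q\BB E[D(O,O';V)\mid h]^q$, and taking conditional expectation and applying Jensen yields $\BB E[D(O,O';V)^q \mid h] \leq C^q \BB E[D(O,O';V)^q \mid h]$, which is finite almost surely because its unconditional expectation is. Finally, the coupling assertion is immediate: given $(h,D) \stackrel{d}{=} (h,D')$, introduce a third metric $\wt D$ conditionally independent of both $D$ and $D'$ given $h$ with the same conditional law; apply the bi-Lipschitz inequality to the pairs $(D,\wt D)$ and $(D',\wt D)$ and compose.

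The main obstacle I anticipate is the internal-metric version of Lemma~\ref{lem-bilipschitz}: one must verify carefully that the annular crossings produced by Lemma~\ref{lem-iterate} can be forced to lie inside $V$ uniformly along an almost $D$-geodesic, which may wander close to $\partial V$. This is handled by working on the high-probability event that the geodesic stays in a compact $K \Subset V$, using the completeness of $D$ on the complement of singular points together with Lemma~\ref{lem-geo-doesnt-dawdle} to guarantee that such a compact $K$ exists with probability as close to one as desired. The rest of the argument is a fairly mechanical conditioning exercise.
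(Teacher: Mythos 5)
Your route to \eqref{eqn-5.1} — first upgrading Lemma~\ref{lem-bilipschitz} to internal metrics, then applying it to conditionally independent samples of $D$ given $h$ and conditioning — is the right idea and is essentially what the paper relies on (the paper simply cites \cite[Lemma 5.1]{local-metrics}, whose argument is of exactly this form, with the rational-circle endpoints replacing fixed points). Your attention to the subtlety of keeping the annular crossings inside $V$, and to verifying that two conditionally independent $\xi$-additive local samples are \emph{jointly} $\xi$-additive local (the hypothesis Lemma~\ref{lem-bilipschitz} actually needs), is appropriate. The substance of \eqref{eqn-5.1} is therefore handled correctly.

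The two remaining assertions, however, you make harder than necessary, and one step is wrong. For \eqref{eqn-finite-cond-moments}, the detour through unconditional $q$-th moments is not needed, and the intermediate ``Jensen'' step is a tautology: from $D(O,O';V)^q \leq C^q\,\BB E[D(O,O';V)\mid h]^q$, conditioning on $h$ gives $\BB E[D(O,O';V)^q\mid h] \leq C^q\,\BB E[D(O,O';V)\mid h]^q$, and Jensen in the other direction only reproduces $\BB E[D^q\mid h]\le C^q\,\BB E[D^q\mid h]$, which proves nothing. The correct (and much shorter) argument is to use \emph{both} inequalities in \eqref{eqn-5.1}: the lower bound gives $\BB E[D(O,O';V)\mid h]\le C\,D(O,O';V)<\infty$ a.s., and then the already-derived $\BB E[D(O,O';V)^q\mid h]\le C^q\,\BB E[D(O,O';V)\mid h]^q$ is a.s.\ finite. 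This is what the paper means by ``follows immediately from \eqref{eqn-5.1}.'' Similarly, for the final bi-Lipschitz assertion, introducing a third conditionally independent copy $\wt D$ is superfluous: since $(h,D)\eqD(h,D')$, the conditional expectations $\BB E[D(O,O')\mid h]$ and $\BB E[D'(O,O')\mid h]$ agree a.s., so \eqref{eqn-5.1} applied to each of $D$ and $D'$ directly yields $D(O,O')\le C\,\BB E[D(O,O')\mid h]=C\,\BB E[D'(O,O')\mid h]\le C^2 D'(O,O')$ and the reverse; lower semicontinuity then upgrades this from rational circles to all pairs. You should streamline accordingly and, in particular, repair the Jensen step.
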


\begin{proof}
The proof of~\eqref{eqn-5.1} is identical to  the proof of~\cite[Lemma 5.1]{local-metrics} with $z,w$ replaced by $O,O'$. The bound~\eqref{eqn-finite-cond-moments} follows  immediately from~\eqref{eqn-5.1} \changes{and Lemma~\ref{lem-rational-dist-finite}}.
Finally, if $(h,D) \eqD (h,D')$, then~\eqref{eqn-5.1} implies that \[
C^{-2} D(O,O') \leq D'(O,O')
\leq C^{2} D(O,O')
\] for all $O,O'$; since $D,D'$ are lower semicontinuous, this implies that a.s.\ $D,D'$ are bi-Lipschitz equivalent with Lipschitz constant $C^2$.   
\end{proof} 

\begin{lem}
\label{lem-threelemmas}
In the setting of Proposition~\ref{prop-meas-general},
let $\theta$ be sampled uniformly from Lebesgue measure on $[0,1]^2$, and let $\mcl S_\theta^\ep$ be the set of open $\ep \times \ep$ squares
in $U$ with vertices in $\ep(\BB{Z}^2 + \theta)$. Then
\begin{itemize}
    \item For any fixed $\ep>0$ and any path $P$ in $U$ with finite $D$-length chosen in a manner depending only on $h$ and $D$, \changes{a.s.\ }
    \eqb
    \len(P;D) = \sum_{S \in \mcl S_{\theta}^\ep} \len(P \cap S;D).
    \label{eqn-5.3}
    \eqe
    \item The metric $D$ is a.s.\ determined by $h, \theta$ and the set of internal metrics $\{D(\cdot,\cdot;S\cap U): S \in \mcl S_{\theta}^\ep\}$.
    \item The internal metrics $\{D(\cdot,\cdot;S\cap U): S \in \mcl S_{\theta}^\ep\}$ are conditionally independent given $h$ and $\theta$.
\end{itemize}
\end{lem}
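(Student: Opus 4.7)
All three parts follow the strategy of~\cite[Lemmas 5.2-5.4]{local-metrics}, with modifications to handle the lower semicontinuous setting.

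For the first part, parametrize $P$ by $D$-length as $P : [0, L] \to U$ with $L = \len(P; D)$. Since $P$ depends only on $(h, D)$, which is independent of $\theta$, for each fixed $t \in [0, L]$ the point $P(t)$ is deterministic conditional on $(h, D)$, and the condition $P(t) \in \ep \mcl G_\theta$ requires $\theta$ to lie in a set of $2$-dimensional Lebesgue measure zero in $[0, 1]^2$. By Fubini, almost surely the Lebesgue measure of $\{t \in [0, L] : P(t) \in \ep \mcl G_\theta\}$ is zero. Since $P$ is parametrized by $D$-length, the $D$-length of $P$ restricted to any open subinterval of $[0, L]$ equals the Lebesgue measure of that subinterval; decomposing $P^{-1}\!\left( \bigsqcup_{S \in \mcl S_\theta^\ep} S \right)$ into its connected components and summing yields~\eqref{eqn-5.3}.

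The second part rests on the identity $\len(C; D) = \len(C; D(\cdot, \cdot; S \cap U))$ for any curve $C$ lying in the open set $S \cap U$. The ``$\leq$'' direction follows from $D \leq D(\cdot, \cdot; S \cap U)$. For the ``$\geq$'' direction: given any partition $t_0 < \cdots < t_N$ of $C$'s parameter interval, each sub-curve $C|_{[t_{i-1}, t_i]}$ lies in $S \cap U$, so $D(C(t_{i-1}), C(t_i); S \cap U) \leq \len(C|_{[t_{i-1}, t_i]}; D)$; summing and taking the supremum over partitions yields $\len(C; D(\cdot, \cdot; S \cap U)) \leq \len(C; D)$. Applying this identity to each connected component of $P \cap S$, we conclude that $\len(P \cap S; D)$ is determined by $D(\cdot, \cdot; S \cap U)$. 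Combining with part 1 and the length space property (Axiom~\ref{item-metric-length}), $D(z, w) = \inf_P \sum_{S \in \mcl S_\theta^\ep} \len(P \cap S; D)$ is determined by $h$, $\theta$, and the collection of internal metrics.

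For the third part, I apply the local metric property (Definition~\ref{defn-local-metric}) iteratively. The key geometric observation is that for any two distinct open squares $S, S' \in \mcl S_\theta^\ep$, one has $\ol S \cap S' = \emptyset$: adjacent squares share a boundary edge, which lies in $\ol S$ but not in the open set $S'$. Hence $S' \subset U \setminus \ol S$, which by the internal-length identity of part 2 implies that $D(\cdot, \cdot; S' \cap U)$ is determined by $(h, D(\cdot, \cdot; U \setminus \ol S))$. Applying Definition~\ref{defn-local-metric} with $V = S$ gives $D(\cdot, \cdot; S \cap U) \perp (h, D(\cdot, \cdot; U \setminus \ol S)) \mid h|_{\ol S}$, and since $h|_{\ol S}$ is determined by $h$, we deduce $D(\cdot, \cdot; S \cap U) \perp \{D(\cdot, \cdot; S' \cap U) : S' \neq S\} \mid h$. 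Induction on finite subcollections yields mutual conditional independence given $h$, and since $\theta$ is independent of $(h, D)$, this persists given $(h, \theta)$. I expect the main technical care to be required in the internal-length identity of part 2, where one needs to handle carefully the case of curves whose endpoints lie on the grid by a standard additivity-of-length argument; but since the identity only involves distances between interior points of $C \subset S \cap U$, no genuine boundary issue arises.
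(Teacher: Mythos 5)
Your treatments of parts (1) and (3) follow the paper's route (both defer to~\cite[Lemmas 5.2 and 5.4]{local-metrics}, essentially unchanged), and the internal-length identity you establish at the start of part (2) is correct and is indeed used. But the way you then combine it with part (1) is a genuine error. The identity $D(z,w) = \inf_P \sum_{S \in \mcl S_\theta^\ep} \len(P\cap S;D)$ that you assert is false. For a path $P$ parametrized by $D$-length, $\sum_S \len(P\cap S;D)$ equals $\len(P;D)$ minus the Lebesgue measure of the set of times at which $P$ lies on the grid $\ep\mcl G_\theta$; routing $P$ along the grid therefore makes the right-hand side strictly smaller than $\len(P;D)$, and the infimum over all paths is at most $D(z,\ep\mcl G_\theta) + D(\ep\mcl G_\theta,w)$, which is typically much less than $D(z,w)$. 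Part~(1)'s additivity holds only for a single path chosen measurably in $(h,D)$, via a Fubini argument exploiting the independence of $\theta$ from $(h,D)$; it is not an identity valid simultaneously for all paths, and it cannot be fed into an infimum over paths as you do.

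The paper's proof of part (2) avoids this by a resampling coupling. One conditions on $(h,\theta)$ and the internal metrics $\{D(\cdot,\cdot;S\cap U)\}_S$ and draws two conditionally independent samples $D$, $D'$; it suffices to show $D=D'$ a.s. For fixed rational balls $O,O'$, choose a near-$D$-geodesic path $P$ measurably in $D$. Part~(1) gives $\len(P;D)=\sum_S \len(P\cap S;D)$, and the bi-Lipschitz bound~\eqref{eqn-5.1} of Lemma~\ref{lem-5.1} is invoked precisely so that this additivity also holds for $D'$ along the same $P$ (finite $D'$-length, zero $D'$-time on the grid). Since the internal metrics of $D$ and $D'$ on each $S\cap U$ agree, $\len(P;D)=\len(P;D')$, hence $D'(O,O')\le D(O,O')$; symmetry and lower semicontinuity give $D=D'$ a.s. This resampling step is the modification to~\cite[Lemma 5.3]{local-metrics} that the lower semicontinuous setting requires, and it is the idea your proposal is missing.
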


\begin{proof}
These three assertions were stated as~\cite[Lemmas 5.2]{local-metrics},~\cite[Lemmas 5.3]{local-metrics} and~\cite[Lemmas 5.4]{local-metrics}, respectively, in the case in which $D$ is continuous.  The proof of the first and third assertions in that setting apply directly here as well.  

To prove the second assertion, we condition on $h, \theta$, and the internal metrics $\{D(\cdot,\cdot;S\cap U): S \in \mcl S_{\theta}^\ep\}$, and we consider two conditionally independent samples $D$ and $D'$ from the conditional law of $D$.  To prove the assertion, it is enough to show that $D=D'$ almost surely.  Suppose that $O,O'$ are fixed rational balls in $U$, and let $P$ be a path from $O$ to $O'$, chosen in a manner depending only on $D$.  The first assertion of the lemma gives
\[
    \len(P;D) = \sum_{S \in \mcl S_{\theta}^\ep} \len(P \cap S;D) \qquad \text{a.s.}
\]
\changes{In other words, if $T_\zeta$ is the $\zeta$-neighborhood of the grid boundary of the squares in $S_{\theta}^\ep$, then a.s.\ $\lim_{\zeta \rta 0} \len(P \cap T_\zeta ;D) = 0$.  It follows from~\eqref{eqn-5.1} of Lemma~\ref{lem-5.1} that a.s.\ the same limit holds with $D$ replaced by $D'$. Thus,}
\[
    \len(P;D') = \sum_{S \in \mcl S_{\theta}^\ep} \len(P \cap S;D') \qquad \text{a.s.}
\]
Since the internal metrics of $D$ and $D'$ on $S\cap U$ agree for each $S \in \mcl S_{\theta}^\ep$, we deduce that $P$ has the same length in the metrics $D$ and $D'$.  Thus, a.s.\ $
D(O,O') =
D'(O,O')$. Since $D,D'$ are lower semicontinuous,  this implies that $D=D'$ a.s., proving the second assertion of the lemma.
\end{proof}

We now prove Proposition~\ref{prop-meas-general} in a manner analogous to~\cite[Theorem 1.7]{local-metrics}.  \changes{The} key difference in our proof is the application of Lemma~\ref{lem-geo-doesnt-dawdle} to control the length of $P$ in a small square. 
 
\begin{proof}[Proof of Proposition~\ref{prop-meas-general}]
As noted at the beginning of this section, it suffices to prove that $D$ is a.s.\ determined by $h$, since Axiom~\ref{item-metric-local} then follows from the fact that $D$ is a $\xi$-additive local metric.

First, since $D$ is almost surely determined by the internal metric $D(\cdot,\cdot;V)$ for bounded and connected subsets $V \subset U$, it is enough to prove the result for $U$ a bounded and connected open set.
As in the proof of Lemma~\ref{lem-5.1}, it suffices to show that, for each fixed pair of rational circles $O,O' \subset U$, the quantity $D(O,O')$ is a.s.\ determined by $h$.

By Lemma~\ref{lem-threelemmas}, $D$ is a.s.\ determined by its internal metrics on the squares $S \in \mcl S_{\theta}^\ep$, and these internal metrics are conditionally independent given $h$. Thus, by the Efron-Stein inequality, if we let $D^S$ denote the new metric obtained by $D$ by resampling $D(\cdot,\cdot;S\cap U)$ from its conditional law given $(h,\theta)$, then
\eqb
\Var[D(O,O')|h,\theta] \leq \frac{1}{2}\sum_{S \in \mcl S_{\theta}^\ep} \BB E\left[(D^S(O,O') - D(O,O'))^2 | h,\theta\right].
\label{eqn-efron1}
\eqe
Since $(h,D^S) \eqD (h,D)$, the law of $D^S(O,O') - D(O,O')$ is symmetric about the origin, and so $\BB E\left[(D^S(O,O') - D(O,O'))^2 | h,\theta\right] = 2\BB E\left[(D^S(O,O') - D(O,O'))_+^2 | h,\theta\right]$, where $x_+ = \max\{x,0\}$.  Hence, we can rewrite the inequality~\eqref{eqn-efron1} as
\eqb
\Var[D(O,O')|h,\theta] \leq \sum_{S \in \mcl S_{\theta}^\ep} \BB E\left[(D^S(O,O') - D(O,O'))_+^2 | h,\theta\right].
\label{eqn-efron2}
\eqe
Since $(h,D^S) \eqD (h,D)$,  Lemma~\ref{lem-5.1} implies that we can choose a deterministic $C>0$ such that a.s.
\eqb
\label{eqn-DDS-lipschitz}
C^{-2} D(O,O')
\leq
D^S(O,O')
\leq C^{2} D(O,O') \qquad \text{\changes{for all rational circles $O,O'$ in $U$}}.
\eqe
\changes{Since $D$ is lower semicontinuous, there exist points $z \in O$ and $w \in O'$ such that $D(z,w) = D(O,O')$.  Let $P$ be a geodesic joining $z$ and $w$, so that $\len(P;D) = D(O,O')$.} For each $S' \in \mcl S_{\theta}^\ep$ different from $S$, the $D^S$-length and $D$-length of $P \cap S'$ agree.  Thus,~\eqref{eqn-5.3} of Lemma~\ref{lem-threelemmas} combined with~\eqref{eqn-DDS-lipschitz} gives
\eqb
C^{-2} \len(P \cap S; D) 
\leq \len(P \cap S; D^S)
\leq C^{2} \len(P \cap S; D).
\label{eqn-dds-c2}
\eqe
As in the proof of the second assertion of Lemma~\ref{lem-threelemmas}, this implies that
\[
\len(P;D^S) = \sum_{S' \in S_{\theta}^\ep} \len(P \cap S';D^S)
\]
By~\eqref{eqn-dds-c2}, the latter is at most $\changes{\len(P;D)}+ C^2 \len(P \cap S; D)$.  Therefore, a.s.
\[
(D^S(O,O')-D(O,O'))_+ \leq C^2  \len(P \cap S;D)
\]
Plugging this into~\eqref{eqn-efron2} and applying the Cauchy-Schwartz inequality yields
\alb
\Var[D(O,O')|h,\theta]
&\leq \BB E\left[ \sum_{S' \in \mcl S_{\theta}^\ep} (C^2  \len(P \cap S;D)  )^2 | h,\theta \right]\\
&\leq C^4 \BB E\left[ \sum_{S' \in \mcl S_{\theta}^\ep} \len(P \cap S;D) \left( \max_{S \in \mcl S_\theta^\ep} \len(P \cap S;D)\right)| h,\theta \right] \\
&\leq C^4 \BB E\left[(D(O,O') )^2| h,\theta \right]^{1/2} \BB E \left[ \left( \max_{S \in \mcl S_\theta^\ep} \len(P \cap S;D)\right)^2| h,\theta \right]^{1/2} 
\ale   
By Lemma~\ref{lem-5.1}, the quantity $\BB E\left[(D(O,O'))^2| h,\theta \right]^{1/2}$ is finite almost surely. Moreover, by Lemma~\ref{lem-geo-doesnt-dawdle}, \[  
\lim_{\ep \rta 0} \max_{S \in \mcl S_\theta^\ep} \len(P \cap S;D) =0 \qquad \text{a.s.}.
\] Since $\len(P \cap S;D) \leq D(O,O') $ for all $S$, the bounded convergence theorem gives \[ \lim_{\ep \rta 0} \BB E \left[ \left( \max_{S \in \mcl S_\theta^\ep} \len(P \cap S;D)\right)^2| h,\theta \right]^{1/2} = 0 \qquad \text{a.s.}
\]
Therefore a.s.\ $\Var[D(O,O')|h,\theta] \rta 0$ as $\ep \rta 0$, so $D(O,O')$ is a.s.\ determined by $h$, as desired.
\end{proof}

We can now check that subsequential limits of the rescaled $\ep$-LFPP metrics~\eqref{eqn-defn-rescaled-LFPP} satisfy Axiom~\ref{item-metric-local}.

\begin{proof}[Proof of Lemma~\ref{cor-axiom-2-check}]
In the special case in which $h$ is a whole-plane GFF normalized so that $h_1(0) = 0$, the result follows from combining Propositions~\ref{prop-check-3.5-axioms} and~\ref{prop-meas-general}.  For  general $h$, let $V, W$ be bounded open sets with $\ol V \subset W$ and $\ol W \subset U$. 
By Lemma~\ref{lem-internal-radius}, for each $x \in \ol V$, we can choose $R_x \in \sigma\left( h|_U \right)$ such that $\frk a_{\ep_n}^{-1} {\wh D^{\ep_n}}_h(\cdot,\cdot;W)$ converges a.s. to $D_h(\cdot,\cdot;W)$ on $B_{R_x}(x)$.  Therefore, $h|_U$ a.s. determines the pair $
(R_x, \frk a_{\ep}^{-1} D_h(\cdot, \cdot; W)|_{B_{R_x}(x)}) \in \sigma\left( h|_U \right)$. Since we can cover $\ol V$ by finitely many such balls $B_{R_x}(x)$, we deduce that $h|_U$ a.s. determines the internal metric $D_h(\cdot, \cdot; W)$ on $V$. 
Letting $V$ and $W$ increase to all of $U$ proves the lemma.
\end{proof}

\subsection{Completing the proof of Theorem~\ref{thm-lfpp-axioms}}
\label{sec-complete}

To complete the proof of Theorem~\ref{thm-lfpp-axioms}, we show that we can choose a subsequence $\ep_n \rta 0$ for which LFPP distances between pairs of rational circles converge a.s.\ to the corresponding $D_h$-distances.
    
    \begin{lem}
    \label{lem-as-circles}
    For every sequence of values of $\ep$ tending to zero as $\ep \rta 0$, we can choose a subsequence $\ep_n$ for which  \changes{ $a_{\ep_n}^{-1} D^{\ep_n}_h(O,O') \rta D_h(O,O')$} a.s.\ for all pairs of rational circles $O,O'$. 
    \end{lem}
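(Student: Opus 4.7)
The plan is to combine the already-established joint convergence in law of $(h, \frk a_{\ep_n}^{-1} D_h^{\ep_n})$ to $(h, D_h)$ in the lower semicontinuous topology with the fact, established in Lemma~\ref{cor-axiom-2-check}, that $D_h$ is a.s.\ determined by $h$. By Lemma~\ref{lem-prob}, this upgrades the convergence to convergence in probability, so after passing to a subsequence we may assume a.s.\ convergence of the metrics in the lower semicontinuous topology. Since there are only countably many pairs of rational circles $(O,O')$ and the random variables $\frk a_{\ep_n}^{-1} D_h^{\ep_n}(O,O')$ form tight families (via the estimates from~\cite{dg-supercritical-lfpp}), a diagonal argument lets us further refine the subsequence so that $\frk a_{\ep_n}^{-1} D_h^{\ep_n}(O,O')$ converges jointly in law to some random variable $L(O,O')$ for every such pair.

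The next step is to identify $L(O,O') = D_h(O,O')$ a.s.\ for each pair. The inequality $L(O,O') \geq D_h(O,O')$ is essentially automatic: choosing near-minimizers $(z_n^*,w_n^*) \in O \times O'$ for $\frk a_{\ep_n}^{-1} D_h^{\ep_n}(O,O')$ and extracting a subsequential Euclidean limit $(z,w) \in O \times O'$, the lower semicontinuous definition gives $\liminf_n \frk a_{\ep_n}^{-1} D_h^{\ep_n}(O,O') \geq D_h(z,w) \geq D_h(O,O')$.

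The reverse inequality is the main obstacle, since the lower semicontinuous topology only offers direct control of the upper bound through specific approximating sequences, while distances between sets involve infima. The plan is to mimic the construction underlying Lemma~\ref{lem-general-rational-weyl}: fix $\eta>0$, choose $(z,w) \in O \times O'$ with $D_h(z,w) = D_h(O,O')$, and by Axiom~\ref{item-metric-length} take a path $P$ from $z$ to $w$ of $D_h$-length at most $D_h(O,O')+\eta$, subdivided by $p_0=z, p_1, \dots, p_\ell=w$ with consecutive $D_h$-distances at most $\eta/\ell$. Since $P$ has finite $D_h$-length, all $p_i$ are non-singular. Applying~\cite[Lemma 5.12]{dg-supercritical-lfpp} around each non-singular $p_i$, pick a small rational circle $O^i$ surrounding $p_i$ with $\liminf_n \frk a_{\ep_n}^{-1} D_h^{\ep_n}(\text{around } O^i) \leq \eta/\ell$. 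By the upper-bound property of lower semicontinuous convergence at $(p_i, p_{i+1})$, find $(z_n^i, w_n^i) \to (p_i, p_{i+1})$ with $\frk a_{\ep_n}^{-1} D_h^{\ep_n}(z_n^i, w_n^i) \to D_h(p_i, p_{i+1}) \leq \eta/\ell$. For large $n$ these points lie inside the disks bounded by $O^i$ and $O^{i+1}$ (disjoint by choice of small radii), so any LFPP-path between them crosses both circles, yielding $\limsup_n \frk a_{\ep_n}^{-1} D_h^{\ep_n}(O^i, O^{i+1}) \leq \eta/\ell$. Concatenating these paths with short loops around each $O^i$ produces an LFPP-path from $O$ to $O'$ (with endpoint cases $O^0=O, O^\ell=O'$ handled by choosing small rational sub-circles that intersect $O$ and $O'$ near $p_0, p_\ell$), giving $\limsup_n \frk a_{\ep_n}^{-1} D_h^{\ep_n}(O,O') \leq D_h(O,O') + O(\eta)$. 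Letting $\eta \to 0$ closes the argument.

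Once $L(O,O') = D_h(O,O')$ a.s.\ for every pair, the joint distributional convergence plus the fact that each limit is determined by $h$ permits a final application of Lemma~\ref{lem-prob} (jointly over the countable collection of pairs) to turn this into convergence in probability, and hence almost sure convergence along a further diagonal subsequence. The hard part is, as described, the upper bound in the identification step: without Hölder control of $D_h$ in terms of the Euclidean metric, lower semicontinuous convergence alone does not propagate from points to set-to-set distances, and the cover-by-small-circles-around-non-singular-points construction is essential to bridge this gap.
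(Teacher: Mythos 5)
Your overall strategy lines up with the paper's: reduce to convergence in law via Lemma~\ref{cor-axiom-2-check} and Lemma~\ref{lem-prob}, obtain a Skorohod coupling in which the LFPP distances between rational circles converge a.s.\ to limits $L(O,O')$, and then identify $L(O,O') = D_h(O,O')$. The lower bound $L(O,O') \geq D_h(O,O')$ is handled the same way in both: it comes essentially for free from lower semicontinuous convergence applied to near-minimizers. The difference lies in the upper bound, which is indeed the crux.

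For the upper bound, you build an explicit near-optimal LFPP path by subdividing a near-geodesic for $D_h$ into $\ell$ short segments, surrounding each subdivision point $p_i$ by a small rational annulus using~\cite[Lemma 5.12]{dg-supercritical-lfpp}, and stitching together near-optimal LFPP paths between consecutive annuli via short LFPP loops around each $O^i$---in effect re-running the proof of Lemma~\ref{lem-general-rational-weyl} from scratch. The paper takes a shorter route: it works directly with the prelimit random variables $\rng D_h$ (the a.s.\ Skorohod limits of LFPP circle distances), picks a single pair of points $z \in O$, $w \in O'$ realizing $D_h(O,O')$, uses the lower semicontinuous property (b) to get $z_n \to z$, $w_n \to w$ with $a_{\ep_n}^{-1} D_h^{\ep_n}(z_n,w_n) \to D_h(z,w)$, surrounds $z$ and $w$ each by one shrinking family of annuli, and then applies a ready-made triangle inequality~\cite[Lemma 5.2, Assertion (ii)]{dg-supercritical-lfpp} twice at the level of the $\rng D_h$ quantities. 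So the paper needs only two families of annuli rather than $\ell$, no diagonal extraction across the subdivision points, and no explicit path concatenation; the triangle inequality for $\rng D_h$ does the stitching for free. Both approaches ultimately rest on the same input---the supercritical annulus-around estimate at non-singular points~\cite[Lemma 5.12]{dg-supercritical-lfpp}---so they are genuinely interchangeable; yours is more self-contained but heavier on bookkeeping.

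One slip worth fixing: you state that the path $P$ of $D_h$-length at most $D_h(O,O')+\eta$ is ``subdivided by $p_0,\dots,p_\ell$ with consecutive $D_h$-distances at most $\eta/\ell$.'' Summing those bounds gives $\sum_i D_h(p_i,p_{i+1}) \leq \eta$, which contradicts the total length unless $D_h(O,O')=0$. What you need (and what Lemma~\ref{lem-general-rational-weyl} in the paper actually does) is to subdivide so that each $D_h(p_i,p_{i+1}) \leq \delta$ for some small parameter $\delta$ independent of $\ell$, with $\sum_i D_h(p_i,p_{i+1}) \leq D_h(O,O')+\eta$, and separately choose the annuli so that the ``around'' contributions total at most $\eta$ (say each at most $\eta/(\ell+1)$). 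With that correction the estimate $\limsup_n a_{\ep_n}^{-1} D_h^{\ep_n}(O,O') \leq D_h(O,O') + O(\eta)$ goes through, and since you are already on a subsequence along which $a_{\ep_n}^{-1} D_h^{\ep_n}(O,O')$ converges, the bound carries over to $L(O,O')$.
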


 \changes{We note that a weaker version of this lemma was proved in~\cite[Theorem 1.2]{dg-supercritical-lfpp}.  This lemma generalizes this earlier result in three ways: by considering rational circles and not just rational points, by strengthening the topology of convergence, and by considering a whole-plane GFF plus a bounded continuous function (and not just a whole-plane GFF).

To prove Lemma~\ref{lem-as-circles}, we first show that, for some choice of subsequence $\{\ep_n\}$, the $a_{\ep_n}^{-1} D^{\ep_n}_h$-distances between rational circles and around rational annnuli converge a.s.\ to \emph{some} collection of random variables (for $h$ a whole-plane GFF).

\begin{lem}
\label{lem-as-circles-prelim}
Let $h$ be a whole-plane GFF.  We can define random variables $\rng{D}_h(O,O')$  and $\rng{D}_h(A)$, for all pairs of rational circles $O,O'$ and all rational annuli $A$, such that the following is true. (Note that $\rng{D}_h$ is \emph{not} defined as a metric.)
\begin{enumerate}
\item
    For every sequence of values of $\ep$ tending to zero as $\ep \rta 0$, we can choose a subsequence $\ep_n$ for which a.s.\  \eqb
\label{eqn-joint-conv}
\frk a_{\ep_n}^{-1} D_h^{\ep_n}(O,O') \rta \rng{D}_h(O,O') \qquad \text{for all rational circles $O,O'$}
\eqe
and
\eqb
\label{eqn-joint-conv-around}
\frk a_{\ep_n}^{-1} D_h^{\ep_n}(\text{around $A$}) \rta \rng{D}_h(\text{around $A$}) \qquad \text{for all rational annuli $A$}
\eqe
\item
If $O_n$ and $O_n'$ are sequences of rational circles surrounding $z$ and $w$, respectively, and whose radii shrink to zero, then the random variables $\rng{D}_h(O_n,O_n')$ converge a.s.\ to $D_h(z,w)$.
\end{enumerate}
\end{lem}

\begin{proof}
We begin by restating~\cite[Equations 5.1-5.3]{dg-supercritical-lfpp}. These equations assert that, for every sequence of $\ep$-values tending to zero, we can choose a subsequence $\ep_n \rta 0$ along \changes{which}  the joint law of $h$, the distances 
\[
\frk a_{\ep_n}^{-1} D_h^{\ep_n}(O,O'), \qquad  \text{for all rational circles $O,O'$}
\]
and the distances
\[
 \frk a_{\ep_n}^{-1} D_h^{\ep_n}(\text{around $A$}), \qquad \text{for all rational annuli $A$}
\]
converges to the joint law of $h$ and some collection of random variables $\rng{D}_h(O,O')$ and $\rng{D}_h(\text{around $A$})$.  (The convergence of the fields $h \rta h$ is in the distributional topology.)  By Lemmas~\ref{cor-axiom-2-check} and~\ref{lem-prob}, this convergence occurs in probability, and therefore a.s.\ along a further subsequence.

Finally, the second assertion is a restatement of~\cite[Lemma 5.3]{dg-supercritical-lfpp}.\footnote{\changes{More precisely, that lemma asserts that the limit of the random variables $\rng{D}_h(O_n,O_n')$ exists a.s.\ and defines $D_n(z,w)$ as its limit.  The authors in~\cite{dg-supercritical-lfpp} then show that the limiting function $D_h$ is a metric, is lower semicontinuous, and satisfies the convergence statement~\eqref{eqn-limit-law}.}}
\end{proof}
}
\begin{proof}[Proof of Lemma~\ref{lem-as-circles}]
By Proposition~\ref{prop-weyl-scaling}, it suffices to consider the case in which $h$ is a whole-plane GFF.  Let $\{\ep_n\}$ and the random variables $D_h^{\ep_n}(O,O')$ and $D_h^{\ep_n}(\text{around $A$})$ be defined as in Lemma~\ref{lem-as-circles-prelim}.  By Lemma~\ref{lem-as-circles-prelim}, it suffices to show that  a.s.\
\eqb
\label{eqn-agree-rational}
D_h(O,O') = \rng{D}_h(O,O') \qquad  \text{for all pairs of rational circles $O, O'$}
\eqe
The inequality $D_h(O,O') \leq \rng{D}_h(O,O')
$ follows directly from the definition of the lower semicontinuous topology. For the converse inequality, let $z \in O$ and $w \in O'$ be such that $D_h(z,w) = D_h(O,O')$.   \changes{We now apply~\cite[Lemma 5.12]{dg-supercritical-lfpp}, which asserts that there exist nested sequences $O_z^n$ and $O_w^n$ of rational circles with positive radii that shrink to $z$ and $w$, respectively, such that the following is true.  Let $\hat{O}_z^n$ denote the rational circle with the same center as $O_z^n$  and twice the radius, and let $A_z^n$ denote the annulus bounded by $O_z^n$ and $\hat{O}_z^n$.  We define $\hat{O}_w^n$ and $A_w^n$ analogously. Then a.s.
\[
\lim_{n \rta \infty} \rng{D}_h(\text{around $A^n_z$}) = \lim_{n \rta \infty} \rng{D}_h(\text{around $A^n_w$}) = 0.
\]
By~\eqref{eqn-joint-conv} and~\eqref{eqn-joint-conv-around}, 
\[
\rng{D}_h(O,O') \leq \lim_{n \rta \infty} \rng{D}_h(O_z^n,O_w^n) +
\lim_{n \rta \infty} \rng{D}_h(\text{around $A^n_z$}) + \lim_{n \rta \infty} \rng{D}_h(\text{around $A^n_w$}) = \lim_{n \rta \infty} \rng{D}_h(O_z^n,O_w^n).
\]
By Lemma~\ref{lem-as-circles-prelim}, the latter a.s.\ equals $D_h(z,w) = D_h(O,O')$. This proves $\rng{D}_h(O,O') \leq D_h(O,O')$.}
\end{proof}
\begin{proof}[Proof of Theorem~\ref{thm-lfpp-axioms}]
By Proposition~\ref{prop-weyl-scaling},  for every sequence of values of $\ep$ tending to zero, we can find a subsequence $\ep_n$ for which the weak limit~\eqref{eqn-limit-law} exists.  By combining Propositions~\ref{prop-weyl-scaling} and~\ref{prop-check-3.5-axioms} and Lemma~\ref{cor-axiom-2-check}, we conclude that each subsequential limiting metric $D_h$ satisfies the five axioms in Definition~\ref{defn-weak-metric} of a weak LQG metric.  Moreover, Axiom~\ref{item-metric-local} and Lemma~\ref{lem-prob} together imply that the convergence~\eqref{eqn-limit-law} occurs almost surely.  Finally, Lemma~\ref{lem-as-circles} gives the desired a.s.\ convergence of distances between pairs of rational circles, and~\eqref{eqn-defn-cr} is satisfied by our definition of $\frk c_r$ in the proof of Proposition~\ref{prop-check-3.5-axioms}.

To complete the proof of the theorem, we now extend the definition of $D_h$ to the case of a whole-plane GFF $h$ plus an unbounded continuous function $f$, so that we have a well-defined weak LQG metric $h \mapsto D_h$. \footnote{(The convergence statements in Theorem~\ref{thm-lfpp-axioms} are stated only for $h+f$ where $f$ is bounded; however, the definition of a weak LQG metric stipulates that $h \mapsto D_h$ must satisfy the five axioms even for unbounded $f$.) }

 For each open and bounded subset $V \subset \BB C$, we set  $D_{h+f}^V := D_{h+\phi_V f}(\cdot,\cdot;V)$, where $\phi_V$ is a smooth compactly supported bump function which is identically equal to 1 on $V$.
By applying Axiom~\ref{item-metric-local} in the setting of a whole-plane GFF plus a bounded continuous function, we deduce that the metric $D_{h+f}^V$ is a.s.\ determined by $(h+\phi_V f)|_V = (h+f)|_V$, in a manner which does not depend on $\phi_V$.
We define the metric $D_{h+f}$ as the length metric such that, if $P$ is a continuous path in $\BB{C}$, its 
$D_{h+f}$-length is equal to its $D_{h+f}^V$-length, where $V\subset\BB C$ is a bounded open set that contains $P$.   
We define $D_{h+f}(z,w)$ for $z,w\in\BB C$ to be the infimum of the $D_{h+f}$-lengths of continuous paths from $z$ to $w$. Then $D_{h+f}$ is a length metric on $\BB C$ which is a.s.\ determined by $D_{h+f}$ and which satisfies $D_{h+f}(\cdot,\cdot;V) = D_{h+f}^V$ for each bounded open set $V\subset\BB C$.  It is easy to check that $D_{h+f}$ satisfies the axioms in Definition~\ref{defn-weak-metric}, so that the mapping   $h \mapsto D_{h}$ is a weak $\gamma$-LQG metric, as desired.
\end{proof}

\section{Properties of weak LQG metrics}
\label{sec-properties}

Throughout this and the next section, we let $h$ be a whole-plane GFF plus a continuous function (unless specified otherwise), and we let $D_h$ be the metric associated to $h$ by a weak LQG metric $h \mapsto D_h$.  Our main building blocks for proving properties of $D_h$ are Propositions~\ref{prop-two-set-dist} and~\ref{prop-metric-scaling}, which are proven exactly as in the continuous metric setting.  \changes{In what follows, we outline the proofs of these propositions in the continuous setting, and we describe how to adapt those proofs to our more general setting.}

\begin{proof}[Proof of Proposition~\ref{prop-two-set-dist}]
\changes{
This property was stated in the continuous metric setting as~\cite[Proposition 3.1]{lqg-metric-estimates}.\footnote{\changes{We will not restate~\cite[Proposition 3.1]{lqg-metric-estimates} here since the statement is identical to Proposition~\ref{prop-two-set-dist} except that~\cite[Proposition 3.1]{lqg-metric-estimates} applies only to the continuous metric setting.}} 
Their proof extends to our setting with almost no modifications. We now outline their proof and point out what needs to be adjusted for our setting, referring the reader to their work for further details.

The idea of their proof is to cover $\BB{r} U$ by small annuli $\BB{A}_{r/2,r}(z)$ with radii $r \in [\ep^2  \BB{r}, \ep \BB{r}]$, such that with extremely high probability, the $D_h$-distance across $\BB{A}_{r/2,r}(z)$ is at least a constant times $\frk c_r e^{\xi h_r(z)}$, and the $D_h$-diameter of the circle $\partial B_r(z)$ of the  annulus is at most  a constant times $\frk c_r e^{\xi h_r(z)}$.  Their proof uses this set of annuli to obtain both upper and lower bounds on $D_h(\BB r K_1,\BB r K_2 ; \BB r U)$.
\begin{itemize}
\item
Every path between the two sets $\BB{r} K_1$ and $\BB{r} K_2$ must cross at least one of these annuli, so they can bound its length from below in terms of the infimum of possible values $\frk c_r e^{\xi h_r(z)}$ for the set of annuli.
\item
We can ``string together'' a collection of the annuli to get a path from  $\BB{r} K_1$ to $\BB{r} K_2$ whose length is similarly bounded above
\end{itemize}
They then finish the proof by applying bounds for the terms $\frk c_r$ and $e^{\xi h_r(z)}$ that hold equally in our setting.  

To adapt this proof to our setting, we apply Lemma~\ref{lem-iterate} to define a collection of annuli with the same property, with one important difference: instead of having an upper bound for the $D_h$-diameter of the circle $\partial B_r(z)$, we have an upper bound for the length around the annulus $\BB{A}_{r/2,r}(z)$.  This modification, however, does not affect the rest of the proof, since one can still ``string together'' these paths around the annuli $\BB{A}_{r/2,r}(z)$ to get a path from  $\BB{r} K_1$ to $\BB{r} K_2$.}
\end{proof}

\changes{
\begin{proof}[Proof of Proposition~\ref{prop-metric-scaling}]
Proposition~\ref{prop-metric-scaling} was stated in the continuous metric setting as~\cite[Theorem 1.5]{lqg-metric-estimates}, and we can directly apply the proof of this theorem to our setting.  In what follows, we  describe the general idea of their proof and how to make the minor modifications to adapt it to our setting.

The idea of the proof of~\cite[Theorem 1.5]{lqg-metric-estimates} is to compare $D_h$-distances to 
distances in what they call the \emph{discretized $\ep$-LFPP metric} $\wt D^\ep_h$.\footnote{
\changes{If we consider $U \cap (\ep \BB{Z}^2)$ as a graph with two edges adjacent if the points are at distance $\ep$ or $\sqrt{2} \ep$, then the $\wt D^\ep_h$-distance between two vertices of the graph is the minimum, over all paths between the two points, of the sum of values $e^{\xi h_\ep(z)}$ for all vertices $z$  along the path.}}
By applying their version of Proposition~\ref{prop-two-set-dist} in the continuous metric setting~\cite[Proposition 3.1]{lqg-metric-estimates}, the authors of~\cite{lqg-metric-estimates} show that, with high probability, 
\eqb
\frk c_{\BB{r}}/\frk c_{\delta \BB{r}} = \delta^{o_1(\delta)} \cdot \left(\text{$\wt D^{\delta \BB{r}}_h$-distance between two sides of $\BB{r} \BB{S}$}\right)
\label{eqn-rev-23}
\eqe
To complete their proof of~\cite[Theorem 1.5]{lqg-metric-estimates}, they apply a lemma~\cite[Lemma 3.6]{lqg-metric-estimates} which asserts that this $\wt D^{\delta \BB{r}}_h$-distance is of order $\delta^{-\xi Q + o_\delta(1)}$, uniformly in $\BB{r}$.

The proof of~\eqref{eqn-rev-23} in~\cite{lqg-metric-estimates} extends directly to our setting without any modifications, but their proof of the estimate for the $\wt D^{\delta \BB{r}}_h$-distance relies on computations in the continuous metric setting. The specific issue is their justification of equation (3.19) in their proof of~\cite[Lemma 3.6]{lqg-metric-estimates}, which  asserts that, with probability tending to $1$ as $\delta \rta 0$, the $\wt D^{\delta \BB{r}}_h$-distance in the square $\BB{S}$ between the leftmost and rightmost sets of vertices in $\BB{S} \cap (\ep \BB{Z}^2)$ is $\delta^{-\xi Q + o_\delta(1)}$.  To justify this estimate in our more general setting, we refer to~\cite[Lemma 2.11]{dg-supercritical-lfpp}, which is a slightly different formulation of the same estimate that is stated for general $\xi>0$. We will not reproduce the statement of that lemma here, but we will mention that it differs from the estimate~\cite[Equation 3.19]{lqg-metric-estimates} just described in two ways.  First, the distance is across an annulus instead of across a square; and second,  their  definition of the discretized LFPP metric differs slightly from the definition in ~\cite{lqg-metric-estimates}.  These differences are relatively minor: their lemma applies to squares as well as annuli, and the two discretized LFPP metrics are the same up to grid rescaling since the  corresponding discrete Gaussian free fields have the same covariance structure.\footnote{\changes{We note that the version of the discrete LFPP metric in~\cite[Lemma 3.6]{lqg-metric-estimates} is defined only at dyadic scales.  This  does not present an issue, since we need only prove~\eqref{eqn-metric-scaling} for $\delta$ and $r$ of the form $2^k$ for $k \in \BB{Z}$.  We can then directly deduce~\eqref{eqn-metric-scaling} for general $\delta$ and $r$ since, by~\eqref{eqn-scaling-constant}, there is some value of $C>1$ for which $C^{-1} \leq \frk c_{r'} / \frk c_{r} \leq C$ whenever $1/2 \leq r'/r \leq 2$.  Alternatively, one could adapt the treatment in~\cite{dg-supercritical-lfpp} to consider the same discretized LFPP metric  $\wt D^{\ep}_h$ from~\cite{lqg-metric-estimates}, which makes sense at all real scales.} }
\end{proof}
}

We can apply Proposition~\ref{prop-two-set-dist} directly to prove the following tightness result for distances between compact sets.

\begin{lem}
If $U\subset\BB C$ is open and $K_1,K_2\subset U$ are connected, disjoint compact sets (that are allowed to be singletons), then the laws of 
\eqb\label{eq:tight}
\frk c_r^{-1} e^{-\xi h_r(0)} D_h (r K_1, rK_2; rU) \quad \text{and} \quad \left( \frk c_r^{-1} e^{-\xi h_r(0)} D_h (r K_1, rK_2; rU) \right)^{-1}
\eqe
for varying $r$ are tight.
\label{lem-tight}
\end{lem}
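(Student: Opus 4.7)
When neither $K_1$ nor $K_2$ is a singleton, Proposition~\ref{prop-two-set-dist} applies directly: the superpolynomial two-sided bound~\eqref{eqn-two-set-dist}, uniform in $\BB r$, immediately gives tightness of both $\frk c_r^{-1} e^{-\xi h_r(0)} D_h(rK_1, rK_2; rU)$ and its reciprocal. So the work is to handle the case where at least one set is a singleton; by symmetry assume $K_1 = \{z_1\}$, and (if $K_2 = \{z_2\}$ is also a singleton) a symmetric construction will apply near $z_2$. Fix once and for all $\rho > 0$ small enough that the closed disks $\wt K_1 := \ol B_\rho(z_1)$ and $\wt K_2 := \ol B_\rho(z_2)$ (with $\wt K_2 := K_2$ if $K_2$ is non-singleton) are disjoint and contained in $U$.

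For the lower bound (tightness of the reciprocal), observe that since $K_i \subseteq \wt K_i$, the infimum on the left is over a subset of the pairs on the right, giving
\[
D_h(rK_1, rK_2; rU) \;\geq\; D_h(r\wt K_1, r\wt K_2; rU).
\]
The right-hand side is bounded below by $A^{-1}\frk c_r e^{\xi h_r(0)}$ with probability $1 - O(A^{-q})$ uniformly in $r$ by Proposition~\ref{prop-two-set-dist} applied to the connected, disjoint, non-singleton compact sets $\wt K_1, \wt K_2 \subset U$, yielding tightness of the reciprocal.

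The upper bound is the harder direction, because the inclusion $K_i \subseteq \wt K_i$ now gives an inequality in the wrong direction. The plan is to build an explicit path from $rz_1$ to $rK_2$: first traverse from $rz_1$ out to a loop $\mcl L_1$ in the annulus $r\BB A_{\rho, 2\rho}(z_1)$ of near-minimal $D_h$-length, then connect $\mcl L_1$ to $rK_2$ (or symmetrically to a loop $\mcl L_2$ near $rz_2$) via a near-optimal path supplied by Proposition~\ref{prop-two-set-dist} applied to the non-singleton sets $r\ol B_{2\rho}(z_1)$ and $r\wt K_2$. The $D_h$-diameter of $\mcl L_i$ is bounded by its $D_h$-length, which is tight by Axiom~\ref{item-metric-coord}, and the long-range leg is tight by Proposition~\ref{prop-two-set-dist}. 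The delicate term is the ``entry cost'' $D_h(rz_1, \mcl L_1; rU)$ from the fixed point to the outer loop, which I bound by telescoping over dyadic scales $\rho_k := 2^{-k}\rho$:
\[
D_h(rz_1, \mcl L_1; rU) \;\leq\; \sum_{k=0}^{\infty} D_h\bigl(r\partial B_{\rho_{k+1}}(z_1), r\partial B_{\rho_k}(z_1); rU\bigr),
\]
each summand being tight at scale $r\rho_k$ by Proposition~\ref{prop-two-set-dist} and comparable to $\frk c_{r\rho_k}\,e^{\xi h_{r\rho_k}(z_1)}$ with high probability.

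The main obstacle will be showing this telescoping series is summable and tight uniformly in $r$ after normalization by $\frk c_r e^{\xi h_r(0)}$. By Proposition~\ref{prop-metric-scaling}, $\frk c_{r\rho_k}/\frk c_r = \rho_k^{\xi Q + o_k(1)}$, so the task reduces to a uniform-in-$r$ tail bound on $\sum_k \rho_k^{\xi Q + o(1)}\, e^{\xi(h_{r\rho_k}(z_1) - h_r(0))}$. The key input is that for the \emph{deterministic} point $z_1$ the typical thickness $h_s(z_1)/\log s^{-1}$ tends to zero as $s \to 0$---strictly below the critical value $Q$ associated with singular points (cf.\ Proposition~\ref{prop-singular})---so that standard Gaussian tail estimates for GFF circle averages make the summands decay geometrically with overwhelming probability, uniformly in $r$.
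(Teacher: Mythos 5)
Your overall strategy is the same as the paper's: reduce to singletons, fall back on Proposition~\ref{prop-two-set-dist} for the non-singleton piece, and control the ``entry cost'' near the deterministic point $rz_1$ by iterating over dyadic scales using Proposition~\ref{prop-metric-scaling} together with Gaussian tail bounds for circle averages. Your lower bound argument (fattening the singletons to disks and using the set-monotonicity of the infimum) is neat, and arguably cleaner than the paper's, which instead sums across-annulus lower bounds.

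There are, however, two gaps in the upper-bound leg. First, the telescoping inequality
\[
D_h(rz_1, \mathcal{L}_1; rU) \leq \sum_{k=0}^{\infty} D_h\bigl(r\partial B_{\rho_{k+1}}(z_1), r\partial B_{\rho_k}(z_1); rU\bigr)
\]
is false as stated: the set-to-set triangle inequality $D(A,C)\leq D(A,B)+D(B,C)$ does not hold for infimum distances, since the near-geodesics realizing $D(A,B)$ and $D(B,C)$ may touch $B$ at far-apart points. To chain the across-annulus crossings into a single path from $rz_1$ to $\mathcal{L}_1$ you must pay a ``glue'' cost at each scale. The paper does this by also controlling the $D_h$-distance \emph{around} nested sub-annuli, observing that every crossing of the two-level annulus $\BB{A}_{2^{-n}\BB r,2^{-n+2}\BB r}(\BB r z)$ necessarily intersects every loop around $\BB{A}_{2^{-n+1}\BB r,2^{-n+2}\BB r}(\BB r z)$ and around $\BB{A}_{2^{-n}\BB r,2^{-n+1}\BB r}(\BB r z)$; the around-distances satisfy the same upper-tail bounds, so adding them only changes constants, but they are not optional. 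Second, even with the corrected chain, the union of paths accumulates at $rz_1$ in the Euclidean topology without containing it, so you still need Lemma~\ref{lem-closure} (lower semicontinuity implies the Euclidean closure has the same $D_h$-diameter) to pass from distance-to-the-chain to distance-to-the-point. Both steps are essential and appear explicitly in the paper's proof.
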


One immediate consequence of Lemma~\ref{lem-tight} is that the $D_h$-distance between two fixed points is a.s.\  finite.  

Our proof of Lemma~\ref{lem-tight} uses the following elementary lemma for lower semicontinuous metrics:

\begin{lem}
Let $D$ be a lower semicontinuous metric on a set $U \subset \BB{C}$. For every $X \subset U$, we have $\diam(X;D) = \diam(\ol X;D)$, where $\ol X$ is the closure of $X$ in the Euclidean topology on $U$. 
\label{lem-closure}
\end{lem}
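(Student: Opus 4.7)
The plan is to prove the two inequalities $\diam(X;D) \leq \diam(\ol X;D)$ and $\diam(\ol X;D) \leq \diam(X;D)$ separately. The first is completely trivial: since $X \subset \ol X$, the supremum defining $\diam(\ol X;D)$ is taken over a larger set of pairs than the supremum defining $\diam(X;D)$, so $\diam(X;D) \leq \diam(\ol X;D)$ with no work at all.

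The substantive (though still easy) content is in the reverse inequality, and this is exactly where the lower semicontinuity hypothesis will be used. Fix $z,w \in \ol X$, and choose sequences $z_n, w_n \in X$ with $z_n \to z$ and $w_n \to w$ in the Euclidean topology on $U$, so that $(z_n,w_n) \to (z,w)$ in $\BB{C} \times \BB{C}$. Applying condition (a) in the definition of the lower semicontinuous topology to the constant sequence $f_n \equiv D$ (or, equivalently, directly invoking lower semicontinuity of $D$ as a function on $\BB{C} \times \BB{C}$), we obtain
\[
D(z,w) \;\leq\; \liminf_{n \rta \infty} D(z_n,w_n) \;\leq\; \sup_{x,y \in X} D(x,y) \;=\; \diam(X;D).
\]
Taking the supremum of the left-hand side over all $z,w \in \ol X$ yields $\diam(\ol X;D) \leq \diam(X;D)$, which combined with the trivial inequality completes the proof.

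There is essentially no obstacle here; the only thing to be careful about is that $D$ is allowed to take the value $+\infty$, but this causes no issue because the inequality $D(z,w) \leq \liminf D(z_n,w_n)$ makes sense in $[0,\infty]$ and the comparison with $\diam(X;D)$ (also possibly $+\infty$) remains valid. One also does not need the metric to be a length metric, finite-valued, or to induce the Euclidean topology, so the lemma applies to the full generality of lower semicontinuous metrics considered in the paper, including those with singular points.
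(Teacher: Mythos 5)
Your proof is correct and follows exactly the same approach as the paper: the trivial inclusion inequality, followed by choosing sequences $z_n \rta z$, $w_n \rta w$ in $X$ and applying lower semicontinuity of $D$ to get $D(z,w) \leq \liminf_n D(z_n,w_n) \leq \diam(X;D)$. The only difference is that the paper omits the trivial direction; your remarks about $+\infty$ values and the generality of the hypotheses are accurate but not strictly necessary.
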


\begin{proof}
If $z,w \in \ol X$, then we can choose sequences $z_n,w_n$ of points in $X$ with $z_n \rta z$ and $w_n \rta w$. Since $D$ is lower semicontinuous, $D(z,w) \leq \liminf_{n \rta \infty} D(z_n,w_n) \leq \diam(X;D)$.
\end{proof}

\begin{proof}[Proof of Lemma~\ref{lem-tight}]
If $K_1$ and $K_2$ are not singletons, then the result follows  immediately \changes{for a whole-plane GFF} from Proposition~\ref{prop-two-set-dist}\changes{, and for general $h$ by combining this proposition with} Axiom~\ref{item-metric-f}.  To prove the result with $K_1$ and $K_2$ allowed to be singletons, it suffices to show that, for each fixed \changes{$z \in U$ and} $p \in (0,1)$, we can choose $N \in \BB{N}$ and $C>0$ large enough so that, for each $\BB{r}>0$,  it is the case with probability at least $1 - p$ that \eqb
C^{-1} \leq D_h(\BB{r} z, \partial B_{2^{-N} \BB{r}} (\BB{r} z)) \leq C. \label{eqn-point-to-point-from-center}
\eqe
By Axiom~\ref{item-metric-f}, we may assume without loss of generality that $h$ is a whole-plane GFF normalized so that $h_1(0) = 0$.

To prove the bounds~\eqref{eqn-point-to-point-from-center}, we first choose some value of $\zeta \in (0,Q/3)$ which will remain fixed. By Proposition~\ref{prop-two-set-dist}, it holds with superexponentially high probability as $n \rta \infty$, at a rate uniform in $\BB{r} > 0$, that 
\eqb 
2^{-\xi \zeta n} \leq
 \frk c_{2^{-n} \BB{r}}^{-1} e^{-\xi h_{2^{-n}\BB{r}}(\BB{r} z)} D_h(\text{across $\BB{A}_{2^{-n} \BB{r}, 2^{-n+2} \BB{r}}(\BB{r} z)$})
 \leq
 2^{\xi \zeta n}
\label{eqn-point-to-point-proto-a}
\eqe
and
\eqb 
2^{-\xi \zeta n} \leq
 \frk c_{2^{-n} \BB{r}}^{-1} e^{-\xi h_{2^{-n}\BB{r}}(\BB{r} z)} D_h(\text{around $\BB{A}_{2^{-n} \BB{r}, 2^{-n+1} \BB{r}}(\BB{r} z)$})
 \leq
 2^{\xi \zeta n}.
 \label{eqn-point-to-point-proto-b}
 \eqe
Now, from~\eqref{eqn-metric-scaling} we deduce that, for all $n$ larger than some fixed positive integer independent of $\BB{r}$,
\changes{ \eqb
2^{-\xi (Q + \zeta) n} \leq \frac{\frk c_{2^{-n} \BB{r}}}{\frk c_{\BB{r}}} \leq 2^{-\xi (Q - \zeta) n}.
 \label{eqn-point-to-point-markov}
 \eqe }
Moreover, \changes{we can apply the Gaussian tail bound
\[
\BB P\left[\sup_{r\in  [\ep^{1+\nu} \BB r , \ep  \BB r]} |h_r(w) - h_{\ep\BB r}(w)| \leq s \log\ep^{-1} \right] \geq 1 - O_\ep\left( \ep^{ s^2/(2\nu)} \right) \qquad \text{for all $s,\nu>0$}
\]
stated in~\cite[Equation (3.8)]{lqg-metric-estimates} to get}
\eqb
\BB P\left[  |  h_{2^{-n}\BB{r}}(\BB{r} z) - h_{\BB{r}}(0)| \leq \zeta n \log 2 \right] \geq 1- O_n\left( 2^{ -\zeta^2 n /2 } \right),
\label{eqn-point-to-point-gaussian}
\eqe
where this lower bound does not depend on $\BB{r}$. Combining~\eqref{eqn-point-to-point-markov} and~\eqref{eqn-point-to-point-gaussian} with the bounds~\eqref{eqn-point-to-point-proto-a} and~\eqref{eqn-point-to-point-proto-b}, we deduce that it holds with exponentially high probability as $n \rta \infty$, at a rate uniform in $\BB{r} > 0$, that 
\eqb
2^{-\xi (Q + 3 \zeta) n} \leq 
 \frk c_{\BB{r}}^{-1} e^{-\xi h_{\BB{r}}(0)} D_h(\text{across $\BB{A}_{2^{-n} \BB{r}, 2^{-n+2} \BB{r}}(\BB{r} z)$})  \leq 2^{-\xi (Q - 3 \zeta) n}
 \label{eqn-point-to-point-a}
\eqe
and
\eqb
2^{-\xi (Q + 3 \zeta) n} \leq 
 \frk c_{\BB{r}}^{-1} e^{-\xi h_{\BB{r}}(0)} D_h(\text{around $\BB{A}_{2^{-n} \BB{r}, 2^{-n+1} \BB{r}}(\BB{r} z)$}) \leq 2^{-\xi (Q - 3 \zeta) n}.
 \label{eqn-point-to-point-b}
\eqe
It follows that, for each fixed $p \in (0,1)$, we can choose $N \in \BB{N}$ large enough so that, for each $\BB{r}>0$,  it is the case with probability at least $1-p$ that the distance bounds~\eqref{eqn-point-to-point-a} and~\eqref{eqn-point-to-point-b} hold for all $n \geq N$.

We can now prove the desired bounds~\eqref{eqn-point-to-point-from-center} by iterating the bounds~\eqref{eqn-point-to-point-a} and~\eqref{eqn-point-to-point-b} across concentric annuli:
\begin{itemize}
    \item Observe that every path across the annulus  $\BB{A}_{2^{-n} \BB{r}, 2^{-n+2} \BB{r}}(\BB{r} z)$ intersects both every path around  $\BB{A}_{2^{-n+1} \BB{r}, 2^{-n+2} \BB{r}}(\BB{r} z)$ and every path around  $\BB{A}_{2^{-n} \BB{r}, 2^{-n+1} \BB{r}}(\BB{r} z)$.  Thus, we can string together paths across the annuli $\BB{A}_{2^{-n} \BB{r}, 2^{-n+2} \BB{r}}(\BB{r} z)$ and paths around the annuli $\BB{A}_{2^{-n} \BB{r}, 2^{-n+1} \BB{r}}(\BB{r} z)$ to obtain a path from  $\partial B_{2^{-N} \BB{r}} (\BB{r} z)$ to an arbitrarily small neighborhood of $\BB{r} z$.  Therefore, we can apply the upper bounds in~\eqref{eqn-point-to-point-a} and~\eqref{eqn-point-to-point-b} on distances around and across annuli to bound from above the distance from  $\partial B_{2^{-N} \BB{r}} (\BB{r} z)$ to an arbitrarily small neighborhood of $\BB{r} z$.  By Lemma~\ref{lem-closure}, this yields an upper bound for the distance from $\partial B_{2^{-N} \BB{r}} (\BB{r} z)$  to $\BB{r} z$.
\item
Since every path from $\BB{r} z$ to $\partial B_{2^{-N} \BB{r}} (\BB{r} z)$ must cross the annuli $\BB{A}_{2^{-n} \BB{r}, 2^{-n+2} \BB{r}}(\BB{r} z)$ for all $n \geq N+2$, we can sum the lower bound in~\eqref{eqn-point-to-point-a} over all $n \geq N+2$ to obtain the desired lower bound in~\eqref{eqn-point-to-point-from-center}.
\end{itemize}
\end{proof}

\subsection{Bounding distances across scales: square annuli and hashes}
\label{sec-annuli-hashes}

Proposition~\ref{prop-two-set-dist} gives an a.s.\ distance estimate for distances near a fixed point at varying scales.  However, to prove properties of $D_h$, we need estimates on distances throughout a fixed bounded \emph{region} of the plane.  To obtain the estimates we need, we partition the region of interest into \emph{dyadic squares}.  

\begin{defn}
\label{defn-dyadic}
For $n \in \BB{Z}$ and a Borel set $X$, let $\mcl S^n(X)$ denote the set of squares of the form $[a,a+2^{-n}] \times [b, b + 2^{-n}]$ intersecting $X$ with $a,b \in 2^{-n} \BB{Z}$. When $X$ is a singleton $\{x\}$, we will denote $\mcl S^n(\{x\})$ simply by $\mcl S^n(x)$.

We write $\mcl S^n = \mcl S^n(\BB{C})$, and we call $S$ a \emph{dyadic square} if $S \in \mcl S^n$ for some integer $n$. We can decompose  every dyadic square $S$ into four squares with side length $|S|/2$; we call these four squares the \emph{dyadic children} of $S$, and $S$ their \emph{dyadic parent}.  
\end{defn}

For a fixed bounded open set in the plane, 
we will prove two distance bounds that hold a.s.\ for every sufficiently small dyadic square $S$ intersecting the fixed set: a lower bound on the distance across a square annulus that surrounds $S$, and a\changes{n} upper bound on the diameter of a network of four paths in $S$ that criss-cross $S$.  The reason for considering these two sets in particular is that they satisfy several geometric properties that allow us to string together upper and lower distance bounds across scales.  

We begin by  defining both of these sets and stating their useful geometric properties.  (We will not formally prove these properties; they are all easy to see by inspection of Figure~\ref{fig-sq-ann-hash}.)

\begin{lem}[The square annulus associated to a dyadic square]
\label{lem-defn-sq-annulus}
For a square $S$, let \changes{$v_S$ denote its center, and let} $A_S$ denote the \changes{(open)} square annulus centered at $v_S$ formed by squares with side lengths $2|S|$ and $3|S|$.  These annuli satisfy the following properties:
\begin{enumerate}[(a)]
    \item 
    If $S'$ is a dyadic child of $S$, then $A_{S'}$ is nested in and disjoint from $A_S$. 
     \item If $S,S' \in \mcl S^n(S)$ share an edge, then every path around $A_S$ intersects every path around $A_{S'}$.
    \item
    For all $z,w \in \BB{C}$ with $|z-w| \geq 2^{-n}$, we can choose $S \in \mcl S^{n+2}$ with $z \in S$ and $w$ outside $A_S$.
\end{enumerate}
\end{lem}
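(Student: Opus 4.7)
The plan is to verify all three properties by direct sup-norm computations from the centers, together with one topological input for (b). It is convenient to write $v_S$ for the center of $S$ and work in the sup norm $|\cdot|_\infty$, so that the inner and outer boundaries of $A_S$ are the sup-spheres about $v_S$ of radii $|S|$ and $3|S|/2$ respectively.

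For (a), if $S'$ is a dyadic child of $S$ then $|S'|=|S|/2$ and $|v_{S'}-v_S|_\infty = |S|/4$. The triangle inequality gives $|y-v_S|_\infty \leq |v_{S'}-v_S|_\infty + 3|S'|/2 = |S|$ for every $y$ in the outer square of $A_{S'}$, so $A_{S'}$ sits inside the inner square of $A_S$ and is thus disjoint from (and nested in) the open annulus $A_S$. For (c), given $z,w$ with $|z-w|\geq 2^{-n}$, pick any $S\in\mcl S^{n+2}$ containing $z$; then $|z-v_S|_\infty \leq 2^{-n-3}$, and any $y$ in the outer square of $A_S$ satisfies $|y-z|_\infty \leq 2^{-n-3}+3\cdot 2^{-n-3} = 2^{-n-1}$, hence $|y-z|\leq 2^{-n-1}\sqrt 2 < 2^{-n}$. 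So $w$ lies outside the outer square of $A_S$, hence outside $A_S$, regardless of which such $S$ is chosen.

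Part (b) is the most interesting step; I read the statement as saying that $S,S'$ lie at the same dyadic scale and share an edge, so $|v_S-v_{S'}|_\infty = |S|$. For concreteness take $v_{S'}=v_S+|S|e_1$, and let $\gamma_S\subset A_S$, $\gamma_{S'}\subset A_{S'}$ be closed paths separating the two boundary components of their respective annuli. Since $\gamma_S$ has winding number one around $v_S$, it meets every ray emanating from $v_S$, and these intersection points necessarily lie in $A_S$, i.e., at sup distance in $[|S|,3|S|/2]$ from $v_S$. Intersecting $\gamma_S$ with $\{v_S+te_1:t\geq 0\}$ therefore produces a point $q = v_S+te_1$ with $t\in[|S|,3|S|/2]$, so $|q-v_{S'}|_\infty = |t-|S||\leq |S|/2 < |S|$, placing $q$ strictly inside the inner square of $A_{S'}$ and hence strictly inside the Jordan curve $\gamma_{S'}$; intersecting with the opposite ray $\{v_S-te_1:t\geq 0\}$ produces a point $p = v_S-te_1$ with $|p-v_{S'}|_\infty = t+|S|\geq 2|S|>3|S|/2$, placing $p$ strictly outside the outer square of $A_{S'}$ and hence strictly outside $\gamma_{S'}$. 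Since $\gamma_S$ is connected and has points on both sides of $\gamma_{S'}$, the Jordan curve theorem forces $\gamma_S\cap\gamma_{S'}\neq\emptyset$, as required.
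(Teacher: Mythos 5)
The paper itself does not prove this lemma: just before stating it, the authors write ``We will not formally prove these properties; they are all easy to see by inspection of Figure~\ref{fig-sq-ann-hash}.''  So your argument supplies what the paper deliberately omits, and the sup-norm computations in parts (a) and (c) are correct and are the natural way to verify them.  Part (b) is also right in substance, but two of the justifications are stated more strongly than is needed or than is true.  First, a closed path separating the boundary components of $A_S$ need not have winding number exactly one about $v_S$ (a separating path may wind more than once or be non-simple); what you actually need is just a nonzero winding number, or better, no winding-number argument at all: the radial segment $\{v_S + t e_1 : |S| \le t \le 3|S|/2\}$ runs from the inner boundary to the outer boundary of $A_S$, so any separating curve $\gamma_S$ must meet it, which directly produces your point $q$ with $t$ in the claimed range (and similarly $p$ on the opposite ray).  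Second, $\gamma_{S'}$ need not be a Jordan curve, but the conclusion still follows because ``separates the inner and outer boundaries'' says precisely that $q$ and $p$ lie in distinct connected components of $\BB{C}\setminus\gamma_{S'}$, so the connected set $\gamma_S$ joining them must meet $\gamma_{S'}$.  These are phrasing repairs, not gaps: the computation $|q-v_{S'}|_\infty\le |S|/2<|S|$ and $|p-v_{S'}|_\infty\ge 2|S|>3|S|/2$ is the heart of the matter and it is correct.
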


See the left side of Figure~\ref{fig-sq-ann-hash} for an illustration of $A_S$. The first property of the lemma implies that, if we can bound the distance across each square annulus from below, we can string these lower bounds together across scales---as we do in the proof of Proposition~\ref{prop-singular} below. 

\begin{defn}[A hash associated to a dyadic square]
\label{defn-hash}
Consider the four rectangles $R_1,\ldots,R_4$ formed by unions of pairs of dyadic children of $S$.  We define a \emph{hash}  associated to $S$ as a union of four paths $P_1 \cup \cdots \cup P_4$, where $P_j$ is a path \changes{in $R_j$} between the two short sides of $R_j$. 
\end{defn}

We call the union of four paths in Definition~\ref{defn-hash} a hash because they form the shape of a hash symbol (\#).  See the right side of Figure~\ref{fig-sq-ann-hash} for an illustration.  The reason for considering this particular configuration of paths is the following lemma, which allows us to string upper bounds for the diameter of hashes together across scales.

\begin{lem}
\label{lem-ann-hash-geometry}
If $S'$ is a dyadic child of $S$, then every hash associated to $S$ intersects both every hash associated to $S'$ and every path around $A_{S'}$.
\end{lem}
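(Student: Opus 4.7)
My plan is to prove the two assertions of the lemma--hash meets hash, and hash meets any path around $A_{S'}$--separately, by purely topological arguments. Both will rest on the classical fact that a horizontal crossing and a vertical crossing of a rectangle must intersect. I will set up coordinates so that $S=[0,1]^2$; by the symmetry of the hash construction under rotations and reflections of $S$, it suffices to handle the case where $S'$ is the bottom-left dyadic child $[0,1/2]^2$.

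For the hash-hash intersection, given arbitrary hashes of $S$ and $S'$, I will consider the horizontal path $P_1$ in the hash of $S$ crossing $R_1=[0,1]\times[0,1/2]$ from left to right, together with the vertical path $P_3'$ in the hash of $S'$ crossing $R_3'=[0,1/4]\times[0,1/2]$ from bottom to top. Since $R_3'\subset R_1$ and the endpoints of $P_3'$ already lie on the bottom and top edges of $R_1$, the path $P_3'$ may be viewed as a vertical crossing of the larger rectangle $R_1$. The crossing lemma then yields $P_1\cap P_3'\neq\emptyset$. The three remaining choices of $S'$ are handled by the same construction, with $R_2,P_2$ replacing $R_1,P_1$ when $S'$ is one of the top children.

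For the hash-loop intersection, $A_{S'}$ has inner square $[-1/4,3/4]^2$ and outer square $[-1/2,1]^2$. Let $\gamma$ be any closed path around $A_{S'}$, so that $\gamma\subset A_{S'}$ separates the inner and outer boundaries of $A_{S'}$ in $\BB{C}$. Since $\gamma$ is disjoint from both the open inner square $(-1/4,3/4)^2$ and the open exterior $\BB{C}\setminus[-1/2,1]^2$, these two open sets lie in distinct connected components of $\BB{C}\setminus\gamma$. I again select the path $P_1$: its initial endpoint $(0,y_0)$ with $y_0\in[0,1/2]$ lies in $(-1/4,3/4)^2$, while its terminal endpoint $(1,y_1)$ lies on the right edge of $[-1/2,1]^2$ and therefore either belongs to $\gamma$ itself or lies in the exterior component (as it can be approached by points of the open exterior avoiding $\gamma$). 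In either case, continuity of $P_1$ forces $P_1\cap\gamma\neq\emptyset$. For the remaining three children of $S$, an analogous argument picks whichever of $P_1,P_2,P_3,P_4$ has its endpoints split between the open inner square and the outer boundary of $A_{S'}$.

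The only subtlety worth flagging is that $\gamma$ need not be a simple Jordan curve, so the standard `inside/outside' reasoning furnished by the Jordan curve theorem is unavailable. However, the separation property built into the definition of a path around $A_{S'}$ is precisely the topological input the continuity argument requires, so this is a minor technical point rather than a real obstacle to the proof.
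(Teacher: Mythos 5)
Your proof is correct, and in fact supplies an argument the paper omits: the text before Lemma~\ref{lem-defn-sq-annulus} says the geometric properties ``are all easy to see by inspection of Figure~\ref{fig-sq-ann-hash}'' and no formal proof of Lemma~\ref{lem-ann-hash-geometry} is given. Your two ingredients are exactly what the picture suggests. For the hash--hash intersection, the key observation that $R_3' \subset R_1$ spans the full vertical extent of $R_1$ lets you view $P_3'$ as a vertical crossing of $R_1$, and then the standard rectangle-crossing lemma forces $P_1 \cap P_3' \neq \emptyset$. For the hash--loop intersection, the separation property defining a ``path around $A_{S'}$'' places the open inner square and the open exterior of $A_{S'}$ in distinct complementary components of $\gamma$, and the endpoints of $P_1$ are split between these two regions (or land on $\gamma$ directly), so continuity forces $P_1 \cap \gamma \neq \emptyset$; you are right that the Jordan curve theorem cannot be invoked because $\gamma$ need not be simple, and that the separation hypothesis built into the definition is exactly what does the work. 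Two small remarks to tighten the write-up: the definition of a hash does not literally say $P_j \subset R_j$, so state this as a standing assumption (it is the only reading under which the lemma can hold); and you assert $\gamma \subset A_{S'}$ without justification, which is not part of the paper's definition of ``around.'' This assumption is harmless---even if $\gamma$ strayed outside the closed annulus, a non-intersecting $P_1$ would still give a path from the inner boundary (which it must cross on leaving the open inner square) to the outer boundary of $A_{S'}$ in $\BB C \setminus \gamma$, contradicting separation---but your sentence ``Since $\gamma$ is disjoint from both the open inner square\dots'' needs that assumption to be stated.
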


\begin{figure}[ht!]
\begin{center}
\begin{tabular}{ccc} 
\includegraphics[width=0.4\textwidth]{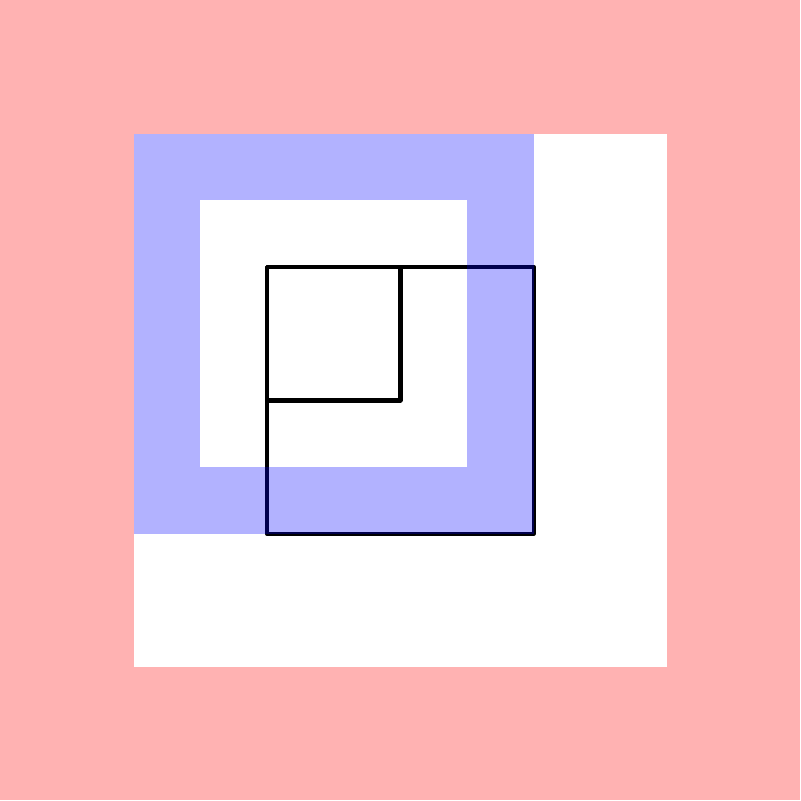}
& \qquad  \qquad &
\includegraphics[width=0.4\textwidth]{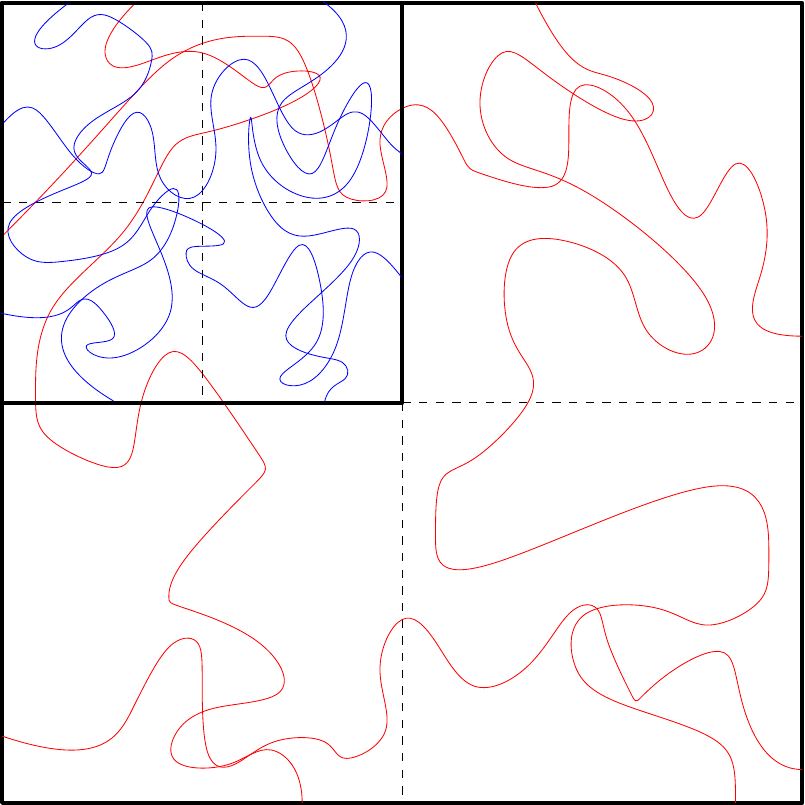}
\end{tabular}
\caption{An illustration of the sets defined in Lemma~\ref{lem-defn-sq-annulus} and~\ref{defn-hash}.  In each figure, we consider a  square $S$ along with one of its dyadic children $S'$ (both black and in bold).  In the figure on the \textbf{left}, we consider the corresponding square annuli $A_S$ and $A_{S'}$ (colored in red and blue, respectively); observe that the two annuli are nested and do not intersect.  In the figure on the \textbf{right}, we consider a hash associated to $S$ (red) and a hash associated to $S'$ (blue); observe that the two hashes necessarily intersect.
} \label{fig-sq-ann-hash}
\end{center}
\end{figure}
 
\begin{lem}
\label{lem-ann-hash-dist} Let $h$ be a whole-plane GFF normalized so that $h_1(0) = 0$.
Let $V \subset \BB{C}$ be a bounded open set, and let $\zeta > 0$.  For each $\BB{r} > 0$, the following bounds hold for each $S \in \mcl S^n(V)$ with superexponentially high probability as $n \rta \infty$, at a rate which is uniform in $\BB{r}$.
\begin{enumerate}
    \item \textit{(Distance lower bound.)}
\eqb
D_h(\text{across $A_{\BB{r} S}$}) \geq 2^{-\xi (Q+\zeta) n} \frk c_{\BB{r}}  e^{\xi h_{2^{-n}\BB{r}}(v_{\BB{r} S})}.
\label{eqn-sq-ann-bound-across}
\eqe
     \item \textit{(Distance upper bounds.)} 
     \eqb
D_h(\text{around $A_{\BB{r} S}$}) \leq 2^{-\xi (Q-\zeta) n}  \frk c_{\BB{r}} e^{\xi h_{2^{-n} \BB{r}}(v_{\BB{r} S})},
\label{eqn-sq-ann-bound-around}
\eqe
     and we can choose a hash $\#_{\BB{r} S}$ associated to ${\BB{r} S}$ with
\eqb
\diam(\#_{\BB{r} S};D_h) \leq 2^{-\xi (Q-\zeta) n} \frk c_{\BB{r}} e^{\xi h_{2^{-n} \BB{r}}(v_{\BB{r} S})}.
\label{eqn-sq-hash-bound}
\eqe
\end{enumerate}
\end{lem}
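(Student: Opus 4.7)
The plan is to reduce each of the three bounds to an application of Proposition~\ref{prop-two-set-dist} for a single dyadic square $S$, and then promote the individual estimate to a uniform-in-$S$ statement by a union bound over the $O(4^n)$ squares in $\mcl S^n(V)$. For the lower bound~\eqref{eqn-sq-ann-bound-across} on the distance across $A_{\BB{r}S}$, let $K_1, K_2$ be the inner and outer boundaries of the reference square annulus (at scale $1$). Using translation invariance of the whole-plane GFF to shift $v_{\BB{r}S}$ to the origin (up to an additive constant $h_1(v_{\BB{r}S})$ absorbed by Axiom~\ref{item-metric-f} into the exponential factor), we apply Proposition~\ref{prop-two-set-dist} with scale $\BB{r}_0 := 2^{-n}\BB{r}$ to obtain
\[
D_h(\text{across $A_{\BB{r}S}$}) \geq A^{-1} \frk c_{\BB{r}_0} e^{\xi h_{\BB{r}_0}(v_{\BB{r}S})}
\]
with failure probability at most $C_q A^{-q}$ for every $q>0$, uniformly in $\BB{r}_0$ and in $S$. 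Proposition~\ref{prop-metric-scaling} rewrites $\frk c_{\BB{r}_0}/\frk c_{\BB{r}} = 2^{-\xi Q n + o_n(n)}$ with $o_n(n)$ uniform in $\BB{r}$. Setting $A = A_n := 2^{\xi\zeta n/2}$ absorbs both the $o_n(n)$ error and $A_n$ itself into the exponent $\zeta$, giving individual failure probability $\leq C_q 2^{-q\xi\zeta n/2}$; since $\#\mcl S^n(V) = O(4^n)$, choosing $q$ large enough (depending only on $\zeta$ and $V$) makes the union-bounded failure probability smaller than $e^{-\lambda n}$ for any desired $\lambda$.

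For the upper bound~\eqref{eqn-sq-ann-bound-around} on the distance around $A_{\BB{r}S}$, we exhibit a specific short closed loop. At scale $1$, fix two small disjoint compact non-singleton sets $K_+, K_-$ in the reference annulus, placed diametrically opposite on (say) the top and bottom sides of its middle square. Concatenating a path from $\BB{r}_0 K_+ + v_{\BB{r}S}$ to $\BB{r}_0 K_- + v_{\BB{r}S}$ inside the western half of $A_{\BB{r}S}$ with an analogous path inside the eastern half yields a closed loop separating the inner and outer boundaries, whose $D_h$-length is bounded by the sum of these two internal distances plus $\diam(\BB{r}_0 K_\pm;D_h)$ at each end. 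Each of these four quantities is controlled by Proposition~\ref{prop-two-set-dist}, and the union-bound procedure of the previous paragraph carries over verbatim.

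For the hash bound~\eqref{eqn-sq-hash-bound}, for each of the four rectangles $R_j \subset \BB{r}S$ formed by pairs of dyadic children of $\BB{r}S$, apply Proposition~\ref{prop-two-set-dist} with $U = R_j$ and $K_1, K_2$ the two short sides of $R_j$; since $D_h$ is a length metric (Axiom~\ref{item-metric-length}), one can select a path $P_j \subset R_j$ between the short sides whose $D_h$-length is within, say, $2^{-n}$ of the internal distance bound. A Jordan-type argument analogous to the intersections behind Lemma~\ref{lem-ann-hash-geometry} shows that within the top-left quadrant of $\BB{r}S$, the segment of any horizontal $P_j$ separates the top edge from the midline, and is therefore crossed by any vertical $P_j$, so every horizontal path meets every vertical path. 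Consequently $\#_{\BB{r}S} = \bigcup_j P_j$ is connected and any two of its points are linked by a $D_h$-path traversing at most three of the $P_j$'s, yielding $\diam(\#_{\BB{r}S}; D_h) \leq 3\max_j \len(P_j;D_h)$ and hence the stated bound after the same scaling and union-bound procedure.

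The main obstacle is calibrating the three small parameters $(n, A_n, \zeta)$ against the error in Proposition~\ref{prop-metric-scaling}: since Proposition~\ref{prop-two-set-dist} yields only superpolynomial (not superexponential) decay in $A$, one must grow $A = A_n$ with $n$, but strictly slower than $2^{\xi\zeta n}$, and then exploit the freedom in the exponent $q$ to overcome the $O(4^n)$ factor from the union bound. All remaining uniformity claims (in $\BB{r}$ and in $S$) reduce directly to the corresponding uniformity statements in Propositions~\ref{prop-two-set-dist} and~\ref{prop-metric-scaling}.
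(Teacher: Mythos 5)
Your proof is correct and takes essentially the same route as the paper: apply Proposition~\ref{prop-two-set-dist} with $A = 2^{\xi\zeta n/2}$, translate via Axiom~\ref{item-metric-translate}, convert $\frk c_{2^{-n}\BB{r}}$ into $\frk c_{\BB{r}}$ using the scaling relation~\eqref{eqn-metric-scaling}, and union-bound over the $O(4^n)$ squares in $\mcl S^n(V)$, choosing the exponent $q$ in the superpolynomial tail large enough to beat the $4^n$ factor. The paper states this in one compressed paragraph and does not spell out the geometric constructions for the around-annulus loop or the hash that you make explicit, but those constructions are the intended content and your elaboration is accurate.
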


We note that, except for the proof of Proposition~\ref{prop-holder-r} below, all our proofs that use Lemma~\ref{lem-ann-hash-dist} apply the lemma in the special case $\BB{r} = 1$.

\begin{proof}
By Proposition~\ref{prop-two-set-dist} (applied with $A = 2^{\xi \zeta n/2}$), Axiom~\ref{item-metric-translate}, and a union bound over all $S \in \mcl S^n(V)$,
it is the case with superexponentially high probability as $n \rta \infty$  (at a rate that is uniform in $\BB{r}>0$) that, for each $S \in \mcl S^n(V)$,
\eqb
D_h(\text{across $A_{\BB{r} S}$}) \geq 2^{-\xi \zeta n/2} \frk c_{2^{-n} \BB{r}}  e^{\xi h_{2^{-n} \BB{r}}(v_{\BB{r} S})}
\label{eqn-sq-ann-bound-1}
\eqe
and
\eqb
D_h(\text{around $A_{\BB{r} S}$})  \leq 2^{\xi \zeta n/2} \frk c_{2^{-n} \BB{r}}  e^{\xi h_{2^{-n} \BB{r}}(v_{\BB{r} S})},
\label{eqn-sq-ann-bound-1a}
\eqe
and there is a hash $\#_{\BB{r} S}$ associated to ${\BB{r} S}$ with 
\eqb
\diam(\#_{\BB{r} S};D_h) \leq 2^{\xi \zeta n/2} \frk c_{2^{-n} \BB{r}}  e^{\xi h_{2^{-n} \BB{r}}(v_{\BB{r} S})}.
\label{eqn-hash-bound-1}
\eqe \changes{
 Moreover, from~\eqref{eqn-metric-scaling} we deduce that, for all $n$ larger than some fixed positive integer independent of $\BB{r}$, 
 \eqb
2^{-\xi (Q + \zeta/2) n} \leq \frac{\frk c_{2^{-n} \BB{r}}}{\frk c_{\BB{r}}} \leq 2^{-\xi (Q - \zeta/2) n}.
 \label{eqn-ann-hash-markov-bound}
 \eqe}
 The result follows from applying~\eqref{eqn-ann-hash-markov-bound} to the bounds~\eqref{eqn-sq-ann-bound-1}-~\eqref{eqn-hash-bound-1}.
\end{proof}

\subsection{H\"older continuity, singular points, completeness, and geodesics}
\label{sec-holder-singular}

It was proved in~\cite[Proposition 5.20]{dg-supercritical-lfpp} that  subsequential limits of the rescaled $\ep$-LFPP metrics~\eqref{eqn-defn-rescaled-LFPP} in the supercritical phase are a.s.\ reverse  H\"older continuous with respect to the Euclidean metric. We now prove Proposition~\ref{prop-holder}, which generalizes this property to weak LQG metrics.  Specifically, we will prove the following more quantitative version of this property which is required to be uniform across scales.  \changes{(We remind the reader that, throughout this and the next section, $h$ is taken to be a whole-plane GFF plus a continuous function, unless specified otherwise.)}

\begin{prop}
\label{prop-holder-r}
Fix $\chi > \xi(Q+2)$ and a compact set $K \subset \BB{C}$. For each $\BB{r}>0$, it holds with polynomially high probability as $\ep \rta 0$, at a rate which is uniform in $\BB{r}$, that 
\eqb
\frk c_{\BB{r}}^{-1} e^{-\xi h_{\BB{r}}(0)} D_h(z,w) \geq \left( |z-w| / \BB{r} \right)^{\chi} \qquad \text{$\forall z,w \in \BB{r} K$ with $|z-w| \leq \ep \BB{r}$}.
\label{eqn-holder-r}
\eqe
\end{prop}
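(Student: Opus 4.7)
The plan is to prove Proposition~\ref{prop-holder-r} by combining the square annulus distance lower bound of Lemma~\ref{lem-ann-hash-dist}, applied at the scale set by $|z-w|$, with Gaussian regularity of the GFF circle averages to produce a matching lower bound on $e^{\xi h_r(v_S)}$ uniformly across all dyadic squares at scale $n$. Uniformity in $\BB{r}$ will come for free once I rescale the field, since both the distance estimates in Lemma~\ref{lem-ann-hash-dist} and the Gaussian tail bounds are uniform in $\BB{r}$.

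First, for each pair $z,w \in \BB{r} K$ with $|z-w| \leq \ep \BB{r}$, I pick an integer $n$ with $2^{-n}\BB{r} \asymp |z-w|$ and, using Lemma~\ref{lem-defn-sq-annulus}(c) (shifting $n$ by an $O(1)$ constant if needed), a dyadic square $S \in \mcl S^{n}(K)$ with $z \in \BB{r} S$ and $w$ lying outside the square annulus $\BB{r} A_S$. Any continuous path from $z$ to $w$ must then cross this annulus, giving $D_h(z,w) \geq D_h(\text{across } A_{\BB{r} S})$. Lemma~\ref{lem-ann-hash-dist} then yields, for any preselected $\zeta > 0$ and with superexponentially high probability uniformly in $\BB{r}$, the simultaneous bound
\[
D_h(z,w) \geq 2^{-\xi(Q+\zeta) n}\, \frk c_{\BB{r}}\, e^{\xi h_{2^{-n}\BB{r}}(v_{\BB{r} S})} \qquad \forall S \in \mcl S^n(K),\; \forall n \geq N_\ep,
\]
where $N_\ep := \lceil \log_2 \ep^{-1} \rceil$.

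The crux of the argument is to lower-bound the circle average $h_{2^{-n}\BB{r}}(v_{\BB{r} S})$ relative to $h_{\BB{r}}(0)$ uniformly in $v_S \in K$ and $n \geq N_\ep$. Setting $\wt h(\cdot) := h(\BB{r}\cdot) - h_{\BB{r}}(0)$, which is itself a whole-plane GFF with $\wt h_1(0) = 0$, I have $h_{2^{-n}\BB{r}}(v_{\BB{r} S}) - h_{\BB{r}}(0) = \wt h_{2^{-n}}(v_S)$, whose law is centered Gaussian with variance $n \log 2 + O(1)$ uniformly in $v_S$ over $K$ (in particular independent of $\BB{r}$). A standard Gaussian tail estimate yields
\[
\BB{P}\bigl(\wt h_{2^{-n}}(v_S) \leq -\lambda n\bigr) \leq C \exp\bigl(-\lambda^2 n / (2\log 2)\bigr),
\]
and since $|\mcl S^n(K)| = O(2^{2n})$, a union bound over $S \in \mcl S^n(K)$ combined with a geometric sum over $n \geq N_\ep$ gives a polynomial-in-$\ep$ tail, uniformly in $\BB{r}$, provided $\lambda > 2 \log 2$.

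The main obstacle, and the reason the exponent in the statement is $\xi(Q+2)$ rather than the naive $\xi Q$ coming from Lemma~\ref{lem-ann-hash-dist} alone, is exactly this $2\log 2$ threshold on $\lambda$: the $O(2^{2n})$ count of dyadic squares at scale $n$ contributes an extra $\xi \cdot 2$ to the effective exponent. Taking $\lambda = (2+\zeta')\log 2$ for small $\zeta' > 0$ and assembling the two bounds gives, up to a multiplicative constant,
\[
\frk c_{\BB{r}}^{-1} e^{-\xi h_{\BB{r}}(0)} D_h(z,w) \geq 2^{-\xi(Q+2+\zeta+\zeta') n} \asymp (|z-w|/\BB{r})^{\xi(Q+2+\zeta+\zeta')}.
\]
Since $\chi > \xi(Q+2)$, I can choose $\zeta, \zeta'$ small enough that $\xi(Q+2+\zeta+\zeta') < \chi$, and then absorb the remaining constant factor by restricting to small enough $|z-w|/\BB{r}$ (i.e., taking $\ep$ small), using that $|z-w|/\BB{r} \leq 1$ so smaller exponents produce larger powers.
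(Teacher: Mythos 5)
Your proposal is correct and follows the same route as the paper's proof: the lower bound from Lemma~\ref{lem-ann-hash-dist} on the distance across the square annulus $A_{\BB{r}S}$ for the dyadic square at scale $n \asymp \log_2(\BB{r}/|z-w|)$, combined with a Gaussian tail bound on the circle average together with a union bound over the $\sim 2^{2n}$ squares in $\mcl S^n(K)$, which is exactly where the extra $\xi\cdot 2$ in the exponent $\xi(Q+2)$ comes from. The paper bounds $|h_{2^{-n}\BB{r}}(v_{\BB{r}S})-h_{\BB{r}}(v_{\BB{r}S})|$ while you bound $h_{2^{-n}\BB{r}}(v_{\BB{r}S})-h_{\BB{r}}(0)$ directly; these are interchangeable since the variances differ only by an $O(1)$ constant uniform over $v_S\in K$ and $\BB{r}>0$.
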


\begin{proof}[Proof of Proposition~\ref{prop-holder-r}]
We may assume without loss of generality that $h$ is a whole-plane GFF normalized so that $h_1(0) = 0$.
Fix $\zeta>0$ and a compact set $K \subset \BB{C}$.  First, by the Gaussian tail bound and a union bound, we have
    \eqb
|h_{2^{-n} \BB{r}}(v_{\BB{r} S}) - h_{\BB{r}}(v_{\BB{r} S})| \leq (2 + \zeta) n \log 2 \qquad \forall S \in \mcl S^n(K) \label{eqn-holder-2}  \eqe
    with probability  $1 - O_n(2^{-[(2+\zeta)^2/2 - 2]n})$, with the rate uniform in $\BB{r}$. Combining the result with Lemma~\ref{lem-ann-hash-dist}, we deduce that the bounds~\eqref{eqn-sq-ann-bound-across} and~\eqref{eqn-holder-2} hold with exponentially high probability as $n \rta \infty$, at a rate which is uniform in $\BB{r}$.  This means that for each $N \in \BB{N}$, we can define an event $\mcl E_N$ that holds with exponentially high probability as $N \rta \infty$ (at a rate which is uniform in $\BB{r}$), such that  on the event $\mcl E_N$, the bounds~\eqref{eqn-sq-ann-bound-across} and~\eqref{eqn-holder-2} hold for all $n \geq N$.
    
    We now fix $N \in \BB{N}$ and work on the event $\mcl E_N$.  Suppose that $z,w \in \BB{r} K$  with $|z-w| \leq 2^{-N} \BB{r}$. Let $n >N$ be the integer satisfying  $2^{-n} < |z-w|/\BB{r} \leq 2^{-n+1}$.  By Lemma~\ref{lem-defn-sq-annulus}(c), we can choose $S \in \mcl S^{n+2}(K)$ containing $z$ such that the point $w$ lies outside $A_S$, which means that $D_h(z,w)$ is bounded from below by the distance across $A_S$. Since we are working on the event $\mcl E_N$, we can combine~\eqref{eqn-sq-ann-bound-across} and~\eqref{eqn-holder-2} to bound $D_h(\text{across $A_S$})$ from below by $2^{-\xi(Q+2+2\zeta)n} \frk c_{\BB{r}} e^{\xi h_{\BB{r}}(0)}$, simultaneously for all possible choices of $S \in \mcl S^{n+2}(K)$.  We deduce that, on the event $\mcl E_N$,
    \eqb
\frk c_{\BB{r}}^{-1} e^{-\xi h_{\BB{r}}(0)} D_h(z,w) \geq 2^{-n\xi(Q+2 + 2\zeta)} \qquad \text{$\forall z,w \in \BB{r} K$ with $|z-w| \leq 2^{-N} \BB{r}$}.
\label{eqn-holder-almost-done}
\eqe
    Since $2^{-n} < |z-w| / \BB{r}$, the upper bound~\eqref{eqn-holder-almost-done} immediately yields~\eqref{eqn-holder-r}.
\end{proof}

\begin{proof}[Proof of Proposition~\ref{prop-holder}]
The proposition follows from Proposition~\ref{prop-holder-r}.
\end{proof}

As we noted in Section~\ref{sec-intro-properties}, when $\changes{\cc \in (-\infty,1)}$, the inverse map of the map in Proposition~\ref{prop-holder} is a.s.\ locally H\"older continuous with any exponent smaller than $\xi(Q-2)$~\cite[Theorem 1.7]{lqg-metric-estimates}.  When $\cc \in (1,25)$, however, this inverse map is not continuous, since a.s.\ the metric $D_h$ has \emph{singular points}, as defined in Definition~\ref{defn-singular}. 
We now prove Proposition~\ref{prop-singular}, which characterizes the set of singular points of the metric $D_h$ when $\cc \in (1,25)$.
The proof of Proposition~\ref{prop-singular} uses the following continuity estimate for the circle average process.

\begin{lem}
Let $h$ be a whole-plane GFF normalized so that $h_1(0) = 0$.
For each $r>0$, let $n = n(r) \in \BB{Z}$ be such that $2^{-n-1} \leq r < 2^{-n}$.  Almost surely, for every $z \in \BB{C}$ the quantity
\[
\frac{h_r(z)}{\log{r^{-1}}} - \frac{h_{2^{-n}}(v_S)}{n \log 2}
\]
tends to $0$ as $r \rta 0$ uniformly in the choice of $S \in \mcl S^{n}(z)$.
\label{lem-limsup-Q}
\end{lem}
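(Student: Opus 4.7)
The plan is to reduce the claim to a standard Hölder modulus of continuity estimate for the circle average process of the whole-plane GFF. I would begin by fixing a compact set $K \subset \BB{C}$ and proving the stated convergence uniformly for $z \in K$ and $S \in \mcl S^n(z)$; exhausting $\BB{C}$ by an increasing sequence of compact sets then handles the ``for every $z \in \BB{C}$'' qualifier. Since $n = n(r)$ satisfies $n \log 2 \leq \log r^{-1} < (n+1)\log 2$, we have $\left| (\log r^{-1})^{-1} - (n \log 2)^{-1} \right| = O((\log r^{-1})^{-2})$. Combined with the easy estimate $\sup_{S \in \mcl S^n(K)} |h_{2^{-n}}(v_S)| = O(n)$ a.s.\ as $n \rta \infty$ (which follows from Gaussian tail bounds and Borel--Cantelli, since $h_{2^{-n}}(v_S)$ is Gaussian with variance $n \log 2 + O(1)$ and there are $O(4^n)$ relevant vertices), replacing $n \log 2$ by $\log r^{-1}$ in the denominator of the second term introduces only an $o(1)$ error.

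After this reduction, it suffices to prove that
\[
\sup_{z \in K, \, S \in \mcl S^n(z)} |h_r(z) - h_{2^{-n}}(v_S)| = o(\log r^{-1}) \quad \text{a.s.\ as } r \rta 0.
\]
For this I would invoke a standard Kolmogorov-type continuity estimate: for every compact $K \subset \BB{C}$, every $\alpha \in (0, 1/2)$, and every $\eta > 0$, there is a.s.\ a finite random constant $C$ such that
\[
|h_r(z) - h_{r'}(z')| \leq C (\log r^{-1})^{1/2 + \eta} \left( \frac{|z - z'| + |r - r'|}{r} \right)^{\alpha}
\]
for all $z, z' \in K$, all $r \in (0, 1]$, and $r' \in [r/2, 2r]$. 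Such an estimate is essentially~\cite[Proposition 2.1]{hmp-thick-pts}, and it can be derived from the variance bound $\Var(h_r(z) - h_{r'}(z')) \leq C'(|r-r'|/r + |z-z'|/r)$ valid in this regime (together with Garsia--Rodemich--Rumsey). Applying it with $r' = 2^{-n}$ and $z' = v_S$, we have $|r - 2^{-n}| \leq r$ and $|z - v_S| \leq \sqrt{2} \cdot 2^{-n} \leq 2\sqrt{2}\, r$, so the quantity in parentheses is bounded by a universal constant, and we obtain $|h_r(z) - h_{2^{-n}}(v_S)| \leq C''(\log r^{-1})^{1/2+\eta} = o(\log r^{-1})$, uniformly in $z \in K$ and $S \in \mcl S^n(z)$.

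The main (mild) obstacle is establishing the quantitative Hölder estimate in the sharp form above; this is a standard but slightly technical Gaussian computation based on the covariance kernel of the whole-plane GFF circle averages (where one must also handle the logarithm of $r$ growing). Once that is in hand, the argument is a direct scale comparison.
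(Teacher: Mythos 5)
Your approach matches the paper's in substance: both reduce the lemma to a uniform modulus-of-continuity estimate for the GFF circle-average process, plus the elementary bookkeeping on the denominator. The paper's proof organizes the comparison in two steps, a spatial one (comparing $h_r(z)$ to $h_r(v_S)$ via~\cite[Lemma 3.15]{ghm-kpz}) and a radial one (comparing $h_r(v_S)$ to $h_{2^{-n}}(v_S)$ via an elementary Brownian-motion estimate and a union bound over the lattice $\{v_S : S \in \mcl S^n\}$), while you package both into a single joint space-and-scale modulus estimate; these are interchangeable. One caution on the citation: the estimate as literally stated in~\cite[Proposition 2.1]{hmp-thick-pts} carries an extra $\epsilon^{-\xi}$ factor in the denominator, which, if taken at face value, would spoil the $o(\log r^{-1})$ conclusion. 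However, in the restricted regime you actually use ($|z-z'| = O(r)$, $r' \asymp r$, and a logarithmic rather than polynomial error prefactor) the sharper bound you write down is correct and does follow from the $O(1)$ variance bound together with a chaining or GRR argument, as you indicate. So the substance is right and the argument closes; the reference is slightly loose, which you yourself flag as the one ``mild obstacle.''
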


\begin{proof}
Observe that, if $S \in \mcl S^{n}(z)$ for some $z \in \BB{C}$, then $|z-v_S| < 2^{-n} \leq 2r$.  Therefore, we can apply the circle average continuity estimate~\cite[Lemma 3.15]{ghm-kpz} to compare $h_r(z)$ and $h_r(v_S)$. 
We deduce from this estimate that a.s., for every $z \in \BB{C}$ the quantity
\eqb
\frac{h_r(z)}{\log{r^{-1}}} - \frac{h_{r}(v_S)}{\log r^{-1}}
\label{eqn-two-fractions}
\eqe
tends to $0$ as $r \rta 0$ uniformly in the choice of $S \in \mcl S^{n}(z)$.
This convergence  still holds if we replace the denominator of the second term of~\eqref{eqn-two-fractions} by $n \log 2$. 
Finally, we can replace $h_{r}(v_S)$ in the second numerator by $h_{2^{-n}}(v_S)$, since by an elementary Brownian motion estimate and a union bound, a.s.\ 
\[
\max_{S \in \mcl S^n} \max_{|S|/2 \leq r < |S|} (h_{r}(v_S) - h_{2^{-n}}(v_S)) \leq n^{51/100} \log 2
\]
for all $n$ sufficiently large.  
\end{proof}

In the proof of both parts of Proposition~\ref{prop-singular}, we 
 assume without loss of generality that $h$ is a whole-plane GFF normalized so that $h_1(0) = 0$.

\begin{proof}[Proof of Proposition~\ref{prop-singular}(a)]
Let $\zeta > 0$. By Lemmas~\ref{lem-ann-hash-dist} and~\ref{lem-limsup-Q} and the Borel-Cantelli lemma, it is a.s.\ the case that, for every $z \in \BB{C}$ satisfying~\eqref{eqn-limsup-Q}, we can choose $\zeta = \zeta(z) > 0$ and $N = N(z) \in \BB{Z}$ for which
\eqb
D_h(\text{across $A_S$}) \geq 2^{\xi \zeta n}  \qquad \forall S \in \mcl S^n(z), \qquad \forall n \geq N.
\label{eqn-across-singular}
\eqe
By Lemma~\ref{lem-defn-sq-annulus}(a), for any two distinct points $z,w \in \BB{C}$, we can choose an integer $M \in \BB{Z}$ and a square $S_n \in \mcl S^n(z)$ for each $n \geq M$, such that the annuli $\{A_{S_n}\}_{n=M}^{\infty}$ are pairwise disjoint, and every path from $z$ to $w$ must cross every single annulus.  Hence, from the fact that summing the right-hand side of~\eqref{eqn-across-singular} over $n \in \BB{N}$ diverges, we may deduce that a.s.\ every $z \in \BB{C}$ satisfying~\eqref{eqn-limsup-Q} satisfies $D_h(z,w) = \infty$ for every $w \neq z$ in $\BB{C}$.  
\end{proof}

\begin{proof}[Proof of Proposition~\ref{prop-singular}(b)]
By Lemmas~\ref{lem-ann-hash-dist} and~\ref{lem-limsup-Q} and the Borel-Cantelli lemma, it is a.s.\ the case that, for every $z \in \BB{C}$ for which~\eqref{eqn-limsup-Q} is less than $Q$, we can choose $\zeta = \zeta(z) > 0$ and $N = N(z) \in \BB{Z}$ such that, for every $n \geq N$ and $S \in \mcl S^n(z)$,  we can choose a hash $\#_S$ associated to $S$ with
\[
\diam(\#_S;D_h) \leq 2^{-\xi \zeta n}. \]
Since a hash associated to a square must intersect a hash associated to each of its dyadic children, this implies that the union of hashes \[ H_S:= \bigcup_{m \in \BB{N}} \bigcup_{S \supset S' \in \mcl S^{m}(z)} \#_{S'} \] satisfies
\[
\diam(H_S;D_h) \leq 
2 \sum_{m = n}^{\infty} 2^{-\xi \zeta m} < \infty\]
and therefore, by Lemma~\ref{lem-closure},
\eqb
\diam(\ol H_S;D_h) \leq 
2 \sum_{m = n}^{\infty} 2^{-\xi \zeta m} < \infty.
\label{eqn-singular-converse-olHs}
\eqe
Since $z \in \ol H_S$ by construction of $H_S$,~\eqref{eqn-singular-converse-olHs} implies that $z$ is not a singular point. 
\end{proof}

We now prove several properties of $D_h$ on the set $\BB{C} \backslash \{\text{singular points}\}$.  Specifically, we prove Proposition~\ref{prop-complete}, which states that a.s.\ on the set $\BB{C} \backslash \{\text{singular points}\}$, the metric $D_h$ is complete and finite-valued, and every pair of points can be joined by a geodesic. 
To prove Proposition~\ref{prop-complete}, we first show that $D_h$ is a finite-valued metric on $\BB{C} \backslash \{\text{singular points}\}$.

\begin{lem}
\label{lem-finite}
A.s.\ for every $z,w,z',w' \in \BB{C}$ with $D_h(z,z') < \infty$ and $D_h(w,w') < \infty$, we have $D(z,w) < \infty$. In particular, if $z, w \in \BB{C}$ are such that $D_h(z,w) = \infty$, then at least one of the points $z,w$ must be singular.
\end{lem}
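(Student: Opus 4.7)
My first observation is that the ``In particular'' clause follows from the first assertion by contraposition: if neither $z$ nor $w$ were singular, we could pick $z'\neq z$ and $w'\neq w$ with $D_h(z,z')<\infty$ and $D_h(w,w')<\infty$, so the first assertion would force $D_h(z,w)<\infty$, contradicting $D_h(z,w)=\infty$. So I focus on the first statement, interpreting it with $z\neq z'$ and $w\neq w'$ implicit (so that $z$ and $w$ are both non-singular); otherwise the hypothesis is vacuous and the assertion obviously false. Using Axioms~\ref{item-metric-translate} and~\ref{item-metric-f} to reduce to the case where $h$ is a whole-plane GFF normalized so that $h_1(0)=0$ (so that Lemma~\ref{lem-ann-hash-dist} applies), the strategy is to construct, on a single a.s.\ event, a family of connected ``skeletons'' $\{\Gamma_N(V_n)\}$ in $\BB C$, each of finite $D_h$-diameter, to which every non-singular point is $D_h$-finitely close.

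Concretely, I would exhaust $\BB C$ by a countable family of bounded open sets $\{V_n\}_{n\in\BB N}$. Applying Lemma~\ref{lem-ann-hash-dist} with $\BB r=1$ and Borel--Cantelli, there is an a.s.\ event on which, for every $n$ and all sufficiently large $N$ (depending on $n$), one may pick for each $S\in\mcl S^N(V_n)$ a loop $\pi_S$ around $A_S$ of finite $D_h$-length; an intersection over $n$ of these a.s.\ events is still almost sure. Set $\Gamma_N(V_n):=\bigcup_{S\in\mcl S^N(V_n)}\pi_S$. By Lemma~\ref{lem-defn-sq-annulus}(b) the loops of edge-adjacent squares intersect, so $\Gamma_N(V_n)$ is connected; being a finite union of sets of finite $D_h$-length, $\diam(\Gamma_N(V_n);D_h)<\infty$.

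Next I would show that any non-singular $z$ satisfies $D_h(z,\Gamma_N(V_n))<\infty$ whenever $z\in V_n$ and $N$ is large enough. Pick $z'\neq z$ witnessing non-singularity. By Axiom~\ref{item-metric-length} there is a path $P_z$ from $z$ to $z'$ with $\len(P_z;D_h)\leq D_h(z,z')+1<\infty$; using the H\"older continuity of Proposition~\ref{prop-holder}, $P_z$ (parametrized by $D_h$-length) is Euclidean-continuous and, since $z\neq z'$, has positive Euclidean diameter. Taking $N$ large enough that $2^{-N+2}\leq|z-z'|$, Lemma~\ref{lem-defn-sq-annulus}(c) (with parameter ``$n$'' equal to $N-2$) gives some $S\in\mcl S^N$ with $z\in S\subset V_n$ and $z'$ outside $A_S$. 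Then $P_z$ must cross the annulus $A_S$ and hence the separating loop $\pi_S\subset\Gamma_N(V_n)$, so $D_h(z,\Gamma_N(V_n))\leq\len(P_z;D_h)<\infty$.

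Given $(z,w,z',w')$ as in the statement, I then choose $n$ so that $V_n\ni z,w$ and $N$ large enough to work for both pairs $(z,z')$ and $(w,w')$, and conclude by the triangle inequality
\[D_h(z,w)\leq D_h(z,\Gamma_N(V_n))+\diam(\Gamma_N(V_n);D_h)+D_h(\Gamma_N(V_n),w)<\infty.\]
The main obstacle in this plan is not any individual step but rather arranging that a \emph{single} a.s.\ event supports the construction uniformly over the uncountably many quadruples $(z,w,z',w')$. That is exactly what the countable exhaustion $\{V_n\}$ and the countable intersection of Borel--Cantelli events are designed to handle: the random scale cutoff need only depend on $n$, while the scale $N$ itself is chosen nonrandomly from $|z-z'|$ and $|w-w'|$ once $n$ is fixed.
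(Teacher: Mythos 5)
Your proposal is correct and follows essentially the same route as the paper's proof: reduce to a normalized whole-plane GFF, use Lemma~\ref{lem-ann-hash-dist} with Borel--Cantelli to get a.s.\ finite $D_h$-distances around the square annuli $A_S$ at all sufficiently small dyadic scales, connect the resulting loops into a single finite-$D_h$-diameter skeleton via Lemma~\ref{lem-defn-sq-annulus}(b), and observe that any non-singular point is $D_h$-finitely close to that skeleton because a near-geodesic to its witness $z'$ must cross a separating annulus from Lemma~\ref{lem-defn-sq-annulus}(c). The only cosmetic difference is that you carry out the exhaustion of $\BB{C}$ by $\{V_n\}$ explicitly where the paper says ``it suffices to consider $z,z',w,w'$ in a fixed connected open set $V$'' and leaves the countable intersection implicit; your explicit note that $z'\neq z$ and $w'\neq w$ must be understood (otherwise the hypotheses are vacuous) is a fair clarification of the statement.
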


\begin{proof}
By Axiom~\ref{item-metric-f}, we may assume without loss of generality that $h$ is a whole-plane GFF normalized so that $h_1(0) = 0$. 
Also, it suffices to consider $z,z',w,w'$ in a fixed  connected open set $V \subset \BB{C}$.  
By Lemma~\ref{lem-ann-hash-dist} and the Borel-Cantelli lemma, a.s.\ there exists $N \in \BB{N}$ such that the $D_h$-distance around $A_S$ is finite for all $S \in \mcl S^n(V)$ with $n \geq N$.  By Lemma~\ref{lem-defn-sq-annulus}(b), this implies that a.s.\ for all $n \geq N$, we can choose paths around the annuli $\{A_S : S \in \mcl S^n(V)\}$ with finite $D_h$-length, such that the $D_h$-distance between any two of these paths is finite. Moreover, a.s.\ for every $z,w,z',w' \in \BB{C}$ with $D_h(z,z') < \infty$ and $D_h(w,w') < \infty$, if we choose $n \geq N$ sufficiently large, then there is an annulus $A_{S_z}$ in  the collection $\{A_S : S \in \mcl S^n(V)\}$ that separates $z$ and $z'$, and an annulus $A_{S_w}$ in the collection that separates $w$ and $w'$. This implies that the $D_h$-distance from $z$ (resp. $w$) to the path around the annulus $A_{S_z}$ (resp. $A_{S_w}$) is finite.
\end{proof}

To prove that $D_h$ is a.s.\ complete and geodesic  on $\BB{C} \backslash \{\text{singular points}\}$, we apply the following elementary lemma:

\begin{lem}
Let $D$ be a lower semicontinuous metric on \changes{$\BB{C}$} such that the Euclidean metric is continuous w.r.t. $D$. Then every $D$-Cauchy sequence converges in $(\BB{C},D)$. Moreover, if \changes{$(\BB{C},D)$} is a length space and every $D$-bounded set is Euclidean bounded, then every pair of points $z,w \in \changes{\BB{C}}$ with $D(z,w)< \infty$ can be joined by a geodesic.
\label{lem-complete-geodesic}
\end{lem}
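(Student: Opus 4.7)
My plan is to handle the two assertions in sequence, using lower semicontinuity of $D$ as the primary tool for transferring estimates to limits.

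For the first assertion, the plan is to first show $D$-Cauchy implies Euclidean-Cauchy by a pointwise-continuity argument, and then identify the $D$-limit via lower semicontinuity. Given a $D$-Cauchy sequence $\{z_n\}$ and $\ep > 0$, I would pick a large index $N$, then invoke continuity of the identity map $(U,D) \rta (U,|\cdot|)$ at $z_N$ to extract $\delta = \delta(z_N, \ep) > 0$ with $D(z_N, w) < \delta \Rightarrow |z_N - w| < \ep$. Applying the Cauchy property at level $\delta$ gives $D(z_N, z_n) < \delta$, and hence $|z_N - z_n| < \ep$, for all large $n$; the Euclidean triangle inequality then yields Euclidean-Cauchyness. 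Let $z$ be the Euclidean limit. Lower semicontinuity applied to the sequence $(z_n, z_m) \rta (z, z_m)$ gives $D(z, z_m) \leq \liminf_n D(z_n, z_m)$, which can be made arbitrarily small by choosing $m$ large (using the Cauchy property once more), so $D(z_m, z) \rta 0$.

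For the geodesic assertion, I would adapt the classical Hopf-Rinow argument. The length-space hypothesis produces curves $P_n: [0,1] \rta U$ from $z$ to $w$, parametrized at constant $D$-speed, with $D(P_n(s), P_n(t)) \leq L_n |s-t|$ and $L_n = \len(P_n;D) \rta D(z,w)$. Each image lies in the $D$-ball $B_{L_n}[z;D]$, so by the hypothesis that $D$-bounded sets are Euclidean-bounded, all images lie in a common Euclidean-compact set $K$. A diagonal argument extracts a subsequence along which $P_n(q) \rta P(q)$ Euclideanly for every $q \in [0,1] \cap \BB{Q}$. Lower semicontinuity of $D$ passes the Lipschitz estimate to the limit: $D(P(q), P(q')) \leq \liminf_n D(P_n(q), P_n(q')) \leq D(z,w)|q-q'|$. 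Then, for any $t \in [0,1]$ and rationals $q_k \rta t$, the sequence $\{P(q_k)\}$ is $D$-Cauchy, so part (1) provides a $D$-limit $P(t) \in U$, which agrees with the Euclidean limit by continuity of the identity map. This yields a well-defined curve $P: [0,1] \rta U$ with $P(0) = z$, $P(1) = w$, and a final application of lower semicontinuity gives $D(P(s), P(t)) \leq D(z,w)|s-t|$ for all $s,t$, so $\len(P;D) \leq D(z,w)$; combined with the trivial reverse inequality $\len(P;D) \geq D(P(0), P(1)) = D(z,w)$, $P$ is a geodesic.

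The main obstacle I expect is that one cannot directly invoke Arzel\`a-Ascoli in the $D$-metric, since $D$ need not induce the Euclidean topology and may even take infinite values at pairs of points in $K$. My workaround is to run the compactness extraction entirely in the Euclidean metric (where $K$ genuinely is compact) while using lower semicontinuity of $D$ one-sidedly to propagate the Lipschitz bounds to the limit --- this is precisely the direction of inequality needed for the length upper bound $\len(P;D) \leq D(z,w)$.
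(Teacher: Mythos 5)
Your treatment of the geodesic assertion mirrors the paper's and is sound: both extract a Euclidean-convergent limit of almost-minimizing curves inside a Euclidean-compact closed $D$-ball, and use lower semicontinuity of $D$ one-sidedly to push the $D$-Lipschitz estimate through to the limit, which is exactly the direction needed for $\len(P;D)\le D(z,w)$. The paper parametrizes by $D$-arclength and (implicitly) applies Arzel\`a--Ascoli after exhibiting a modulus of continuity for the Euclidean metric on $K$ via continuity of the identity; you parametrize on $[0,1]$ and run a diagonal argument over rational times, extending to irrational times by completeness. Either implementation is fine.

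The completeness argument, however, has a genuine gap. You claim a $D$-Cauchy sequence is Euclidean Cauchy by fixing ``a large $N$'', extracting $\delta = \delta(z_N,\ep)$ from continuity of the identity at $z_N$, and then invoking the Cauchy property at level $\delta$ to get $D(z_N,z_n)<\delta$ for all large $n$. This is circular: the Cauchy property at level $\delta$ gives $D(z_n,z_m)<\delta$ only for $n,m\ge M(\delta)$, so to conclude $D(z_N,z_n)<\delta$ for large $n$ you need $N\ge M(\delta)$; but $\delta$ depends on $N$ through the pointwise continuity constant at $z_N$, and increasing $N$ can shrink $\delta$ and push $M(\delta)$ out further. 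This is the familiar error of treating pointwise continuity as uniform continuity, and in fact the implication ``$D$-Cauchy $\Rightarrow$ Euclidean Cauchy'' is false in the generality you claim: with $U=\BB C$, $\phi(z)=z/(1+|z|)$, and $D(z,w)=|\phi(z)-\phi(w)|$, the metric $D$ is continuous (hence lower semicontinuous) and the identity $(\BB C,D)\to(\BB C,|\cdot|)$ is continuous (because $\phi^{-1}$ is), yet $z_n=n$ is $D$-Cauchy with no Euclidean or $D$-limit. What actually rescues the assertion in the paper's applications is that $D$-Cauchy sequences are Euclidean bounded (via Lemma~\ref{lem-Dh-Eucl-bounded}): extract a Euclidean-convergent subsequence $z_{n_k}\to z$, and then the lower-semicontinuity calculation you already wrote, $D(z,z_m)\le\liminf_k D(z_{n_k},z_m)$, which tends to $0$ as $m\to\infty$ by the Cauchy property, upgrades this to $D$-convergence of the full sequence. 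So the final step of your part (1) is correct; only the route to the candidate limit $z$ needs to be replaced by the bounded-subsequence extraction, and for that route the continuity-of-identity hypothesis is not even needed.
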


\begin{proof}
If $(z_n)_{n \in \BB{N}}$ is $D$-Cauchy, then $(z_n)_{n \in \BB{N}}$ converges to some $z$ in the Euclidean topology. Therefore, for each fixed $\ep>0$, we have $D(z,z_n) \leq \liminf_{m \rta \infty} D(z_n,z_m) < \ep$ for all large enough $n$.  In other words, $(z_n)_{n \in \BB{N}}$ is convergent in $(\BB{C},D)$.

Now, suppose that every $D$-bounded set is Euclidean bounded, and let $z,w \in \changes{\BB{C}}$ be such that $\ell: = D(z,w)$ is finite. The boundedness assumption implies that the Euclidean closure $K$ of the set $\{x \in U : D(z,x) \leq \ell + 1\}$ is compact.  Therefore, there exists a continuous nondecreasing function $\delta: [0,\infty) \rta [0,\infty)$ with $\delta(0) = 0$ such that, for every $x,y \in K$, we have $|x-y| < \delta(D(x,y))$.  

For each $n \in \BB{N}$, let \changes{$P_n:[0,\ell_n] \rta \BB{C}$} be a path, parameterized by $D$-length, from $z$ to $w$ with \changes{length}  $\ell_n \leq \ell + 1/n$. Note that, for each $n$, \changes{we have $P_n([0,\ell_n]) \subset K$}, so that for all $s,t$ in the domain of $P_n$,
\eqb
|P_n(s) - P_n(t)| < \delta(D(P_n(s),P_n(t)) \leq \delta(\len(P_n[s,t];D)) = \delta(s-t).
\label{eqn-Pn-cont}
\eqe
\changes{By the  Arzel\`{a}–Ascoli theorem, we can choose a subsequence of $P_n|_{[0,\ell]}$ that converges uniformly to a continuous path $P: [0,\ell] \rta  K$.   By~\eqref{eqn-Pn-cont},  \[|P(\ell) - w| \leq \limsup_{n \rta \infty}
|P_n(\ell) - P_n(\ell_n)| < \limsup_{n \rta \infty} \delta(\ell-\ell_n) = 0,\]
i.e., $P$ is a path from $z$ to $w$.  Since, for each $
0 = t_0 < t_1 < \cdots < t_k = \ell$,
\begin{align*}
\sum_{j=0}^{k-1} D(P(t_j,t_{j+1}))
&\leq \liminf_{n \rta \infty} \sum_{j=0}^{k-1} D(P_n(t_j,t_{j+1})) \\
&\leq \liminf_{n \rta \infty} \len(P_n|_{[0,\ell]};D)\\
&\leq \ell,
\end{align*}
the path $P$ is a geodesic from $z$ to $w$.}
\end{proof}

To apply Lemma~\ref{lem-complete-geodesic} to prove Proposition~\ref{prop-complete}, we check that a.s.\ every $D_h$-bounded set is Euclidean bounded.

\begin{lem}
\label{lem-Dh-Eucl-bounded}
Almost surely, every $D_h$-bounded set is Euclidean bounded. \changes{Specifically, a.s.\ for every $r>0$ and $C>0$, there exists $R>0$ such that $D_h(\text{across $\BB{A}_{r,R}(0)$}) > C$.}
\end{lem}

\begin{proof}
Let $\zeta > 0$.
By Proposition~\ref{prop-two-set-dist} (applied with $A = 2^{\xi \zeta n}$), it is the case with superexponentially high probability as $n \rta \infty$ that 
\[
D_h(\text{across $\BB{A}_{2^n,2^{n+1}}(0)$}) \geq 2^{-\xi \zeta n} \frk c_{2^n} e^{\xi h_{2^n}(0)}.
\]
By~\eqref{eqn-metric-scaling}, $\frk c_{2^n} \geq 2^{\xi(Q-\zeta)n}$ for all sufficiently large $n$.  Also, since $h_{2^n}(0) - h_1(0)$ is a centered Gaussian with variance $n\log{2}$, we have $\BB{P}(h_{2^n}(0) - h_1(0) > \zeta n \log {2}) \leq 
2^{-\zeta^2 n / 2}$.  Hence, by the Borel-Cantelli lemma, \changes{a.s.}
\[
D_h(\text{across $\BB{A}_{2^n,2^{n+1}}(0)$}) \geq 2^{ \xi (Q - 3 \zeta) n}  e^{\xi h_1(0)}.
\]
for all $n \in \BB{N}$ sufficiently large.  This proves the second assertion of the lemma.
This means that a.s.\ a set that is $D_h$-bounded cannot cross the annuli $\BB{A}_{2^n,2^{n+1}}(0)$ for infinitely many values of $n \in \BB{N}$\changes{, hence is bounded in the Euclidean metric.}
\end{proof}

\begin{proof}[Proof of Proposition~\ref{prop-complete}]
The fact that $D_h$ is finite-valued follows from Lemma~\ref{lem-finite}.
The completeness property follows from Proposition~\ref{prop-holder} and Lemma~\ref{lem-complete-geodesic}, and the geodesic property follows from Lemmas~\ref{lem-complete-geodesic} and~\ref{lem-Dh-Eucl-bounded}.
\end{proof}

We conclude our study of the set $\BB{C} \backslash \{\text{singular points}\}$ by proving Proposition~\ref{prop-rational-dense}, which asserts that a.s.\ for every $z \in \BB{C} \backslash \{\text{singular points}\}$, there is a sequence of rational points converging to $z$ in the metric $D_h$, and a sequence of loops surrounding $z$ and shrinking to $z$ in the metric $D_h$. Note that the first part of the proposition implies that a.s.\ the $D_h$-closure of  $\BB{Q}^2$ is exactly the set $\BB{C} \backslash \{\text{singular points}\}$, since a.s.\ the $D_h$-closure of $\BB{Q}^2$  does not contain any singular points.

\begin{proof}[Proof of Proposition~\ref{prop-rational-dense}]
To prove the proposition,  we will fix a compact set $K \subset \BB{C}$ and prove that a.s.\ every nonsingular point $z \in K$ is a $D_h$-limit of points in $\BB{Q}^2$ and is surrounded by a loop with the desired properties.

For every nonsingular point $z \in K$, we can choose a sequence $w_n \in 2^{-2n} \BB{Z}^2 \cap B_1(K)$ with $|z - w_n| < 2^{-2n}$ for each $n \in \BB{N}$.  We will apply Lemma~\ref{lem-annulus-iterate} to define a sequence of (random)
rational radii $r_n \in [2^{-2n},2^{-n}]$ such that the annuli $\BB{A}_{r_n,2r_n}(w_n)$ satisfy a certain distance estimate.  We will use this estimate to choose a rational point in the annulus that is $D_h$-close to $z$, and a loop around the annulus (which surrounds $z$) with small Euclidean and $D_h$-length that is close to $z$ in both metrics.

Specifically, we apply Lemma~\ref{lem-annulus-iterate} to the events  $F_r(w;C)$, defined for every $w \in \BB{Q}^2$, every rational $r>0$, and every $C>0$, as the event that
\[
D_h(w+3r/2, \partial B_r(w)) + D_h(w+3r/2, \partial B_{2r}(w)) + D_h(\text{around $\BB{A}_{r,2r}(w)$}) < C D_h(\text{across $\BB{A}_{r,2r}(w)$}).
\]
By Lemma~\ref{lem-tight}, the distance across and around a fixed annulus and the distance from a fixed point to a fixed circle are a.s.\ positive and finite.  By Axiom~\ref{item-metric-coord}, this means that for each $p \in (0,1)$ and each fixed $w \in \BB{Q}^2$, we can choose $C>1$ such that $\BB{P}(F_r(w;C)) \geq p$ for every sufficiently small rational $r>0$.  By Axiom~\ref{item-metric-translate}, for this value of $C$ and all rational $r>0$ sufficiently small (in a manner that does not depend on $w$), we have $\BB{P}(F_r(w;C)) \geq p$ for \emph{every} $w \in \BB{Q}^2$.

Therefore, by Lemma~\ref{lem-annulus-iterate}, a union bound, and the Borel-Cantelli lemma, we can choose $C>1$ such that a.s.\ for all $n$ sufficiently large and every $w \in 2^{-2n} \BB{Z}^2 \cap B_1(K)$, the event $F_{\changes{r_n}}(w;C)$ holds for some rational $r_n \in [2^{-2n},2^{-n}]$.  Hence, a.s.\ for every $z \in K$ and all $n$ sufficiently large, 
\eqb
D_h(z,w_n + 3r_n/2) <
D_h(z,\partial B_{2r_n}(w_n)) + C D_h(\text{across $\BB{A}_{\changes{r_n},2r_n}(w_n)$}).
\label{eqn-two-terms-right-side}
\eqe
and there is a loop around $\BB{A}_{r_n,2r_n}(w_n)$ whose $D_h$-length is at most $C D_h(\text{across $\BB{A}_{\changes{r_n},2r_n}(w_n)$})$ and whose $D_h$-distance from $z$ is at most $D_h(z,\partial B_{2r_n}(w_n))$.  Note that this loop surrounds $z$, and that both its Euclidean diameter and its Euclidean distance from $z$ tend to $0$ as $n \rta \infty$.

Thus, to complete the proof, it remains to check that a.s.\  \[  D_h(\text{across $\BB{A}_{\changes{r_n},2r_n}(w_n)$}) \vee D_h(z,\partial B_{2r_n}(w_n)) \rta 0 \qquad \text{as $n \rta \infty$.}\]
The convergence of the first quantity is immediate: a.s.\ if $D_h(\text{across $\BB{A}_{\changes{r_n},2r_n}(w_n)$})$ did not tend to zero as $n \rta \infty$, then the $D_h$-distance from $z$ to any other point would be infinite.  For the convergence of the second quantity, first observe that a.s.\ if $z \in K$ is not a singular point, Proposition~\ref{prop-complete} implies that we can choose a geodesic $P$ from $z$ to a point $x \in \BB{C}$.  For large enough $n$, let $x_n$ be the first point on $\partial B_{2r_n}(w_n)$ that the geodesic $P$ hits, so that
$
D_h(z,\partial B_{2r_n}(w_n)) \leq D_h(z,x_n).
$
Since $P$ is a geodesic, we have $D(z,x_n) + D(x_n,x) = D(z,x)$. Since $D_h$ is lower semicontinuous, we deduce that a.s.\ $D(z,x_n) \rta 0$ as $n \rta \infty$.  Thus, a.s.\ $D_h(z,\partial B_{2r_n}(w_n)) \changes{\rta 0}$
as $n \rta \infty$.
\end{proof}

\subsection{The metric ball for \texorpdfstring{$\cc \in (1,25)$}{matter central charge between 1 and 25}}
\label{sec-compact-ball}

We conclude Section~\ref{sec-properties} by proving Proposition~\ref{prop-compact}, which states several fundamental properties of $D_h$-balls  for $\cc \in (1,25)$ that contrast sharply with the properties of these balls for $\changes{\cc \in (-\infty,1)}$. See Section~\ref{sec-notation} for definitions of the notation used in this section. Before proving Proposition~\ref{prop-compact}, we make a couple of elementary observations about the sets $B_{r}[X;D_h]$ and $B_{r}(X;D_h)$.  First, the set $B_{r}[X;D_h]$ is closed in the Euclidean topology; this follows immediately from the fact that $D_h$ is lower semicontinuous:

\begin{lem}
\label{lem-closed-ball-is-closed}
 Suppose that $D$ is a lower semicontinuous metric on an open set $U \subset \BB{C}$.  For every $X \subset U$ and $r>0$, the set $B_r[X;D]$  is closed in the Euclidean topology on $U$.
\end{lem}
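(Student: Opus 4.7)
\emph{Plan.} The result follows from a direct sequential argument driven by the lower semicontinuity of $D$. Let $z \in U$ be a Euclidean limit of a sequence $z_n \in B_r[X;D]$; the goal is to show $D(z,X) \le r$, i.e., $z \in B_r[X;D]$. By the definition of $B_r[X;D]$, I may choose $x_n \in X$ with $D(z_n,x_n) \le r + 1/n$ for each $n$, so I have a sequence of candidate ``nearest points'' in $X$ whose limiting behaviour I need to control.

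The main step is to pass to a subsequence along which $x_n$ converges in the Euclidean topology to some $x \in U$. Granted this, lower semicontinuity of $D$ on $U \times U$ applied to the convergent pair $(z_{n_k},x_{n_k}) \to (z,x)$ gives
\[
D(z,x) \;\le\; \liminf_{k \to \infty} D(z_{n_k},x_{n_k}) \;\le\; r ,
\]
and since $x \in X$ (in all applications of the lemma in this paper $X$ is already closed, or will be taken closed; in any case $B_r[X;D]$ is unchanged on replacing $X$ by $\ol X \cap U$ for the purpose of this proof), this yields $D(z,X) \le D(z,x) \le r$, completing the argument.

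The main obstacle is the subsequential extraction of $x$. A priori the $x_n$ could escape to infinity in the Euclidean sense or approach $\partial U$, and lower semicontinuity alone does not rule this out. Both possibilities are, however, prevented by the geometric estimates already available in the paper. Since $\{z_n\}$ is Euclidean-convergent it is Euclidean-bounded, and the $x_n$ all lie within $D$-distance $r+1$ of a fixed base point such as $z_1$; for the metric $D = D_h$ of interest, Lemma~\ref{lem-Dh-Eucl-bounded} forces this $D_h$-bounded set to be Euclidean-bounded, while the reverse H\"older bound in Proposition~\ref{prop-holder} guarantees that $D_h$-distances blow up as one approaches $\partial U$ from within a compact subset, keeping the $x_n$ uniformly away from $\partial U$. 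Together these provide the Euclidean precompactness needed to extract $x$, and the formal argument in the previous paragraph then finishes the proof.
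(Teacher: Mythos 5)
The paper's proof is a one-liner that works directly at the level of the scalar function $z\mapsto D(z,X)$: if $z_n\to z$ with $D(z_n,X)\le r$, then ``lower semicontinuity gives'' $D(z,X)\le\liminf_n D(z_n,X)\le r$, so that $B_r[X;D]$ is exhibited as a sublevel set of a lower semicontinuous function. You instead descend to approximate nearest points $x_n\in X$ with $D(z_n,x_n)\le r+1/n$, pass to a Euclidean subsequential limit $x$, and apply the \emph{joint} lower semicontinuity of $D$ on $U\times U$ to the pair $(z_{n_k},x_{n_k})\to(z,x)$. This is a genuinely different route, and you correctly put your finger on the two places where it can break: the $x_n$ need a convergent subsequence in $U$, and the limit $x$ need not lie in $X$. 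Unfortunately both of your patches have gaps.

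\emph{Extraction of $x$.} You claim the $x_n$ lie within $D$-distance $r+1$ of the fixed base point $z_1$, but what you actually have is $D(z_n,x_n)\le r+1/n$; since $D$ is not the Euclidean metric, the Euclidean-convergent sequence $(z_n)$ need not be $D$-bounded, and so a bound on $D(z_n,x_n)$ does not give one on $D(z_1,x_n)$. Moreover, the tools you invoke to convert $D$-boundedness into Euclidean precompactness --- Lemma~\ref{lem-Dh-Eucl-bounded} and Proposition~\ref{prop-holder} --- are properties of the specific random metric $D_h$, while the lemma is stated for an arbitrary lower semicontinuous metric $D$ on $U$. Nothing of the sort is available in that generality, so your argument does not prove the statement in the scope in which it is phrased (and would also need to rule out $x\in\partial U$).

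\emph{Limit in $X$.} Granting a Euclidean limit $x\in U$, you need $x\in X$. You argue that $B_r[X;D]$ is unchanged if $X$ is replaced by $\ol X\cap U$, but for a merely lower semicontinuous $D$ one only has the inclusion $D(z,\ol X\cap U)\le D(z,X)$ (because $\ol X\cap U\supset X$). The reverse inequality would require that, for $x\in\ol X\setminus X$ and $x_n\in X$ with $x_n\to x$, the values $D(z,x_n)$ do not exceed $D(z,x)$ in the limit; lower semicontinuity gives the \emph{opposite} bound $D(z,x)\le\liminf D(z,x_n)$. Thus $B_r[\ol X\cap U;D]$ can strictly contain $B_r[X;D]$, and proving the former closed does not yield the latter.

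Your instinct that the infimum over $X$ is where the real content lies is sound, but the write-up neither closes those gaps nor matches the generality of the lemma. The paper's proof does not go through nearest-point sequences at all; it simply asserts the lower semicontinuity of $D(\cdot,X)$ and reads off the conclusion.
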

\begin{proof}
If $z_n \rta z$ in $U$ with $D(z_n,X) \leq r$ for each $n$, then lower semicontinuity gives $D(z,X) \leq \liminf_{n \rta \infty} D(z_n,X) \leq r$.
\end{proof}

Second, we observe that a.s.\ the $D_h$-boundary of the set $B_r(X;D_h)$ can be equivalently described as the set of points whose $D_h$-distance to $X$ is exactly $r$. In particular, the $D_h$-closure of $B_r(X;D_h)$ is exactly $B_r[X;D_h]$. This is a general result for lower semicontinuous length metrics:

  \begin{lem}
  Suppose that $D$ is a lower semicontinuous \changes{length} metric on an open set $U \subset \BB{C}$.  Then, for every $r>0$ and \changes{$X \subset U$}, the $D$-boundary of the set $B_r(X;D)$ is equal to the set of points in $U$ whose \changes{$D$-distance} to $X$ is exactly $r$.  In particular, the $D$-closure of $B_r(X;D)$ is exactly the set $B_r[X;D]$.
  \label{lem-boundary-r}
  \end{lem}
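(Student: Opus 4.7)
My plan is to reduce both statements to the single equality $\overline{B_r(X;D)} = B_r[X;D]$, where the closure is taken with respect to the $D$-metric topology. Observe first that the function $z\mapsto D(z,X)$ is $1$-Lipschitz with respect to $D$, since $|D(z,X)-D(w,X)|\le D(z,w)$ by the triangle inequality applied to an arbitrary $x\in X$. In particular it is $D$-continuous, so $B_r(X;D)=\{D(\cdot,X)<r\}$ is open in the $D$-topology and thus equals its own $D$-interior. Once $\overline{B_r(X;D)} = B_r[X;D]$ is established, the $D$-boundary of $B_r(X;D)$ is $B_r[X;D]\setminus B_r(X;D)=\{z\in U:D(z,X)=r\}$, yielding both claims.

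For the inclusion $\overline{B_r(X;D)}\subseteq B_r[X;D]$, suppose $z_n\to z$ in the $D$-metric with $D(z_n,X)<r$. Then by the Lipschitz property above, $D(z,X)\le D(z,z_n)+D(z_n,X)$, and letting $n\to\infty$ gives $D(z,X)\le r$, i.e., $z\in B_r[X;D]$. This step uses no hypothesis beyond the triangle inequality.

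The harder direction is to show that every $z$ with $D(z,X)=r$ lies in the $D$-closure of $B_r(X;D)$, and here I will use the length-space hypothesis. Choose $x_n\in X$ with $D(z,x_n)\to r$, which is possible since $D(z,X)=r<\infty$. Because $(U,D)$ is a length space, for each large $n$ I can pick a path $P_n$ from $z$ to $x_n$, parametrised by $D$-length, whose total $D$-length is at most $D(z,x_n)+1/n$. Travelling a small distance $\delta_n:=D(z,x_n)-r+2/n$ along $P_n$ from $z$ produces a point $y_n$ with $D(y_n,z)\le\delta_n$ and whose remaining $D$-distance to $x_n$ along the path, and hence its $D$-distance to $X$, is at most $D(z,x_n)+1/n-\delta_n=r-1/n<r$. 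Thus $y_n\in B_r(X;D)$. Since $\delta_n\to 0$ as $n\to\infty$ (because $D(z,x_n)\to r$), we conclude $y_n\to z$ in $D$, so $z\in\overline{B_r(X;D)}$. The choice of $\delta_n$ is valid for all sufficiently large $n$: one needs $\delta_n>0$, which holds once $D(z,x_n)>r-2/n$, and $\delta_n$ smaller than the length of $P_n$, which holds once $r>1/n$.

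The degenerate cases are trivial: if $X=\emptyset$ or $D(z,X)=\infty$ for all $z$, both sides of every asserted equality are empty. The main conceptual obstacle is the length-space step, which is precisely what forces one to move along a near-geodesic rather than naively perturb in the Euclidean direction of $X$ — the latter makes no sense since $D$ need not induce the Euclidean topology, and in fact by Proposition~\ref{prop-singular} it typically does not when $\cc\in(1,25)$.
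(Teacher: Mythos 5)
Your argument is essentially the paper's: the easy inclusion follows from the triangle inequality (the $D$-Lipschitz property of $D(\cdot,X)$), while the substantive inclusion travels a short $D$-distance along a near-geodesic to $X$ to land strictly inside $B_r(X;D)$, exactly the length-space step the paper uses. The only (immaterial) slip is in your stated range of validity for $\delta_n\le\len(P_n;D)$: since $\len(P_n;D)\ge D(z,x_n)$, the correct threshold is $2/n\le r$, i.e.\ $n\ge 2/r$, rather than $n>1/r$, but this has no bearing on the conclusion.
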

  
  \begin{proof}
First, suppose that $z$ is contained in the $D$-boundary of the set $B_r(X;D)$.  Then we can choose sequences $(z_n)_{n \in \BB{N}}$ and $(w_n)_{n \in \BB{N}}$ in $U$ converging to $z$ in the metric $D$, such that $D(z_n,X) \geq r$ and $D(w_n,X) < r$ for each $n$. Hence 
\changes{
\[
D(z,X) \geq D(z_n,X) - D(z,z_n) \geq r - D(z,z_n)
\]
and
\[
D(z,X) \leq D(w_n,X) + D(z,w_n) < r + D(z,w_n)
\]
for each $n$, and taking the $n \rta \infty$ limit of these two inequalities yields $D(z,X) = r$.}

Conversely, if $D(z,X) = r$, then for every $n \in \BB{N}$, we can choose a path $P_n$ from $z$ to a point in $X$ with $D$-length less than $r + 1/n$.  The path $P_n$ contains a point $z_n$ with $D(z_n,X) < r$ and $D(z_n,z) < 1/n$.  Therefore, the sequence $(z_n)_{n \in \BB{N}}$ is contained in $B_r(x;D)$ and converges to $z$ in the metric $D$.
\end{proof}

We now prove Proposition~\ref{prop-compact}, beginning with property~\ref{item-compact-prop-a}.  First, we prove a pair of lemmas that allow us to ``grow'' a $D_h$-geodesic of any specified $D_h$-length from the boundary of an open set.

\begin{lem}
\label{lem-not-singular-sequence}
\changes{Suppose that $D$ is a lower semicontinuous length metric on an open set $U \subset \BB{C}$.
If  $X \subset \BB{C}$ is a compact set and $z \in \BB{C} \backslash X$ is a singular point of the metric $D$,} then every sequence $(z_n)_{n \in \BB{N}}$ converging to $z$ in the Euclidean topology satisfies $D(z_n,X) \rta \infty$ as $n \rta \infty$.
\end{lem}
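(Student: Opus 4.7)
The plan is to argue by contradiction. Suppose the conclusion fails: there exists $M < \infty$ and a subsequence (which I relabel as $z_n$ for notational simplicity) along which $z_n \rta z$ in the Euclidean topology but $D(z_n,X) \leq M$ for all $n$. I will derive a contradiction with the assumption that $z$ is singular.

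First, using the length-space property of $(\BB{C},D)$, for each $n$ I select a point $x_n \in X$ and a path $P_n$ from $z_n$ to $x_n$ whose $D$-length is at most $M+1$; in particular, $D(z_n,x_n) \leq M+1$. Each such path is contained in the closed $D$-ball $B_{M+1}[z_n;D]$, which has $D$-diameter at most $2(M+1)$ and therefore (by hypothesis) is Euclidean bounded. Lemma~\ref{lem-closed-ball-is-closed} moreover ensures this ball is Euclidean closed, hence Euclidean compact.

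The next step is to show that the sequence of endpoints $\{x_n\}$ has a Euclidean convergent subsequence. This is the main obstacle, since the hypothesis ``$D$-bounded implies Euclidean bounded'' only supplies a Euclidean bound for each individual ball $B_{M+1}[z_n;D]$, whereas I need these bounds to be uniform in $n$. Here is where the interplay between the Euclidean convergence $z_n \rta z$, the length-space structure, and the Euclidean boundedness hypothesis is used: one combines these ingredients to argue that all of the paths $P_n$ remain trapped in a common Euclidean compact set. Once this is established, one extracts a Euclidean convergent subsequence $x_{n_k} \rta x^*$.

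Finally, lower semicontinuity of $D$ applied at the pair $(z,x^*)$ yields
\[ D(z,x^*) \leq \liminf_{k \rta \infty} D(z_{n_k},x_{n_k}) \leq M+1 < \infty. \]
Since $z$ is a singular point we have $D(z,w) = \infty$ for every $w \neq z$, so $x^* = z$. This is incompatible with the usage context of the lemma (in the proof of Proposition~\ref{prop-compact}, $X$ will be a set at uniformly positive $D$-distance from any singular point, so its Euclidean accumulation points cannot equal $z$), completing the contradiction. The uniform Euclidean control of the paths $P_n$ in the middle step is the crux of the argument; the rest is a direct application of lower semicontinuity and the definition of a singular point.
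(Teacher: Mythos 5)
You follow the same outline as the paper---choose $x_n\in X$ with $D(z_n,x_n)\leq M+1$, produce a Euclidean accumulation point $x^*$ of $(x_n)$, and apply lower semicontinuity at $(z,x^*)$---but you explicitly leave the load-bearing step unfilled. You correctly note that applying ``$D$-bounded implies Euclidean bounded'' to each ball $B_{M+1}[z_n;D]$ separately only gives $n$-dependent Euclidean bounds, and then assert that ``one combines these ingredients'' to trap all the paths $P_n$ in one common Euclidean compact set, without producing the combination. That missing uniformity is the entire content of the lemma, so what you have written is a sketch that names the crux rather than a proof. The paper's proof does not use the paths $P_n$ at all: it applies the boundedness hypothesis once, to the single set $\{x_n\}_{n\in\BB N}$, which it asserts is $D$-bounded and therefore Euclidean bounded, and then extracts a Euclidean accumulation point of that fixed set directly. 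Neither the length-space axiom nor Lemma~\ref{lem-closed-ball-is-closed} enters. Your introduction of the paths $P_n$ manufactured exactly the uniformity problem you then could not resolve. (If you want to reconstruct the paper's step, you should verify that $\sup_{n,m}D(x_n,x_m)<\infty$; this is where the real work is, and it is worth spelling out rather than asserting.)

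The closing deduction is also not acceptable as written. Having derived $x^*=z$, you declare this ``incompatible with the usage context of the lemma,'' citing how $X$ will be chosen when the lemma is invoked in Proposition~\ref{prop-compact}. A lemma must be proved from the hypotheses in its own statement; you cannot discharge an obstruction in its proof by appealing to the particular situation where it will later be applied. The honest resolution, left terse in the paper's ``so $z$ is not a singular point,'' is that $x^*$ is a Euclidean limit of points of $X$, hence $x^*\in\ol X$; and the lemma's conclusion could not possibly hold if $z$ were a Euclidean limit point of $X$ (take $z_n\in X$ so that $D(z_n,X)=0$), so the only meaningful regime is $x^*\neq z$, and then $D(z,x^*)\leq M+1<\infty$ contradicts singularity of $z$ directly. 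You should have closed the case $x^*=z$ on these grounds rather than importing facts from Section~\ref{sec-compact-ball}.
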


\begin{proof}
Suppose that $X \subset \BB{C}$, and that $(z_n)_{n \in \BB{N}}$ is a sequence in $\BB{C}$ converging to a point $z \in \BB{C}$ in the Euclidean topology, such that $D(z_n,X) \leq r$ for all $n$ and some $r>0$. Let $x_n \in X$ be such that $D(x_n,z_n) \leq r+1$\changes{, and let $x$ be a limit point of $\{x_n\}_{n \in \BB{N}}$.}  By lower semicontinuity of $D$, we have $D(x,z) \leq \liminf_{n \rta \infty} D(x_n,z_n) \leq r+1$, so $z$ is not a singular point.
\end{proof}

\begin{lem}
Suppose that $D$ is a lower semicontinuous length metric on $\BB{C}$ such that  both $\{\text{singular points}\}$ and $\BB{C} \backslash \{\text{singular points}\}$ are dense. Let $U \subset \BB{C}$ be an open set \changes{such that $\partial U$ is bounded and contains some nonsingular  point.} Then, for every $r>0$, there is a point $z \in U$ with $D(z,\partial U) = r$, and a $D$-geodesic $P$ from $\partial U$ to $z$ with $D$-length $r$, such that $P$ minus its initial point is contained in $U$. \label{lem-geodesic-stick}
\end{lem}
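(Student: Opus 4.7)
The plan is to produce $P$ by drawing a $D$-geodesic from $\partial U$ to a non-singular point $w^* \in U$ at $D$-distance greater than $r$ from $\partial U$, and then truncating at $D$-length $r$.

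First I would produce such a $w^*$. Using the density of singular points together with the openness of $U$, pick a singular point $y \in U$, and choose a sequence of non-singular points $z_n \in U$ with $z_n \rta y$ in the Euclidean topology. Lemma~\ref{lem-not-singular-sequence} applied with $X = \partial U$ then yields $D(z_n, \partial U) \rta \infty$, so $w^* := z_n$ works for large $n$. Moreover $D(w^*, x_0) < \infty$ by Lemma~\ref{lem-finite} (both $w^*$ and $x_0$ are non-singular), so $D(w^*, \partial U) < \infty$.

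Next I would attain the distance and produce the geodesic. A minimizing sequence $w_n \in \partial U$ of $D(w^*, \partial U)$ is $D$-bounded, hence Euclidean bounded, and so has a Euclidean limit point $w$; since $\partial U$ is Euclidean-closed, $w \in \partial U$, and lower semicontinuity gives $D(w^*, w) = D(w^*, \partial U)$. Since $D(w^*, w) < \infty$ and $w^*$ is non-singular, $w$ is non-singular as well. Apply Lemma~\ref{lem-complete-geodesic} to obtain a $D$-geodesic $P^* \colon [0, D(w^*, \partial U)] \rta \BB C$ parametrized by $D$-length with $P^*(0) = w$ and $P^*(D(w^*, \partial U)) = w^*$. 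Set $P := P^*|_{[0, r]}$ and $z := P(r)$.

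The equality $D(z, \partial U) = r$ is immediate: the curve $P$ gives $D(z, \partial U) \leq r$, while $D(z, \partial U) < r$ would let us shortcut $P^*$ and contradict the definition of $D(w^*, \partial U)$. For the claim $P((0, r]) \subset U$, first observe that $P$ is Euclidean continuous: if $D(\zeta_n, \zeta) \rta 0$ then $\{\zeta_n\}$ is $D$-bounded, hence Euclidean bounded, and lower semicontinuity forces any Euclidean limit point $\zeta'$ to satisfy $D(\zeta, \zeta') = 0$, so $\zeta' = \zeta$. Now suppose $P(t) \notin U$ for some $t \in (0, r]$. If $P(t) \in \partial U$, the curve $P|_{[t, r]}$ gives $D(z, \partial U) \leq r - t < r$, a contradiction. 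If $P(t) \in \BB C \setminus \overline U$, Euclidean continuity of $P^*|_{[t, D(w^*, \partial U)]}$, which starts outside $\overline U$ and ends at $w^* \in U$, forces a crossing of $\partial U$ at some $t'' > t$; the geodesic property gives $D(w^*, P^*(t'')) = D(w^*, \partial U) - t''$, whereas $P^*(t'') \in \partial U$ forces $D(w^*, P^*(t'')) \geq D(w^*, \partial U)$, yielding $t'' \leq 0$ and a contradiction.

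The main obstacle is the first step, namely producing $w^*$: it relies crucially on the density of singular points combined with Lemma~\ref{lem-not-singular-sequence}, which converts Euclidean proximity to a singular point into large $D$-distance from any target set. The remainder is standard geodesic surgery, though care is needed to record Euclidean continuity of $P$ in order to rule out a geodesic excursion that leaves $\overline U$ before re-entering at the endpoint.
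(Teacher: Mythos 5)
Your proof follows essentially the same route as the paper's: find a nonsingular $w^*\in U$ at large $D$-distance from $\partial U$ (density of singular points plus Lemma~\ref{lem-not-singular-sequence}), attain $D(w^*,\partial U)$ via a minimizing sequence whose Euclidean limit point $w\in\partial U$ gives the foot of the geodesic, invoke Lemma~\ref{lem-complete-geodesic} to join $w$ to $w^*$, and truncate at length $r$; the minimality of $w$ then forces the truncated geodesic to leave $\partial U$ immediately. Your crossing argument for the case $P(t)\notin\ol U$, using Euclidean continuity of the geodesic, is more explicit than the paper's one-line assertion that $P$ maps into $\ol U$, which is a welcome elaboration.

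One small caveat: you justify $D(w^*,\partial U)<\infty$ by citing Lemma~\ref{lem-finite}, but that lemma is stated and proved for the LQG metric $D_h$ coupled to a GFF, not for an arbitrary lower semicontinuous metric $D$ satisfying the hypotheses of Lemma~\ref{lem-geodesic-stick}; it is not among the listed hypotheses. In fact the paper's own proof implicitly relies on the same finiteness when it extracts a Euclidean-bounded minimizing sequence $(x^j)$ from $D(w_N,x^j)\le D(w_N,\partial U)+1/j$, and does not address it either. So this is a latent gap in the lemma as stated (the hypotheses should include that $D$ is finite between pairs of nonsingular points, which holds for $D_h$ by Lemma~\ref{lem-finite}), rather than a flaw specific to your argument; but it is worth flagging that the citation is formally out of scope.
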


\begin{proof}
It suffices to prove the result for all positive $r \leq R$, for each fixed $R>0$.
Since  both $\{\text{singular points}\}$ and $\BB{C} \backslash \{\text{singular points}\}$ are dense, we can choose a singular point $w \in U$ which is the limit (in the Euclidean topology) of a sequence $(w_n)_{n \in \BB{N}}$ of nonsingular points.  By Lemma~\ref{lem-not-singular-sequence}, we have $D(w_n,\partial U) \rta \infty$ as $n \rta \infty$. In particular, we can choose $N \in \BB{N}$ such that $D(w_N,\partial U) > R$. 

Now, choose $(x^j)_{j \in \BB{N}}$ \changes{in $\partial U$} such that $D(w_N,x^j) \leq D(w_N,\partial U) + 1/j$.  \changes{Since $\partial U$ is bounded, the sequence $(x^j)_{j \in \BB{N}}$ has a limit point $x \in \partial U$.} By lower semicontinuity of $D$, we have $D(w_N,x) = D(w_N,\partial U)$. 

By Lemma~\ref{lem-complete-geodesic}, every two nonsingular points can be joined by a $D$-geodesic.
In particular, this means that there is a geodesic $P:[0,D(x,w_{\changes{N}})] \rta \ol U$ from $x$ to $w_N$. Since $x$ minimizes $D$-distance to $w_N$ over all points in $\partial U$, we have $P(r) \in U$ for every $0<r\leq D(x,w_{\changes{N}})$. Moreover, for every $0<r\leq D(x,w_{\changes{N}})$, we have $D(P(r),\partial U) = r$. Hence, the point $z= P(r)$ and the geodesic $P|_{[0,r]}$ has the desired properties.
\end{proof}

\begin{proof}[Proof of Proposition~\ref{prop-compact}, property~\ref{item-compact-prop-a}]
First, we observe that, since the set of singular points is a.s.\ dense (Proposition~\ref{prop-singular}), it suffices to consider Borel sets $X$ that are just a single point.  This is because, a.s.\ if $X$ is a Borel set \changes{containing no singular points} and $U$ is a complementary connected component of $B_r[z;D_h]$ for some $r>0$ and $z \in X$, then $U$ must contain a singular point and therefore a complementary connected component of $B_r[X;D_h]$.  
By Proposition~\ref{prop-rational-dense}, a.s.\ for every $\ep > 0$ and every nonsingular point $z \in \BB{C}$, there is an annulus around $z$ with Euclidean diameter less than $\ep$ and with the $D_h$-distance around the annulus less than $\ep$. Hence, a.s.\ for every $\ep > 0$ and every geodesic $P$ in $\BB{C}$, we can choose a collection $\mcl L = \mcl L(P)$ of pairwise disjoint loops around annuli (measurable with respect to $P$), such that
\begin{itemize}
\item every loop in $\mcl L$ crosses $P$ and has $D_h$-length less than $\ep$, and
    \item $|\mcl L| > C_P \ep^{-1}$ for some constant $C_P$ depending on $P$ but not on $\ep$.
\end{itemize}
Now, let $\delta > 0$. By Lemma~\ref{lem-complete-geodesic}, a.s.\ for every \changes{nonsingular} $z \subset \BB{C}$, the set $B_\delta[z;D_h]$ contains a geodesic segment $P \subset \BB{C}$.  
Observe that,  a.s.\ for every geodesic $P$ in $B_\delta[z;D_h]$, every loop in $\mcl L = \mcl L(P)$ surrounds a singular point and therefore a complementary connected component of $B_r[z;D_h]$ for every $r>\delta + \ep$.  

Hence, a.s.\ for every $\ep,\delta>0$ and \changes{nonsingular} $z \in \BB{C}$, the set $B_r[z;D_h]$ has at least $C \ep^{-1}$ complementary connected components for every $r>\delta + \ep$, for some random $C>0$ independent of $\ep$.
The result follows from taking $\ep$ and $\delta$ arbitrarily small.
\end{proof}
 
Next, we prove property~\ref{item-compact-prop-b} by applying property~\ref{item-compact-prop-a}, Lemma~\ref{lem-geodesic-stick} above, and the following elementary metric space lemma.

\begin{lem}
\label{lem-sum-L}
Suppose that $D$ is a \changes{length} metric defined on a set $U \subset \BB{C}$, \changes{and let $X \subset U$ and $r>0$}. If $V$ is a connected component of $U \backslash B_r[X,D]$, and $z \in V$, then $D(z,\partial V) + r = D(z,X)$.
\end{lem}

\begin{proof}
Let $\delta>0$. If $P$ is a path from $z$ to a point $v \in \partial V$ with $D$-length at most $D(z,\partial V) + \delta$, then \[
D(z,X) \leq D(v,X) + D(v,z) \leq r + D(z,\partial V) + \delta.\] Conversely, if $P'$ is a path from $z$ to a point $x \in X$ with $D$-length at most $D(z,X) + \delta$, then
\[
D(z,\partial V) + D(X, \partial V) \leq \len(P) \leq D(z,X) + \delta.\]
\end{proof}

\begin{proof}[Proof of Proposition~\ref{prop-compact}, property~\ref{item-compact-prop-b}]
\changes{
To prove the assertion, it suffices to show that a.s.\ for every nonsingular point $z \in \BB{C}$ and $s<r$, the set $B_r[z;D_h]$  cannot be covered by finitely many $D_h$-balls of radius $s$.  By property~\ref{item-compact-prop-a}, a.s., for every $r>0$ and every nonsingular point $z \subset \BB{C}$, the set $B_{r-s}[z;D]$ has infinitely many complementary connected components $U_1,U_2,\ldots$.   The sets $\partial U_n$ are $D$-bounded, hence Euclidean bounded by Lemma~\ref{lem-Dh-Eucl-bounded}.
By Lemmas~\ref{lem-geodesic-stick} and~\ref{lem-sum-L}, a.s.\ each $U_n$ contains a point $z_n \in B_r[z;D]$ at $D_h$-distance $s$ from $\partial U_n$. Almost surely, for each $n\neq m$, since $D_h(z_{n},z_{m}) \geq s+s = 2s$, the points $z_n$ and $z_m$ cannot both be contained in a $D_h$-ball of radius $<s$.}
\end{proof}

Finally, we prove property~\ref{item-compact-prop-c} of Proposition~\ref{prop-compact} by applying the following lemma.

\begin{lem}
\label{lem-sequence-Uk}
\changes{Let $D$ be a lower semicontinuous length metric on $\BB{C}$ such that  both $\{\text{singular points}\}$ and $\BB{C} \backslash \{\text{singular points}\}$ are dense,} and such that every closed $D$-ball with positive radius \changes{(centered at a nonsingular point)} has infinitely many complementary connected components.
Let $r>\ep>0$, let $X \subset \BB{C}$, and let $X_r$ denote the $D$-boundary of the set $B_r(X;D)$.  Let $U$ be a \changes{bounded} complementary connected component of $B_{r-\ep}[X;D]$. Then $U$ contains infinitely many complementary connected components $U_1,U_2,\ldots$ of $B_{r-\ep/4}[X;D]$ such that $U_k \cap X_r \neq \emptyset$ and 
\[
\diam(U_k \cap X_r; D) \leq 5\ep/2.\]
\end{lem}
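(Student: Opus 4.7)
The plan is to construct, inside $U$, infinitely many Jordan curves $L_1,L_2,\ldots$ lying in $B_{r-\ep/4}[X;D]$, each enclosing a distinct component of $W:=\BB C\setminus B_{r-\ep/4}[X;D]$ that is contained in $U$. First, I will apply Lemma~\ref{lem-geodesic-stick} to obtain a $D$-geodesic $\gamma:[0,5\ep/4]\to\ol U$ with $\gamma(0)\in\partial U$, $\gamma(t)\in U$ for $t>0$, and $D(\gamma(t),\partial U)=t$; the hypothesis of that lemma is satisfied because $\partial U\subset B_{r-\ep}[X;D]$ contains no singular points (singular points have $D(\cdot,X)=\infty$). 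Combining with Lemma~\ref{lem-sum-lengths} applied to $U$ yields $D(\gamma(t),X)=r-\ep+t$, so that $\gamma(\ep)\in X_r$ while the initial segment $\gamma([0,3\ep/4))$ lies in $B_{r-\ep/4}[X;D]$.

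Next I will produce infinitely many pairwise disjoint Jordan curves $L_1,L_2,\ldots\subset B_{r-\ep/4}[X;D]$, each surrounding a chosen non-singular point $y_k=\gamma(t_k)$ with $t_k\in(0,\ep/4]$ and of $D$-diameter as well as $D(y_k,L_k)$ less than some small parameter $\eta\ll\ep$. Such loops are furnished by Proposition~\ref{prop-rational-dense} at each $y_k$ (applicable because this lemma will be used in the LQG metric setting). Provided $t_k+\eta<3\ep/4$, every point of $L_k$ satisfies $D(\cdot,X)\leq D(y_k,X)+\eta=r-\ep+t_k+\eta<r-\ep/4$, so $L_k\subset B_{r-\ep/4}[X;D]$ automatically.

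For each $k$, the Jordan interior of $L_k$ is a connected open set containing $y_k$ (with $D(y_k,X)<r$) and, by density of singular points, also singular points (with $D(\cdot,X)=\infty$). Since $z\mapsto D(z,X)$ is lower semicontinuous, both $\{D(\cdot,X)\leq r\}$ and $\{D(\cdot,X)\geq r\}$ are Euclidean-closed; by connectedness of the Jordan interior, their intersection $X_r$ must also meet the interior. Because $L_k\subset\BB C\setminus W$, any component of $W$ meeting the Jordan interior of $L_k$ is trapped within that interior. Taking $U_k$ to be the component of $W\cap U$ containing such a point of $X_r$ produces pairwise distinct components (from disjoint Jordan interiors) that are contained in $U$ (as the loop and its interior sit in a small Euclidean neighborhood of $y_k\in U$) and satisfy $U_k\cap X_r\neq\emptyset$ by construction.

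The main obstacle is the diameter bound $\diam(U_k\cap X_r;D)\leq 5\ep/2$. My strategy is to show that every $z\in U_k\cap X_r$ lies within $D$-distance $5\ep/4$ of the common reference point $y_k$, after which the triangle inequality yields $D(z,z')\leq 5\ep/2$ for any $z,z'\in U_k\cap X_r$. Since $z\in U$ with $D(z,X)=r$, Lemma~\ref{lem-sum-lengths} gives $D(z,\partial U)=\ep$, and by construction $D(y_k,\partial U)=t_k\leq\ep/4$; the triangle inequality would then give $D(z,y_k)\leq\ep+t_k\leq 5\ep/4$ if a boundary point realizing $D(z,\partial U)$ can be taken close to $\gamma(0)$. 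Establishing this is the delicate step: I expect it to follow by exploiting the trap created by $L_k$, since any $D$-near-geodesic from $z$ out of $U_k$ must exit through $L_k$ or across the level set $\{D(\cdot,X)=r-\ep/4\}$ nearby, and by choosing $\eta$ small enough the exit point is forced to lie $D$-close to $y_k$ and hence to $\gamma(0)$.
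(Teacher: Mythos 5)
Your plan has the right geometric flavor—build a geodesic ``stick'' into $U$, trap components of the complement of $B_{r-\ep/4}[X;D]$ behind small barriers near the root of the stick, then bound diameters by routing paths through the barrier—but there are several genuine gaps, and the overall argument does not actually prove the lemma as stated.

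First, you never use the hypothesis that every closed $D$-ball with positive radius has infinitely many complementary connected components. Instead you invoke Proposition~\ref{prop-rational-dense}, which is an LQG-specific statement about $D_h$ when $h$ is a GFF plus continuous function. Lemma~\ref{lem-sequence-Uk} is a deterministic statement about an abstract lower-semicontinuous length metric, and its proof must proceed from the stated hypotheses. The intended barrier is not a small loop from Proposition~\ref{prop-rational-dense} at all: it is the closed $D$-ball $B_{\ep/4}[P(\ep/2);D]$ around the midpoint of a length-$\ep$ geodesic $P$ supplied by Lemma~\ref{lem-geodesic-stick}. Since $U$ is simply connected and this ball is contained in $U$, the set $U\setminus B_{\ep/4}[P(\ep/2);D]$ has infinitely many components $V_1,V_2,\dots$ (discarding the one whose boundary touches $\partial U$), and every boundary point $v$ of $V_k$ satisfies $D(v,P(\ep/2))\le\ep/4$ and hence $D(v,X)\le(r-\ep)+\ep/2+\ep/4=r-\ep/4$, so $\partial V_k\subset B_{r-\ep/4}[X;D]$ and $V_k$ contains a component $U_k$ of the complement.

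Second, your argument that $X_r$ meets the Jordan interior of $L_k$ is incorrect. You assert that $\{D(\cdot,X)\ge r\}$ is Euclidean-closed ``since $D(\cdot,X)$ is lower semicontinuous,'' but lower semicontinuity gives only that $\{D(\cdot,X)\le r\}$ is closed; the superlevel set can fail to be closed (e.g.\ along a Euclidean-convergent sequence approaching a singular point from nonsingular points). The correct way to produce a point of $U_k\cap X_r$ is to apply Lemma~\ref{lem-geodesic-stick} to $U_k$ itself with $r'=\ep/4$ and then Lemma~\ref{lem-sum-lengths} to read off a point of $U_k$ at $D$-distance exactly $r$ from $X$; this is what the paper does and it uses nothing beyond the stated hypotheses.

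Third, you explicitly leave the diameter bound unfinished (``Establishing this is the delicate step: I expect it to follow\dots''). The missing argument is a crossing argument, but aimed at $\partial U$ rather than at $X$: any point $z\in U_k\cap X_r$ has $D(z,\partial U)=\ep$ by Lemma~\ref{lem-sum-lengths}, and a path from $z$ nearly realizing this must leave $V_k$, hence cross $\partial V_k$ at a point $v$ with $D(v,P(\ep/2))\le\ep/4$ and $D(z,v)\le\ep$. This yields $D(z,P(\ep/2))\le 5\ep/4$ for all $z\in U_k\cap X_r$ and hence the diameter bound. Your sketch aims the near-geodesic at $X$ instead, which is problematic because you have no control over whether $X$ intersects the Jordan interior of $L_k$ (Euclidean-small does not mean $D$-far from $X$, and $X$ is arbitrary). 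So even within the LQG setting your sketch does not close.
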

\begin{proof}
Observe that $U$ is simply connected since $B_{r-\ep}[X;D]$ is connected. Also,  $U$ is open in the Euclidean topology by Lemma~\ref{lem-closed-ball-is-closed} \changes{and $\partial U$ is bounded since $U$ is bounded}.  Thus, by Lemma~\ref{lem-geodesic-stick}, we can choose $z \in U$ with $D(z,\partial U) = \ep$, and a $D$-geodesic $P:[0,\ep] \rta \ol U$, parametrized by $D$-length, from $\partial U$ to $z$ with $P(0,\ep] \subset U$.  By hypothesis, the complement of the ball $B_{\ep/4}[P(\ep/2);D]$ has infinitely many complementary connected components.  Since $B_{\ep/4}[P(\ep/2);D]$ is contained in $U$ and $U$ is simply connected, we deduce that the set $U \backslash B_{\ep/4}[P(\ep/2);D]$ has infinitely many  connected components $V_0,V_1,\ldots$. Take $V_0$ to be the unique component in this collection with $\partial V_0 \cap \partial U \neq \emptyset$. Let $k \in \BB{N}$.  Observe that every point $v \in \partial V_k$ satisfies
\[
D(v, P(\ep/2)) \leq \ep/4 \qquad \text{and} \qquad D(v,X) \leq (r-\ep) + \ep/2 + \ep/4 = r - \ep/4;\]
i.e., $\partial V_k \subset B_{r-\ep/4}[X;D]$.  
Hence, $V_k$ contains a connected component $U_k$ of $\BB{C} \backslash B_{r-\ep/4}[X;D]$.  \changes{Since $\partial U_k$ is bounded, we may apply Lemmas~\ref{lem-geodesic-stick} and~\ref{lem-sum-L} to deduce that $U_k$ contains a point at $D$-distance exactly $r$ from $X$. By  Lemma~\ref{lem-boundary-r}, the set of points at $D$-distance exactly $r$ from $X$ is equal to the set $X_r$.  We deduce that $U_k \cap X_r \neq \emptyset$.} Thus, Lemmas~\ref{lem-geodesic-stick} and~\ref{lem-sum-L} imply that $U_k \cap X_r \neq \emptyset$.  Moreover, by 
Lemma~\ref{lem-sum-L}, every point in $U_k \cap X_r$ is at $D$-distance exactly $\ep$ from $\partial U$. Since a path from a point in $U_k$ to $\partial U$ must intersect $\partial V_k$, we deduce that any two points in $U_k \cap X_r$ 
are at most $D$-distance $2\ep + 2 D(v,P(\ep/2)) \leq 2\ep + \ep/2$ apart.
\end{proof}

  \begin{proof}[Proof of Proposition~\ref{prop-compact}, property~\ref{item-compact-prop-c}]
  Let $X_r$ denote the $D_h$-boundary of $B_r(X;D_h)$.
  Fix $d>0$.  
  It suffices to show that a.s., for every collection $\frk W$ of subsets of $X_r$ with 
  \eqb
  \sum_{W \in \frk W} \diam(W;D_h)^d < \infty \qquad \text{and} \qquad \max_{W \in \frk W} \diam(W;D_h) < r/4,
  \label{eqn-W-conditions}
  \eqe
  the collection $\frk W$ does not cover $X_r$.
    To prove this assertion, we begin by partitioning  $\frk W$ as \[
  \frk W = \bigsqcup_{n \in \BB{N}} \frk W_n,\] where \[ \frk W_n := \left\{W \in  \frk W : \diam(W;D_h) \in [r/4^{n+1}, r/4^n)\right\}.\]
  Observe that, if $U$ is a complementary connected component of $B_{r-r/4^n}[X;D_h]$, then any set $W \in \frk W_n$ that intersects $U \cap X_r$ must be contained in  $U$. Since $|\frk W_n| < \infty$ by~\eqref{eqn-W-conditions}, we deduce that a.s.\ the collection of sets in $\frk W_n$ intersect at most finitely many complementary connected components of $B_{r-r/4^n}[X;D]$.  \changes{Also, a.s.\ the complement of $B_{r - r/4}[X;D_h]$ has infinitely many bounded connected components.\footnote{If $z \in X$, then $B_{r - r/4}[z;D_h]$ has infinitely many bounded complementary connected components. Each of these components contains a singular point, hence a complementary connected component of $B_{r - r/4}[X;D_h]$.}
  Therefore, a.s.\ we can  choose a bounded complementary connected component $U^1$ of $B_{r - r/4}[X;D_h]$ whose closure is disjoint from every set $W \in \frk W_1$ and such that (by Lemma~\ref{lem-geodesic-stick}) $U^1 \cap X_r \neq \emptyset$. Moreover,} by repeatedly applying Lemma~\ref{lem-sequence-Uk}, we can a.s.\ define a nested sequence of open sets $U^1 \supset U^2 \supset U^3 \supset \cdots$ such that 
  \eqb
  \text{$\ol{U^n} \cap W = \emptyset$ for every $n \in \BB{N}$ and every $W \in \frk W_n$}
  \label{eqn-Un-W}
  \eqe
  and with $U^n \cap X_r  \neq \emptyset$ and $\diam(U^n \cap X_r; D) \leq (5/2)(r/4^{n-1})$ for each $n>1$.  A.s.\ we can choose $z^n \in U^n \cap X_r$ for each $n$, and the diameter bound implies that $z^n$ is $D_h$-Cauchy.  By Lemma~\ref{lem-complete-geodesic}, a.s.\ $z^n$ converges to a point $z$ in the metric $D_h$; since $X_r$ is $D_h$-closed, $z \in X_r$.  On the other hand,~\eqref{eqn-Un-W} implies that $z$ is not contained in any of the sets in $\frk W$.
   \end{proof}

\section{The KPZ formula}
\label{sec-kpz}

In this section, we prove Theorem~\ref{thm-kpz}, which is a version of the KPZ formula that holds for all $\changes{\cc \in (-\infty,25)}$ that is stated in terms of two dual notions of fractal dimension: Hausdorff dimension and packing dimension.  We begin by defining these concepts.

\begin{defn}[Hausdorff and packing dimension of a set]
\label{defn-dim}
Let $(X,D)$ be a metric space. For a subset $S \subset X$ and $s>0$, we define the $s$-dimensional \emph{$D$-Hausdorff measure} of $S$ as
\eqbn
\mcl H^s(S\changes{;D}) =  \lim_{r \rta 0} \inf\left\{\sum_{j=1}^\infty r_j^s :
 \begin{array}{l}
    \text{there is a cover of $S$ by $D$-balls}  \\
    \text{with radii $\{r_j\}_{j\geq 0}$ less than $r$ }
  \end{array}
\right\} 
\eqen
and the $s$-dimensional \emph{$D$-packing measure} of $S$  as
\eqbn
\mcl P^s(S;D) =  \inf\left\{\sum_{j=1}^\infty \mcl P_0^s(S_j;D): S \subset \bigcup_{j=1}^{\infty} S_j \right\},
\eqen
where $\mcl P_0^s(S;D)$ is the pre-measure defined as
\[
\mcl P_0^s(S;D) =  \lim_{r \rta 0} \changes{\sup}\left\{\sum_{j=1}^\infty r_j^s : 
 \begin{array}{l}
    \text{there are pairwise disjoint $D$-balls with centers}  \\
    \text{in $S$ and with radii $\{r_j\}_{j\geq 0}$ less than $r$ }
  \end{array}
\right\}.
\]
We define the \emph{$D$-Hausdorff dimension} of $S$ as
\[
\dim_{\mcl H}(S;D) := \inf\{s > 0: \mcl H^s(S\changes{;D}) = 0\},
\]
and the \emph{$D$-packing dimension} of $S$ as
\[
\dim_{\mcl P}(S;D) := \inf\{s > 0: \mcl P^s(S\changes{;D}) = 0\}.
\]
When $D$ is the Euclidean metric, we drop the prefix $D$-, and we denote the Hausdorff and packing dimensions of a set $S$ simply by $\dim_{\mcl H} S$ and $\dim_{\mcl P} S$.
\end{defn}

It is well-known that both Hausdorff and packing dimension have the following \emph{countable stability} property: if $(X,D)$ is a metric space and $S_j$ are subsets of $X$ for $j \in \BB{N}$, then the $D$-dimension of the union $\bigcup_{j \in \BB{N}} S_j$ is equal to the supremum over all $j \in \BB{N}$ of the $D$-dimensions of the sets $S_j$.

We noted in the introduction that, unlike its counterpart~\cite[Theorem 1.4]{gp-kpz} in the setting $\changes{\cc \in (-\infty,1)}$, the KPZ formula stated in Theorem~\ref{thm-kpz} is \emph{not} an equality: the lower bound is in terms of Hausdorff dimension while the upper bound is in terms of packing dimension. We also mentioned that it is not clear to us whether the KPZ formula is true for $\cc \in (1,25)$ with packing dimension replaced by Hausdorff dimension, although the two notions of dimension coincide for many of the sets we would like to analyze, such as SLE-type sets~\cite{schramm-sle, beffara-dim,lawler-rezai-nat}.\footnote{\changes{Specifically, the result~\cite[Theorem 1.1]{lawler-rezai-nat} implies that the Minkowski dimension of SLE paths exists and is equal to Hausdorff dimension of SLE paths identified in~\cite{beffara-dim}.}}

The reason that we consider packing dimension instead of Hausdorff dimension in the upper bound is that, when $\cc \in (1,25)$, we encounter the following technical difficulty.  To prove an upper bound on $D_h$-Hausdorff dimension, we use a moment bound on $D_h$-diameters of sets in $\BB{C}$. When $\changes{\cc \in (-\infty,1)}$, we can bound moments of $D_h$-diameters of open subsets of the plane~\cite[Proposition 3.9]{lqg-metric-estimates} by bounding the $D_h$-diameters of hashes and connecting these hashes across scales by applying Lemma~\ref{lem-ann-hash-geometry}.  When $\cc\in (1,25)$, however, 
the singular points are dense (Proposition~\ref{prop-singular}), and so the $D_h$-diameter of an open set is a.s.\ infinite. This corresponds to the fact that we cannot bound the $D_h$-diameters of hashes associated to all squares at a given dyadic scale.  Therefore, we instead bound the $D_h$-diameter of a open set restricted to a smaller-dimensional fractal $X$.  The idea is that this smaller-dimensional fractal $X$ intersects a smaller order number of dyadic squares at each scale, so that we can apply a union bound to bound the $D_h$-diameters of hashes associated to these smaller number of squares.

The difficulty with this approach is that this moment bound requires an upper bound on the order of the number of dyadic squares intersecting $X$ \emph{at each scale}.  More precisely, it is not enough to stipulate that $X$ has bounded Hausdorff dimension; instead, we require the stronger assumption that $X$ has bounded \emph{upper Minkowski dimension}.

\begin{defn}
For a subset $S$ of a metric space $(X,D)$, the upper Minkowski dimension $\ol{\dim_{\mcl M}}(S;D)$ of $S$ in the metric $D$ is defined as
\[
\ol{\dim_{\mcl M}}(S;D) := \limsup_{\ep \rta 0} \frac{\log(\text{number of $D$-balls of radius $\ep$ needed to cover $S$})}{\log \changes{\ep^{-1}}}
\]
In our context, in which we take $(X,D)$ to be $\BB{C}$ equipped with the Euclidean metric, we can equivalently define upper Minkowski dimension in terms of dyadic squares rather than balls.
\end{defn}

By implementing the approach just described, we can show that, with $f$ defined in the statement of Theorem~\ref{thm-kpz}, 
\[
\dim_{\mcl H}(X;D_h) \leq f(\ol{\dim_{\mcl M}} X).
\]
A priori, this statement is weaker than the upper bound in the statement of Theorem~\ref{thm-kpz}, since, for a general set $S$ in a metric space $(X,D)$,
\[
{\dim_{\mcl H}}(S;D) \leq {\dim_{\mcl P}}(S;D) \leq \ol{\dim_{\mcl M}}(S;D).
\]
However, it is well-known   (see, e.g.,~\cite[Proposition 3.8]{falconer}) that every set $S$ can be expressed as a countable union of sets with upper Minkowski dimension \changes{$t$, for each  $t>\dim_{\mcl P} S$}.  By countable stability of Hausdorff dimension, this means that the upper bound of Theorem~\ref{thm-kpz} is equivalent to the statement with packing dimension replaced by upper Minkowski dimension.

\subsection{A KPZ formula for the subset of thick points in a fractal}

Before proving Theorem~\ref{thm-kpz}, we prove Theorem~\ref{thm-kpz-thick}, which relates, a.s.\ for all Borel sets $X \subset \BB{C}$, the Hausdorff dimensions of $X \cap \mcl T_h^\alpha$ with respect to the Euclidean metric and the metric $D_h$. The statement of Theorem~\ref{thm-kpz-thick} is easy to check for $\alpha \in (Q,2]$:

\begin{proof}[Proof of Theorem~\ref{thm-kpz-thick} for $\alpha > Q$.]
The result follows  immediately from the fact that every $\alpha$-thick point is a singular point (Proposition~\ref{prop-singular}).
\end{proof}

We now prove Theorem~\ref{thm-kpz-thick} for $\alpha < Q$.  As in the proof of~\cite[Theorem 1.5]{gp-kpz}, instead of considering the set $ X \cap \mcl T^\alpha_h$ directly, we  consider a set of points for which $h_\ep(z) / \log(\ep^{-1})$ is \emph{uniformly} close to $\alpha$ (note that the rate of convergence in~\eqref{eqn-thick-pts} can depend on $z$). The following lemma, stated in the classical LQG regime as~\cite[Lemma 2.1]{gp-kpz}, asserts that we can find such a set whose Hausdorff dimensions in the Euclidean metric and in $D_h$ are not too  different from those of $X\cap \mcl T_h^\alpha$. 

\begin{lem}  \label{lem-thick-subset}
Let $h$ be a whole-plane GFF normalized so that $h_1(0) = 0$, \changes{and $D_h$ be the metric associated to $h$ by a weak LQG metric for some $\xi > 0$.} Let $\alpha \in \BB R$ and $\zeta  > 0$. Almost surely, for each bounded Borel set $X\subset \BB C$ there exists a random $ \overline{\ep} >0$ depending on $\alpha $, $\zeta$, and $X$ such that the following is true. If we set
\begin{equation}\label{eqn-defn-X-alpha-zeta}
X^{\alpha, \zeta} := \left\{ z \in X: \frac{h_{\ep}(z)}{\log (\ep^{-1})} \in [\alpha - \zeta, \alpha+ \zeta], \forall \ep \in (0, \overline{\ep}] \right\}
\end{equation}
then $\dim_{\mcl{H}} X^{\alpha, \zeta}  \geq \dim_{\mcl H} (X \cap \mcl T_h^\alpha)- \zeta$ and $\dim_{\mcl{H}}(X^{\alpha, \zeta};D_h)  \geq \dim_{\mcl H}(X \cap \mcl T_h^\alpha;D_h) - \zeta$.  \end{lem}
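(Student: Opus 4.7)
The plan is to separate the dependence on $X$ from the uniform-thickness condition by introducing intrinsic sets that depend only on $h$. For each $\delta > 0$, set
\[
A_\delta := \left\{ z \in \BB C : \frac{h_\ep(z)}{\log \ep^{-1}} \in [\alpha - \zeta, \alpha + \zeta], \ \forall \ep \in (0,\delta] \right\},
\]
so that $X^{\alpha,\zeta} = X \cap A_{\overline{\ep}}$. The sets $A_\delta$ are Borel and, crucially, depend only on $h$ (not on $X$). They are also monotone in $\delta$: if $\delta' \leq \delta$, then $A_{\delta'} \supset A_\delta$, since the defining condition concerns a smaller range of $\ep$.

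The key observation is that $\mcl T_h^\alpha \subset \bigcup_{n \in \BB N} A_{1/n}$. Indeed, if $z \in \mcl T_h^\alpha$, then $h_\ep(z)/\log \ep^{-1} \to \alpha$ as $\ep \rta 0$, so for some $n = n(z)$ the deviation is at most $\zeta$ for every $\ep \in (0,1/n]$. Intersecting with $X$ yields $X \cap \mcl T_h^\alpha \subset \bigcup_n (X \cap A_{1/n})$.

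Now apply countable stability of Hausdorff dimension in both metrics. In the Euclidean metric, countable stability gives
\[
\dim_{\mcl H}(X \cap \mcl T_h^\alpha) \leq \sup_n \dim_{\mcl H}(X \cap A_{1/n}),
\]
and by the monotonicity of $A_\delta$ in $\delta$ the sequence $\dim_{\mcl H}(X \cap A_{1/n})$ is nondecreasing, so some $n_1 = n_1(X)$ achieves $\dim_{\mcl H}(X \cap A_{1/n_1}) \geq \dim_{\mcl H}(X \cap \mcl T_h^\alpha) - \zeta$. The identical argument applied with $D_h$ in place of the Euclidean metric (Hausdorff dimension is countably stable for any metric) produces $n_2 = n_2(X)$ with $\dim_{\mcl H}(X \cap A_{1/n_2}; D_h) \geq \dim_{\mcl H}(X \cap \mcl T_h^\alpha; D_h) - \zeta$. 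Setting $\overline{\ep} := 1/\max(n_1,n_2)$, monotonicity of $A_\delta$ gives $X^{\alpha,\zeta} = X \cap A_{\overline{\ep}} \supset X \cap A_{1/n_i}$ for $i=1,2$, and both lower bounds follow.

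The only almost-sure hypothesis needed is the standard one that the circle averages $h_\ep(z)$ are jointly measurable and the $A_\delta$ are well-defined, which holds with probability one for a whole-plane GFF; on that full-probability event the construction of $\overline{\ep}$ from $X$ is purely deterministic. There is no serious technical obstacle here — the statement is essentially a soft truncation argument that mirrors~\cite[Lemma 2.1]{gp-kpz} from the $\cc<1$ regime. The only novelty relative to that setting is that the $D_h$-dimension estimate needs to hold for a metric that takes the value $+\infty$, but this plays no role in the argument: countable stability of Hausdorff dimension is valid for any (possibly $+\infty$-valued) metric, and $\overline{\ep}$ can simply be chosen small enough to accommodate both the Euclidean and $D_h$ bounds simultaneously.
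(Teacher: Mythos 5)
The paper gives no proof of this lemma---it simply cites~\cite[Lemma 2.1]{gp-kpz}---and your truncation argument is a faithful reproduction of that standard proof, so you are following the same approach. However, one point in your final paragraph is waved away too quickly. The issue is not merely that the metric $D_h$ takes the value $+\infty$ at singular points; it is that the \emph{dimension} $\dim_{\mcl H}(X \cap \mcl T_h^\alpha; D_h)$ itself can equal $+\infty$ in the supercritical regime (Theorem~\ref{thm-kpz-thick} records that this happens exactly when $\alpha > Q$ and $X \cap \mcl T_h^\alpha$ is uncountable). In that case the step of choosing $n_2$ with $\dim_{\mcl H}(X \cap A_{1/n_2}; D_h) \geq \dim_{\mcl H}(X \cap \mcl T_h^\alpha; D_h) - \zeta$ does not follow from countable stability alone: the supremum of a nondecreasing sequence of \emph{finite} numbers can be $+\infty$ with no individual term infinite, so no finite $n_2$ need satisfy the inequality.

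The patch is short. If $\dim_{\mcl H}(X \cap \mcl T_h^\alpha; D_h) = +\infty$, then $\alpha > Q$ and $X \cap \mcl T_h^\alpha$ is uncountable; since $X \cap \mcl T_h^\alpha = \bigcup_n (X \cap \mcl T_h^\alpha \cap A_{1/n})$, some $X \cap \mcl T_h^\alpha \cap A_{1/n}$ is uncountable. Every point of this set is singular by Proposition~\ref{prop-singular}, so small $D_h$-balls about them are singletons, no countable cover exists, and $\dim_{\mcl H}(X \cap \mcl T_h^\alpha \cap A_{1/n}; D_h) = +\infty$. Taking $\overline{\ep} \leq 1/n$ forces $X^{\alpha,\zeta} \supset X \cap \mcl T_h^\alpha \cap A_{1/n}$, giving the claimed bound. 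For the range actually exploited in the paper ($\alpha < Q$ with $\zeta$ small), the target dimensions are finite and your argument is complete as written; but since the lemma is stated for all $\alpha \in \BB R$, this edge case deserves a sentence.
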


The advantage of  considering this set of ``uniformly thick'' points is that, for a bounded domain $V \subset \BB{C}$, we can uniformly bound the circle average of points in a sufficiently small neighborhood of $V^{\alpha,\zeta}$.  \changes{The following is a restatement of~\cite[Lemma 2.2]{gp-kpz}.

\begin{lem} \label{lem-circle-avg-bound}
Let $\alpha \in \BB R$ and $\zeta \in (0,1)$.
Almost surely, for each bounded Borel set $X\subset \BB C$, there exists a $\delta > 0$ (depending on $\alpha,\zeta$, and $X$) such that for each $r \in (0,\delta)$ and each $z \in \BB C$ which lies at  (Euclidean)  distance at most $r$ from a point in $X^{\alpha,\zeta}$, 
\[
\frac{h_{r}(z)}{\log (r^{-1})} \in [m_{\alpha,\zeta}, M_{\alpha,\zeta}] ,
\]
where $m_{\alpha,\zeta} := (1-\zeta^2) (\alpha - \zeta) - 3\sqrt{10} \zeta$ and $M_{\alpha,\zeta} := (1-\zeta^2) (\alpha + \zeta) + 3\sqrt{10} \zeta$.
\end{lem}
}

Combining this circle average estimate with Lemma~\ref{lem-ann-hash-dist} yields the following distance bounds:

\begin{lem}
\label{lem-ann-hash-dist-thick}
Let $V \subset \BB{C}$ be a bounded open set.  Let $h,\alpha,\zeta$  be as in Lemma~\ref{lem-thick-subset}, and set $r^{\pm}_{\alpha,\zeta} := (1-\zeta^2) (\alpha \pm \zeta) \pm (3\sqrt{10} + 1) \zeta$.
 For each $\BB{r} > 0$, the following bounds hold for each $S \in \mcl S^n(V^{\alpha,\zeta})$ with superexponentially high probability as $n \rta \infty$, at a rate which is uniform in $\BB{r}$.
\begin{enumerate}
    \item \textit{(Distance lower bound.)}
\eqb
D_h(\text{across $A_S$}) \geq 2^{-\xi (Q-r^-_{\alpha,\zeta}) n}.
\label{eqn-sq-ann-bound-across-thick}
\eqe
     \item \textit{(Distance upper bound.)} 
     \eqb
D_h(\text{around $A_S$}) \leq 2^{-\xi (Q-r^+_{\alpha,\zeta}) n},
\label{eqn-sq-ann-bound-around-thick}
\eqe
     and we can choose a hash $\#_S$ associated to $S$ with
\eqb
\diam(\#_S;D_h) \leq 2^{-\xi (Q-r^+_{\alpha,\zeta}) n}.
\label{eqn-sq-hash-bound-thick}
\eqe
\end{enumerate}
\end{lem}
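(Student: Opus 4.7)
The plan is to derive the three bounds directly from their counterparts in Lemma~\ref{lem-ann-hash-dist} by replacing the random circle-average factor $e^{\xi h_{2^{-n}}(v_S)}$ by an explicit deterministic power of $2$ whenever the dyadic square $S$ meets $V^{\alpha,\zeta}$. Since $V^{\alpha,\zeta}\subset V$ for a bounded open set $V$, it suffices to perform a union bound over $S\in\mcl S^n(V)$, whose cardinality grows only polynomially in $2^n$.

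First I would apply Lemma~\ref{lem-ann-hash-dist} with this $V$ (and $\BB r = 1$), obtaining on an event $\mcl E_n^{(1)}$ of superexponentially high probability the three inequalities involving $\frk c_1 e^{\xi h_{2^{-n}}(v_S)}$ simultaneously for all $S\in\mcl S^n(V)$. Next, I would invoke the circle-average regularity estimate \cite[Lemma 3.15]{ghm-kpz}, used already in the proof of Lemma~\ref{lem-avoids-thick}, with $b=\zeta^2$: this yields an event $\mcl E_n^{(2)}$ of superexponentially high probability on which
\[
|h_{2^{-n}}(v_S)-h_{2^{-n(1-\zeta^2)}}(z)|\leq 3\sqrt{10}\,\zeta\,n\log 2
\]
for all $S\in\mcl S^n(V)$ and all $z$ with $|z-v_S|<2\cdot 2^{-n}$.

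On $\mcl E_n^{(2)}$, if $S\in\mcl S^n(V^{\alpha,\zeta})$ then I may pick $z_S\in S\cap V^{\alpha,\zeta}$; for $n$ large enough that $2^{-n(1-\zeta^2)}\leq\overline{\ep}$, the definition~\eqref{eqn-defn-X-alpha-zeta} forces
\[
h_{2^{-n(1-\zeta^2)}}(z_S)\in (1-\zeta^2)[\alpha-\zeta,\,\alpha+\zeta]\,n\log 2,
\]
so combining the two displays gives the two-sided bound $h_{2^{-n}}(v_S)\in[(1-\zeta^2)(\alpha-\zeta)-3\sqrt{10}\,\zeta,\,(1-\zeta^2)(\alpha+\zeta)+3\sqrt{10}\,\zeta]\,n\log 2$. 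Plugging this into the three inequalities from Lemma~\ref{lem-ann-hash-dist} on $\mcl E_n^{(1)}\cap\mcl E_n^{(2)}$, the $\pm\xi\zeta n$ losses from that lemma combine with the $\pm\xi\cdot 3\sqrt{10}\,\zeta n$ losses from circle-average regularity to produce precisely the exponents $-\xi(Q-r^\mp_{\alpha,\zeta})n$ appearing in~\eqref{eqn-sq-ann-bound-across-thick}--\eqref{eqn-sq-hash-bound-thick}, since $r^{\pm}_{\alpha,\zeta}=(1-\zeta^2)(\alpha\pm\zeta)\pm(3\sqrt{10}+1)\zeta$ is calibrated to absorb exactly these errors. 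The constant prefactor $\frk c_1$ is deterministic and independent of $n$, so it can be absorbed into the exponent by taking $n$ sufficiently large (or by a trivial extra slack in $r^{\pm}_{\alpha,\zeta}$).

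I expect no real obstacle: the lemma is essentially bookkeeping that combines two established estimates, with the definition of $r^{\pm}_{\alpha,\zeta}$ engineered precisely to accommodate the two error contributions. The one small subtlety is that the threshold $n\geq-\log_2\overline{\ep}$ required to use the uniform thickness bound is random, but this is harmless for an a.s.\ statement and the probability bound itself is unaffected because $\overline{\ep}$ is a.s.\ positive.
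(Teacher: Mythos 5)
Your approach matches the paper's: both combine Lemma~\ref{lem-ann-hash-dist} (with $\BB r=1$) with a uniform circle-average estimate over dyadic squares meeting $V^{\alpha,\zeta}$. The paper's proof is a one-liner citing \cite[Lemma 2.2]{gp-kpz} for that circle-average estimate; you instead reconstruct it from \cite[Lemma 3.15]{ghm-kpz} and the definition~\eqref{eqn-defn-X-alpha-zeta}, which is a valid alternative and arguably more self-contained. The bookkeeping matching the $\pm\xi\zeta n$ losses from Lemma~\ref{lem-ann-hash-dist} and the $\pm 3\sqrt{10}\,\xi\zeta n$ losses from circle-average regularity to the definition of $r^{\pm}_{\alpha,\zeta}$ checks out (minor typo: the exponents produced are $-\xi(Q-r^{\pm}_{\alpha,\zeta})n$, not $r^{\mp}$). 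Your remarks about absorbing $\frk c_1$ and about the random threshold $\overline{\ep}$ are both sound.

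One caveat worth flagging: you assert that the circle-average event $\mcl E_n^{(2)}$ holds with superexponentially high probability, but the Gaussian tail bound behind \cite[Lemma 3.15]{ghm-kpz} together with a union bound over the $\sim 4^n$ squares of $\mcl S^n(V)$ naively gives only \emph{exponentially} high probability (failure rate of order $2^{-cn}$ for a fixed constant $c$ depending on the calibration of the $3\sqrt{10b}$ factor). To match the ``superexponentially high probability'' asserted in the lemma, one would either need to verify that \cite[Lemma 2.2]{gp-kpz} delivers that stronger rate or sharpen the argument. This discrepancy is inconsequential for the paper, since Lemma~\ref{lem-ann-hash-dist-thick} is only ever used through Borel--Cantelli, where exponential decay already suffices.
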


\begin{proof}
\changes{The lemma follows immediately from Lemmas~\ref{lem-circle-avg-bound} and~\ref{lem-ann-hash-dist}.}
\end{proof}

We now prove Theorem~\ref{thm-kpz-thick}.  By the countable stability of Hausdorff dimension, we can restrict our attention to Borel sets $X$ that are contained in some deterministic bounded open subset $V \subset \BB{C}$.  Moreover, by Weyl scaling (Axiom~\ref{item-metric-f}), we can assume without loss of generality that $h$ is a whole-plane GFF normalized so that $h_1(0) = 0$.

\begin{proof}[Proof of Theorem~\ref{thm-kpz-thick} for $\alpha < Q$, upper bound.]
By Lemma~\ref{lem-ann-hash-dist-thick}, a.s.\ there exists $N \in \BB{N}$ such that for every $n \geq N$ and $S \in \mcl S^n(V^{\alpha,\zeta})$,  we can choose a hash $\#_S$ associated to $S$ with
\[
\diam(\#_S;D_h) \leq 2^{-\xi (Q - r^+_{\alpha,\zeta}) n}. \]
Since a hash associated to a square must intersect a hash associated to each of its dyadic children, this implies that, for every $n \geq N$ and $S \in \mcl S^n(V^{\alpha,\zeta})$, the union of hashes 
\[ H_S:= \bigcup_{m \in \BB{N}} \bigcup_{S \supset S' \in \mcl S^{m}(V^{\alpha,\zeta})} \#_{S'} \] a.s.\ satisfies
\[
\diam(H_S;D_h) \leq 2 \sum_{m = n}^{\infty} 2^{-\xi (Q - r^+_{\alpha,\zeta}) m},
\] and therefore, by Lemma~\ref{lem-closure},
\eqb
\diam(\ol H_S;D_h) \leq 2 \sum_{m = n}^{\infty} 2^{-\xi (Q - r^+_{\alpha,\zeta}) m}. 
\label{eqn-Hs-bound}
\eqe
Since $S \cap V^{\alpha,\zeta} \subset \ol H_S$ by construction of $H_S$,~\eqref{eqn-Hs-bound} implies
\[
\diam(S \cap V^{\alpha,\zeta}; D_h) \leq \diam(\ol H_S;D_h) \leq 2 \sum_{m = n}^{\infty} 2^{-\xi (Q - r^+_{\alpha,\zeta}) m}
\] and therefore
\[
\diam(S \cap V^{\alpha,\zeta}; D_h)^{1/[\xi(Q - r^+_{\alpha,\zeta})]} \leq C \diam(S)
\]
for some deterministic constant $C = C(\xi,\alpha,\zeta,Q) > 0$ independent of $n,S$.

We conclude that a.s., for every Borel set $X \subset V$,
\[
\dim_{\mcl H}(X^{\alpha,\zeta};D_h) \leq \frac{1}{\xi(Q - r^+_{\alpha,\zeta})} \dim_{\mcl H} X^{\alpha,\zeta}.
\]
Since, by Lemma~\ref{lem-thick-subset}, a.s.\ $\dim_{\mcl H}(X^{\alpha,\zeta};D_h) \geq \dim_{\mcl H}(X \cap \mcl T_h^\alpha;D_h) - \zeta$, we see that sending $\zeta \rta 0$ yields the desired result.
\end{proof} 

\begin{proof}[Proof of Theorem~\ref{thm-kpz-thick} for $\alpha < Q$, lower bound.]
\changes{By Lemma~\ref{lem-ann-hash-dist-thick}, a.s.\ there exists $N \in \BB{N}$ such that the bounds \eqref{eqn-sq-ann-bound-across-thick},~\eqref{eqn-sq-ann-bound-around-thick}, and~\eqref{eqn-sq-hash-bound-thick} hold for each $n \geq N$ and $S \in \mcl S^n(V^{\alpha,\zeta})$.} Suppose that $W$ is a set that intersects $V^{\alpha,\zeta}$ with sufficiently small Euclidean diameter---specifically, with
 \eqb
 \diam(W) \in [2^{-n+2},2^{-n+3}) \qquad \text{for some $n \geq N$.}
 \label{eqn-W-diam-cond}
 \eqe
 By Lemma~\ref{lem-defn-sq-annulus}(c), we can choose $S \in \mcl S^{n}(V^{\alpha,\zeta})$ intersecting $W$ such that some point of $W$ lies outside $A_S$.  This means that $\diam(W)$ is bounded from below by the distance across $A_S$.  From Lemma~\ref{lem-ann-hash-dist-thick}, we  deduce that a.s.\ for every $W$ intersecting $V^{\alpha,\zeta}$ and satisfying~\eqref{eqn-W-diam-cond},
 \eqb
 \diam(W; D_h) \geq 2^{-\xi (Q - r^-_{\alpha,\zeta})n} \geq (\diam(W)/8)^{\xi (Q - r^-_{\alpha,\zeta})}.
 \label{eqn-lower-bound-3}
 \eqe
 By Proposition~\ref{prop-holder}, the condition~\eqref{eqn-W-diam-cond} is satisfied whenever $W$ has sufficiently small $D_h$-diameter. Hence a.s., the inequality~\eqref{eqn-lower-bound-3} holds simultaneously for every set $W$ intersecting $V^{\alpha,\zeta}$ with $D_h$-diameter less than some random threshold.

We conclude that a.s., for every Borel set $X \subset V$,
\[
\dim_{\mcl H}(X^{\alpha,\zeta};D_h) \geq \frac{1}{\xi(Q - r^-_{\alpha,\zeta})} \dim_{\mcl H} X^{\alpha,\zeta}
\]
Since, by Lemma~\ref{lem-thick-subset}, a.s.\ $\dim_{\mcl H} X^{\alpha,\zeta} \geq \dim_{\mcl H} (X \cap \mcl T_h^\alpha) - \zeta$, we see that sending $\zeta \rta 0$ yields the desired result.
\end{proof}

\subsection{A moment bound for diameters}

Theorem~\ref{thm-kpz-thick} immediately yields a proof of the lower bound of Theorem~\ref{thm-kpz} on  $\dim_{\mcl H}(X;D_h)$.

\begin{proof}[Proof of Theorem~\ref{thm-kpz}, lower bound]
First, assume that $\dim_{\mcl H} X < Q^2/2$. \changes{Combining Theorem~\ref{thm-kpz-thick} with~\eqref{eqn-thick-dim} gives a lower bound on $\dim_{\mcl H}\left( X \cap \mcl T_h^{\alpha}; D_h \right)$ in terms of $\dim_{\mcl H} X$.  This lower bound} is maximized by $\alpha = Q - \sqrt{Q^2 - 2 \dim_{\mcl H} X}$. (Note that $\alpha \neq Q$.) 
For this value of $\alpha$, we have
\eqb
\dim_{\mcl H} \left( X \cap \mcl T_h^\alpha; D_h \right) \geq \frac{1}{\xi} \left( Q - \sqrt{Q^2 - 2 \dim_{\mcl H} X} \right).
\label{laststep}
\eqe
Since $\dim_{\mcl H}(X;D_h) \geq \dim_{\mcl H} \left( X \cap \mcl T_h^\alpha ; D_h \right)$, this yields the desired lower bound on $\dim_{\mcl H}(X;D_h)$. 

Next, if $\dim_{\mcl H} X > Q^2/2$, then by applying  Theorem~\ref{thm-kpz-thick} and~\eqref{eqn-thick-dim} for values of $\alpha < Q$ arbitrarily close to $Q$, we deduce that $\dim_{\mcl H}(X;D_h)$ is infinite.

Finally, if $\dim_{\mcl H} X = Q^2/2$, then applying  Theorem~\ref{thm-kpz-thick} and~\eqref{eqn-thick-dim} gives
\[
\dim_{\mcl H} \left( X \cap \mcl T_h^\alpha; D_h \right) \geq
\frac{1}{\xi(Q-\alpha)} \left( \frac{Q^2}{2} - \frac{\alpha^2}{2} \right) = \frac{Q+\alpha}{2\xi}
\]
for each $\alpha < Q$, so sending $\alpha \rta Q$ yields the desired lower bound.
\end{proof}

Proving the upper bound of Theorem~\ref{thm-kpz} on  $\dim_{\mcl H}(X;D_h)$ is considerably more difficult. To prove this upper bound, we will need the following moment bound on diameters of sets with bounded upper Minkowski dimension, which generalizes~\cite[Proposition 3.9]{lqg-metric-estimates}. 

\begin{prop}[Moment bound for diameters] \label{prop-internal-moment}  Let $h$ be a whole-plane GFF normalized so that $h_1(0) = 0$.
For each $p \in \BB{R}$, we can choose a deterministic constant $C_p>0$ such that the following is true.
Let $X$ be a Borel set contained in the closed Euclidean square  $\BB{S}: = [0,1]^2$, such that $|\mcl S^n(X)| \leq T 2^{xn}$ for some $x,T>0$ and all $n \in \BB{N}$. Then, for every real number $p <  \xi^{-1} (Q + \sqrt{Q^2 - 2x})$ and every $\BB r >0$,
\eqb \label{eqn-internal-moment}
\BB E\left[\left( \frk c_{\BB r}^{-1} e^{-\xi h_{\BB r}(0)} \sup_{u,v \in \BB{r} X} D_h(u,v;\BB{r} \BB S)  \right)^p \right] \leq T C_p. 
\eqe
\end{prop}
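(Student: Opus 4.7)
The plan is to control $\sup_{u,v \in \BB r X} D_h(u,v;\BB r \BB S)$ by a sum over dyadic scales $n \geq 0$ of the maximum $D_h$-diameter of the hashes associated to squares in $\mcl S^n(X)$, and then take the $p$-th moment scale by scale using Minkowski's inequality. The assumption $|\mcl S^n(X)| \leq T 2^{xn}$ enters only through a union bound at each scale, producing a factor $T 2^{xn}$ that competes with the scaling factor $2^{-p\xi Q n + p^2 \xi^2 n /2}$ coming from the hash moment bound and forces the quadratic restriction $p < \xi^{-1}(Q + \sqrt{Q^2 - 2x})$.

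First I would use Lemma~\ref{lem-ann-hash-dist} to select, for each $n \geq 0$ and each $S \in \mcl S^n(X)$, a hash $\#_{\BB r S}$ associated to $\BB r S$ with $\diam(\#_{\BB r S}; D_h) \leq 2^{-\xi(Q-\zeta)n} \frk c_{\BB r} e^{\xi h_{2^{-n}\BB r}(v_{\BB r S})}$, outside of an event of superexponentially small probability (uniformly in $\BB r$). Since a parent hash and a child hash always intersect (Lemma~\ref{lem-ann-hash-geometry}) and $D_h$ is lower semicontinuous, I can then bound
\eqbn
\sup_{u, v \in \BB r X} D_h(u, v; \BB r \BB S) \leq 4 \sum_{n=0}^{\infty} \sup_{S \in \mcl S^n(X)} \diam(\#_{\BB r S}; D_h):
\eqen
given $u, v \in \BB r X$, for each $N$ I pick $S_u^N \in \mcl S^N(X)$ containing $u/\BB r$ and a point $u_N \in \#_{\BB r S_u^N}$ (analogously for $v$), connect $u_N$ to $v_N$ by the chain of intersecting ancestor hashes, and pass to the limit $u_N \rta u$, $v_N \rta v$ via lower semicontinuity. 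By scale invariance of the whole-plane GFF in the normalization $h_1(0) = 0$, $h_{2^{-n}\BB r}(v_{\BB r S}) - h_{\BB r}(0) \eqD h_{2^{-n}}(v_S)$ is a centered Gaussian of variance $n \log 2 + O(1)$ uniformly in $v_S \in \BB S$ and in $\BB r > 0$. Combined with the hash-diameter bound above (and the trivial treatment of the ``bad event'', whose superexponentially small probability is easily absorbed), this yields
\eqbn
\BB E\left[\left(\frk c_{\BB r}^{-1} e^{-\xi h_{\BB r}(0)} \diam(\#_{\BB r S}; D_h)\right)^p\right] \leq C_\zeta\, 2^{n[p^2 \xi^2/2 - p\xi(Q-\zeta)]}
\eqen
uniformly in $S \in \mcl S^n(X)$ and $\BB r > 0$. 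A union bound over $\mcl S^n(X)$, followed by Minkowski's inequality across $n$ (for $p \geq 1$; for $p \in (0,1]$ I would instead use $(\sum_n a_n)^p \leq \sum_n a_n^p$), reduces matters to the convergence of
\eqbn
\sum_{n=0}^{\infty} (TC_\zeta)^{1/p}\, 2^{n[x/p + p\xi^2/2 - \xi(Q-\zeta)]},
\eqen
which holds as soon as the bracketed exponent is strictly negative, i.e., $x + p^2 \xi^2/2 - p\xi Q < 0$ after letting $\zeta \rta 0$. Raising back to the $p$-th power turns $T^{1/p}$ into $T$, yielding the proposition's bound. For positive $p$ below this quadratic interval, I would apply Jensen's inequality to reduce to a larger $p$ already treated.

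The main obstacle I anticipate is carrying out the hash-chain decomposition rigorously in the lower semicontinuous setting. Because singular points are dense for $\cc \in (1,25)$ and the $D_h$-diameter of a Euclidean open set is a.s.\ infinite, one cannot afford to bound $D_h$-diameters of Euclidean balls; the whole strategy hinges on never leaving the nested collection of hashes based on dyadic squares that \emph{actually intersect} $X$. Tracking this combinatorics at each scale---so that the factor entering the union bound is $|\mcl S^n(X)| \leq T 2^{xn}$ rather than $4^n$---is exactly what permits the threshold $\xi^{-1}(Q + \sqrt{Q^2 - 2x})$, rather than a threshold involving $\sqrt{Q^2 - 4}$ (which, in particular, would be vacuous in the supercritical phase $Q<2$).
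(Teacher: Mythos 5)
Your route is genuinely different from the paper's. You take moments of hash diameters scale-by-scale and combine them via Minkowski's inequality (or subadditivity of $t \mapsto t^p$ for $p \leq 1$). The paper instead works entirely at the level of tail bounds: it introduces a parameter $C$, uses Proposition~\ref{prop-two-set-dist} together with a circle-average estimate to show that on an event of probability $1-TC^{-q-\sqrt{q^2-2x}+o_C(1)}$ the full normalized diameter is $O_C(C^{\xi+\zeta})$, and then integrates that tail against $\wt C^{p-1}\,d\wt C$ to recover moments. The paper's route thereby sidesteps the need for a moment estimate on an individual hash diameter.

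That estimate is in fact the crux of your version, and your justification of it is too quick. The bound of Lemma~\ref{lem-ann-hash-dist} holds on an event; that event is not independent of the log-normal factor $e^{\xi(h_{2^{-n}\BB r}(v_{\BB r S}) - h_{\BB r}(0))}$, and on the complementary ``bad event'' you have no \emph{a priori} control of $\diam(\#_{\BB r S};D_h)$ to absorb, so the product of bounds you write down does not follow by simply multiplying expectations. The correct way to obtain
\[
\BB E\Bigl[\bigl(\frk c_{\BB r}^{-1} e^{-\xi h_{\BB r}(0)}\diam(\#_{\BB r S};D_h)\bigr)^p\Bigr] \preceq 2^{n[p^2\xi^2/2 - p\xi(Q-\zeta)]}
\]
is to first observe that Proposition~\ref{prop-two-set-dist} gives all moments of the \emph{locally} normalized quantity $\frk c_{2^{-n}\BB r}^{-1}e^{-\xi h_{2^{-n}\BB r}(v_{\BB r S})}\diam(\#_{\BB r S};D_h)$ uniformly, and then to separate that random variable from the Gaussian increment $h_{2^{-n}\BB r}(v_{\BB r S})-h_{\BB r}(0)$. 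This separation is exactly the content of Lemma~\ref{lem-internal-moment}, which the paper proves \emph{after} the present proposition: it uses the conditional independence of the locally normalized internal metric from coarser circle averages (via Axioms~\ref{item-metric-local},~\ref{item-metric-f} and~\cite[Section 3.1]{shef-kpz}), followed by a H\"older step to handle the off-center term $h_{\BB r}(v_{\BB r S})-h_{\BB r}(0)$. So your argument is not wrong in spirit, but this step needs to be supplied, not waved at. Finally, your Jensen reduction handles small positive $p$ but goes the wrong way for $p<0$; those moments are claimed by the proposition and require a separate argument using the lower bound in Proposition~\ref{prop-two-set-dist}, as the paper does in one line.
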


\begin{proof}[Proof of Proposition~\ref{prop-internal-moment}]
For $p < 0$, the bound~\eqref{eqn-internal-moment} follows from the lower bound of Proposition~\ref{prop-two-set-dist}. 
We will bound the positive moments up to order $\xi^{-1} (  Q + \sqrt{Q^2-2x}  )$.  

Fix $q\in (2,Q)$ which we will eventually send to $Q$.
By a circle average estimate,\footnote{We can prove this circle average estimate in the exact same manner as~\cite[Lemma 3.11]{lqg-metric-estimates}.  The only difference is that we define the events $E_{\BB{r}}^n$  with the supremum taken over all centers of squares in $\BB{r} \mcl S^n(\BB{r} X)$ rather than all $w \in B_{R\BB{r}} \cap (2^{-n-1} \BB{r} \BB{Z}^2)$.  This means that, in the estimates (3.30)-(3.33) in that proof,  instead of taking a union bound over $2^{2n}$ points for every $n \in \BB{N}$, we  take a union bound over $T 2^{xn}$ points.  We refer the reader to the proof of~\cite[Lemma 3.11]{lqg-metric-estimates} for the details.} it holds with probability $1 - T C^{-q-\sqrt{q^2-2x} + o_C(1)} $, with the $o_C(1)$ uniform in $\BB{r}, T,x,X$, that \eqb \label{eqn-use-circle-avg-all}
 \sup\left\{ |h_{2^{-n}\BB r}(v_{\BB{r} S}) - h_{\BB r}(0)|  :  S \in \mcl S^n(X)  \right\} \leq   \log(C 2^{q n})  ,\quad\forall n \in \BB{N}.
\eqe
Now fix $\zeta\in (0,Q-q)$, which we will eventually send to zero.  By Proposition~\ref{prop-two-set-dist} (applied with $A = C^\zeta 2^{\xi \zeta n}$), Axiom~\ref{item-metric-translate}, and a union bound over all $S \in \mcl S^n(\BB{S})$,
it is the case with superpolynomially high probability as $C \rta \infty$ (at a rate that is uniform in $\BB{r}>0$ but depends on $\zeta$) that, for each $\BB{r} > 0$, $n \in \BB{N}$ and $S \in \mcl S^n(\BB{S})$, we can choose a hash $\#_{\BB{r} S}$ associated to $\BB{r} S$ with
\eqb
\sup_{u,v \in \#_{\BB{r} S}} D_h(u,v;\BB{r} \BB S) 
 \leq C^{\zeta} \changes{2^{\xi \zeta n}} \frk c_{2^{-n} \BB{r}} e^{\xi h_{2^{-n} \BB{r}}(v_{\BB{r} S})}.
\label{eqn-sq-hash-bound-int}
\eqe
Combining~\eqref{eqn-sq-hash-bound-int} with~\eqref{eqn-use-circle-avg-all} and~\eqref{eqn-metric-scaling}, we deduce that for all $C>1$, it holds with probability $1 -  T C^{-q-\sqrt{q^2-2x} + o_C(1)} $ (with the $o_C(1)$ uniform in $\BB{r}, T,x,X$) that, for all $\BB{r}>0$, $n \in \BB{N}$ and $S \in \mcl S^n(X)$, 
\eqb
\sup_{u,v \in \#_{\BB{r} S}} D_h(u,v;\BB{r} \BB S) \leq C^{\xi + \zeta} 2^{-\xi (Q-q-\zeta) n + o_n(n)}  \frk c_{\BB{r}} e^{\xi h_{\BB{r}}(0)},
\label{eqn-internal-hash}
\eqe
with the $o_n(n)$ uniform in $\BB{r}, T,x,X$.
As in the proofs of Proposition~\ref{prop-singular}(b) and the upper bound of Theorem~\ref{thm-kpz-thick}, we consider
the union of hashes \[ H_{\BB{r} \BB{S}}:= \bigcup_{n = 0}^\infty \bigcup_{S \in \mcl S^{n}(X \cap \BB{S})} \#_{\BB{r} S}. \] 
By Lemma~\ref{lem-closure} and the fact that a hash associated to a dyadic square necessarily intersects a hash associated to its dyadic parent,~\eqref{eqn-internal-hash} implies that
\[
\sup_{u,v \in \ol{H_{\BB{r} \BB{S}}}} D_h(u,v;\BB{r} S)
\leq 2
C^{\xi + \zeta}   \frk c_{\BB{r}} e^{\xi h_{\BB{r}}(0)} \sum_{n = 0}^{\infty} 2^{-\xi (Q-q-\zeta) n + o_n(n)} \leq O_C(C^{\xi+\zeta}) \frk c_{\BB{r}} e^{\xi h_{\BB{r}}(0)},
\]
with the $O_C(\cdot)$ uniform in $\BB{r}, T,x,X$.
Since $\ol H_{\BB{r} \BB{S}}$ contains $\BB{r} X$, we have 
$
\sup_{u,v \in \BB{r} X} D_h(u,v;\BB{r} \BB S) 
\leq 
\sup_{u,v \in \ol{H_{\BB{r} \BB{S}}}} D_h(u,v;\BB{r} S)$
.  Thus, setting $\wt C := C^{\xi + \zeta}$, we have
\eqbn
\BB P\left[ \frk c_{ \BB r}^{-1} e^{-\xi h_{\BB r}(0)} \sup_{u,v \in \BB{r} X} D_h(u,v;\BB{r} \BB S)  > \wt C \right] \leq T \wt C^{-(\xi+\zeta)^{-1} (  q + \sqrt{q^2-2x}  ) + o_{\wt C}(1)},
\eqen
with the $o_{\wt C}(1)$  uniform in $\BB{r}, T,x,X$.
For positive $p < \xi^{-1} (  q + \sqrt{q^2-2x}  ) $, we can multiply this last estimate by $\wt C^{p-1}$ and integrate to get the desired $p$-th moment bound~\eqref{eqn-internal-moment}.  The result follows from taking $\zeta>0$ arbitrarily small and $q$ arbitrarily close to $Q$.
\end{proof}

From Proposition~\ref{prop-internal-moment}, we obtain the following necessary ingredient for proving the upper bound of Theorem~\ref{thm-kpz}.

\begin{lem} \label{lem-internal-moment}  
Let $h$ be a whole-plane GFF normalized so that $h_1(0) = 0$.
For each $p \in \BB{R}$ and $\zeta>0$ and every bounded subset $K \subset \BB{C}$, we can choose a deterministic constant $C_{p,K,\zeta}>0$ such that the following is true.
Let $m \in \BB{N}$ and $z \in K$, and let $X \subset 2^{-m} \BB{S} + z$ be a Borel set such that $|\mcl S^n(X)| \leq T 2^{x(n-m)}$ for some $x,T>0$ and all integers $n\geq m$. Then, for every real number $p <  \xi^{-1} (Q + \sqrt{Q^2 - 2x})$ and $\zeta > 0$, we have
\eqb \label{eqn-lem-internal-moment}
\BB E\left[\left( \sup_{u,v \in X} D_h(u,v; 2^{-m} \BB S + z)  \right)^p \right] \leq T C_{p,K,\zeta} 2^{-m\xi Q p + m \xi^2 p^2/2 + m\zeta}.
\eqe
\end{lem}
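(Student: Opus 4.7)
The plan is to reduce the lemma to Proposition~\ref{prop-internal-moment} via translation and Weyl scaling, and then apply a Cameron--Martin shift to handle the Gaussian factor arising from circle-average fluctuations at scale $2^{-m}$ around $z$. I focus on $p > 0$; the $p \leq 0$ case follows analogously.

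First I rescale and translate. Set $Y := 2^m(X - z) \subset \BB S$; since $|\mcl S^n(Y)| = |\mcl S^{n+m}(X)| \leq T 2^{xn}$, the set $Y$ satisfies the hypothesis of Proposition~\ref{prop-internal-moment}. Let $\bar h := h(\cdot + z) - h_1(z)$, which is a whole-plane GFF with $\bar h_1(0) = 0$. By Axioms~\ref{item-metric-translate} and~\ref{item-metric-f},
\[
\sup_{u,v \in X} D_h(u, v; 2^{-m}\BB S + z) = e^{\xi h_1(z)} \sup_{u',v' \in X - z} D_{\bar h}(u', v'; 2^{-m}\BB S).
\]
Applying Proposition~\ref{prop-internal-moment} to $\bar h$ with $\BB r = 2^{-m}$ and the set $Y$ gives, for every $p' < \xi^{-1}(Q + \sqrt{Q^2 - 2x})$,
\[
\BB E[W^{p'}] \leq T C_{p'}, \quad W := \frk c_{2^{-m}}^{-1} e^{-\xi \bar h_{2^{-m}}(0)} \sup_{u',v' \in X - z} D_{\bar h}(u', v'; 2^{-m}\BB S).
\]
Since $h_1(z) + \bar h_{2^{-m}}(0) = h_{2^{-m}}(z)$, the target $p$-th moment rewrites as $\frk c_{2^{-m}}^p \, \BB E\bigl[e^{\xi p h_{2^{-m}}(z)} W^p\bigr]$.

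The core step is to decouple $e^{\xi p h_{2^{-m}}(z)}$ from $W^p$ via a Cameron--Martin shift. Let $f(w) := \operatorname{Cov}(h(w), h_{2^{-m}}(z))$ be the covariance kernel; from the standard whole-plane GFF Green's function one has, for $z \in K$, that $f(w) = m \log 2 + O_K(1)$ for $w \in 2^{-m}\BB S + z$ and $f_1(z) = O_K(1)$, while $\sigma^2 := \Var(h_{2^{-m}}(z)) = m \log 2 + O_K(1)$. Girsanov gives
\[
\BB E\bigl[e^{\xi p h_{2^{-m}}(z)} W(h)^p\bigr] = e^{(\xi p)^2 \sigma^2/2} \, \BB E\bigl[W(h + \xi p f)^p\bigr],
\]
so the exponential prefactor equals $2^{m\xi^2 p^2/2} \cdot C_K$. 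I then claim $W(h + \xi p f) \leq C_K \cdot W(h)$ pointwise: by Axiom~\ref{item-metric-f}, the shift multiplies $\sup D_{\bar h}$ by $2^{\xi^2 p m} \cdot C_K$ (since $\xi p f$ restricted to $2^{-m}\BB S + z$ is approximately the constant $\xi p(m \log 2 + O_K(1))$), while simultaneously $\bar h_{2^{-m}}(0)$ shifts by $\xi p(m\log 2 + O_K(1))$, so that $e^{-\xi \bar h_{2^{-m}}(0)}$ picks up the compensating factor $2^{-\xi^2 p m} \cdot C_K$. These cancel, yielding the pointwise bound and hence $\BB E[W(h + \xi p f)^p] \leq C_K \cdot T C_p$. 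Combining with $\frk c_{2^{-m}}^p \leq 2^{-m\xi Q p + m\zeta}$ from~\eqref{eqn-metric-scaling} gives the desired estimate.

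The main obstacle is technical: the kernel $f$ grows logarithmically away from $z$, so the global shift $h \mapsto h + \xi p f$ is not a bounded perturbation as required by Axiom~\ref{item-metric-f} in its literal form. However, the weak LQG metric has already been extended (in the final paragraph of the proof of Theorem~\ref{thm-lfpp-axioms}) to whole-plane GFF plus arbitrary continuous functions, via locality (Axiom~\ref{item-metric-local}) and internal metrics on bounded open sets. Since the supremum in $W$ is an internal metric on the bounded set $2^{-m}\BB S + z$, Axiom~\ref{item-metric-f} applies to any bounded modification of $f$ that agrees with $f$ on $2^{-m}\BB S + z$, which suffices for the Weyl-scaling step above; and the Girsanov tilt itself is valid for the single Gaussian $h_{2^{-m}}(z)$ without any global integrability constraint on $f$.
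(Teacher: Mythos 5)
Your proof is correct but takes a genuinely different route from the paper's. Both start by translating and scaling to reduce to Proposition~\ref{prop-internal-moment} at scale $\BB r = 2^{-m}$, producing $\BB E[W^p] \leq T C_p$ for $W = \frk c_{2^{-m}}^{-1} e^{-\xi h_{2^{-m}}(z)}\sup_{u,v \in X} D_h(u,v;2^{-m}\BB S + z)$; the divergence is in how each disposes of the remaining Gaussian factor $e^{\xi p h_{2^{-m}}(z)}$. The paper observes that, after removing $h_{2^{-m}}(z)$ via Axioms~\ref{item-metric-local} and~\ref{item-metric-f}, the quantity $W$ is determined by the field localized near $z$, while the Gaussian increment $h_1(z) - h_{2^{-m}}(z)$ (variance $m\log 2$) is \emph{independent} of that local field by the Markov property of circle averages from~\cite[Section 3.1]{shef-kpz}; the $p$-th moment of $e^{-\xi h_1(z)}\sup D_h$ therefore factors exactly as a product of expectations, and a final H\"older step removes $e^{-\xi h_1(z)}$ and produces the $m\zeta$ slack. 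You instead perform a Cameron--Martin/Girsanov tilt along the covariance kernel $f(\cdot) = \operatorname{Cov}(h(\cdot), h_{2^{-m}}(z))$ and compensate the resulting field shift via Weyl scaling: since $f = m\log 2 + O_K(1)$ both on $2^{-m}\BB S + z$ and on $\partial B_{2^{-m}}(z)$, the multiplicative effect on the internal metric and the shift of $\bar h_{2^{-m}}(0)$ cancel up to a $K$-dependent constant. Both routes produce the same exponent; the paper gets the $m\zeta$ slack from H\"older whereas you get it from $\frk c_{2^{-m}} = 2^{-m\xi Q + o(m)}$ via~\eqref{eqn-metric-scaling}. The paper's argument is slightly more economical because independence yields an exact identity rather than a pointwise comparison after tilting, but both are sound.

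One factual correction: for the \emph{pinned} whole-plane GFF with $h_1(0)=0$, the kernel $f(w)=\operatorname{Cov}(h(w),h_{2^{-m}}(z))$ does not grow logarithmically at infinity. The $-\log|w-z|$ contribution from pairing $h(w)$ against the circle is cancelled by the pinning correction, so $f(w)\to 0$ as $|w|\to\infty$ and $\nabla f \in L^2$, i.e., $f$ genuinely lies in the Cameron--Martin space. The obstacle you flag in your final paragraph is therefore not actually present, and your localization workaround, while correct, is unnecessary. Also, $|\mcl S^n(Y)| = |\mcl S^{n+m}(X)|$ should be an inequality with a factor of $4$, since translation by $z$ does not preserve the dyadic grid; this is harmless and is absorbed into $T$.
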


\begin{proof}
First, observe that we can choose $C_p > 0$ such that, for every $z \in \BB{C}$ and every real number $p <  \xi^{-1} (Q + \sqrt{Q^2 - 2x})$,
\eqb
\BB E\left[\left( \frk c_{2^{-m}}^{-1} e^{-\xi h_{2^{-m}}(0)} \sup_{u,v \in X} D_h(u,v; 2^{-m} \BB S + z)  \right)^p \right] \leq T C_p. 
\label{eqn-lem-internal-moment-1}
\eqe
Indeed, we obtain~\eqref{eqn-lem-internal-moment-1} for $z=0$ by setting $\BB{r} = 2^{-m}$ in Proposition~\ref{prop-internal-moment}; the result then follows for general $z \in \BB C$ by Axiom~\ref{item-metric-translate} and the translation invariance property $h(\cdot + z) - h_{2^{-m}}(z) \eqD h - h_{2^{-m}}(0)$.

Now, by Axioms~\ref{item-metric-local} and~\ref{item-metric-f}, the internal metric
\eqbn
e^{-\xi h_{2^{-m}}(z)} D_h\left( u,v ; 2^{-m} \BB{S} + z \right) 
= D_{h-h_{2^{-m}}(z)} \left( u,v ; 2^{-m} \BB{S} + z \right) 
\eqen
is a.s.\ determined by $(h-h_{2^{-m}}(z))|_{B_{2^{-m}(z)}}$
On the other hand, the random variable $h_1(z) - h_{2^{-m}}(z)$ is independent from $(h-h_{2^{-m}}(z))|_{B_{2^{-m}}(z)}$~\cite[Section 3.1]{shef-kpz}. Therefore, with $p$ as above, we can decompose
\eqb
 \BB E\left[ \left( e^{-\xi h_1(z)} \sup_{u,v\in X}  D_h\left( u, v ; 2^{-m} \BB{S} + z   \right) \right)^p \right]  \label{eqn-lem-internal-moment-2}
\eqe
as a product of expectations
\[
\BB E\left[ e^{\xi p (h_{2^{-m}}(z) - h_1(z))} \right] \BB E\left[ \left( e^{-\xi h_{2^{-m}}(z)} \sup_{u,v\in X} D_h\left( u, v ; 2^{-m} \BB{S} + z   \right) \right)^p \right],
\]
which is at most $2^{-m\xi Q p +m \xi^2 p^2 / 2}  TC_p$ by~\eqref{eqn-lem-internal-moment-1} and the fact that $h_1(z) - h_{2^{-m}}(z)$ is centered Gaussian with variance $m \log 2$~\cite[Section 3.1]{shef-kpz}.  To deduce~\eqref{eqn-lem-internal-moment} from this bound on~\eqref{eqn-lem-internal-moment-2}, we apply H\"older's inequality.  Since $h_1(z)$ is a Gaussian with variance bounded uniformly in $z \in K$, we can choose a constant $C_K$ depending only on $K$ such that, for each $q>1$ sufficiently small,
\allb \label{eqn-diam-moment-holder}
& \BB E\left[ \left(  \sup_{u,v\in X} D_h\left( u, v ; 2^{-m} \BB{S} + z   \right) \right)^p \right] \notag \\
&\qquad \leq \BB E\left[ e^{\xi q h_1(z)/(q-1)} \right]^{1-1/q} \BB E\left[ \left( e^{- \xi h_1(z)}  \sup_{u,v\in X} D_h\left( u, v ; 2^{-m} \BB{S} + z   \right) \right)^{q p} \right]^{1/q}  \notag \\
&\qquad= C_K^{q/(q-1)} 2^{-m\xi Q p   +m \xi^2 p^2 q / 2} \changes{ (T C_{pq})^{1/q}}.
\alle  
The result follows from setting $\zeta = \frac{\xi^2 p^2}{2} (q-1)$.
\end{proof}

\subsection{Completing the proof of the KPZ upper bound}

\begin{proof}[Proof of Theorem~\ref{thm-kpz}, upper bound]
As we explained after stating Theorem~\ref{thm-kpz}, it suffices to prove the upper bound of Theorem~\ref{thm-kpz} with packing dimension replaced by upper Minkowski dimension.  In other words, we assume that the upper Minkowski dimension of $X$ is less than some $x > 0$, and we show that
\eqb
\dim_{\mcl H}(X; D_h) \leq f(x),
\label{eqn-kpz-goal}
\eqe
with $f$ defined in the statement of the theorem.  
So, suppose that the upper Minkowski dimension of $X$ is less than $x$. We may assume that $x < Q^2/2$, so that $f(x) = \frac{1}{\xi} (Q - \sqrt{Q^2 - 2x})$, since otherwise the bound~\eqref{eqn-kpz-goal} is trivial.  By Weyl scaling (Axiom~\ref{item-metric-f}), we can assume without loss of generality that $h$ is a whole-plane GFF normalized so that $h_1(0) = 0$.

By definition of Minkowski dimension, we can choose  \changes{$\wt T>1$} such that $
|\mcl S^n(X)| \leq \wt T 2^{nx}$ for each $n \in \BB{N}$.
Observe, for integers $m \leq n$, every square in $\mcl S^n$ intersects at most nine squares of $\mcl S^m$ (or four if $m<n$). We deduce that, with $T = 9\wt T$,
\eqb
\sum_{S \in \mcl S^m} |\mcl S^n(X \cap S)| \leq T 2^{nx} \qquad \forall m \leq n \in \BB{N}
\label{eqn-kpz-mink}
\eqe
Fix $\zeta>0$ and $m \in \BB{N}$. For each $A>0$,
\[
\left|\left\{ S \in \mcl S^m(X) :  |\mcl S^n(X \cap S)| > A T  2^{(n-m)(x+\zeta)} \right\}\right| \leq A^{-1}   2^{m(x+\zeta) - n \zeta}  \qquad \forall n \geq m
\]
so summing over all integers $n \geq m$ yields
\eqb
\label{eqn-kpz-mink-m}
\left| \left\{ S \in \mcl S^m(X) : \text{ $|\mcl S^n(X \cap S)| > A T  2^{(n-m)(x+\zeta)}$ for some $n \geq m$} \right\} \right| \leq C_\zeta A^{-1}   2^{m(x+\zeta)}
\eqe
for some constant $C_\zeta>1$ depending only on $\zeta$.

We can rephrase~\eqref{eqn-kpz-mink-m} as follows.  For each $S \in \mcl S^m(X)$, we set
\eqb
\label{eqn-TS-def}
T_S = \sup_{n \geq m} 2^{(m-n)(x+\zeta)} |\mcl S^n(X \cap S)|.
\eqe
\changes{For all $S \in \mcl S^m(X)$, we have 
\[ T_S = 
\sup_{n \geq m} 2^{(m-n)(x+\zeta)} |\mcl S^n(X \cap S)| 
\leq 
2^{m(x+\zeta)} \sup_{n \geq m} 2^{-nx} |\mcl S^n(X \cap S)| 
\leq
2^{m(x+\zeta)} T,
\]
where the last inequality follows from~\eqref{eqn-kpz-mink}. Thus,} we can partition the set of possible values of $T_S$ into the finite collection of intervals $I_k$ for $k = 1,\ldots,\lceil m (x+\zeta) \rceil$, where $I_1 = (-\infty, 2 T]$ and $I_k = (2^{k-1} T, 2^{k} T]$ for $k > 1$.   Then~\eqref{eqn-kpz-mink-m}  implies that, for $k>1$, 
\[
\left| \left\{ S \in \mcl S^m(X) : T_S \in I_k \right\} \right| \leq C_\zeta 2^{-k+1+m(x+\zeta)}.
\]
Since  \changes{$T>1$ and} the total number of squares $S \in \mcl S^m(X)$ is at most $T 2^{m(x+\zeta)}$, we deduce that, for all $k$ including $k=1$,
\eqb
\left| \left\{ S \in \mcl S^m(X) : T_S \in I_k \right\} \right| \leq C_\zeta T 2^{-k+1+m(x+\zeta)}.
\label{eqn-kpz-mink-Ik}
\eqe
Now, for each $S \in \mcl S^{m}(X)$, Lemma~\ref{lem-internal-moment} with $p = \xi^{-1}(Q - \sqrt{Q^2 - 2(x+3\zeta)})$ gives
\eqbn
\BB E\left[\left( \sup_{u,v \in X \cap S} D_h(u,v; S)  \right)^p \right] \leq T_S C_{X,x,\zeta} 2^{-m\xi Q p + m \xi^2 p^2/2 + m\zeta} = T_S C_{X,x,\zeta} 2^{-m(x+2\zeta)} 
\eqen
for some constant $C_{X,x,\zeta}>0$ depending on $X,x,\zeta$ but not on $S,m$.  (In the last step, we assume that $\zeta>0$ is small enough so that $p$ is real.) For each $k = 1,\ldots,\lceil m (x+\zeta)\rceil$, we can sum this bound over all $S$ with $T_S \in I_k$. By~\eqref{eqn-kpz-mink-Ik}, this gives
\[
\sum_{S \in \mcl S^{m}(X) \, : \, T_S \in I_k} \BB E\left[\left( \sup_{u,v \in X \cap S} D_h(u,v; S)  \right)^p \right]
\leq
T^2  2^{-m \zeta} C_{X,x,\zeta}.\]
Summing over $k$ yields
\[
\sum_{S \in \mcl S^{m}(X)} \BB E\left[\left( \sup_{u,v \in X \cap S} D_h(u,v; S)  \right)^p \right]
\leq
T^2 \lceil m (x+\zeta)\rceil 2^{-m \zeta} C_{X,x,\zeta}.\]
By Markov's inequality, this means that for any fixed $\ep>0$, the probability that
\eqb
\sum_{S \in \mcl S^{m}(X)}  \left( \sup_{u,v \in X \cap S} D_h(u,v; S)  \right)^p < \ep 
\label{eqn-kpz-markov}
\eqe
tends to 1 as $m \rta \infty$.  By the Borel-Cantelli lemma, a.s.\ ~\eqref{eqn-kpz-markov} holds for arbitrarily small values of $m$.  This implies that the Hausdorff dimension of $X$ in the metric $D_h$ is at most $p$.  Sending $\zeta \rta 0$ gives~\eqref{eqn-kpz-goal}.
\end{proof}

\newcommand{\etalchar}[1]{$^{#1}$}
\def\cprime{$'$}

\end{document}